\newcommand{\modif}[1]{#1}
\newcommand{\smodif}[1]{#1}
\begin{document}

\title{Which exceptional low-dimensional projections of a\\ Gaussian point
cloud
can be found in polynomial time?}
\author{Andrea Montanari\thanks{Department of Mathematics and Department of Statistics, 
		Stanford University} \;\; and \;\; Kangjie Zhou\thanks{Department of Statistics, 
		Stanford University}}
\date{\today}
\maketitle

\begin{abstract}
Given $d$-dimensional standard Gaussian vectors   $\xx_1,\dots,\xx_n$,
we consider the set of all empirical distributions of its $m$-dimensional 
projections, for $m$ a fixed constant. Diaconis and Freedman \cite{diaconis1984asymptotics} proved that, 
if $n/d\to \infty$, all such distributions converge to the
standard Gaussian distribution.
In contrast, we study the proportional asymptotics, whereby $n,d\to \infty$ 
with $n/d\to \alpha \in (0, \infty)$. In this case, 
the projection of the data points along a typical random subspace is again Gaussian, 
but the set $\cuF_{m,\alpha}$ of all
probability distributions that are asymptotically feasible as
$m$-dimensional projections contains non-Gaussian distributions
corresponding to exceptional subspaces.

Non-rigorous methods from statistical physics yield an indirect characterization
of $\cuF_{m,\alpha}$ in terms of a generalized Parisi formula. 
Motivated by the goal of putting this formula on a rigorous basis,
and to understand whether these projections can be found efficiently, 
we study the subset $\cuF^{\salg}_{m,\alpha}\subseteq \cuF_{m,\alpha}$ of distributions that
can be realized by a class of iterative algorithms. We prove that this set is characterized by a certain stochastic optimal control problem,
and obtain a dual characterization of this problem in terms of a 
variational principle that extends  Parisi's formula.

As a by-product, we obtain computationally achievable values for a class
of random optimization problems including `generalized spherical perceptron'
models.
\end{abstract}

\tableofcontents

\section{Introduction}
Let $(\xx_i)_{i\le n} \sim_{\iid} \normal(\bzero, \id_d)$
be independent standard Gaussian vectors and denote by
$\XX \in \R^{n \times d}$ the matrix with rows 
$\xx_i^\top$ for $i \in [n]$. We are interested in characterizing the set of low-dimensional 
empirical distributions of this ``Gaussian cloud'',
in the proportional asymptotics
whereby $n, d \to \infty$ with $n/d \to \alpha \in (0, \infty)$.
Namely, fixing $m\ge 1$, we define\footnote{Here and below $\cuP(\R^m)$
	denotes the set of probability distributions on $\R^m$, \modif{and $\stackrel{w}{\Rightarrow}$ denotes weak convergence of probability measures.}}:
\begin{equation}\label{eq:FeasibleFirst}
	\begin{split}
		\cuF_{m,\alpha}:= \Big\{P \in \cuP (\R^{m}):\, & \exists
		\WW = \WW_n (\XX,\omega), \mbox{ s.t. } \WW^\top \WW = I_m, \,\, \\
		& \frac{1}{n} \sum_{i=1}^{n} \delta_{\WW^\top \xx_i} \stackrel{w}{\Rightarrow} P\, \mbox{ in probability}
		\Big\}\, ,
	\end{split}
\end{equation}
where $\omega$ represents some additional randomness independent of $\XX$ (without loss of generality, we can take $\omega$ to be uniformly random on $[0,1]$).
In words, this is the set of probability distributions on $\R^m$ that can be approximated by the empirical distribution of the projections $\{ \WW^\top \xx_i \}_{i \le n}$ for $\WW$ any $d \times m$ orthogonal matrix. \modif{Probability measures in $\cuF_{m, \alpha}$ are referred to as $(\alpha, m)$-feasible distributions.} By general arguments (cf. \cite[Lemma E.8]{montanari2022overparametrized}),
the set $\cuF_{m,\alpha}$ is closed under weak convergence.

This feasible set $\cuF_{m,\alpha}$ was first studied in theoretical 
statistics as a null model for projection 
pursuit \cite{friedman1974projection,friedman1987exploratory}. In particular,
Diaconis and Freedman \cite{diaconis1984asymptotics} established 
that\footnote{Strictly speaking, their result 
	applies to $n,d\to\infty$, with $n/d\to\infty$ at any rate,
	but the treatment given in their paper can be adapted to yield the claimed limit here.}
\begin{align*}
	\lim_{\alpha\to\infty}
	\sup_{P\in\cuF_{m,\alpha}} d_{\sKS}\big(P,\normal(0,1)\big) = 0\, ,
\end{align*}
with $d_{\sKS}$ denoting the Kolmogorov-Smirnov distance.
Later, Bickel, Kur and Nadler \cite{bickel2018projection} first attempted to 
characterize the feasible set $\cuF_{m, \alpha}$ under the proportional 
limit. They obtained certain upper and lower bounds in terms of the second moment of 
the target distribution, as well as the Kolmogorov-Smirnov distance between the target distribution and standard Gaussian measure. 
Tighter inner and outer bounds on $\cuF_{m, \alpha}$ 
(and generalizations of this set) were 
established in the recent paper \cite{montanari2022overparametrized}, together 
with applications to  supervised learning problems. 

The structure of the feasible set $\cuF_{m,\alpha}$ is directly related
to the asymptotics of random optimization problems of the form 
\begin{align}\label{eq:GeneralOpt}
	\mbox{maximize} \,\, H_{n, d} \left( \WW \right) := \, \frac{1}{n} \sum_{i=1}^{n} h \left( \WW^\top \xx_i \right), \quad \mbox{subject to} \ \WW \in O(d, m),
\end{align}
where $h$ is bounded and continuous, and we denote by $O(d,m)$ the set of $d\times m$ orthogonal matrices.

A simple and yet not fully understood example of
the type of random optimization problems \eqref{eq:GeneralOpt} is provided by the so-called spherical
perceptron problem.
Given data $\{ (y_i,\oxx_i) \}_{i \le n} \sim_{\iid}\Unif(\{+1,-1\})\otimes \normal(\bzero,\id_d)$
and a parameter $\kappa\in\R$, we would like to
find a vector $\ww\in\R^d$, $\|\ww\|_2=1$
such that $y_i\<\ww,\oxx_i\>\ge \kappa$ for all $i\le n$.
This is known in machine learning as a linear classifier with margin
$\kappa$ for the data $\{ (y_i,\oxx_i) \}_{i \le n}$, see \cite{shalev2014understanding} for further background. 

For $\kappa\ge 0$, an explicit threshold $\alpha_*(\kappa)$ is known such that a $\kappa$-margin classifier 
$\ww$ exists with high probability  if $n/d\to\alpha<\alpha_*(\kappa)$, and 
does not exist if $n/d\to\alpha>\alpha_*(\kappa)$ \cite{gardner1988space,shcherbina2003rigorous,stojnic2013another}. However, 
such a phase transition  has not been established for
$\kappa<0$.
Franz and Parisi used non-rigorous spin glass techniques to derive a conjectured threshold in \cite{franz2016simplest,franz2019critical}. 
They further speculated that the structure of near optima of this problem is related to dense
`disordered' sphere packings in high  dimensions \cite{parisi2020theory}. 
Upper and lower bounds $\alpha_{\sUB}(\kappa)$, $\alpha_{\sLB}(\kappa)$ on the phase transition threshold 
of the spherical perceptron for $\kappa<0$, as well as efficient algorithms to find a solution
were recently studied in \cite{montanari2024tractability, el2022algorithmic}.

The negative spherical perceptron problem can be rephrased as a question about the value of an optimization problem of the form  
\eqref{eq:GeneralOpt}, with $m=1$. Defining $\xx_i=y_i\oxx_i$ and taking $h_{\kappa}(t) = \min(t - \kappa, 0)$, we are led to consider the optimization problem
\begin{align*}
	\mbox{maximize} \,\, \frac{1}{n} \sum_{i=1}^{n} h_{\kappa} \big(\<\ww, \xx_i\>\big), \quad \mbox{subject to } \ww \in \S^{d-1},
\end{align*}
where $\S^{d-1}$ denotes the $d$-dimensional unit sphere.
A $\kappa$-margin solution exists if and only if the value of this problem is zero. 

Proposition 4.1 in \cite{montanari2022overparametrized} implies that for any $h \in C_b (\R^m)$ (the set of all bounded continuous functions on $\R^m$):
\begin{equation}\label{eq:feasibility_test_fcn}
	\pliminf_{n, d \rightarrow \infty} \max_{\WW \in O(d, m)} H_{n,d} \left( \WW \right) = \, \sup_{P \in \cuF_{m, \alpha}} \left\{ \int_{\R^m} h(z) P(\d z) \right\} =:
	\VH_{m,\alpha}(h).
\end{equation}
Therefore, characterizing the feasible set $\cuF_{m,\alpha}$
would allow us to determine $\VH_{m,\alpha}(h)$, the asymptotics of the global maximum for all problems of the form 
\eqref{eq:GeneralOpt}.

Vice versa, determining $\VH_{m,\alpha}(h)$ for all
$h \in C_b (\R^m)$ provides a complete characterization of the closed convex hull of $\cuF_{m,\alpha}$, \modif{which is the closure of the set containing all convex combinations of finitely many elements in $\cuF_{m,\alpha}$}. This claim is a direct consequence of the following duality theorem, whose proof is deferred to \cref{sec:ProofDuality}.
\begin{thm}\label{thm:dual_char_prob}
	Denote by $\cuP (\R^m)$ the set of all probability distributions on $\R^m$. Assume $E \subset \cuP (\R^m)$ is convex and closed under weak limit. Then, for any $\mu \in \cuP (\R^m)$, $\mu \in E$ if and only if for any $h \in C_b (\R^m)$,
	\begin{equation*}
		\int_{\R^m} h \, \d \mu \le \sup_{\nu \in E} \left\{ \int_{\R^m} h \,
		\d \nu \right\}.
	\end{equation*}
\end{thm}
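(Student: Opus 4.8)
This is a Hahn–Banach separation statement, and the only non-trivial content is the ``if'' direction, which I would prove by contraposition. The plan is to embed everything in the locally convex topological vector space $\cM := \cM(\R^m)$ of finite signed Borel measures on $\R^m$, equipped with the coarsest topology $\tau$ making every evaluation map $\nu \mapsto \int_{\R^m} h\,\d\nu$, $h \in C_b(\R^m)$, continuous. Since bounded (indeed bounded Lipschitz) continuous functions separate finite signed measures on a metric space, $\tau$ is Hausdorff, and by the standard identification of the dual of a weak topology one has $(\cM,\tau)^* = \{\nu \mapsto \int h\,\d\nu : h \in C_b(\R^m)\}$. The key structural fact to establish is that $\cuP(\R^m)$ is $\tau$-closed in $\cM$: if a net $(\mu_\lambda)$ in $\cuP(\R^m)$ $\tau$-converges to $L \in \cM$, then testing against the constant $h\equiv 1$ forces $L(\R^m)=1$, while testing against nonnegative $h \in C_b(\R^m)$ together with regularity of $L$ (approximate the negative part of $L$ from inside by a compact set and separate it from a neighbourhood by an Urysohn function) forces $L\ge 0$; hence $L \in \cuP(\R^m)$. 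It is exactly here that the full class $C_b(\R^m)$ matters rather than $C_0$ or $C_c$: the constant $1$ being an admissible test function is what prevents mass from leaking to infinity in the limit.

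Granting this, suppose $\mu \notin E$. One may assume $E\neq\emptyset$, since otherwise $\sup_{\nu\in E}\int h\,\d\nu = -\infty < 0 = \int 0\,\d\mu$ already exhibits a separating $h$. Since $\R^m$ is Polish, the weak topology on $\cuP(\R^m)$ is metrizable, so the hypothesis that $E$ is closed under weak limits is equivalent to $E$ being closed in $\cuP(\R^m)$, and hence — combined with the previous paragraph — $\tau$-closed in $\cM$. Now $\{\mu\}$ is compact and convex, $E$ is closed and convex, and the two are disjoint, so the Hahn–Banach strict separation theorem in the locally convex Hausdorff space $(\cM,\tau)$ produces a continuous linear functional, i.e. some $h \in C_b(\R^m)$, and a constant $\gamma\in\R$ such that, after possibly replacing $h$ by $-h$, $\int_{\R^m} h\,\d\mu > \gamma > \int_{\R^m} h\,\d\nu$ for all $\nu\in E$. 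Then $\int h\,\d\mu > \gamma \ge \sup_{\nu\in E}\int h\,\d\nu$, contradicting the assumed inequality; this proves the contrapositive and hence the theorem.

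I expect the $\tau$-closedness of $\cuP(\R^m)$ inside the signed-measure space to be the only genuinely delicate point, since everything else is a direct invocation of Hahn–Banach separation together with textbook facts (a weak topology is locally convex with the expected dual; $\cuP(\R^m)$ is metrizable when $\R^m$ is Polish). An alternative would be to fix a metrizable compactification $K \supset \R^m$, run the separation inside the weak-$\ast$-compact set $\cuP(K)$, and then verify that weak-$\ast$ limits of elements of $E$ which happen to be supported on $\R^m$ still lie in $E$; but that route needs an extra tightness lemma, so working directly in $\cM(\R^m)$ with the $C_b$-weak topology is cleaner.
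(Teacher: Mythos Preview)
Your proof is correct and follows essentially the same Hahn–Banach separation argument as the paper. The paper instead embeds into $rba(\R^m)=C_b(\R^m)^*$ with the weak-$*$ topology (so the dual identification $(rba,w_*)^*=C_b$ is immediate) and simply asserts that $E$ is closed there, whereas you work in the smaller space $\cM(\R^m)$ with the $\sigma(\cM,C_b)$ topology and supply the careful $\tau$-closedness argument for $\cuP(\R^m)$ (and hence for $E$, via metrizability)—which is arguably the one point the paper glosses over.
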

Throughout the paper, we will often 
move between $\cuF_{m,\alpha}$ and its dual $\VH_{m,\alpha}(\,\cdot\, )$, which is a functional on $C_b(\R^m)$.

The main goal of this paper is to provide a Parisi-type formula for 
a subset of  $\cuF_{m,\alpha}$ that can be realized via polynomial-time
computable projections, namely\footnote{\modif{Similar to \cite[Lemma E.8]{montanari2022overparametrized}, we know that $\cuF^{\salg}_{m,\alpha}$ is closed under weak limits.}}
\begin{equation*}
	\begin{split}
		\cuF^{\salg}_{m,\alpha} := \Big\{P \in \cuP (\R^{m}):\, & \exists
		\WW = \WW_n (\XX,\omega) \mbox{ polytime computable, s.t. } \\
		&\WW^\top \WW = I_m, \,
		\frac{1}{n} \sum_{i=1}^{n} \delta_{\WW^\top \xx_i} \stackrel{w}{\Rightarrow} P\, \mbox{ in probability}
		\Big\}\, .
	\end{split}
\end{equation*}
More explicitly, $\WW_n (\XX,\omega)$ is `polytime computable' means that there exists an algorithm, accepting $(\XX,\omega)$ (or its finite-precision approximation) as input and computing
$\WW_n (\XX,\omega)$  in time polynomial in $n, d$. In what follows, we will describe a class of efficient algorithms for computing $\WW_n (\XX,\omega)$, 
and characterize the resulting set of computationally feasible distributions, thus establishing
an inner bound on $\cuF^{\salg}_{m,\alpha}$. These
algorithms are based on the incremental approximate message passing (IAMP) algorithms 
that have been recently developed to optimize the Hamiltonians of mean-field spin glasses
\cite{montanari2021optimization,el2021optimization}.
Recent work by Huang and Sellke \cite{huang2022tight,huang2024optimization}
proves that ---in the spin glass context--- IAMP algorithms are optimal within the broader class of
Lipschitz algorithms. This provides rigorous evidence for the expectation that the class of IAMP algorithms
characterize the fundamental computational limit of the random optimization problem~\eqref{eq:GeneralOpt}, and related ones.

The main contributions of this paper are as follows.

Section \ref{sec:replica_method} provides further background, by  deriving a general prediction for $\VH_{m,\alpha}(h)$
using non-rigorous techniques from statistical physics. The resulting prediction
takes the form of a generalized Parisi formula. This conjecture is
a useful benchmark as well as a motivation for our theory.

In Section \ref{sec:main_results}, we present our main results, namely:
\begin{enumerate}
	\item We characterize a set of probability distributions that can be realized 
	via $m$-dimensional polynomial-time computable projections  
	(Theorem \ref{thm:general_amp_achievability}), which provides an inner
	bound for $\cuF^{\salg}_{m,\alpha}$.
	
	As mentioned above, our inner bound is based on computing the projection matrix $\WW = \WW_n (\XX,\omega)$ via an IAMP
	algorithm, and we denote the resulting set of distributions by $\cuF^{\sAMP}_{m,\alpha}\subseteq \cuF^{\salg}_{m,\alpha}$.
	We show that the probability distributions in  $\cuF^{\sAMP}_{m,\alpha}$ can be
	represented as the laws of a certain class of stochastic integrals.
	\item Using this stochastic integral representation,
	it is immediate to derive a lower bound on
	$$
	\VH^{\salg}_{m,\alpha}(h) := \sup_{P \in \cuF_{m,\alpha}^{\mbox{\tiny\rm alg}}} \left\{ \int_{\R^m} h (z)\, P(\d z) \right\}
	$$
	for general $h \in C_b (\R^m)$.
	We will denote this lower bound by $\VH^{\sAMP}_{m,\alpha}(h)$ (Theorem \ref{thm:ValH}).
	In particular, for any $\eps>0$, there exists an IAMP algorithm 
	returning $\widehat{\WW}_n^{\sAMP}$, such that
	$$
	H_{n,d} \left( \widehat{\WW}_n^{\sAMP} \right) \ge \, \VH_{m,\alpha}^{\sAMP}(h)-\eps
	$$
	with probability converging to $1$ as $n,d\to\infty$, with $n/d\to\alpha$.
	
	\modif{By analogy with previous results in spin glass theory,
		we expect $\VH^{\sAMP}_{m,\alpha}(h)= \VH^{\salg}_{m,\alpha}(h)$ in general. Further, recalling that
		$\VH_{m,\alpha}(h)$ denote the analogous quantity where the supremum is taken over 
		$P \in \cuF_{m,\alpha}$, we expect $\VH^{\sAMP}_{m,\alpha}(h)= \VH^{\salg}_{m,\alpha}(h)= \VH_{m,\alpha}(h)$
		for `directions' $h$ that satisfy a no overlap gap condition (see Theorem \ref{thm:tight_under_no-ogp}). } 
	\item For a fixed $h$, computing the value of $\VH_{m,\alpha}^{\sAMP}(h)$ is equivalent to solving a stochastic
	optimal control problem, as described in \cref{sec:VH_statement}. When $m=1$, we use a duality argument to derive a Parisi-type formula
	for $\VH^{\sAMP}_{1,\alpha}(h)$, which takes the form of a variational principle over a suitable function space
	(Theorem \ref{thm:two_stage_strong_duality}). 
	This variational principle
	turns out to be closely related to the conjectured formula for  $\VH_{1,\alpha}(h)$,
	which we derive using the replica method in Section \ref{sec:replica_method}.
\end{enumerate}

For the first two results, we generalize techniques developed in the context of 
spin glasses in  \cite{montanari2021optimization,el2021optimization}. However, the third point
(deriving a Parisi-type formula via duality) poses significantly new challenges. 
The approach used in previous work was to establish a Hamilton-Jacobi-Bellman (HJB) equation for the value of the stochastic
optimal control problem, and then show that the latter is equivalent to the Parisi-type PDE via Legendre-Fenchel duality.
While we follow a similar route, we need to consider a more general class of optimal control problems,
corresponding to more general initializations for the HJB equation. As a consequence, we lack
a priori convexity estimates on the solution of the HJB equation and establishing that the latter is indeed well-posed
requires a novel proof. 

In particular, we will show that for any test function $h$ that satisfies certain regularity conditions and general function order parameter $\mu$ (cf. \cref{defn:gamma_space}), the Parisi PDE associated with the replica prediction of the optimization problem \eqref{eq:GeneralOpt} admits a unique weak solution. Our assumptions are very weak and almost minimal in order for the Parisi functional to be well-defined. As a comparison, earlier work on IAMP algorithms for the negative spherical perceptron \cite{el2022algorithmic} assumes that $h$ is concave and has some specific structure, and that $\mu$ is non-negative and non-decreasing.

Proofs of our main results are presented
in Section \ref{sec:iamp} and Section \ref{sec:char_opt_ctrl}. 
Section \ref{sec:iamp} describes the IAMP algorithm and presents the proof of our general feasibility theorem, i.e.,
Theorem \ref{thm:general_amp_achievability}. Section \ref{sec:char_opt_ctrl}
presents the proof of the Parisi-type formula
(Theorem \ref{thm:two_stage_strong_duality}), \smodif{building on the technical preliminaries developed in \cref{sec:solve_Parisi_PDE,sec:verification_argument,sec:variational_problem}.}
\modif{In these subsequent sections, we construct a solution to the Parisi PDE and establish its regularity, prove a dual relationship between the value of our stochastic optimal control problem and the solution to the Parisi PDE, and analyze the corresponding Parisi variational problem.}

\subsection*{Concurrent work}
After this work was first presented (as part of the Ph.D. thesis of the second author), we became aware that 
partially overlapping results have been obtained independently by Brice Huang, Mark Sellke, and Nike Sun \cite{huang2024preparation}.
The authors also consider the Ising case and obtain hardness results for Lipschitz algorithms.

\subsection*{Notations}

We will follow the convention of using boldface letters for matrices or vectors whose
dimensions diverge as $n,d\to\infty$, and normal fonts otherwise.
We denote the standard scalar product between two vectors $\uu,\vv$ by $\<\uu,\vv\>$,
and the matrix scalar product by $\<\AA,\BB\> = \Tr(\AA^\top\BB)$. We use $\norm{\cdot}_2$ to denote the Euclidean norm of a vector. We use $\norm{\cdot}_{L^p}$ to denote the standard $L^p$ norm of a function for $p \in [1, +\infty]$.

We denote by $\S_+^m$ the convex cone of $m\times m$ positive semi-definite matrices. For $d \ge m$, we denote by $O(d, m)$ the set of all $d \times m$ orthogonal matrices. For a subset $S$ in a topological space, we denote by $\close S$ its closure.
For $p \ge 1$, we denote by $C^k (\R^p)$ the collection of all functions that have continuous $k$-th derivatives in $\R^p$. We also denote by $C_b (\R^p)$ the set of all bounded continuous functions on $\R^p$, and by $C_{c}^{\infty} (\R^p)$ the set of all infinitely differentiable functions with compact supports. We use $\cuP (\R^p)$ to denote the set of all probability measures on $\R^p$ equipped with the topology of weak convergence, unless otherwise stated. For a sequence of probability measures $(\nu_n)_{n \ge 1}$ and a probability measure $\nu$, we write $\nu_n \stackrel{w}{\Rightarrow} \nu$ if $\nu_n$ weakly converges to $\nu$. We use $\Law (X)$ to denote the law of a random variable $X$. For two random variables $X, Y$, we write $X \perp Y$ if $X$ is independent of $Y$.

For a function $h$, we denote by $\operatorname{conc} (h)$ the (upper) concave envelope of $h$. Namely, $\operatorname{conc} (h)$ is the pointwise minimum of all concave functions that dominate $h$. For $l, k \ge 1$, and a differentiable mapping $F: \R^k \to \R^l$, we denote by $J_F (x)$, or $ \partial F / \partial x \in \R^{l \times k}$ the Jacobian matrix of $F$, namely for $x \in \R^k$: $J_F(x)_{ij} = \partial F_i / \partial x_j$. We occasionally use $J_F$ as a shorthand for $J_F (x)$ whenever the variable $x$ is clear from the context. We say that a function $\psi:\R^{p}\to\R$ is pseudo-Lipschitz (of order $2$) if there exists a constant $C$ such that, 
for all $x,y\in \R^p$, 
$$
|\psi(x)-\psi(y)|\le C(1+\|x\|_2+\|y\|_2)\|x-y\|_2.
$$
Let $\{ B_t \}_{t \in [0, 1]}$ be an $m$-dimensional standard Brownian motion, and let $\{ \cF_t \}_{t \in [0, 1]}$ be its canonical filtration. For $s \le t$, we denote by $D[s, t]$ the space of all admissible controls on the interval $[s, t]$, i.e., the collection of all progressively measurable processes $\{ \Phi_r \}_{s \le r \le t}$ satisfying
\begin{equation}
	\sigma(\Phi_r) \subset \cF_r, \ \forall r \in [s, t], \ \mbox{and} \ \E \left[ \int_{s}^{t} \Phi_r \Phi_r^\top \d r \right] < \infty.
	\label{eq:Ddef}
\end{equation}

%
%
\section{Conjectures from statistical physics}\label{sec:replica_method}

This section will be devoted to the prediction of the feasible set $\cuF_{m, \alpha}$ using physicists' replica method. 
Based on the duality between $\cuF_{m, \alpha}$ and $\VH_{m,\alpha}(\cdot)$, we will state the general prediction for $\VH_{m,\alpha}(h)$ in the following conjecture, with detailed calculations deferred to Appendix~\ref{append:replica}. Recall that $\VH_{m,\alpha}(h)$ is defined in Eq.~\eqref{eq:feasibility_test_fcn}.

\begin{conj}[Replica prediction for $\VH_{m,\alpha}(h)$]\label{conj:Parisi_formula_mdim}
	\smodif{For any fixed $h \in C_b (\R^m)$, we have almost surely
		\begin{equation*}
			\lim_{n \to \infty} \max_{\WW \in O(d, m)} \frac{1}{n} \sum_{i=1}^{n} h \left( \WW^\top \xx_i \right) = \VH_{m,\alpha}(h)\,.
		\end{equation*}
		Further, $\VH_{m,\alpha}(h)$ has the following variational representation:
		\begin{equation*}
			\VH_{m,\alpha}(h) = \inf_{(\mu, M, C) \in \mathscr{U} \times \mathscr{I}_m \times \S_+^m} \mathsf{F}_m (\mu, M, C) \,.
	\end{equation*}}
	In the above display, the $m$-dimensional Parisi functional $\mathsf{F}_m: \mathscr{U} \times \mathscr{I}_m \times \S_+^m \to \R$ is defined as
	\begin{equation*}
		\mathsf{F}_m (\mu, M, C) = f_{\mu} (0, 0) + \frac{1}{2 \alpha} \int_{0}^{1} \Tr \left( M(t) \left( C + \int_{t}^{1} \mu(s) M(s) \d s \right)^{-1} \right) \d t,
	\end{equation*}
	where $f_{\mu}: [0, 1] \times \R^m \to \R$ solves the $m$-dimensional Parisi PDE:
	\begin{equation}\label{eq:Parisi_mdim}
		\begin{split}
			\partial_t f_{\mu} (t, x) + \, & \frac{1}{2} \mu (t)\<\nabla_{x} f_{\mu} (t, x), M(t) \nabla_{x} f_{\mu} (t, x)\> +\frac{1}{2} \operatorname{Tr}\left( M(t) \nabla_{x}^2 f_{\mu} (t, x) \right) = \, 0, \\
			f_{\mu} (1, x) = \, & \sup_{u \in \R^m} \left\{ h(x + u) - \frac{1}{2} \<u, C^{-1}u\> \right\}\, ,
		\end{split}	
	\end{equation}
	$\S_+^m$ denotes the set of $m \times m$ positive definite matrices, and
	\begin{align*}
		\mathscr{I}_m &:= \left\{ M: [0, 1) \to \S_+^m : \int_{0}^{1} M(t) \,\d t = I_m \right\},\\
		\mathscr{U} &:= \left\{ \mu: [0, 1) \to \R_{\ge 0}: \ \mu \ \mbox{{\rm non-decreasing}}, \ \int_{0}^{1} \mu(t) \d t < \infty \right\}.
	\end{align*}
\end{conj}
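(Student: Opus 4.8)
The statement is a prediction of the replica method, so the ``proof'' I would present is the corresponding formal computation of the asymptotic free energy, specialized to a full-replica-symmetry-breaking (FRSB) saddle; the almost-sure statement then follows from standard Gaussian concentration of $\frac1n\log Z_n(\beta)$ around its mean. The plan is as follows. First I would introduce the Gibbs measure on the Stiefel manifold $O(d,m)$,
$$
Z_n(\beta)=\int_{O(d,m)}\exp\Big(\beta\sum_{i=1}^n h(\WW^\top\xx_i)\Big)\,\nu_d(\d\WW),
$$
with $\nu_d$ the normalized invariant measure, and use that, since $h\in C_b(\R^m)$ and $O(d,m)$ is compact, $\max_{\WW}\frac1n\sum_i h(\WW^\top\xx_i)=\lim_{\beta\to\infty}\frac1{\beta n}\log Z_n(\beta)$; it thus suffices to evaluate $\lim_{\beta\to\infty}\lim_{n\to\infty}\frac1{\beta n}\E\log Z_n(\beta)$.

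Next I would apply the replica identity $\E\log Z_n=\lim_{r\to0}r^{-1}\log\E[Z_n^r]$ and, for integer $r$, expand $\E[Z_n^r]$ over $r$ independent frames $\WW^1,\dots,\WW^r$. Integrating out $\xx\sim\normal(\bzero,\id_d)$, the vector $((\WW^a)^\top\xx)_{a\le r}\in\R^{mr}$ is centered Gaussian with block covariance $\bQ=(Q^{ab})_{a,b\le r}$, $Q^{ab}=(\WW^a)^\top\WW^b$, $Q^{aa}=I_m$, so the single-sample expectation is a functional $\Xi_\beta(\bQ)=\E_{\bg\sim\normal(\bzero,\bQ)}\exp(\beta\sum_a h(\bg^a))$. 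Promoting $\bQ$ to an order parameter, $\E[Z_n^r]\asymp\int\d\bQ\,\exp\big(d\,\mathcal{S}_r(\bQ)+n\log\Xi_\beta(\bQ)\big)$, where $\mathcal{S}_r$ is the large-deviation rate for the overlaps of $r$ Haar $m$-frames; a Gaussian parametrization $\WW^a=\GG^a((\GG^a)^\top\GG^a)^{-1/2}$ together with a Laplace computation gives $\mathcal{S}_r(\bQ)=\tfrac12\log\det\bQ$ up to an additive constant. Using $n/d\to\alpha$ and a saddle point over $\bQ$,
$$
\lim_{n\to\infty}\frac1n\E\log Z_n(\beta)=\lim_{r\to0}\frac1r\,\mathrm{extr}_{\bQ}\Big(\tfrac1{2\alpha}\log\det\bQ+\log\Xi_\beta(\bQ)\Big).
$$

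Then I would impose the matrix-valued FRSB ansatz: the off-diagonal blocks of $\bQ$ form an ultrametric hierarchy encoded by a monotone path $t\mapsto\widehat{M}(t)\in\S_+^m$ and a Parisi measure on $[0,1]$. Evaluating $r^{-1}\log\Xi_\beta(\bQ)$ on this ansatz and sending $r\to0$ produces $\Phi_\beta(0,0)$, where $\Phi_\beta$ solves a Parisi-type PDE with terminal datum $\Phi_\beta(1,x)=\log\E_z\exp(\beta h(x+\Sigma^{1/2}z))$ for the innermost block variance $\Sigma$, while evaluating $\tfrac1{2\alpha r}\log\det\bQ$ produces the matrix Crisanti--Sommers integral. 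Finally I would pass to the zero-temperature scaling: keep $\beta\Sigma\to C\in\S_+^m$, rescale the Parisi measure to the nondecreasing slope $\mu$, and reparametrize the path so that its ``speed'' is $M(t)$ with $\int_0^1 M(t)\,\d t=I_m$. Under these substitutions the terminal condition Laplace-degenerates to $f_\mu(1,x)=\sup_u\{h(x+u)-\tfrac12\langle u,C^{-1}u\rangle\}$, the PDE becomes $\partial_t f_\mu+\tfrac12\mu\langle\nabla f_\mu,M\nabla f_\mu\rangle+\tfrac12\Tr(M\nabla^2 f_\mu)=0$, the spherical term becomes $\tfrac1{2\alpha}\int_0^1\Tr\!\big(M(t)(C+\int_t^1\mu(s)M(s)\,\d s)^{-1}\big)\d t$, and stationarity over the order parameters becomes $\inf_{(\mu,M,C)\in\mathscr{U}\times\mathscr{I}_m\times\S_+^m}\mathsf{F}_m(\mu,M,C)$.

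The hard part is twofold. First, the derivation is not rigorous: the $r\to0$ continuation, the assumption that the dominant saddle is of FRSB form, and the interchange of the $n\to\infty$, $r\to0$ and $\beta\to\infty$ limits are unjustified --- which is precisely why the statement appears as a conjecture and why the rest of the paper proves instead a rigorous IAMP inner bound. Second, even the formal computation is markedly heavier for $m>1$ than in the scalar case: one must carry the non-commutativity of the blocks $Q^{ab}$ through the hierarchical parametrization, obtain correctly the matrix Parisi PDE and the \emph{ordered} integral $\int_t^1\mu(s)M(s)\,\d s$ inside the spherical term, and make the zero-temperature reparametrization (the one responsible for the constraint $\int_0^1 M=I_m$ and for the $C^{-1}$-Moreau envelope in the terminal condition) consistent throughout. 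I expect the bookkeeping of the matrix FRSB ansatz and its $\beta\to\infty$ limit to be the main obstacle.
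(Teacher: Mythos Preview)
Your proposal is correct and follows essentially the same route as the paper's Appendix~\ref{append:replica}: introduce the Gibbs measure, apply the replica identity for integer $k$, reduce to a saddle point over the overlap matrix with the $\tfrac{1}{2\alpha}\log\det Q$ entropy term, impose the FRSB ansatz, and take the zero-temperature limit. The paper carries this out explicitly only for $m=1$ (citing \cite{gyorgyi2000beyond} for the FRSB evaluation and using the concrete scaling $y_\beta(t)=\tfrac{\mu(t)}{\beta}\mathbf{1}_{t<1-c/\beta}+\mathbf{1}_{t\ge 1-c/\beta}$ in Lemma~\ref{lem:Parisi_final}), while you sketch the matrix-valued version directly; but the structure and the acknowledged non-rigorous steps are the same.
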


\begin{rem}[Replica prediction for $\VH_{1,\alpha}(h)$]\label{rmk:ReplicaM1}
	The formulas in Conjecture \ref{conj:Parisi_formula_mdim} can be significantly simplified for
	the case $m=1$. In this case, $M(t) = r(t)$ is a non-negative function on $[0, 1]$ satisfying $\int_{0}^{1} r(t) \d t = 1$, and $C = c$ is a positive scalar. We then have
	\begin{equation*}
		\mathsf{F}_1 (\mu, r, c) = f_{\mu} (0, 0) + \frac{1}{2 \alpha} \int_{0}^{1} \frac{r(t) \d t}{c + \int_{t}^{1} \mu(s) r(s) \d s },
	\end{equation*}
	where $f_{\mu}$ solves the PDE
	\begin{equation}\label{eq:Parisi_m1_first}
		\begin{split}
			&\partial_s f_{\mu} (s, x)+\frac{1}{2} r(s) \left( \mu (s) \partial_x f_{\mu} (s, x)^2 + \partial_x^2 f_{\mu} (s, x) \right) = \, 0, \\
			&	f_{\mu} (1, x) = \,  \sup_{u \in \R} \left\{ h( x + u) - \frac{u^2}{2c}  \right\}.
		\end{split}	
	\end{equation}
	Examples of this formula in the literature sometimes use a different parametrization
	of the time variable.
	Namely, they use the change of variable $s\mapsto t(s) = \int_{0}^{s} r(u) \d u$.
	Under this change of variable, we recast $f_{\mu} (s, x)$ as $f_{\mu} (t, x)$ and $\mu(s)$ as $\mu(t)$. The Parisi PDE then reads
	\begin{equation}
		\begin{split}
			&\partial_t f_{\mu} (t, x)+\frac{1}{2} \mu (t) \partial_x f_{\mu} (t, x)^2 +  \frac{1}{2} \partial_x^2 f_{\mu} (t, x) = \, 0, \\
			&f_{\mu} (1, x) = \,  \sup_{u \in \R} \left\{ h( x + u) - \frac{u^2}{2c}  \right\}.
		\end{split}	\label{eq:ParisiEq-m1}
	\end{equation}
	Further, the Parisi functional reduces to
	\begin{equation}
		\mathsf{F}_1 (\mu, c) =  f_{\mu} (0, 0) + \frac{1}{2 \alpha} \int_{0}^{1} \frac{\d q}{c + \int_{q}^{1} \mu(u) \d u }\, .\label{eq:ParisiF-m1}
	\end{equation}
	Notice that the simplified Parisi functional does not depend on $r$ any more. The replica prediction for $\VH_{1,\alpha}(h)$ then becomes
	\begin{equation}\label{eq:ReplicaPred_m1}
		\lim_{n \to \infty} \max_{\ww \in \S^{d-1}} \frac{1}{n} \sum_{i=1}^{n} h \left( \langle \ww, \xx_i \rangle \right) = \VH_{1,\alpha}(h) = \inf_{(\mu, c) \in \mathscr{U} \times \R_{> 0}} \mathsf{F}_1 (\mu, c).
	\end{equation}
	In the following sections, we will drop the subscript ``$1$'' and use $\mathsf{F} (\mu, c)$ instead of 
	$\mathsf{F}_1 (\mu, c)$.
\end{rem}

%
%
\section{Main results}\label{sec:main_results}

In this section, we present our main results regarding $\cuF_{m, \alpha}^{\salg}$, the set of  computationally
feasible probability distributions in $\cuF_{m, \alpha}$. 
In Section \ref{sec:IAMP}, we describe the class of two-stage AMP algorithms to be analyzed.
Section \ref{sec:FeasibleIAMP} presents the characterization of the set of
probability distributions in $\cuF_{m, \alpha}^{\salg}$ that can be
realized using our algorithm. Section \ref{sec:VH_statement} then states our main achievability result for $\VH^{\salg}_{m,\alpha}(\cdot)$.
Finally, in Section~\ref{sec:Parisi_formula} we establish the extended Parisi variational principle for
$\VH^{\salg}_{1,\alpha}(\cdot)$.
%
%
\subsection{Overview of the algorithm}
\label{sec:IAMP}
We first give a brief description of our two-stage AMP algorithm. For further
background information and discussion, we refer to Section~\ref{sec:iamp}.

Our algorithm has two stages: The first stage consists of $T_1$ identical iterations, followed by an incremental stage of
$T_2$ iterations with small weights, where $T_1$ and $T_2$ are two positive integers to be determined.

For $t = 1, \cdots, T_1$, we update $\VV^t\in\R^{n\times m}$, $\WW^t\in\R^{d\times m}$ according to
\begin{align*}
	\WW^{t + 1} & = \frac{1}{\sqrt{n}} \XX^\top F (\VV^{t}) - \WW^{t} K_{t}^\top, \\
	\VV^{t} & = \frac{1}{\sqrt{n}} \XX \WW^{t} - \frac{d}{n} F (\VV^{t-1})\, ,
\end{align*}
with $\WW^1 = \XX^\top F_0 / \sqrt{n}$ \smodif{and $F_0 \in \R^{n \times m}$ independent of $\XX$.} Here $F:\R^m \to \R^m$ is understood to be applied row-wise.
Namely, for $\VV\in\R^{n\times m}$ (with rows $\vv_i$), 
$F (\VV)\in\R^{n\times m}$ is the matrix whose $i$-th row is $F(\vv_i)$.
Further, the Onsager correction term $K_t\in\R^{m\times m}$ is defined as
\begin{equation*}
	K_{t} = \frac{1}{n} \sum_{i=1}^{n} \frac{\partial F}{\partial \vv_i^t} (\vv_i^t),
\end{equation*}
\smodif{where $\vv_i^t$ is the $i$-th row of $\VV^t$, and $\frac{\partial F}{\partial \vv_i^t} (\vv_i^t)$ denotes the Jacobian matrix of $F$ with respect to $\vv_i^t$.}
We will show (using general tools from the analysis of AMP algorithms) 
that, in the limit of large $T_1$ after $n,d\to \infty$,
this iteration converges to an approximate fixed point of some non-linear function,
which will be the starting point of the second stage of our algorithm.

In the second stage, we allow each iterate to depend on all previous ones. Denote $\WW^{\le t} = (\WW^s)_{1 \le s \le t}$ and $\VV^{\le t} = (\VV^s)_{1 \le s \le t}$. We iterate, for $T_1 + 1 \le t \le T_1+T_2$:
\begin{equation}\label{eq:amp_iter_2nd}
	\begin{split}
		\WW^{t + 1} & = \frac{1}{\sqrt{n}} \XX^\top F_t (\VV^{\le t}) - \sum_{s=1}^{t} G_s (\WW^{\le s}) K_{t, s}^\top, \\
		\VV^{t} & = \frac{1}{\sqrt{n}} \XX G_t(\WW^{\le t}) - \sum_{s=1}^{t} F_{s-1} (\VV^{\le s-1}) D_{t, s}^\top,
	\end{split}
\end{equation}
where
\begin{equation*}
	K_{t, s} = \frac{1}{n} \sum_{i=1}^{n} \frac{\partial F_t}{\partial \vv_i^s} \left( \vv_i^{1}, \cdots, \vv_i^t \right), \ D_{t, s} = \frac{1}{n} \sum_{i=1}^{d} \frac{\partial G_t}{\partial \ww_i^s} \left( \ww_i^{1}, \cdots, \ww_i^t \right), \quad t \ge s.
\end{equation*}
As before, we will overload the notations and let $F_t$ and $G_t$ operate on its argument matrices row-wise. 
We further assume that $F_t$ and $G_t$ have the following specific structure:
\begin{align*}
	F_t \left( \vv^{1}, \cdots, \vv^{t} \right) = \, & \vv^t \Phi_{t-1} \left( \vv^{1}, \cdots, \vv^{t-1} \right), \\
	G_t \left( \ww^{1}, \cdots, \ww^{t} \right) = \, & \ww^t \Psi_{t-1} \left( \ww^{1}, \cdots, \ww^{t-1} \right),
\end{align*}
where $\Phi_{t-1}$ and $\Psi_{t-1}$ 
are matrix-valued \smodif{mappings} that satisfy certain moment constraints from the state evolution of AMP. The second stage of our algorithm involves $T_2$ 
iterations with the above choices of $F_t$ and $G_t$.

Finally, the output of our two-stage AMP algorithm will be a linear combination of $\WW^{T_1}$ and the incremental AMP iterations in the second stage. To be concrete, we will show that 
\begin{equation*}
	\plim_{n,d\to\infty} \frac{1}{n} (\WW^{T_1})^{\top} \WW^{T_1} = Q,
\end{equation*}
where $Q \in \S_+^m$ is a deterministic $m\times m$ matrix satisfying $0 \preceq Q \preceq I_m$, which will be characterized in \cref{sec:fixed_pt_amp}. For any such $Q$, let $Q_{1}, \cdots, Q_{T_2}$ be $T_2$ deterministic $m\times m$ matrices such that 
$$
\sum_{t=1}^{T_2} Q_t^\top Q_t = I_m - Q,
$$
we then compute
\begin{equation*}
	\WW_Q = \frac{1}{\sqrt{n}}  \WW^{T_1} + \frac{1}{\sqrt{n}} \sum_{t=1}^{T_2} G_{T_1 + t+1} \left( \WW^{\le T_1 + t + 1} \right) Q_t\, .
\end{equation*}
We set the final output of our algorithm to be $\widehat{\WW}_n^{\sAMP} = \WW_Q (\WW_Q^\top \WW_Q)^{-1/2}$, which is guaranteed to be a $d \times m$ orthogonal matrix. The set of $(\alpha, m)$-feasible distributions achievable by our two-stage AMP algorithm will be studied in the next section.
%
%
\subsection{A set of computationally feasible distributions}
\label{sec:FeasibleIAMP}

We begin with stating our AMP achievability result for general $m$, and then simplify our formulas to the special case $m=1$.
\begin{defn}\label{defn:Q_contraction}
	Let $Q \in \S_+^m$ be such that $0 \preceq Q \preceq I_m$, and $V \sim \normal(0, Q)$. We say that a differentiable function $F: \R^{m} \to \R^m$ is a $Q$-contraction, if
	\begin{equation*}
		\E \left[ F \left( V \right) F \left( V \right)^\top \right] = \, \alpha Q,
	\end{equation*}
	and there exists some $S \in \S_+^m$, $S \succ 0$, such that
	\begin{equation}\label{eq:Q_contraction}
		\E \left[ \JF \left( V \right)^\top S\, \JF \left( V \right) \right] \preceq \, \alpha S,
	\end{equation}
	where $\JF$ denotes the Jacobian matrix of $F$.
\end{defn}
We are now in position to state our main achievability results for AMP algorithms.
\begin{thm}[Inner bound for $\cuF_{m, \alpha}^{\salg}$]\label{thm:general_amp_achievability}
	Let $Q \in \S_+^m$ be such that $0 \preceq Q \preceq I_m$, and $V \sim \normal(0, Q)$.
	Assume that $F$ is a $Q$-contraction as per \cref{defn:Q_contraction}. Let $(B_t)_{0 \le t \le 1}$ be an $m$-dimensional standard Brownian motion independent of $V$. Define the filtration $\{ \cF_t \}$ by
	\begin{equation*}
		\cF_t = \sigma \left( V, (B_s)_{0 \le s \le t} \right), \ 0 \le t \le 1.
	\end{equation*}
	Assume $Q(t) \in L^2 ([0, 1] \to \R^{m \times m} )$ satisfies
	\begin{equation*}
		\int_{0}^{1} Q(t) Q(t)^\top \d t = I_m - Q,
	\end{equation*}
	and $\{ \Phi_t \}_{0 \le t \le 1}$ is an $m \times m$ matrix-valued progressively measurable stochastic process with respect to the filtration $\{ \cF_t \}_{0 \le t \le 1}$, satisfying
	\begin{equation*}
		\E \left[ \Phi_t \Phi_t^\top \right] \preceq \frac{I_m}{\alpha}, \, \forall t \in [0, 1].
	\end{equation*}
	Then, we have that $\operatorname{Law} ( U) \in \cuF_{m, \alpha}^{\salg}$, where
	\begin{equation}\label{eq:stochastic_integral_rep}
		U = V + \frac{1}{\alpha} F(V) + \int_{0}^{1} Q(t) \left( I_m + \Phi_t \right) \d B_t.
	\end{equation}
\end{thm}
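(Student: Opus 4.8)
The plan is to realize the random variable $U$ in \eqref{eq:stochastic_integral_rep} as the almost-sure limit (in the appropriate sense of empirical distributions) of the output $\widehat{\WW}_n^{\sAMP}$ of the two-stage AMP algorithm described in \cref{sec:IAMP}, and then invoke the state evolution characterization of AMP to identify the limiting law. Concretely, I would first run the first-stage iteration with the nonlinearity $F$ (the given $Q$-contraction); the $Q$-contraction condition $\E[F(V)F(V)^\top]=\alpha Q$ and the Jacobian inequality \eqref{eq:Q_contraction} are precisely the fixed-point and stability conditions that guarantee the first-stage state evolution converges, as $T_1\to\infty$ after $n,d\to\infty$, to a limit in which $\frac1n(\WW^{T_1})^\top\WW^{T_1}\to Q$ and $\frac1n(\VV^{T_1})^\top\VV^{T_1}\to Q$, with the joint law of a generic row of $(\VV^{T_1},\frac{1}{\sqrt n}\XX\WW^{T_1})$ converging to that of $(V, \text{something})$; the key algebraic point is that the Onsager-corrected iteration makes $\frac{1}{\sqrt n}\XX^\top F(\VV^{T_1})$ behave like $\WW^{T_1}$ plus independent Gaussian noise, and self-consistency under $F$ being a $Q$-contraction pins the covariance to $\alpha Q$, which after the $1/\sqrt n$ and $d/n\to 1/\alpha$ bookkeeping produces the deterministic term $V+\frac1\alpha F(V)$ in the limit.

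Second, I would discretize the continuous objects $Q(\cdot)$ and $\Phi_\cdot$ on a mesh of $[0,1]$ into $T_2$ blocks, choosing the matrices $Q_t$ in the algorithm as (approximately) $Q(t)\,\Delta t^{1/2}$ on each block and the $\Psi_{t-1}$ factors so that $G_{T_1+t+1}$ implements a martingale increment whose conditional covariance matches $Q(t)(I_m+\Phi_t)(I_m+\Phi_t)^\top Q(t)^\top\,\Delta t$. The moment constraint $\E[\Phi_t\Phi_t^\top]\preceq I_m/\alpha$ is exactly what is needed so that the per-step state-evolution covariance of the incremental iterates stays within the admissible cone (this is the analogue of the "$\|\Phi\|\le 1/\sqrt\alpha$" condition in \cite{montanari2021optimization,el2021optimization}), which in turn guarantees the AMP iteration is well-defined and its state evolution does not blow up. Standard AMP-to-SDE arguments (as in \cite{montanari2021optimization,el2021optimization}, adapted to the matrix-valued, rectangular-$\XX$ setting here) then show that as the mesh is refined, the empirical distribution of the rows of $\frac{1}{\sqrt n}\sum_{t=1}^{T_2}G_{T_1+t+1}(\WW^{\le T_1+t+1})Q_t$ converges to the law of the stochastic integral $\int_0^1 Q(t)(I_m+\Phi_t)\,\d B_t$, jointly with the first-stage limit, and that the cross term $\frac1n(\WW^{T_1})^\top G_{T_1+t+1}(\cdots)$ vanishes in the limit so that $\frac1n\WW_Q^\top\WW_Q\to Q+\int_0^1 Q(t)Q(t)^\top\d t=I_m$. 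Hence $\WW_Q$ is asymptotically orthogonal, the normalization $\widehat\WW_n^{\sAMP}=\WW_Q(\WW_Q^\top\WW_Q)^{-1/2}$ is a negligible correction, and the empirical distribution of $\{(\widehat\WW_n^{\sAMP})^\top\xx_i\}$ converges to $\Law(U)$.

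Third, I would address the two finite-$T$/approximation layers: one must exchange the limits $n,d\to\infty$ and $T_1,T_2\to\infty$ (and mesh $\to 0$), and one must handle the fact that $\cuF^{\salg}_{m,\alpha}$ is defined via genuine weak-limit-in-probability of a polytime-computable $\WW_n$. Since AMP with a fixed finite number of iterations and fixed mesh is manifestly polynomial time, each fixed choice of $(T_1,T_2,\text{mesh})$ yields a point in $\cuF^{\salg}_{m,\alpha}$; because $\cuF^{\salg}_{m,\alpha}$ is closed under weak limits (stated in the excerpt, by analogy with \cite[Lemma E.8]{montanari2022overparametrized}), taking $T_1,T_2\to\infty$ and mesh $\to 0$ keeps the limit $\Law(U)$ in the set. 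I expect the main obstacle to be precisely this interchange-of-limits step together with verifying that the chosen $\Psi_{t-1},\Phi_{t-1}$ indeed satisfy the moment/variance constraints imposed by the state evolution recursion uniformly along the iteration — i.e., constructing the algorithmic nonlinearities so that the discrete state-evolution covariances converge to the prescribed continuous ones while never leaving the feasible region; this is the technical heart, and it is exactly where the $Q$-contraction hypothesis and the bound $\E[\Phi_t\Phi_t^\top]\preceq I_m/\alpha$ are used. A secondary, more routine obstacle is the bookkeeping for the rectangular matrix $\XX$ and the two coupled state-evolution covariances (on the $\R^n$ side and the $\R^d$ side), which differs from the symmetric spin-glass case but follows the same template.
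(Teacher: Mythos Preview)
Your proposal is correct and follows essentially the same route as the paper: first-stage fixed-point AMP with the $Q$-contraction $F$ to produce the $V+\frac{1}{\alpha}F(V)$ term, second-stage incremental AMP discretizing $Q(\cdot)$ and $\Phi_\cdot$, and closure via approximation using the fact that $\cuF^{\salg}_{m,\alpha}$ is weakly closed. One small correction on the mechanism in the second stage: in the paper the adapted control is encoded through the $\VV$-side nonlinearities $F_{T_1+t}(v^{\le t})=v^t\Phi_{t-1}(v^{\le t-1})$, while the $\Psi$'s on the $\WW$-side enter only via their \emph{means} $A_t=\frac{1}{\alpha}\E[\Psi_{T_1+t}]$ (these survive as the only nonzero Onsager coefficients $D_{T_1+t+1,s}$), yielding the discrete law $\Law\big(V+\tfrac{1}{\alpha}F(V)+\sum_t(V^{t+1}+V^t\Phi_{t-1}A_t)Q_t\big)$ with $A_t^\top A_t\preceq I_m/\alpha$; the passage from this discrete sum to the It\^o integral (Lemma~5.1 in the paper) then uses L\'evy's construction of Brownian motion and the upwards martingale theorem to match the discrete filtration to $\{\cF_t\}$, which is the one technical ingredient your outline does not name explicitly.
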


The proof of Theorem~\ref{thm:general_amp_achievability} is presented in Section~\ref{sec:iamp}, where we also describe in greater details the two-stage AMP algorithm utilized to prove it. For the case $m=1$, we obtain the following feasibility result as a corollary of \cref{thm:general_amp_achievability}.
\begin{cor}[Inner bound for $\cuF_{1, \alpha}^{\salg}$]\label{cor:unsupervised_amp_achievability}
	Let $q\in[0,1]$, $v \sim \normal(0, q)$ and assume that $F:\R\to\R$ is a $q$-contraction as per \cref{defn:Q_contraction}, i.e.,
	\begin{align*}
		\E \left[ F(v)^2 \right] = \alpha q, \quad \E \left[ F'(v)^2 \right] \le \alpha \, .
	\end{align*}
	Define the filtration $\cF_t = \sigma \left( v, (B_s)_{0 \le s \le t} \right)$ for $t \in [0, 1]$,
	with $(B_t)_{t \in [0, 1]}$ a standard Brownian motion independent of $v$. Assume $q(t) \in L^2 [0, 1]$ satisfies
	\begin{equation*}
		\int_{0}^1 q(t)^2 \d t = 1 - q,
	\end{equation*}
	and $(\phi_t)_{0 \le t \le 1}$ is a real-valued progressively measurable process with respect to $\{\cF_t\}_{0 \le t \le 1}$, satisfying
	\begin{equation*}
		\E \left[ \phi_t^2 \right] \le \, \frac{1}{\alpha}, \ \forall t \in [0, 1].
	\end{equation*}
	Then, we have that 
	$\operatorname{Law} (U) \in \cuF_{1, \alpha}^{\salg}$, where
	\begin{equation*}
		U = v + \frac{1}{\alpha} F(v) + \int_{0}^{1} q(t) \left( 1 + \phi_t \right) \d B_t\, .
	\end{equation*}
\end{cor}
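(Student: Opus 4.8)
The plan is to obtain \cref{cor:unsupervised_amp_achievability} directly as the $m=1$ instance of \cref{thm:general_amp_achievability}; the only work is to verify that every hypothesis and the conclusion of the theorem, once specialized to $m=1$, match the scalar statement. Setting $m=1$, the matrix $Q\in\S_+^1$ becomes a scalar $q$ with $0\le q\le 1$, so $V\sim\normal(0,Q)$ becomes $v\sim\normal(0,q)$, and the one-dimensional Brownian motion $(B_t)_{t\in[0,1]}$ together with the filtration $\cF_t=\sigma(v,(B_s)_{0\le s\le t})$ are exactly those in the corollary.

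Next I would unpack \cref{defn:Q_contraction} for $m=1$. The first condition $\E[F(V)F(V)^\top]=\alpha Q$ reads $\E[F(v)^2]=\alpha q$. For the second condition, observe that any $S\in\S_+^1$ with $S\succ 0$ is simply a positive real number and $\JF(V)=F'(v)$, so $\E[\JF(V)^\top S\,\JF(V)]\preceq\alpha S$ becomes $S\,\E[F'(v)^2]\le\alpha S$, i.e.\ $\E[F'(v)^2]\le\alpha$; hence the existence of a valid $S$ is automatic and the $Q$-contraction property collapses to exactly the two scalar relations in the corollary. In the same way, $\int_0^1 Q(t)Q(t)^\top\,\d t=I_m-Q$ becomes $\int_0^1 q(t)^2\,\d t=1-q$, progressive measurability of $(\Phi_t)$ becomes that of $(\phi_t)$, and $\E[\Phi_t\Phi_t^\top]\preceq I_m/\alpha$ becomes $\E[\phi_t^2]\le 1/\alpha$.

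Finally, the stochastic integral representation \eqref{eq:stochastic_integral_rep} specializes to
\[
U=v+\frac{1}{\alpha}F(v)+\int_0^1 q(t)\bigl(1+\phi_t\bigr)\,\d B_t,
\]
so \cref{thm:general_amp_achievability} gives $\Law(U)\in\cuF_{1,\alpha}^{\salg}$, which is the assertion. I do not expect a genuine obstacle, since the statement is a corollary by construction; the only points deserving a line of care are the reduction of \cref{defn:Q_contraction} just described (in particular that the auxiliary matrix $S$ plays no role when $m=1$) and the degenerate endpoints $q\in\{0,1\}$: for $q=1$ the constraint forces $q(t)\equiv 0$ and $U=v+\alpha^{-1}F(v)$, while for $q=0$ we have $v=0$ almost surely and $F(0)=0$, both of which are covered by \cref{thm:general_amp_achievability} verbatim.
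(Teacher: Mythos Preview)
Your proposal is correct and matches the paper's treatment: the corollary is presented immediately after \cref{thm:general_amp_achievability} as the $m=1$ specialization, with no separate proof given, and your verification that the $Q$-contraction condition, the constraint on $Q(t)$, and the moment bound on $\Phi_t$ all collapse to their scalar counterparts is exactly what is needed.
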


%
%
\subsection{Dual value $\VH_{m,\alpha}^{\salg}(h)$ and stochastic optimal control}
\label{sec:VH_statement}

Fix a function $h: \R^m \to \R$ and recall from \eqref{eq:GeneralOpt} the problem of maximizing the Hamiltonian
\begin{equation}\label{eq:GeneralOpt_recall}
	H_{n, d} (\WW) = \, \frac{1}{n} \sum_{i=1}^{n} h \left( \WW^\top \xx_i \right), \quad \WW \in O(d, m) .
\end{equation}
We will characterize the optimal value achieved by AMP algorithms
of the type described in \cref{sec:IAMP} (and detailed in \cref{sec:iamp}). \modif{For this purpose, we let $\cuF_{m, \alpha}^{\sAMP}$ denote the set of all $(\alpha, m)$-feasible distributions of the form \eqref{eq:stochastic_integral_rep}, and define 
	\begin{equation*}
		\VH_{m,\alpha}^{\sAMP} (h) = \, \sup_{P \in \cuF_{m, \alpha}^{\sAMP}} \int_{\R^m} h(z) P (\d z).
	\end{equation*}
}
Equivalently,
\begin{equation}\label{eq:SOC_First}
	\begin{split}
		& \VH_{m,\alpha}^{\sAMP} (h) := \, \sup \, \E \left[ h \left( V + \frac{1}{\alpha} F(V) + \int_{0}^{1} Q(t) \left( I_m + \Phi_t \right) \d B_t \right) \right], \\
		& \mbox{subject to} \,\,\, 0 \preceq Q \preceq I_m, \, \int_{0}^{1} Q(t) Q(t)^\top \d t = I_m - Q, \,\, \text{$F$ is a $Q$-contraction,} \\
		& \quad\quad\quad\quad \,\,\,  \Phi \in D [0, 1] \,\, \text{and} \,\, \E \left[ \Phi_t \Phi_t^\top \right] \preceq \frac{I_m}{\alpha}, \, \forall t \in [0, 1].
	\end{split}
\end{equation}
Note that for any fixed choice of $Q$, $\{ Q(t) \}_{t \in [0, 1]}$ and $F$, \eqref{eq:SOC_First} is a stochastic optimal control problem for the control process $\{ \Phi_t \}_{t \in [0, 1]}$ on the time interval $[0,1]$, \modif{which will be analyzed in greater details in \cref{sec:lagrange_dual}.}

As a direct consequence of \cref{thm:general_amp_achievability}, the following theorem establishes that the asymptotic maximum of $H_{n, d} (\WW)$ achieved by our two-stage AMP algorithm equals $\VH_{m,\alpha}^{\sAMP} (h)$.
\begin{thm}[Optimal value of $H_{n, d} (\WW)$ achieved by AMP algorithms] \label{thm:ValH}
	Assume $h: \R^m \to \R$ is continuous, bounded from above, and of at most linear growth at infinity. Then, the following holds.
	\begin{itemize}
		\item[$(a)$] \emph{Upper bound.}	 Let $\widehat{\WW}^{\sAMP}_n$ be the output of any AMP algorithm as described in 
		\cref{sec:IAMP} (and further elaborated in \cref{sec:iamp}). Then, we have almost surely,
		\begin{equation*}
			\lim_{n \to \infty} H_{n,d}\left(\widehat{\WW}^{\sAMP}_{n}  \right) \le \, \VH_{m,\alpha}^{\sAMP} (h).
		\end{equation*}
		\item[$(b)$] \emph{Achievability.}
		For any $\veps > 0$, there exists a two-stage AMP algorithm that outputs $\widehat{\WW}^{\sAMP}_{n}$, such that almost surely,
		\begin{equation*}
			\lim_{n \to \infty} H_{n,d}\left( \widehat{\WW}^{\sAMP}_{n}  \right) \ge  \VH_{m,\alpha}^{\sAMP} (h) - \veps \, .
		\end{equation*}
	\end{itemize}
\end{thm}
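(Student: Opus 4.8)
Both parts will be deduced from \cref{thm:general_amp_achievability} and the state evolution analysis of the two-stage AMP algorithm of \cref{sec:iamp}, combined with one elementary analytic step: the passage from weak convergence of the empirical projection measures to convergence of the Hamiltonian values. Concretely, suppose $\widehat{\WW}_n$ is a sequence of $d\times m$ orthogonal matrices with $\widehat\mu_n:=\frac1n\sum_{i\le n}\delta_{\widehat{\WW}_n^\top\xx_i}\stackrel{w}{\Rightarrow}\mu$, the convergence holding almost surely (or in probability). Since $\int_{\R^m}\|x\|_2^2\,\widehat\mu_n(\d x)=\frac1n\Tr(\widehat{\WW}_n^\top\XX^\top\XX\widehat{\WW}_n)\le\frac mn\|\XX\|_{\op}^2$ and $\frac1n\|\XX\|_{\op}^2\to(1+\alpha^{-1/2})^2$ almost surely, the $\widehat\mu_n$ have uniformly bounded second moments, hence (with weak convergence) $\|x\|_2$ is uniformly integrable under $\{\widehat\mu_n\}$; as $h$ is continuous with at most linear growth, $H_{n,d}(\widehat{\WW}_n)=\int h\,\d\widehat\mu_n\to\int h\,\d\mu$. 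If only convergence in probability is available one gets convergence in probability of $H_{n,d}(\widehat{\WW}_n)$, and the almost sure statement for a fixed finite AMP recursion then follows from the concentration of AMP empirical averages and Borel--Cantelli, after approximating $h$ from below by the bounded continuous functions $h\vee(-L)$ and letting $L\to\infty$.

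\medskip
\noindent\textit{Part $(b)$.} Since $h$ is bounded above, $\VH_{m,\alpha}^{\sAMP}(h)<\infty$, and the feasible set of \eqref{eq:SOC_First} is nonempty (take $Q=0$, $F\equiv0$, $\Phi\equiv0$, for which $U\sim\normal(0,I_m)$); so fix an admissible tuple $(Q,\{Q(t)\},F,\{\Phi_t\})$ with $\E[h(U)]\ge\VH_{m,\alpha}^{\sAMP}(h)-\veps$, $U$ as in \eqref{eq:stochastic_integral_rep}. By \cref{thm:general_amp_achievability}, $\Law(U)\in\cuF_{m,\alpha}^{\salg}$, and the construction in its proof (\cref{sec:iamp}) is a two-stage AMP algorithm, computable in time polynomial in $n,d$, whose output $\widehat{\WW}_n^{\sAMP}$ satisfies $\frac1n\sum_{i\le n}\delta_{(\widehat{\WW}_n^{\sAMP})^\top\xx_i}\stackrel{w}{\Rightarrow}\Law(U)$. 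By the analytic step, $\lim_{n\to\infty}H_{n,d}(\widehat{\WW}_n^{\sAMP})=\E[h(U)]\ge\VH_{m,\alpha}^{\sAMP}(h)-\veps$ almost surely.

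\medskip
\noindent\textit{Part $(a)$.} Here one needs the converse: the state evolution of \emph{any} algorithm in the class of \cref{sec:IAMP} forces its limiting projection law into $\cuF_{m,\alpha}^{\sAMP}$. Running the first stage to its approximate fixed point as $T_1\to\infty$, the state evolution fixed-point equations are precisely the conditions defining a $Q$-contraction in \cref{defn:Q_contraction}: with $V\sim\normal(0,Q)$ and $Q=\plim_n\frac1n(\WW^{T_1})^\top\WW^{T_1}$ one has $\E[F(V)F(V)^\top]=\alpha Q$, while the condition ensuring convergence of the first-stage iteration is the bound \eqref{eq:Q_contraction}; moreover $Q\succeq0$ as a limit of Gram matrices and $Q\preceq I_m$ since the construction requires the residual budget $I_m-Q=\int_0^1 Q(t)Q(t)^\top\,\d t$ to be positive semidefinite. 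The incremental second stage, under the moment constraints on $\Phi_{t-1},\Psi_{t-1}$ dictated by the state evolution, contributes in the large-$T_2$ limit precisely the stochastic integral $\int_0^1 Q(t)(I_m+\Phi_t)\,\d B_t$ with $\{\Phi_t\}$ progressively measurable for $\cF_t=\sigma(V,(B_s)_{0\le s\le t})$ and $\E[\Phi_t\Phi_t^\top]\preceq I_m/\alpha$. Hence the limit of $\frac1n\sum_i\delta_{(\widehat{\WW}_n^{\sAMP})^\top\xx_i}$ is $\Law(U)$ for some $U$ of the form \eqref{eq:stochastic_integral_rep} satisfying every constraint of \eqref{eq:SOC_First}, so it lies in $\cuF_{m,\alpha}^{\sAMP}$ and, by the analytic step, $\lim_{n\to\infty}H_{n,d}(\widehat{\WW}_n^{\sAMP})=\int h\,\d\Law(U)\le\VH_{m,\alpha}^{\sAMP}(h)$.

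\medskip
\noindent\textbf{Main obstacle.} The real work lies in the converse used for part $(a)$: showing that \emph{every} two-stage AMP algorithm in the class---not only those tailored to prove \cref{thm:general_amp_achievability}---yields a limiting law of the prescribed stochastic-integral form, and that the finite-$T_1$, finite-$T_2$ truncations and the merely approximate fixed point of the first stage produce errors that vanish in the iteration limit without reversing the inequality. This is exactly what the full state evolution of the two-stage recursion in \cref{sec:iamp} (memory kernels, Onsager corrections, and the identification of the incremental limit with a stochastic integral) supplies; for the present theorem it then only remains to observe that the constraint set of \eqref{eq:SOC_First} is both sufficient (\cref{thm:general_amp_achievability}) and necessary for realizability within this class.
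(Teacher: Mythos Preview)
Your proposal is correct and in fact more detailed than the paper's own treatment: the paper simply states \cref{thm:ValH} ``as a direct consequence of \cref{thm:general_amp_achievability}'' without a separate proof, relying on the state evolution machinery of \cref{sec:iamp} (in particular \cref{thm:iamp_feasibility} and \cref{thm:FeasibleDiscrete}) to identify the limiting empirical distribution of any algorithm in the class with a law in $\cuF_{m,\alpha}^{\sAMP}$. Your explicit uniform-integrability step (bounding second moments via $\|\XX\|_{\op}^2/n$) is a useful detail the paper leaves implicit, since \cref{cor:se_weak_conv} only yields weak convergence and $h$ need not be bounded below.
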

Of course, since AMP is a polynomial-time algorithm, \cref{thm:ValH} implies that
\begin{equation*}
	\VH_{m,\alpha}^{\salg} (h) \ge \VH_{m,\alpha}^{\sAMP} (h)\, .
\end{equation*}

\modif{From now on, we focus on the case $m = 1$, for which the function parameter $q(t)$ in \cref{cor:unsupervised_amp_achievability} and \eqref{eq:SOC_First} can be eliminated via a reparametrization of the time variable. See below for a precise statement and \cref{sec:proof_simple_1dim} for its proof.
	\begin{prop}\label{prop:feasible_simple_1dim}
		When $m = 1$, we have
		\begin{equation}\label{eq:SOC_First_1dim}
			\begin{split}
				& \VH_{1,\alpha}^{\sAMP} (h) := \, \sup \, \E \left[ h \left( v + \frac{1}{\alpha} F(v) + \int_{q}^{1} \left( 1 + \phi_t \right) \d B_t \right) \right], \\
				& \mbox{subject to} \,\, q \in [0, 1], \, \text{$F$ is a $q$-contraction}, \phi \in D [q, 1] \,\, \text{and} \sup_{t \in [q, 1]} \E \left[ \phi_t^2 \right] \le \frac{1}{\alpha}.
			\end{split}
		\end{equation}
	\end{prop}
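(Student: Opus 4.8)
\emph{Proof plan.} We prove the two matching inequalities between the supremum defining $\VH_{1,\alpha}^{\sAMP}(h)$ in the $m=1$ case of \eqref{eq:SOC_First} and the supremum in \eqref{eq:SOC_First_1dim}. The inequality ``$\ge$'' is immediate: given $q \in [0,1]$, a $q$-contraction $F$, and an admissible control $(\phi_t)_{t \in [0,1]}$ with $\sup_t \E[\phi_t^2] \le 1/\alpha$, choosing $q(t) := \mathbf{1}\{t \ge q\}$ in \cref{cor:unsupervised_amp_achievability} satisfies $\int_0^1 q(t)^2\,\d t = 1-q$ and gives $\int_0^1 q(t)(1+\phi_t)\,\d B_t = \int_q^1 (1+\phi_t)\,\d B_t$; thus every feasible choice for \eqref{eq:SOC_First_1dim} is, via this substitution, a feasible choice for the $m=1$ instance of \eqref{eq:SOC_First} with the same objective value.

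For ``$\le$'', the plan is a \emph{deterministic time change}. Fix a tuple $(q(\cdot), q, F, (\phi_t))$ feasible for the $m=1$ instance of \eqref{eq:SOC_First} and set $N_t := \int_0^t q(s)\,\d B_s$, a continuous $L^2$ martingale with \emph{deterministic} quadratic variation $C_t := \int_0^t q(s)^2\,\d s$ increasing continuously from $C_0 = 0$ to $C_1 = 1-q$. With $T_u := \inf\{t : C_t > u\}$, the Dambis--Dubins--Schwarz theorem produces a standard Brownian motion $\beta_u := N_{T_u}$ on $[0,1-q]$, adapted to $\mathcal{G}_u := \cF_{T_u}$ and independent of $v$ (being a measurable functional of $B$); since the clock $C$ is deterministic, the time-change identity for stochastic integrals gives
\begin{equation*}
	\int_0^1 q(t)(1+\phi_t)\,\d B_t \;=\; \int_0^1 (1+\phi_t)\,\d N_t \;=\; \int_0^{1-q} \bigl(1 + \phi_{T_u}\bigr)\,\d \beta_u .
\end{equation*}
Relabelling $u \mapsto q+u$, letting $\tilde B$ be a Brownian motion on $[0,1]$ independent of $v$ with $\tilde B_{q+u} - \tilde B_q = \beta_u$, and setting $\tilde\phi_{q+u} := \phi_{T_u}$ (and $\tilde\phi_t := 0$ on $[0,q)$), we obtain $\int_0^1 q(t)(1+\phi_t)\,\d B_t = \int_q^1 (1+\tilde\phi_t)\,\d\tilde B_t$; since $v$ and $F$ are untouched, $F$ is still a $q$-contraction and the objective is preserved. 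Admissibility of $\tilde\phi$ for \eqref{eq:SOC_First_1dim} then follows because $T_u$ is a deterministic time, so $\E[\tilde\phi_{q+u}^2] = \E[\phi_{T_u}^2] \le 1/\alpha$, and the progressive measurability of $\tilde\phi$ with respect to the filtration of $(v,\tilde B)$ reduces to that of $\phi$. Taking suprema yields ``$\le$''. This is precisely the reparametrization $s \mapsto \int_0^s q(u)^2\,\d u$ already used at the level of the replica formula in \cref{rmk:ReplicaM1}.

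The step I expect to cause the most trouble is the admissibility of $\tilde\phi$ when $q(\cdot)$ vanishes on a set of positive Lebesgue measure: there $C$ is flat on the corresponding intervals, $T_u$ jumps across them, and $\mathcal{G}_u = \cF_{T_u}$ is then strictly larger than $\sigma(v, (\beta_r)_{r \le u})$, so a priori $\phi_{T_u}$ need not be adapted to the canonical filtration of $(v, \tilde B)$ required in \eqref{eq:SOC_First_1dim}. I would handle this by a density reduction performed \emph{before} the time change: replace $q(\cdot)$ by $q_\delta(\cdot) := \lambda_\delta \sqrt{q(\cdot)^2 + \delta}$ with $\lambda_\delta$ chosen so that $\int_0^1 q_\delta(t)^2\,\d t = 1 - q$; then $q_\delta > 0$ everywhere, $q_\delta \to q$ in $L^2$, and by the It\^o isometry (using $\sup_t \E[(1+\phi_t)^2] < \infty$) the variable $\int_0^1 q_\delta(t)(1+\phi_t)\,\d B_t$ converges in $L^2$, hence in distribution, to $\int_0^1 q(t)(1+\phi_t)\,\d B_t$; since $h$ is continuous and (under the assumptions of \cref{thm:ValH}) bounded above with at most linear growth, the corresponding family is uniformly integrable and the objective values converge, so it suffices to treat tuples with $q(\cdot) > 0$ a.e. For those, $C$ is a homeomorphism of $[0,1]$ onto $[0,1-q]$ and $B$ is recovered from $N$ via $B_t = \int_0^t q(s)^{-1}\,\d N_s$ (legitimate since $\int_0^t q(s)^{-2}\,\d\langle N\rangle_s = t < \infty$), so $\mathcal{G}_u = \sigma(v, (\beta_r)_{r \le u})$ and $\tilde\phi$ is genuinely admissible. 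The remaining items --- the precise form of the time-change identity, the null point $u = 1-q$, and the convergence of objective values --- are routine.
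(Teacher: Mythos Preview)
Your proposal is correct and follows essentially the same route as the paper: a deterministic time change via the Dambis--Dubins--Schwarz theorem, preceded by a reduction to the case $q(\cdot)>0$ a.e.\ (the paper dispatches this reduction in one line ``with It\^o's isometry''; you spell out the $q_\delta$ perturbation and the uniform-integrability argument). Your treatment of the filtration equality $\cF^B_{T_u}=\sigma(v,(\beta_r)_{r\le u})$ via the recovery $B_t=\int_0^t q(s)^{-1}\,\d N_s$ is exactly the content of the paper's unproven assertion $\cF^B_{s^{-1}(t)}=\cF^W_{s^{-1}(t)}$, so you are in fact slightly more thorough than the original.
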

	For $m = 1$, we also define the maximum of \eqref{eq:GeneralOpt_recall} achievable by our two-stage AMP algorithm with a fixed choice of $q$:
	\begin{equation*}
		\begin{split}
			& \VH_{1,\alpha}^{\sAMP} (q, h) := \, \sup \, \E \left[ h \left( v + \frac{1}{\alpha} F(v) + \int_{q}^{1} \left( 1 + \phi_t \right) \d B_t \right) \right], \\
			& \mbox{subject to $F$ being a $q$-contraction, } \phi \in D[q, 1] \,\, \text{and} \sup_{t \in [q, 1]} \E \left[ \phi_t^2 \right] \le \frac{1}{\alpha}. 
		\end{split} 
	\end{equation*}
	It is immediate to see that
	\begin{equation*}
		\VH_{1,\alpha}^{\sAMP} (h) = \, \sup_{q \in [0, 1]} \VH_{1,\alpha}^{\sAMP} (q, h).
	\end{equation*}
	In what follows, we present an extended variational principle for $\VH_{1,\alpha}^{\sAMP} (q, h)$, and establish the optimality of our two-stage AMP algorithm under a `no overlap gap' condition.
}
%

%

\subsection{Extended Parisi variational principle}
\label{sec:Parisi_formula}

We now develop a variational principle that is dual to 
the stochastic optimal control problem~\eqref{eq:SOC_First_1dim}.
The resulting formula is closely related to the Parisi  
variational principle, that we derived heuristically in Section 
\ref{sec:replica_method} (note that the Parisi formula for $m=1$ is in Remark \ref{rmk:ReplicaM1}).

We begin with defining two relevant function spaces.

\begin{defn}[Space of functional order parameters]\label{defn:gamma_space}
	Define
	\begin{equation*}
		\FS:= \left\{ (\mu, c) \in L^1 [0, 1] \times \R_{> 0}:\, \mu \vert_{[0, t]} \in L^{\infty} [0, t] \ \text{and} \ c + \int_{t}^{1} \mu(s) \d s > 0 \ \text{for all} \ t \in [0, 1) \right\},
	\end{equation*}
	and let 
	\begin{equation*}
		\FSG:= \Big\{\gamma: [0, 1] \to \R_{> 0} \mbox{ {\rm absolutely continuous}}:\, 
		(\mu, c) \in \FS \ \mbox{{\rm for }} \mu = \gamma' / \gamma^2, \, c = 1 / \gamma(1)
		\Big\}\, .
	\end{equation*}
	Further, for any $q \in [0, 1)$, define
	\begin{equation*}
		\FS (q) = \left\{ (\mu, c) \in \FS:\, \mu (t) = 0, \ \forall t \in [0, q] \right\}
	\end{equation*}
	and
	\begin{equation*}
		\FSG (q) =  \left\{ \gamma \in \FSG: \, \gamma \vert_{[0, q]} \ \mbox{is constant} \right\}.
	\end{equation*}
	Then, we see that $\gamma \in \FSG (q)$ if and only if $(\mu, c) \in\FS (q)$ with $(\mu, c) = (\gamma' / \gamma^2, 1 / \gamma(1))$.
	\modif{In the rest of this paper, this correspondence (and, equivalently, $1/\gamma(t)=c+\int_t^1\mu(s)\,\de s$) will be implicitly assumed.}
\end{defn}
We now extend the definition of the Parisi functional from $\mathscr{U} \times \R_{>0}$ to the larger space $\FS$. For $(\mu, c) \in \FS$, define
\begin{equation}\label{eq:Parisi_Func_extended}
	\mathsf{F} (\mu, c) = f_{\mu} (0, 0) + \frac{1}{2 \alpha} \int_{0}^{1} \frac{\d t}{c + \int_{t}^{1} \mu(s) \d s},
\end{equation}
where $f_{\mu}$ is the solution to the Parisi PDE~\eqref{eq:ParisiEq-m1}, which we copy here for the reader's convenience:
\begin{equation}\label{eq:parisi_mu_gen}
	\begin{split}
		&\partial_t f_{\mu} (t, x)+\frac{1}{2} \mu (t) \partial_x f_{\mu} (t, x)^2 +  \frac{1}{2} \partial_x^2 f_{\mu} (t, x) = \, 0, \\
		&f_{\mu} (1, x) = \,  \sup_{u \in \R} \left\{ h( x + u) - \frac{u^2}{2c}  \right\}.
	\end{split}
\end{equation}

\modif{
	To ensure that the Parisi functional~\eqref{eq:Parisi_Func_extended} is well-defined on the extended function space $\FS$, we establish the following theorem on the existence, uniqueness, and regularity of solutions to the Parisi PDE~\eqref{eq:parisi_mu_gen}, whose proof is presented in \cref{sec:solve_Parisi_PDE}.
	\begin{thm}\label{thm:solve_Parisi_PDE}
		Assume $(\mu, c) \in \FS$, and $h \in C^2 (\R)$ is Lipschitz continuous and bounded from above. Then, the Parisi PDE~\eqref{eq:parisi_mu_gen} has a unique weak solution $f_{\mu}$ such that $f_{\mu} (t, \cdot) \in C^2 (\R)$ for all $t \in [0, 1]$, and
		(for $1/\gamma(t)=c+\int_t^1\mu(s)\,\de s$)
		\begin{equation}\label{eq:parisi_regularity}
			\begin{split}
				\norm{\partial_x f_{\mu} (t, \cdot)}_{L^{\infty} (\R)} \le \, & \norm{h'}_{L^{\infty} (\R)}, \\
				\partial_x^2 f_{\mu} (t,  x) > \, & - \gamma(t), \, \forall (t, x) \in [0, 1] \times \R.
			\end{split}
		\end{equation}
		Further, if $\sup_{z \in \R} h''(z) < 1/c$, then we have additionally
		\begin{equation}\label{eq:parisi_additional_regularity}
			\partial_x^2 f_{\mu} (t,  x) \le \, C, \, \forall (t, x) \in [0, 1] \times \R,
		\end{equation}
		where the constant $C$ only depends on $(\mu, c)$.
	\end{thm}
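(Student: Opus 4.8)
The plan is to reduce the Parisi PDE to a well-understood case by a time-truncation and limiting argument, exploiting the structure of $\FS$. First I would observe that the terminal condition $g(x) := \sup_{u}\{h(x+u) - u^2/(2c)\}$ is the Moreau–Yosida-type regularization of $h$; since $h$ is Lipschitz with $\|h'\|_{L^\infty} =: L$, the supremum is attained at some $u^*(x)$ with $|u^*(x)| \le cL$, so $g$ is Lipschitz with the same constant $L$, is bounded from above, and is semiconvex: $g(x) + x^2/(2c)$ is convex, hence $g'' \ge -1/c = -\gamma(1)$ in the distributional sense (when $\sup h'' < 1/c$ one gets in addition $g'' \le$ some finite constant, by differentiating the optimality condition $h'(x+u^*) = u^*/c$). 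This gives the terminal data for the two inequalities in \eqref{eq:parisi_regularity}–\eqref{eq:parisi_additional_regularity}.

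Next I would handle the singularity of $\mu$ near $t=1$ (recall $(\mu,c)\in\FS$ only requires $\mu|_{[0,t]}\in L^\infty$ for $t<1$ and $L^1$ integrability overall, so $\mu$ may blow up at $1$). Fix $\theta\in(0,1)$ and solve the PDE on $[0,\theta]$ with bounded coefficient $\mu|_{[0,\theta]}\in L^\infty$; on this interval standard parabolic theory for the Parisi PDE (as in Jagannath–Tobasco or the treatments in \cite{montanari2021optimization,el2021optimization}) gives a unique classical-in-$x$, weak-in-$t$ solution $f_\mu^{(\theta)}$ with terminal data $f_\mu^{(\theta)}(\theta,\cdot) := f_\mu(\theta,\cdot)$ obtained from the next step. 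To define $f_\mu(\theta,\cdot)$ itself, I would run the Cole–Hopf / stochastic-representation argument on $[\theta,1)$: writing $f_\mu(t,x) = \mathbb{E}[\,\Gamma_t(x + B_1 - B_t)\,]$-type formulas only works when $\mu$ is bounded, so instead I would use the a priori gradient bound. The key monotonicity estimates are: (i) $v := \partial_x f_\mu$ solves the viscous Burgers equation $\partial_t v + \mu v\partial_x v + \tfrac12\partial_x^2 v = 0$, and the maximum principle gives $\|v(t,\cdot)\|_{L^\infty}\le\|v(1,\cdot)\|_{L^\infty}\le L$ uniformly in $\theta$; (ii) $w := \partial_x^2 f_\mu$ solves $\partial_t w + \mu v\partial_x w + \mu w^2 + \tfrac12\partial_x^2 w = 0$, and comparing with the spatially-homogeneous Riccati ODE $\dot\rho = \mu\rho^2$ with $\rho(1) = 1/c = \gamma(1)$, whose backward solution is exactly $\rho(t) = \gamma(t) = 1/(c+\int_t^1\mu)$, a comparison-principle argument yields $w(t,x) > -\gamma(t)$ (strict, by strong maximum principle for the linearized equation, using $\gamma(t)>0$ on $[0,1)$ which holds precisely because $(\mu,c)\in\FS$). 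These bounds are uniform in $\theta$, so the family $\{f_\mu^{(\theta)}\}$ is precompact (equi-Lipschitz in $x$, locally uniformly bounded after subtracting the explicit affine-in-time drift from the second moment) and any limit as $\theta\uparrow1$ is the desired weak solution; uniqueness follows from the uniqueness of the viscous Burgers equation for $v=\partial_x f_\mu$ plus the terminal normalization.

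For the additional bound \eqref{eq:parisi_additional_regularity} under $\sup_z h''(z) < 1/c$, I would push the comparison for $w = \partial_x^2 f_\mu$ from above: the same Riccati comparison with initial value $w(1,\cdot) = g'' \le C_0 < \infty$ gives $w(t,x) \le \bar\rho(t)$ where $\bar\rho$ solves $\dot{\bar\rho} = \mu\bar\rho^2$, $\bar\rho(1) = C_0$; this stays finite on all of $[0,1]$ precisely because $\bar\rho(t) = 1/(1/C_0 + \int_t^1\mu)$ and $1/C_0 > 0$, so $C := \sup_t\bar\rho(t) = 1/(1/C_0 + \int_0^1\mu)$ — finite and depending only on $(\mu,c)$ (through $\int_0^1\mu$ and $c$, since $C_0$ is controlled by $c$ and $\sup h''$). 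The main obstacle I anticipate is making the comparison principle for $w$ rigorous when $\mu$ is merely $L^1$ and blows up at $t=1$: one cannot directly apply classical parabolic maximum principles on $[0,1]$. I would circumvent this by always working on $[0,\theta]$ with the already-constructed terminal data $f_\mu(\theta,\cdot)$ (for which $\partial_x^2 f_\mu(\theta,\cdot) > -\gamma(\theta)$ has been shown via the $[\theta,1)$ analysis with a further inner truncation $[\theta,1-\epsilon]$), then letting $\theta\uparrow1$; the uniform-in-$\theta$ nature of all constants, which is exactly what the explicit Riccati solutions $\gamma$ and $\bar\rho$ provide, is what makes the limit go through.
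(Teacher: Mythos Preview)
Your architecture (gradient bound for $v=\partial_x f_\mu$ via the maximum principle for viscous Burgers, curvature bounds via Riccati comparison for $w=\partial_x^2 f_\mu$, truncation to $[0,\theta]$) is reasonable, and the lower bound $w>-\gamma(t)$ via the spatially constant solution $-\gamma$ is indeed the correct mechanism. But the upper-bound argument has a genuine gap.

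First a sign error: the spatially homogeneous reduction of $\partial_t w+\mu v\,\partial_x w+\mu w^2+\tfrac12\partial_x^2 w=0$ is $\dot{\bar\rho}+\mu\bar\rho^2=0$, not $\dot{\bar\rho}=\mu\bar\rho^2$. With the correct sign and $\bar\rho(1)=C_0$ one gets $\bar\rho(t)=1/(1/C_0-\int_t^1\mu)$. Since $1/C_0=1/h_2-c$ (with $h_2:=\sup_z h''(z)$), this equals $1/(1/h_2-1/\gamma(t))$ and blows up precisely where $\gamma(t)=h_2$. Because $\mu\in\FS$ carries \emph{no sign constraint}, $\gamma$ is not monotone and can drop below $h_2$ in the interior of $[0,1]$; so a global Riccati supersolution simply does not exist in general. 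Your statements ``stays finite on all of $[0,1]$'' and ``$\sup_t\bar\rho(t)=\bar\rho(0)$'' both implicitly assume $\mu\ge 0$.

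The paper resolves this by splitting $[0,1]$. On a neighborhood $[\theta,1]$ where $\gamma(t)>h_2$, it proves exactly the (corrected) Riccati bound $w\le \gamma(t)h_2/(\gamma(t)-h_2)$, but via Cole--Hopf for piecewise-constant $\mu$; the nontrivial case is a piece with $\mu<0$, where the Brascamp--Lieb Poincar\'e inequality for log-concave densities is needed (this is also what makes the \emph{lower} bound rigorous for signed $\mu$; the references you cite assume $\mu\ge 0$). On $[0,\theta]$, where $\mu\in L^\infty$, the paper uses Duhamel's principle plus Gr\"onwall to get an upper bound of order $\exp(C\|\mu\|_{L^\infty[0,\theta]}^3)$---this replaces your failed global Riccati. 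Finally, you do not treat the case $h_2\ge 1/c$: there the terminal datum need not have $g''\le C_0<\infty$, and the paper reduces to the previous case by replacing $h$ with $h_c(x)=\operatorname{conc}(h(x)-x^2/2c)+x^2/2c$ and approximating $c_n\uparrow c$.
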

}
We are now ready to state our main result establishing a Parisi-type variational principle for the stochastic optimal control problem~\eqref{eq:SOC_First_1dim}, \modif{with its proof deferred to \cref{sec:proof_general_q}.}
\begin{thm}\label{thm:two_stage_strong_duality}
	Assume $h \in C^2 (\R)$ is Lipschitz continuous and bounded from above.
	Then, the following holds.
	\begin{itemize} 
		\item[$(a)$] \emph{Variational formula.}
		Fix $q \in [0, 1)$ and $v\sim\normal(0,q)$. For any $\gamma \in \FSG (q)$ and $(\mu, c) \in \FS (q)$ satisfying $\mu = \gamma' / \gamma^2$ and $c = 1 / \gamma(1)$, we have:
		\begin{align*}
			\mathsf{F} (\mu, c) = \, \sup_{\substack{F: \R \to \R \\ \phi \in D[q, 1]}} \E \Bigg[ \, & h \left( v + \frac{1}{\alpha} F(v) + \int_{q}^{1} \left( 1 + \phi_t \right) \d B_t \right) - \frac{1}{2} \int_{q}^{1} \gamma(t) \left( \phi_t^2 - \frac{1}{\alpha} \right) \d t \\
			& - \frac{\gamma(q)}{2 \alpha} \left( \frac{1}{\alpha} F(v)^2 - q \right) \Bigg]\, .
		\end{align*}
		\item [$(b)$] \emph{Weak duality.} For any $q \in [0, 1)$, we have
		\begin{equation}\label{eq:weak_dual}
			\VH_{1,\alpha}^{\sAMP} (q,h) \le \, \inf_{(\mu, c) \in \mathscr{L} (q)} \mathsf{F} (\mu, c).
		\end{equation}
		\item [$(c)$] \emph{Strong duality.} Fix $q \in [0, 1)$, and assume there exists $(\mu_*, c_*) \in \FS(q)$ such that
		\begin{equation*}
			\mathsf{F} (\mu_*, c_*) = \, \inf_{(\mu, c) \in \FS (q)} \mathsf{F} (\mu, c).
		\end{equation*}
		Then there exists a feasible pair $(F^*, \phi^*)$ that satisfies
		\begin{equation}\label{eq:feasibility_F_phi}
			\E [F^*(v)^2] = \alpha q, \ \E [(F^*)'(v)^2] \le \alpha, \ \E [(\phi_t^*)^2] \modif{=} \frac{1}{\alpha}, \ \forall t \in [q, 1],
		\end{equation}
		such that
		\begin{equation}\label{eq:strong_dual}
			\mathsf{F} (\mu_*, c_*) = \, \E \left[ h \left( v + \frac{1}{\alpha} F^*(v) + \int_{q}^{1} \left( 1 + \phi_t^* \right) \d B_t \right) \right] = \VH_{1,\alpha}^{\sAMP} (q,h) .
		\end{equation}
		Further, $(F^*, \phi^*)$ is efficiently computable given access to $(\mu_*, c_*)$. 
	\end{itemize}
\end{thm}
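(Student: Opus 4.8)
The three parts are organized around part $(a)$, which is the crux: it is an exact variational representation of the Parisi functional $\mathsf F(\mu,c)$ (for $(\mu,c)\in\FS(q)$) as the value of a \emph{Lagrangian-relaxed} version of the stochastic control problem~\eqref{eq:SOC_First_1dim}, in which the constraints $\E[\phi_t^2]\le 1/\alpha$ and $\E[F(v)^2]=\alpha q$ have been dualized with multipliers $\gamma(t)$ and $\gamma(q)/\alpha$. Write $\mathcal J_\gamma(F,\phi)$ for the expectation on the right-hand side of the identity in $(a)$. Once $(a)$ is in hand, part $(b)$ is immediate: for any $q$-contraction $F$ and any $\phi$ with $\E[\phi_t^2]\le1/\alpha$ the two penalty terms have nonnegative expectation (using $\gamma>0$ and $\E[F(v)^2]=\alpha q$), so $\E[h(U)]\le\mathcal J_\gamma(F,\phi)\le\mathsf F(\mu,c)$; taking the supremum over feasible $(F,\phi)$ and then the infimum over $(\mu,c)\in\FS(q)$ yields~\eqref{eq:weak_dual}. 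Part $(c)$ upgrades this to an equality using the assumed attainment of $\inf_{(\mu,c)\in\FS(q)}\mathsf F(\mu,c)$ together with first-order optimality at the minimizer.

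\textbf{Part $(a)$: the two inequalities.}
Since $\mu\equiv 0$ on $[0,q]$, the Parisi PDE~\eqref{eq:parisi_mu_gen} reduces to the backward heat equation there, so $f_\mu(0,0)=\E[f_\mu(q,v)]$ with $v\sim\normal(0,q)$, and $\tfrac1{2\alpha}\int_0^q\gamma=\tfrac{q\gamma(q)}{2\alpha}$; thus $\mathsf F(\mu,c)=\E[f_\mu(q,v)]+\tfrac{q\gamma(q)}{2\alpha}+\tfrac1{2\alpha}\int_q^1\gamma(t)\,\d t$. For the direction $\mathsf F(\mu,c)\ge\mathcal J_\gamma(F,\phi)$: given admissible $(F,\phi)$, run the diffusion $\d Z_t=\mu(t)\,\partial_x f_\mu(t,Z_t)\,\d t+\d B_t$ on $[q,1]$ from $Z_q=v$, apply Itô's formula to $f_\mu(t,Z_t)$, and use the PDE to get $\d f_\mu(t,Z_t)=\tfrac12\mu(t)(\partial_x f_\mu(t,Z_t))^2\,\d t+\partial_x f_\mu(t,Z_t)\,\d B_t$; the stochastic integral is a true martingale by the bounds $|\partial_x f_\mu|\le\|h'\|_{L^\infty}$ and $\partial_x^2 f_\mu>-\gamma(t)$ of Theorem~\ref{thm:solve_Parisi_PDE}, and differentiating the PDE in $x$ shows $\partial_x f_\mu(t,Z_t)$ is itself a martingale. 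Taking expectations, bounding $f_\mu(1,Z_1)=\sup_u\{h(Z_1+u)-u^2/(2c)\}\ge h(Z_1+a)-a^2/(2c)$, and completing the square against the penalty terms gives the claimed inequality, once the diffusion-control expression $\int_q^1(1+\phi_t)\,\d B_t$ in $\mathcal J_\gamma$ is brought into comparable form with $Z_1-v=\int_q^1\mu\,\partial_x f_\mu\,\d t+(B_1-B_q)$ via the time-change/reparametrization already used in Proposition~\ref{prop:feasible_simple_1dim}. For the reverse direction (tightness), we exhibit an explicit optimal pair: $\phi_t^*$ is the optimal feedback control of the relaxed problem, expressed through the second derivative of $f_\mu$ along the optimally driven diffusion, and $F^*(v)=\tfrac{\alpha}{\gamma(q)}\partial_x f_\mu(q,v)$ as dictated by the initial-time envelope/stationarity condition; a reverse Itô computation then yields $\mathcal J_\gamma(F^*,\phi^*)=\mathsf F(\mu,c)$.

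\textbf{Part $(c)$: strong duality and efficient computability.}
Let $(\mu_*,c_*)\in\FS(q)$ attain the infimum, with associated $\gamma_*$, and let $(F^*,\phi^*)$ be the pair produced in the tightness half of $(a)$ at $(\mu_*,c_*)$. It is explicit in terms of the weak solution $f_{\mu_*}$ of~\eqref{eq:parisi_mu_gen}, hence efficiently computable: solving the one-dimensional parabolic PDE, simulating the associated SDE, and solving the fixed-point equation for $F^*$ are all polynomial time. It remains to verify the constraints~\eqref{eq:feasibility_F_phi}, after which the penalty terms in $\mathcal J_{\gamma_*}$ vanish and $\mathsf F(\mu_*,c_*)=\E[h(U^*)]$. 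Here optimality of $(\mu_*,c_*)$ enters: $\mathsf F$ is Gateaux differentiable on $\FS(q)$, and by the envelope theorem applied to the variational formula in $(a)$, its derivative along a perturbation of $\mu$ (resp.\ of $c$) equals the pairing of that perturbation with $\tfrac1{2\alpha}-\tfrac12\E[(\phi_t^*)^2]$ (resp.\ with $\tfrac{q}{2\alpha}-\tfrac1{2\alpha^2}\E[F^*(v)^2]$); since admissible perturbations of $\mu_*$ can be taken of either sign on $[q,1)$ and $c_*$ can be perturbed in both directions, stationarity forces $\E[(\phi_t^*)^2]=1/\alpha$ for a.e.\ $t\in[q,1]$ and $\E[F^*(v)^2]=\alpha q$; the remaining bound $\E[(F^*)'(v)^2]\le\alpha$ follows from the martingale identities for $\partial_x f_{\mu_*}(t,Z_t)$ on $[0,1]$ together with the explicit form $(F^*)'(v)=\tfrac{\alpha}{\gamma(q)}\partial_x^2 f_{\mu_*}(q,v)$ and the $L^\infty$ gradient bound. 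Finally, feasibility of $(F^*,\phi^*)$ gives $\E[h(U^*)]\le\VH_{1,\alpha}^{\sAMP}(q,h)$ while weak duality $(b)$ gives $\VH_{1,\alpha}^{\sAMP}(q,h)\le\mathsf F(\mu_*,c_*)$; chaining these with $\mathsf F(\mu_*,c_*)=\E[h(U^*)]$ collapses all three to a common value, proving~\eqref{eq:strong_dual}.

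\textbf{Main obstacle.}
The hard point is the tightness half of $(a)$ (hence the construction of the optimal pair in $(c)$). Because $\mu$ is allowed to be signed, the usual Bou\'e--Dupuis/Cole--Hopf variational representation of $f_\mu$ is unavailable, $\partial_x^2 f_\mu$ has no a priori upper bound unless $\sup h''<1/c$ (Theorem~\ref{thm:solve_Parisi_PDE} gives only $\partial_x^2 f_\mu>-\gamma(t)$), and $f_\mu$ is only a weak solution; consequently the optimally driven SDE $\d Z_t=\mu(t)\partial_x f_\mu(t,Z_t)\,\d t+\d B_t$ and the stochastic integrals in the reverse Itô computation cannot be justified by off-the-shelf well-posedness theory, and must instead be handled by a localization/truncation-and-limit argument. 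The second delicate ingredient is the change of variables converting the drift-control representation of the Parisi PDE into the diffusion-control form $v+\tfrac1\alpha F(v)+\int_q^1(1+\phi_t)\,\d B_t$ appearing in $\VH_{1,\alpha}^{\sAMP}(q,h)$, which must be carried out carefully so that the quadratic penalties and the normalization constraints transform consistently.
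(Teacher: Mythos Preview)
Your overall architecture is right: part $(a)$ is a Lagrangian representation of $\mathsf F(\mu,c)$, part $(b)$ follows trivially, and part $(c)$ uses first-order optimality to make the penalty terms vanish. You also correctly flag the main analytic obstacle (signed $\mu$, no upper bound on $\partial_x^2 f_\mu$, weak solution) and propose the right optimal pair $F^*(v)=\tfrac{\alpha}{\gamma(q)}\partial_x f_\mu(q,v)$, $\phi^*_t=\tfrac{1}{\gamma(t)}\partial_x^2 f_\mu(t,X_t)$. However, there is a genuine gap in your proof of the inequality $\mathsf F(\mu,c)\ge\mathcal J_\gamma(F,\phi)$ in part $(a)$.

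You say: given admissible $(F,\phi)$, run the Parisi SDE $\d Z_t=\mu(t)\partial_x f_\mu(t,Z_t)\,\d t+\d B_t$ from $Z_q=v$ and apply It\^o to $f_\mu(t,Z_t)$. But this SDE does not involve $(F,\phi)$ at all; it is the \emph{optimal} trajectory, so what you obtain is a representation of $f_\mu(q,v)$ in terms of a specific process, not an upper bound on the value for arbitrary controls. The inequality $\mathsf F\ge\mathcal J_\gamma(F,\phi)$ must start from the martingale $M_t=z+\int_q^t(1+\phi_s)\,\d B_s$ determined by the \emph{given} $\phi$, and you then need a function $V$ of $(t,M_t)$ whose It\^o differential absorbs the penalty $\tfrac12\gamma(t)\phi_t^2$. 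This forces $V$ to solve the Hamilton--Jacobi--Bellman equation
\[
\partial_t V+\tfrac12\sup_{\phi}\big\{\partial_z^2 V\,(1+\phi)^2-\gamma(t)\phi^2\big\}+\tfrac{\gamma(t)}{2\alpha}=0,\qquad V(1,z)=h(z),
\]
i.e.\ $\partial_t V+\tfrac12\tfrac{\gamma\partial_z^2V}{\gamma-\partial_z^2V}+\tfrac{\gamma}{2\alpha}=0$, not the Parisi PDE. The paper's route is exactly this: Proposition~\ref{prop:two_stage_veri_arg} shows that $V_\gamma$ solves this HJB and is the Legendre--Fenchel transform of $f_\mu$,
\[
V_\gamma(t,z)=\inf_x\Big\{f_\mu(t,x)+\tfrac{\gamma(t)}{2}(x-z)^2\Big\}+\tfrac{1}{2\alpha}\int_t^1\gamma,\qquad
f_\mu(t,x)=\sup_z\Big\{V_\gamma(t,z)-\tfrac{\gamma(t)}{2}(z-x)^2\Big\}-\tfrac{1}{2\alpha}\int_t^1\gamma,
\]
via the substitution $\Phi(t,x)=f_\mu(t,x/\gamma(t))+x^2/(2\gamma(t))+\tfrac12\int_t^1\gamma$ which maps the Parisi PDE to a PDE whose Legendre dual is the HJB. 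Your ``time-change/reparametrization from Proposition~\ref{prop:feasible_simple_1dim}'' is not the mechanism here: that proposition eliminates the deterministic prefactor $q(t)$ and cannot convert a drift-control representation of $f_\mu$ into the diffusion-control form needed for $V_\gamma$. The drift--diffusion correspondence is precisely this Legendre duality, and you need it (or an equivalent HJB argument) to get the $\ge$ direction.

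For part $(c)$, your envelope-theorem idea is sound in spirit but the derivatives you write are wrong: perturbing $\mu\mapsto\mu+s\delta$ changes $\gamma(t)$ by $-s\gamma(t)^2\int_t^1\delta$, so the pairing is with $\int_q^u\gamma(t)^2(\E[\phi_t^{*2}]-1/\alpha)\,\d t$ plus a $\gamma(q)$ term, not with $\tfrac1{2\alpha}-\tfrac12\E[\phi_t^{*2}]$ pointwise. The paper instead computes $\partial_s\mathsf F(\mu+s\delta,c)|_{s=0}$ directly (Proposition~\ref{prop:first_order_var_Parisi}) as $\tfrac12\int_q^1\delta(t)\big(\E[(\partial_x f_\mu(t,X_t))^2]-\tfrac1\alpha\int_0^t\gamma^2\big)\,\d t$, and then combines stationarity with the It\^o identity $\E[(\partial_x f_\mu(t,X_t))^2]-\E[(\partial_x f_\mu(s,X_s))^2]=\int_s^t\gamma(u)^2\E[\phi_u^{*2}]\,\d u$ to force $\E[\phi_t^{*2}]=1/\alpha$. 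Your route can be fixed, but requires the corrected computation.
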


\modif{
	\cref{thm:two_stage_strong_duality} (c) establishes that, if the infimum of $\mathsf{F} (\mu, c)$ over $\FS(q)$ is achieved, we have the dual characterization 
	\begin{align*}
		\VH_{1,\alpha}^{\sAMP} (q,h) = \, \inf_{(\mu, c) \in \FS (q)} \mathsf{F} (\mu, c)\, .
	\end{align*}
	Namely, there exists a two-stage AMP algorithm achieving the value $\inf_{(\mu, c) \in \FS (q)} \mathsf{F} (\mu, c)$, provided that this infimum is attained at some $(\mu_*, c_*) \in \FS (q)$. Assuming such a minimizer exists is  a common practice in the literature on optimization of spin glasses \cite{el2021optimization,sellke2024optimizing}. However, rigorously justifying this assumption is still an open problem, the main challenge being to prove sequential compactness of sublevel sets of $\mathsf{F} (\mu, c)$, on a space where the Parisi PDE is well-defined. 
	
	Given the optimal value achieved by our algorithm, it is natural to ask when it equals $\inf_{(\mu, c) \in \mathscr{U} \times \R_{> 0}} \mathsf{F} (\mu, c)$, the replica prediction for the global maximum of $H_{n, d}$. In the next theorem, we establish that these two values are equal under a certain ``no overlap gap'' assumption, which is analogous to the one considered in spin glass theory (see, e.g., \cite[Assumption 2]{el2021optimization}).}

\modif{\begin{thm}[No overlap gap]\label{thm:tight_under_no-ogp}
		Assume that there exists $(\mu_*, c_*) \in \mathscr{U} \times \R_{> 0}$, such that $\mu_* = 0$ on $[0, q]$ and $\mu_*$ is strictly increasing on $[q, 1)$ for some $q \in [0, 1)$, and moreover
		\begin{equation*}
			\mathsf{F} (\mu_*, c_*) = \inf_{(\mu_*, c_*) \in \mathscr{U} \times \R_{> 0}} \mathsf{F} (\mu, c).
		\end{equation*}
		Then 
		the infimum over $(\mu,c)\in \FS(q)$ is also achieved at $(\mu_*,c_*)$ and
		the same conclusion as that of \cref{thm:two_stage_strong_duality} (c) holds for $(\mu_*, c_*)$, i.e., there exists a feasible pair $(F^*, \phi^*)$ such that the corresponding two-stage AMP algorithm achieves $\mathsf{F} (\mu_*, c_*)$.
	\end{thm}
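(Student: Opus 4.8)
The plan is to reduce the statement to an application of Theorem \ref{thm:two_stage_strong_duality}(c), applied with the given minimizer $(\mu_*, c_*)$, once we verify that its hypotheses are met. The key point is that Theorem \ref{thm:two_stage_strong_duality}(c) requires the minimizer of $\mathsf{F}$ over $\FS(q)$ — the extended space — whereas the present hypothesis only gives a minimizer over $\mathscr{U} \times \R_{>0}$, i.e. over non-decreasing $\mu$'s with finite integral. So the first step is to argue that, under the stated structure ($\mu_* = 0$ on $[0,q]$ and $\mu_*$ strictly increasing on $[q,1)$), we have $(\mu_*, c_*) \in \FS(q)$, and moreover that it is also a minimizer over the larger space $\FS(q)$, not merely over $\mathscr{U}\times\R_{>0}$. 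The membership $(\mu_*, c_*)\in\FS(q)$ follows directly from the definitions: $\mu_*\in L^1[0,1]$ (finite integral), $\mu_*|_{[0,t]}\in L^\infty[0,t]$ for $t<1$ because $\mu_*$ is monotone and finite-valued on $[0,t]$, $\mu_* = 0$ on $[0,q]$, and $c_* + \int_t^1 \mu_*(s)\,\de s \ge c_* > 0$. For the minimality over $\FS(q)$, I would invoke the fact (established in the variational-problem analysis of \cref{sec:variational_problem}, which we may assume) that the infimum of $\mathsf{F}$ over $\FS(q)$ equals the infimum over $\mathscr{U}\times\R_{>0}$ restricted appropriately — or more directly, observe that $\mathscr{U}\cap\FS(q) \subseteq \FS(q)$ and that Theorem \ref{thm:two_stage_strong_duality}(b) gives $\VH^{\sAMP}_{1,\alpha}(q,h)\le \inf_{\FS(q)}\mathsf{F}\le \inf_{\mathscr{U}\cap\FS(q)}\mathsf{F} = \mathsf{F}(\mu_*,c_*)$, so if $\mathsf{F}(\mu_*,c_*)$ is shown to be achievable it must coincide with $\inf_{\FS(q)}\mathsf{F}$, making $(\mu_*,c_*)$ a minimizer over $\FS(q)$ as well.

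With $(\mu_*, c_*)\in\FS(q)$ identified as a minimizer over $\FS(q)$, the second step is to apply Theorem \ref{thm:two_stage_strong_duality}(c) verbatim: this produces a feasible pair $(F^*,\phi^*)$ satisfying the saturation conditions \eqref{eq:feasibility_F_phi} and the chain of equalities \eqref{eq:strong_dual}, namely $\mathsf{F}(\mu_*,c_*) = \E[h(v + F^*(v)/\alpha + \int_q^1(1+\phi_t^*)\,\de B_t)] = \VH^{\sAMP}_{1,\alpha}(q,h)$, with $(F^*,\phi^*)$ efficiently computable from $(\mu_*,c_*)$. The third step is then to run the two-stage AMP algorithm of \cref{sec:IAMP} with this data: by Theorem \ref{thm:general_amp_achievability} and Theorem \ref{thm:ValH}(b), the output $\widehat\WW_n^{\sAMP}$ satisfies $\lim_n H_{n,d}(\widehat\WW_n^{\sAMP}) \ge \VH^{\sAMP}_{1,\alpha}(q,h) - \eps$ for any $\eps$, and combining with the $(F^*,\phi^*)$ that exactly saturates, one obtains an algorithm achieving $\mathsf{F}(\mu_*,c_*)$ in the limit. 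Since by hypothesis $\mathsf{F}(\mu_*,c_*) = \inf_{\mathscr{U}\times\R_{>0}}\mathsf{F}$, which equals the replica prediction $\VH_{1,\alpha}(h)$ from Remark \ref{rmk:ReplicaM1}, the conclusion is that the two-stage AMP algorithm achieves the conjectured global maximum.

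The main obstacle, I expect, is precisely the gap between the two optimization domains in the first step — showing that a minimizer over $\mathscr{U}\times\R_{>0}$ with the prescribed monotonicity structure is also a minimizer over the weaker space $\FS(q)$, and in particular that extending the feasible set of functional order parameters from monotone to merely $L^1$-with-local-$L^\infty$ does not lower the Parisi infimum. This is where the structural hypothesis "$\mu_*$ strictly increasing on $[q,1)$" should be doing real work: strict monotonicity is the standard replica-symmetry-breaking genericity condition that rules out "flat" pieces where the dual variational problem could be improved by a non-monotone perturbation, and it is what allows the first-order optimality (Euler–Lagrange) conditions for $\mathsf{F}$ on $\FS(q)$ to be matched by $(\mu_*,c_*)$. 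Concretely, I would verify that the variation of $\mathsf{F}$ along admissible directions in $\FS(q)$ vanishes at $(\mu_*,c_*)$ using the regularity of $f_{\mu_*}$ from Theorem \ref{thm:solve_Parisi_PDE} (in particular the two-sided bound on $\partial_x^2 f_{\mu_*}$, which holds here since strict monotonicity plus the structure give the extra regularity \eqref{eq:parisi_additional_regularity}), and then argue that any strictly-decreasing-somewhere perturbation that stays in $\FS(q)$ can be approximated by monotone ones without first-order cost — so the minimality over $\mathscr{U}\times\R_{>0}$ upgrades to minimality over $\FS(q)$. Once that is in place, the rest is a direct citation of Theorem \ref{thm:two_stage_strong_duality}(c) and the achievability half of Theorem \ref{thm:ValH}.
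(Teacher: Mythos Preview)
Your last paragraph correctly identifies the mechanism — strict monotonicity of $\mu_*$ on $[q,1)$ is what gives two-sided perturbations and hence first-order stationarity — and this is exactly the paper's (omitted) route via \cite[Corollary 2.2]{el2021optimization}. But the way you package it creates a gap. You propose to first \emph{upgrade minimality} from $\mathscr{U}\times\R_{>0}$ to $\FS(q)$ and then invoke Theorem~\ref{thm:two_stage_strong_duality}(c) as a black box. That upgrade is neither proved nor needed: approach (a) cites a comparison of infima that Section~\ref{sec:variational_problem} does not establish; approach (b) is circular; and ``first-order conditions vanish in all $\FS(q)$ directions $\Rightarrow$ minimality over $\FS(q)$'' fails because no convexity of $\mathsf{F}$ on $\FS$ is available. (Also, your aside that strict monotonicity yields the extra regularity \eqref{eq:parisi_additional_regularity} is incorrect --- that bound requires $\sup h''<1/c_*$, which is a separate case.)

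The clean fix is to bypass minimality over $\FS(q)$ entirely and re-run the \emph{proof} of Theorem~\ref{thm:two_stage_strong_duality}(c), which only uses the stationarity condition
\[
\E\big[(\partial_x f_{\mu_*}(t,X_t))^2\big]=\tfrac{1}{\alpha}\int_0^t\gamma_*(s)^2\,\de s,\qquad t\in[q,1],
\]
and, in the case $1/c_*\le\sup h''$, the first-order condition in $c$. For the $\mu$-variation, test with the two-sided family $\mu_s(t)=\int_q^t(1+s\eta)\,\de\mu_*$ for bounded $\eta$; these stay in $\mathscr{U}$ (non-negative, non-decreasing, vanishing on $[0,q]$) for small $|s|$, so minimality over $\mathscr{U}\times\R_{>0}$ forces $\int_{[q,1)}\eta(u)\big(\int_u^1 G(t)\,\de t\big)\,\de\mu_*(u)=0$ for all $\eta$, where $G(t)=\E[(\partial_x f_{\mu_*}(t,X_t))^2]-\alpha^{-1}\int_0^t\gamma_*^2$. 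Since $\mu_*$ is strictly increasing, $\de\mu_*$ has full support on $[q,1)$; continuity of $G$ (Proposition~\ref{prop:first_order_var_Parisi}(ii) or~\ref{prop:1st_2nd_moment_gen_equal}) then forces $\int_u^1 G=0$ for all $u$, hence $G\equiv 0$. The $c$-variation is unconstrained since $c_*\in\R_{>0}$ is interior. From there the construction of $(F^*,\phi^*)$ and the achievability conclusion proceed exactly as in Section~\ref{sec:proof_general_q} and Section~\ref{sec:less_than_case}.
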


	Based on our analysis of the Parisi variational problem for general $(\mu, c) \in \FS$ in \cref{sec:variational_problem}, the proof of \cref{thm:tight_under_no-ogp} closely follows that of \cite[Corollary 2.2]{el2021optimization}. We therefore omit it for simplicity.
}

\subsection{Proof of \cref{prop:feasible_simple_1dim}}\label{sec:proof_simple_1dim}

We assume without loss of generality that $q = 0$, as this proof can be easily adapted to the case of general $q \in [0, 1]$. Under this assumption, it suffices to show that $\forall q(t) \in L^2 [0, 1]$ with $\norm{q(t)}_{L^2 [0, 1]} = 1$:
\begin{equation}\label{eq:reparametrize_equivalence}
	\begin{split}
		& \left\{ \Law \left( \int_{0}^{1} q(t) \left( 1 + \phi_t \right) \d B_t \right): \phi \in D [0, 1], \ \sup_{t \in [0, 1]} \E \left[ \phi_t^2 \right] \le \frac{1}{\alpha} \right\} \\
		= \, & \left\{ \Law \left( \int_{0}^{1} \left( 1 + \phi_t \right) \d B_t \right): \phi \in D [0, 1], \ \sup_{t \in [0, 1]} \E \left[ \phi_t^2 \right] \le \frac{1}{\alpha} \right\}.
	\end{split}
\end{equation}
With It\^{o}'s isometry, one may further assume that $q(t) \neq 0$ a.e., so that the mapping
\begin{equation*}
	t \mapsto s(t) := \int_{0}^{t} q(u)^2 \d u
\end{equation*}
is strictly increasing and satisfies $s(0) = 0$, $s(1) = 1$. Therefore, $s(t)$ admits a unique inverse $s^{-1} (t)$ with $s^{-1} (0) = 0$, $s^{-1} (1) = 1$. Now, let us define a new Gaussian process
\begin{equation*}
	W_t := \int_{0}^{t} q(u) \d B_u \iff W_0 = 0, \ \d W_t = q(t) \d B_t,
\end{equation*}
then \modif{a simple application of Dambis-Dubins-Schwarz theorem 
	(cf. \cite[Theorem 5.13]{le2016brownian}) implies
	that $\{ W_{s^{-1} (t)} \}_{0 \le t \le 1}$ is a standard Brownian motion.}

Now, according to the time-change formula for stochastic integrals (cf. Proposition 3.4.8 in \cite{karatzas2012brownian}), we obtain that
\begin{equation*}
	\int_{0}^{1} q(t) \left( 1 + \phi_t \right) \d B_t = \int_{0}^{1} \left( 1 + \phi_t \right) \d W_t = \int_{0}^{1} \left( 1 + \phi_{s^{-1} (t)} \right) \d W_{s^{-1} (t)},
\end{equation*}
where $\phi_{s^{-1} (t)} \in \cF_{s^{-1} (t)}^{B} = \cF_{s^{-1} (t)}^{W}$ is progressively measurable and satisfies $\E [\phi_{s^{-1} (t)}^2] \le 1/\alpha$. This establishes \cref{eq:reparametrize_equivalence}, completing the proof of \cref{prop:feasible_simple_1dim}.

%
%
\section{Two-stage AMP algorithm: Proof of Theorem \ref{thm:general_amp_achievability}}\label{sec:iamp}

This section is devoted to the construction and analysis of our general two-stage AMP algorithm, culminating in the proof of \cref{thm:general_amp_achievability}. We remark that in \cref{defn:Q_contraction} and \cref{thm:general_amp_achievability}, all vectors and matrices introduced in the current section have been transposed to align with standard notational conventions.

\subsection{Approximate message passing}
Following \cite{bayati2011dynamics, javanmard2013state, celentano2020estimation}, we define the general AMP algorithm as an iterative procedure, which generates two sequences of matrices $\{ \WW^t \}_{t \ge 1} \subset \R^{d \times m}$ and $\{ \VV^t \}_{t \ge 1} \subset \R^{n \times m}$ according to:
\begin{equation}\label{eq:amp_iter}
	\begin{split}
		\WW^{t + 1} & = \frac{1}{\sqrt{n}} \XX^\top F_t (\VV^{\le t}) - \sum_{s=1}^{t} G_s (\WW^{\le s}) K_{t, s}^\top, \\
		\VV^{t} & = \frac{1}{\sqrt{n}} \XX G_t(\WW^{\le t}) - \sum_{s=1}^{t} F_{s-1} (\VV^{\le s-1}) D_{t, s}^\top,
	\end{split}
\end{equation}
where $\WW^{1} = \XX^\top F_0 / \sqrt{n}$, \smodif{$F_0 \in \R^{n \times m}$ is independent of $\XX$,} and $\{ F_t: \R^{mt} \to \R^m \}_{t \ge 1}$ and $\{ G_t: \R^{mt} \to \R^m \}_{t \ge 1}$ are two sequences of Lipschitz functions. Moreover, we let $\WW^{\le t} = (\WW^{s})_{1 \le s \le t}$, $\VV^{\le t} = (\VV^{s})_{1 \le s \le t}$, and adopt the convention that the Lipschitz functions $G_t$ and $F_t$ apply row-wise to their arguments. The Onsager correction terms are defined as
\begin{equation}\label{eq:onsager_terms}
	D_{t, s} = \frac{1}{n} \sum_{i=1}^{d} \frac{\partial G_t}{\partial \ww_i^s} (\ww_i^1, \cdots, \ww_i^t), \ \,\, K_{t, s} = \frac{1}{n} \sum_{i=1}^{n} \frac{\partial F_t}{\partial \vv_i^s} (\vv_i^1, \cdots, \vv_i^t),
\end{equation}
where $\ww_i^s$ is the $i$-th row of $\WW^s$ and $\vv_i^s$ is the $i$-th row of $\VV^s$, respectively.
\begin{rem}
	We will refer to the matrices in Eq.~\eqref{eq:onsager_terms} as ``Onsager coefficients".
	The population version of these coefficients are used in some of the earlier
	literature, whereby the empirical average over $i$ is replaced by the expectation over the
	asymptotic distributions of the $\ww^t_i$'s and $\vv^t_i$'s. 
	By an induction argument in \cite{ javanmard2013state}, the high-dimensional asymptotics of
	these two versions of general AMP algorithms are the same. 
\end{rem}

As $n, d \to \infty$ and $n/d \to \alpha$, for any fixed $t \in \mathbb{N}$, the limiting joint distribution of the first $t$ AMP iterates is exactly characterized by the following proposition.
\begin{prop}[State evolution of AMP]\label{prop:state_evolution}
	Denote $\overline{Z}_{\le t} := (\overline{Z}_1, \cdots, \overline{Z}_t) \in \R^{mt}$ and $Z_{\le t} := (Z_1, \cdots, Z_t) \in \R^{mt}$, where each $Z_t, \overline{Z}_t \in \R^m$ \modif{is a row vector}. Then, the distributions of the random row vectors $\overline{Z}_{\le t} \in \R^{mt}$ and $Z_{\le t} \in \R^{mt}$ are defined as follows. Both $\overline{Z}_{\le t}$ and $Z_{\leq t}$ are multivariate normal with zero mean, and their covariance structures are specified via the following recursion:
	\begin{equation}\label{eq:covariance}
		\begin{split}
			& \E \left[ \overline{Z}_i^\top \overline{Z}_j \right] = \frac{1}{\alpha} \mathbb{E}\left[G_i \left( Z_{\leq i} \right)^\top G_j \left( Z_{\leq j} \right)\right], \quad i, j \geq 1, \\
			& \E \left[ Z_i^\top Z_j \right] =\mathbb{E}\left[F_{i-1}\left(\overline{Z}_{\leq i-1} \right)^\top F_{j-1}\left(\overline{Z}_{\leq j-1} \right)\right], \quad i, j \geq 1.
		\end{split}
	\end{equation}
	Under this specification, we have
	\begin{equation*}
		\plim_{n \to \infty} D_{t, s} = \frac{1}{\alpha} \E \left[ \frac{\partial G_t}{\partial w^s} \left( Z_{\le t} \right) \right], \ \plim_{n \to \infty} K_{t, s} = \E \left[ \frac{\partial F_t}{\partial v^s} \left( \overline{Z}_{\le t}\right) \right].
	\end{equation*}
	Furthermore, for any pseudo-Lipschitz functions $\psi_1, \psi_2$, almost surely
	\begin{equation}\label{eq:state_evolution}
		\begin{split}
			\lim_{n \to \infty} \frac{1}{d} \sum_{i=1}^{d} \psi_1 \left( \ww_i^1, \cdots, \ww_i^t \right) & = \E \left[ \psi_1 \left( Z_{\le t} \right) \right], \\
			\lim_{n \to \infty} \frac{1}{n} \sum_{i=1}^{n} \psi_2 \left( \vv_i^1, \cdots, \vv_i^t \right) & = \E \left[ \psi_2 \left( \overline{Z}_{\le t} \right) \right],
		\end{split}
	\end{equation}
	as $n, d \to \infty$ and $n/d \to \alpha$. 
\end{prop}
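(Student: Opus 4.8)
The plan is to prove \cref{prop:state_evolution} by the conditioning technique of Bolthausen and Bayati--Montanari, in the form extended to long-memory (non-Markovian) iterations in \cite{javanmard2013state} and to matrix-valued iterates in \cite{celentano2020estimation}: the recursion \cref{eq:amp_iter} is precisely a long-memory, matrix-valued, \emph{bipartite} AMP driven by the rectangular i.i.d.\ Gaussian matrix $\XX$, so all that is really needed is to run the standard induction in this combined generality. I would induct on the iteration index $t$, with an induction hypothesis bundling together: (i) the claimed joint Gaussianity of the rescaled iterates, expressed as the convergence \cref{eq:state_evolution} of empirical averages of pseudo-Lipschitz test functions with covariances given by \cref{eq:covariance}; (ii) the convergence $D_{t,s}\to \alpha^{-1}\E[\partial_{w^s}G_t(Z_{\le t})]$ and $K_{t,s}\to\E[\partial_{v^s}F_t(\bar Z_{\le t})]$; and (iii) a non-degeneracy clause stating that the empirical Gram matrices of $\{F_{s-1}(\VV^{\le s-1})\}_{s\le t}$ and $\{G_s(\WW^{\le s})\}_{s\le t}$ (of size $O(mt)$) converge to well-conditioned limits, which is needed to make sense of the Gaussian-conditioning projections below.

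For the inductive step, let $\mathfrak S_t$ be the $\sigma$-algebra generated by $\WW^1,\dots,\WW^t,\VV^1,\dots,\VV^t$; equivalently it records the linear images $\XX\,G_s(\WW^{\le s})$ and $\XX^\top F_{s-1}(\VV^{\le s-1})$ for $s\le t$. Conditionally on $\mathfrak S_t$ the matrix $\XX$ is Gaussian subject to these linear constraints, hence by the Gaussian conditioning formula it is distributed as an $\mathfrak S_t$-measurable matrix plus $\bP^{\perp}_{\mathrm{col}}\,\tilde\XX\,\bP^{\perp}_{\mathrm{row}}$, with $\tilde\XX$ a fresh i.i.d.\ Gaussian independent of $\mathfrak S_t$ and the $\bP^{\perp}$ the orthogonal projections onto the complements of the (rank $O(mt)$) constraint subspaces. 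Substituting into the definition of $\WW^{t+1}$ splits it into a ``memory'' part --- an explicit linear combination of $G_s(\WW^{\le s})$, $s\le t$, with coefficients read off from the empirical Gram matrices --- and a ``fresh'' part $n^{-1/2}\tilde\XX^\top$ applied to the component of $F_t(\VV^{\le t})$ orthogonal to $\{F_{s-1}(\VV^{\le s-1})\}_{s\le t}$. Using Gaussian integration by parts to identify the empirical projection coefficients with the derivative averages in \cref{eq:onsager_terms}, together with hypotheses (ii)--(iii), one checks that the Onsager term $\sum_s G_s(\WW^{\le s})K_{t,s}^\top$ cancels the memory part up to $o(1)$; and that the fresh part is, conditionally on $\mathfrak S_t$, asymptotically a Gaussian matrix with i.i.d.\ rows whose covariance --- together with the cross-covariances with $\WW^1,\dots,\WW^t$ --- is exactly the Schur-complement expression encoded by \cref{eq:covariance}. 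The computation for $\VV^t$ is symmetric, modulo the $d/n\to\alpha^{-1}$ factor that accounts for the asymmetric normalizations in \cref{eq:covariance} and in the definition of $D_{t,s}$.

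It then remains to control errors and close the loop: the $O(n^{-1/2})$ conditional-mean terms, the rank-$O(mt)$ defect of $\bP^{\perp}$ from the identity, the CLT discrepancy in treating the fresh part as exactly Gaussian when integrated against a pseudo-Lipschitz $\psi$, and the gap between empirical and population Gram/Onsager quantities are all shown to vanish using the a.s.\ bound $\|\XX/\sqrt n\|_{\op}=O(1)$, standard concentration of pseudo-Lipschitz averages of jointly Gaussian vectors (with summable-in-$n$ tails, giving the almost-sure statement by Borel--Cantelli), and the non-degeneracy clause; this re-establishes (i)--(iii) at level $t+1$. The limits for $D_{t,s},K_{t,s}$ follow from \cref{eq:state_evolution} applied to the test functions $\partial_{w^s}G_t$ and $\partial_{v^s}F_t$; since these are only defined a.e.\ when $F_t,G_t$ are merely Lipschitz, one inserts a routine mollification argument, or simply invokes the structured form $F_t(\vv^{\le t})=\vv^t\Phi_{t-1}(\vv^{\le t-1})$, $G_t(\ww^{\le t})=\ww^t\Psi_{t-1}(\ww^{\le t-1})$ used throughout this paper, whose derivatives are themselves pseudo-Lipschitz. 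The main obstacle is the bookkeeping in the inductive step --- tracking the growing family of linear constraints from the long memory and verifying that the Schur complement produced by conditioning reproduces \cref{eq:covariance} exactly, with the correct placement of the $\alpha$ factors. A clean alternative that sidesteps re-deriving this is to symmetrize: embed $(\VV^t,\WW^t)$ into $\R^{(n+d)\times m}$, replace $\XX$ by the symmetric block matrix with off-diagonal blocks $\XX/\sqrt n$ and $\XX^\top/\sqrt n$, and use block-supported nonlinearities so that \cref{eq:amp_iter} becomes an instance of the already published matrix-valued long-memory \emph{symmetric} AMP state evolution; \cref{eq:covariance} and the Onsager limits are then recovered by restricting the symmetric recursion to the two diagonal blocks. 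I would present the symmetrization reduction as the primary route and the conditioning argument above as the justification for the symmetric statement it invokes.
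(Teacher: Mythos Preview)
Your proposal is correct and carefully sketches the Bolthausen/Bayati--Montanari conditioning argument in the long-memory, matrix-valued, bipartite setting, together with the symmetrization reduction as an alternative route. However, the paper's own proof is a one-line citation: ``This can be deduced from the results in \cite{javanmard2013state, celentano2020estimation, montanari2022statistically}.'' In other words, the paper treats \cref{prop:state_evolution} as a known result and does not re-derive it; what you have written is essentially a summary of the proofs \emph{in those references}, not a different approach to the same end. Your outline is accurate as a description of what those papers do (and the symmetrization trick you mention is indeed a standard device for reducing the rectangular case to the symmetric one), but for the purposes of this paper it suffices to cite them.
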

\begin{proof}
	This can be deduced from the results in \cite{javanmard2013state, celentano2020estimation, montanari2022statistically}.
\end{proof}

\begin{rem}\label{rmk:Randomness}
	As mentioned in the introduction, we allow our two-stage AMP algorithm to be randomized. Within the AMP framework, randomization can be implemented by letting 
	the functions $F_t$ (or $G_t$) depend on some additional randomness. For example, one can replace
	$F_t(\vv_i^1,\dots,\vv^t_i)$ by 
	$F_t(\vv_i^1,\dots,\vv^t_i,\omega_i)$ with $(\omega_i)_{i\ge 1}\sim_{\iid} \Unif [0,1]$, the uniform distribution on $[0, 1]$. For simplicity of notation, we will leave this dependence implicit.
	Expectations in the state evolution equations are understood to be taken with respect to
	these random variables as well.
\end{rem}
\begin{cor}\label{cor:se_weak_conv}
	As $n / d \to \alpha$, the empirical distribution of $(\vv_i^1, \cdots, \vv_i^t)_{1 \le i \le n}$ almost surely weakly converges to the law of $\overline{Z}_{\le t}$. Similarly, the empirical distribution of $(\ww_i^1, \cdots, \ww_i^t)_{1 \le i \le d}$ almost surely weakly converges to the law of $Z_{\le t}$.
\end{cor}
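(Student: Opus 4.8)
The plan is to read off Corollary~\ref{cor:se_weak_conv} from the state evolution identity~\eqref{eq:state_evolution} of Proposition~\ref{prop:state_evolution}; the only genuinely non-trivial point is upgrading the ``for each fixed test function, almost surely'' conclusion there to a single almost-sure event on which the empirical measures converge weakly. First I would observe that the class of pseudo-Lipschitz functions of order $2$ (in the sense defined in the paper) contains every bounded Lipschitz function as well as $x\mapsto\|x\|_2^2$: if $f$ has Lipschitz constant $L$ then $|f(x)-f(y)|\le L\|x-y\|_2\le L(1+\|x\|_2+\|y\|_2)\|x-y\|_2$, and similarly $\big|\|x\|_2^2-\|y\|_2^2\big|\le(\|x\|_2+\|y\|_2)\|x-y\|_2$. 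Hence~\eqref{eq:state_evolution} applies to each such $\psi$ and yields, for that $\psi$ alone, a probability-one event on which $\frac1n\sum_{i\le n}\psi(\vv_i^1,\dots,\vv_i^t)\to\E[\psi(\overline{Z}_{\le t})]$, and analogously for the $\ww$-iterates with $d$ replacing $n$.

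To make the exceptional null set uniform I would fix a countable dense set $D\subset\R^{mt}$ and take $\mathcal{C}:=\{x\mapsto\cos\langle\tau,x\rangle,\ x\mapsto\sin\langle\tau,x\rangle:\tau\in D\}\cup\{x\mapsto\|x\|_2^2\}$. Since $\mathcal{C}$ is countable and each of its members is pseudo-Lipschitz of order $2$, intersecting the corresponding events from~\eqref{eq:state_evolution} produces a full-probability event $\Omega_0$ on which all of these convergences hold simultaneously. On $\Omega_0$ the convergence of the empirical second moments to the finite number $\E[\|\overline{Z}_{\le t}\|_2^2]$ implies, via Markov's inequality, that the empirical measures $\widehat\mu_n:=\frac1n\sum_{i\le n}\delta_{(\vv_i^1,\dots,\vv_i^t)}$ form a tight sequence, so by Prokhorov's theorem every subsequence has a weakly convergent further subsequence. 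If $\nu$ is any subsequential weak limit, then testing against $\cos\langle\tau,\cdot\rangle$ and $\sin\langle\tau,\cdot\rangle$ shows that the characteristic function of $\nu$ agrees with that of $\Law(\overline{Z}_{\le t})$ on $D$, hence everywhere by continuity and density of $D$, so $\nu=\Law(\overline{Z}_{\le t})$. As all subsequential limits coincide, $\widehat\mu_n\stackrel{w}{\Rightarrow}\Law(\overline{Z}_{\le t})$ on $\Omega_0$, i.e.\ almost surely. The argument for $(\ww_i^1,\dots,\ww_i^t)_{1\le i\le d}$ is verbatim, using the first line of~\eqref{eq:state_evolution} with $d$ in place of $n$.

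I expect the only real obstacle to be precisely this bookkeeping step: reducing an uncountable family of test functions to a countable convergence-determining family and feeding in tightness through the second-moment statement, so that a single almost-sure event suffices. Everything else is an immediate consequence of Proposition~\ref{prop:state_evolution}, and no new probabilistic input is needed.
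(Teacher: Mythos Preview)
Your proof is correct and follows the same core idea as the paper's: use that characteristic functions (equivalently, $\cos\langle\tau,\cdot\rangle$ and $\sin\langle\tau,\cdot\rangle$) are bounded Lipschitz, hence pseudo-Lipschitz of order $2$, and deduce almost-sure weak convergence from the state-evolution identity~\eqref{eq:state_evolution}. The difference is only in packaging. The paper observes that $\P\big(\nu_n(f_u)\to\nu(f_u)\big)=1$ for every $u\in\R^{mt}$ with $f_u(x)=\exp(i\langle u,x\rangle)$, and then invokes Theorem~2.6 of Berti--Pratelli--Rigo (\emph{Almost sure weak convergence of random probability measures}, 2006), which upgrades ``for each $u$, almost surely'' to ``almost surely, for all $u$'' in one stroke. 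You instead reprove that upgrade by hand: you pass to a countable dense set of frequencies, add $\|x\|_2^2$ to secure tightness via Markov and Prokhorov, and identify the limit through continuity of characteristic functions. Your route is more elementary and self-contained (no external citation needed), at the cost of a few extra lines; the paper's route is terser but relies on a black-box lemma whose proof is essentially what you wrote out.
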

\begin{proof}
	We only prove the first part as the second part is exactly identical. Following the notation of \cite{berti2006almost}, we denote by $\nu_n$ the empirical distribution of $(\vv_i^1, \cdots, \vv_i^t)_{1 \le i \le n}$ and by $\nu$ the law of $\overline{Z}_{\le t}$. Since for each $u$, the function $f_u (x) = \exp(i \langle u, x \rangle)$ is bounded Lipschitz, we know from Eq.~\eqref{eq:state_evolution} that
	\begin{equation*}
		\P \Big( \lim_{n \to \infty} \nu_n (f_u) = \nu (f_u) \Big) = 1, \,\, \forall u\in \R^{mt}.
	\end{equation*}
	Applying Theorem 2.6 in \cite{berti2006almost} implies that $\nu_n \stackrel{w}{\Rightarrow} \nu$ almost surely.
\end{proof}

\subsection{First stage: Fixed-point AMP}\label{sec:fixed_pt_amp}
In this section, we present the first stage of our general two-stage AMP algorithm, which consists of several identical AMP iterations that finally converges to a fixed point of the state evolution equations~\eqref{eq:covariance}. This fixed point will be the starting point of the pure incremental part in the second stage.

We now specify our choices of $\{ F_t \}_{t \ge 0}$ and $\{ G_t \}_{t \ge 1}$ in this stage. We set all $F_t$ with $t \ge 1$ to be $F$, a $Q$-contraction that only depends on $\VV^t$, and all $G_t$ to be the identity mapping on $\WW^t$. Consequently, $D_{t} = (d/n) I_m$ and  the AMP iterations reduce to
\begin{equation}\label{eq:amp_iter_stage_1}
	\begin{split}
		\WW^{t + 1} & = \frac{1}{\sqrt{n}} \XX^\top F (\VV^{t}) - \WW^{t} K_{t}^\top, \\
		\VV^{t} & = \frac{1}{\sqrt{n}} \XX \WW^{t} - \frac{d}{n}  F (\VV^{t-1}) ,
	\end{split}
\end{equation}
where we still have $\WW^{1} = \XX^\top F_0 / \sqrt{n}$, and
\begin{equation}\label{eq:onsager_terms_stage_1}
	K_{t} = \frac{1}{n} \sum_{i=1}^{n} \frac{\partial F}{\partial \vv_i^t} (\vv_i^t)
\end{equation}
are the Onsager terms. Finally, for $\omega \sim \Unif [0, 1]$ independent of everything else, we let $F_0 = F_0 (\omega)$ be such that
\begin{equation*}
	\E \left[ F_0 (\omega)^\top F_0 (\omega) \right] = \alpha Q.
\end{equation*}

\begin{prop}\label{lem:cov_mu_Q}
	Let $F$ and $F_0$ be as specified above. Then, for all $t \ge 1$, $\E [ \overline{Z}_t^\top \overline{Z}_t ] = Q$. Further, we have
	\begin{equation}\label{eq:lim_cov_Q}
		\lim_{t \to \infty} \E \left[ \overline{Z}_{t+1}^\top \overline{Z}_t \right] = Q.
	\end{equation}
\end{prop}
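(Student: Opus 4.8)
The first claim is that the state evolution recursion~\eqref{eq:covariance}, specialized to the first-stage iteration~\eqref{eq:amp_iter_stage_1}, has $\E[\overline{Z}_t^\top \overline{Z}_t] = Q$ for every $t \ge 1$. Here $F_t = F$ for all $t \ge 1$, $F_0 = F_0(\omega)$ with $\E[F_0^\top F_0] = \alpha Q$, and $G_t = \mathrm{id}$, so the recursion decouples into two scalar-matrix recursions
\[
\E[\overline{Z}_i^\top \overline{Z}_j] = \tfrac{1}{\alpha}\E[Z_i^\top Z_j], \qquad
\E[Z_i^\top Z_j] = \E[F_{i-1}(\overline{Z}_{i-1})^\top F_{j-1}(\overline{Z}_{j-1})].
\]
I would prove $\E[\overline{Z}_t^\top \overline{Z}_t] = Q$ by induction on $t$. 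The base case $t = 1$: $\E[Z_1^\top Z_1] = \E[F_0^\top F_0] = \alpha Q$, hence $\E[\overline{Z}_1^\top \overline{Z}_1] = \tfrac{1}{\alpha}\E[Z_1^\top Z_1] = Q$. For the inductive step, assume $\overline{Z}_t \sim \normal(0, Q)$. Then $\E[Z_{t+1}^\top Z_{t+1}] = \E[F(\overline{Z}_t)^\top F(\overline{Z}_t)] = \alpha Q$ by the $Q$-contraction identity $\E[F(V)F(V)^\top] = \alpha Q$ with $V \sim \normal(0,Q)$ (Definition~\ref{defn:Q_contraction}), and therefore $\E[\overline{Z}_{t+1}^\top \overline{Z}_{t+1}] = Q$. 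Since each $\overline{Z}_t$ is a centered Gaussian, knowing its covariance is $Q$ fixes its law to $\normal(0,Q)$, which is exactly what feeds into the next step of the induction. This first part is routine.

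The real content is the limit~\eqref{eq:lim_cov_Q}: $\lim_{t\to\infty}\E[\overline{Z}_{t+1}^\top \overline{Z}_t] = Q$. Set $R_t := \E[\overline{Z}_{t+1}^\top \overline{Z}_t] = \tfrac{1}{\alpha}\E[Z_{t+1}^\top Z_t] = \tfrac{1}{\alpha}\E[F(\overline{Z}_t)^\top F(\overline{Z}_{t-1})]$. By the joint Gaussianity of $(\overline{Z}_{t-1}, \overline{Z}_t)$ — each marginal $\normal(0,Q)$ with cross-covariance $R_{t-1}$ — the quantity $R_t$ is a fixed deterministic function of $R_{t-1}$; call it $R_t = \Gamma(R_{t-1})$, where $\Gamma(R) = \tfrac{1}{\alpha}\E[F(X)^\top F(Y)]$ for $(X,Y)$ jointly centered Gaussian with $\mathrm{Cov}(X)=\mathrm{Cov}(Y)=Q$ and $\mathrm{Cov}(X,Y)=R$. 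Note $\Gamma(Q) = \tfrac{1}{\alpha}\E[F(V)^\top F(V)] = Q$, so $Q$ is a fixed point of $\Gamma$, and the goal is to show $R_t \to Q$. The plan is to exhibit a Lyapunov/monotonicity argument driven by the contraction inequality~\eqref{eq:Q_contraction}, $\E[\JF(V)^\top S\, \JF(V)] \preceq \alpha S$ for some $S \succ 0$. Concretely, I would introduce the $S$-weighted inner product and study the scalar quantity $\langle S, Q - R_t\rangle = \Tr(S(Q-R_t)) \ge 0$ (nonnegativity because $Q - R_t = \tfrac{1}{2}\E[(\overline{Z}_{t+1}-\overline{Z}_t)^\top(\overline{Z}_{t+1}-\overline{Z}_t)] \succeq 0$, using that both marginals equal $Q$). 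Writing $\Delta_t := \overline{Z}_t - \overline{Z}_{t-1}$, one has, via a Gaussian-interpolation / Stein-type identity applied to $F(\overline{Z}_t) - F(\overline{Z}_{t-1}) = \int_0^1 \JF(\overline{Z}_{t-1} + \lambda \Delta_t)\,\Delta_t\, \d\lambda$, an estimate of the form
\[
\Tr\!\big(S(Q - R_{t+1})\big) = \tfrac{1}{2\alpha}\,\E\big[\|F(\overline{Z}_{t+1})-F(\overline{Z}_t)\|_S^2\big] \le \tfrac{1}{2\alpha}\,\E\big[\Delta_{t+1}^\top\,\E[\JF^\top S \JF]\,\Delta_{t+1}\big] \le \tfrac{1}{2}\E[\Delta_{t+1}^\top S \Delta_{t+1}] = \Tr\!\big(S(Q-R_t)\big),
\]
where the middle inequality uses~\eqref{eq:Q_contraction} (after a convexity/Jensen step over $\lambda$) and the fact that $\overline{Z}_{t+1} - \overline{Z}_t$ is, jointly with the previous pair, a Gaussian whose relevant second moment is controlled by $\Delta_{t+1} = \overline{Z}_t - \overline{Z}_{t-1}$ up to the right normalization — this last identification is the delicate point and must be set up carefully from the recursion, since $\overline{Z}_{t+1}-\overline{Z}_t$ and $\overline{Z}_t - \overline{Z}_{t-1}$ live in coupled Gaussian spaces. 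This shows $a_t := \Tr(S(Q-R_t)) \ge 0$ is nonincreasing, hence convergent; to upgrade convergence of $a_t$ to $R_t \to Q$, I would argue that any subsequential limit $R_\infty$ of $R_t$ is a fixed point of $\Gamma$ with $\Tr(S(Q-R_\infty)) = \lim a_t$, and then use strictness in~\eqref{eq:Q_contraction} (or, if only $\preceq$ is assumed, a separate argument ruling out nontrivial fixed points, e.g. via the strict inequality in the interpolation step unless $\Delta \equiv 0$) to conclude $R_\infty = Q$; since $\{R_t\}$ is bounded (entrywise controlled by $Q$ in the PSD order via Cauchy–Schwarz), this forces $R_t \to Q$.

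The main obstacle I anticipate is precisely the contraction step: correctly relating $\E[\|F(\overline{Z}_{t+1}) - F(\overline{Z}_t)\|_S^2]$ to $\E[\Delta_{t+1}^\top S \Delta_{t+1}]$ through the recursion structure, because $\overline{Z}_{t+1} - \overline{Z}_t$ is obtained by applying the linear state-evolution map to $F(\overline{Z}_t) - F(\overline{Z}_{t-1})$ (and the map $Z \mapsto \overline{Z}$ carries a factor $1/\alpha$ in its covariance), so one has to chase the $\alpha$ factors and the Gaussian couplings to see that the composition is genuinely a contraction in the $\|\cdot\|_S$-geometry. A clean way to organize this is to track the pair $(\overline{Z}_t, \overline{Z}_{t-1})$ as a Gaussian vector with covariance determined solely by $(Q, R_{t-1})$, reduce everything to a two-parameter Gaussian computation, and then invoke~\eqref{eq:Q_contraction} together with Jensen over the interpolation parameter. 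Once the monotone Lyapunov functional $a_t$ is in place and shown to be strictly decreasing away from $R_t = Q$, the convergence~\eqref{eq:lim_cov_Q} follows by a standard compactness-plus-fixed-point argument.
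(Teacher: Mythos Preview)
Your induction for $\E[\overline Z_t^\top\overline Z_t]=Q$ is exactly the paper's argument.

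For the convergence $R_t\to Q$, the displayed contraction inequality is where your plan has a genuine gap. Writing $F(\overline Z_{t+1})-F(\overline Z_t)=\int_0^1\JF\bigl((1-\lambda)\overline Z_t+\lambda\overline Z_{t+1}\bigr)\,\Delta_{t+1}\,\d\lambda$ and then invoking~\eqref{eq:Q_contraction} does not work, for two reasons you partly anticipate but do not resolve: the interpolation point has covariance $Q-2\lambda(1-\lambda)(Q-R_t)\prec Q$ whenever $R_t\neq Q$, so~\eqref{eq:Q_contraction}---which is stated only for $V\sim\normal(0,Q)$---is inapplicable; and $\JF$ at that point is correlated with $\Delta_{t+1}$, so one cannot replace $\JF^\top S\JF$ by its expectation inside $\E[\Delta_{t+1}^\top(\cdot)\Delta_{t+1}]$. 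The Jensen step over $\lambda$ addresses neither issue, and there is no apparent patch that stays within a pathwise-interpolation framework.

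The paper instead interpolates in the \emph{covariance parameter}. Writing $C_t=\E[\overline Z_{t+1}^\top\overline Z_t]$ and $\psi(C)=\tfrac1\alpha\E_{(C,Q)}[F(\overline Z)^\top F(\overline Z')]$ for $(\overline Z,\overline Z')$ jointly Gaussian with both marginals $\normal(0,Q)$ and cross-covariance $C$, it proves (a) $\psi$ is Loewner-monotone on $\{0\preceq C\preceq Q\}$ and (b) $Q$ is the unique fixed point there. Both steps use Gaussian interpolation in $C$ (Talagrand's Lemma~1.3.1): because only the cross-covariance moves while the marginals remain $\normal(0,Q)$, the Jacobian $\JF$ is always evaluated at the correct law and~\eqref{eq:Q_contraction} (taken strict WLOG) applies directly. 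For (b), convexity of $t\mapsto\beta^\top\psi(C+t(Q-C))\beta$ yields $\beta^\top(Q-C)\beta\le\tfrac1\alpha\beta^\top\E_{V\sim\normal(0,Q)}[\JF(V)^\top(Q-C)\JF(V)]\beta$, which after tracing against $S$ contradicts the strict contraction. Finally, the independence of $F_0$ gives $C_1=0$, whence $\{C_t\}$ is Loewner-increasing and bounded by $Q$; its limit is a fixed point, hence $Q$. The idea you are missing is precisely this switch from pathwise to covariance-level interpolation, which is what keeps the marginal at $\normal(0,Q)$ and makes the $Q$-contraction hypothesis usable.
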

\begin{proof}
	We first prove $\E [ \overline{Z}_t^\top \overline{Z}_t ] = Q$ by induction. For $t = 1$, our choice of $F_0$ implies
	\begin{equation*}
		\E \left[ \overline{Z}_1^\top \overline{Z}_1 \right] = \, \frac{1}{\alpha} \E \left[ Z_1^\top Z_1 \right] = \frac{1}{\alpha} \E \left[ F_0^\top F_0 \right] = Q.
	\end{equation*}
	Now assume the conclusion holds for $t$. For $t+1$, we have
	\begin{align*}
		\E \left[ \overline{Z}_{t+1}^\top \overline{Z}_{t+1} \right] = \, \frac{1}{\alpha} \E \left[ Z_{t+1}^\top Z_{t+1} \right] = \frac{1}{\alpha} \mathbb{E}\left[F \left(\overline{Z}_{t} \right)^\top F \left(\overline{Z}_{t} \right)\right] = Q,
	\end{align*}
	since $F$ is a $Q$-contraction. This completes the induction. 
	
	To prove \cref{eq:lim_cov_Q}, we define $C_t = \E [\overline{Z}_{t+1}^\top \overline{Z}_t]$, then we have
	\begin{equation*}
		C_1 = \frac{1}{\alpha} \E \left[ Z_2^\top Z_1 \right] = \frac{1}{\alpha} \E \left[ F \left( \overline{Z}_1 \right)^\top F_0 \right] = 0,
	\end{equation*}
	and the recurrence relation
	\begin{equation*}
		C_{t+1} = \frac{1}{\alpha} \E \left[ F \left( \overline{Z}_{t+1} \right)^\top F \left( \overline{Z}_{t} \right) \right].
	\end{equation*}
	Define for any $C \in \S_+^m$ satisfying $0 \preceq C \preceq Q$:
	\begin{equation*}
		\psi (C) = \frac{1}{\alpha} \E_{(C, Q)} \left[ F \left( \overline{Z} \right)^\top F \left( \overline{Z}' \right) \right],
	\end{equation*}
	where $\E_{(C, Q)}$ denotes the expectation taken under $(\overline{Z}, \overline{Z}')^\top
	\sim \normal \left( 0, \begin{bmatrix}
		Q & C \\
		C & Q 
	\end{bmatrix} \right)$. We then know that $\psi (Q) = Q$, and $C_{t+1} = \psi (C_t)$. Next, we will show that
	\begin{itemize}
		\item [$(a)$] $\psi$ is increasing with respect to the Loewner order, i.e., for $0 \preceq A \preceq B \preceq Q$, we have $\psi (A) \preceq \psi (B)$.
		\item [$(b)$] For any $0 \preceq C \preceq Q$, $\psi (C) = C$ if and only if $C = Q$, namely $Q$ is the only fixed point of $\psi$ \modif{in the region $\{ C \in \S_+^m : 0 \preceq C \preceq Q \}$}.
	\end{itemize}
	\noindent \textbf{Proof of $(a)$.} Denote $H = B - A \succeq 0$, and define for $t \in [0, 1]$:
	\begin{equation*}
		\psi_{H} (t) = \psi(A + tH).
	\end{equation*}
	Then, it suffices to show that $\psi_{H} (1) \succeq \psi_{H} (0)$. Using Gaussian interpolation (see, e.g., \cite[Lemma 1.3.1]{talagrand2010mean}), we deduce that
	\begin{align*}
		\psi_{H}' (t) = \, \frac{1}{\alpha} \E_{(A+tH, Q)} \left[ J_F \left( \overline{Z} \right)^\top H J_F \left( \overline{Z}' \right) \right] \succeq 0,
	\end{align*}
	which implies $\psi_{H} (1) \succeq \psi_{H} (0)$ and completes the proof of part $(a)$.
	
	\vspace{0.5em} \noindent \textbf{Proof of $(b)$.} First note that, since $\cuF_{m, \alpha}^{\salg}$ is closed under weak limits, we can assume without loss of generality that \eqref{eq:Q_contraction} in \cref{defn:Q_contraction} holds with strict inequality. Now, assume by contradiction that $\psi (C) = C$ for some $0 \preceq C \preceq Q$, $C \neq Q$. Let $H = Q - C$, and 
	define for $\beta \in \R^m \backslash \{0 \}$ and $t \in [0, 1]$:
	\begin{equation*}
		\psi_{\beta, H} (t) = \beta^\top \psi(C + tH) \beta.
	\end{equation*}
	Similar to the proof of $(a)$, we can show that $\psi_{\beta, H}' (t) \ge 0$ and $\psi_{\beta, H}'' (t) \ge 0$. 
	Hence,
	\begin{align*}
		\beta^\top \left( Q - C \right) \beta = \, & \beta^\top \left( \psi(Q) - \psi(C) \right) \beta = \psi_{\beta, H} (1) - \psi_{\beta, H} (0) = \int_{0}^{1} \psi_{\beta, H}' (t) \d t \\
		\le \, & \psi_{\beta, H}' (1) = \frac{1}{\alpha} \beta^\top \E_{\overline{Z} \sim \normal (0, Q)} \left[ J_F \left( \overline{Z} \right)^\top H \JF \left( \overline{Z} \right) \right] \beta,
	\end{align*}
	which implies that for all $\beta \in \R^m \backslash \{ 0 \}$,
	\begin{equation*}
		\left\langle H, \beta \beta^\top \right\rangle \le \, \left\langle H, \frac{1}{\alpha} \E_{\overline{Z} \sim \normal (0, Q)} \left[ J_F \left( \overline{Z} \right) \beta \beta^\top \JF \left( \overline{Z} \right)^\top \right] \right\rangle,
	\end{equation*}
	thus leading to $\forall S \in \S_+^m \backslash \{ 0 \}$,
	\begin{align*}
		\left\langle H, S \right\rangle \le \, \left\langle H, \frac{1}{\alpha} \E_{\overline{Z} \sim \normal (0, Q)} \left[ J_F \left( \overline{Z} \right) S \JF \left( \overline{Z} \right)^\top \right] \right\rangle,
	\end{align*}
	contradicting our assumption that $F$ is a $Q$-contraction. This proves part $(b)$.
	
	Finally, we show that $C_t \to Q$ as $t \to \infty$. Since $\psi$ is increasing and $C_1 = 0 \preceq C_2$, we know that the sequence $\{ C_t \}$ is increasing. Further since $C_t \preceq Q$ is bounded, we know that $C_t \to C$ for some $0 \preceq C \preceq Q$, and $C$ is a fixed point of $\psi$. By part $(b)$, we must have $C = Q$. This completes the proof.
\end{proof}
As discussed in \cref{sec:IAMP}, this fixed-point AMP stage will stop after $T_1$ steps, where $T_1 \in \mathbb{N}_+$ is to be determined later. 

%
%

\subsection{Second stage: Incremental AMP}

In this section, we describe the second stage of our algorithm, which is an incremental AMP (IAMP) 
procedure introduced in \cite{montanari2021optimization}.
We will see that the asymptotics of this incremental stage admit a
stochastic integral representation, under a suitable scaling limit. 

For this IAMP stage, the non-linear functions $\{ F_t \}_{t \ge T_1}$ and $\{ G_t \}_{t \ge T_1 + 1}$ are chosen to satisfy the following assumption:
\begin{ass}\label{ass:F_t_and_G_t}
	Consider the random variables $(V^t)_{t \ge 1}$ and $(W^t)_{t \ge 1}$ defined as follows:
	\begin{equation*}
		\begin{split}
			& (V^t)_{1 \le t \le T_1} = (\overline{Z}_t)_{1 \le t \le T_1}, \ (V^t)_{t \ge T_1 + 1} \sim_{\iid} \normal(0, I_m), \ (V^t)_{t \ge T_1 + 1} \perp (V^t)_{1 \le t \le T_1} , \\
			& (W^t)_{1 \le t \le T_1} = (Z_t)_{1 \le t \le T_1}, \ (W^t)_{t \ge T_1 + 1} \sim_{\iid} \normal(0, I_m), \ (W^t)_{t \ge T_1 + 1} \perp (W^t)_{1 \le t \le T_1}.
		\end{split}
	\end{equation*}
	We impose the following second moment constraints on $\{ F_t \}_{t \ge T_1}$ and $\{ G_t \}_{t \ge T_1 + 1}$:
	\begin{enumerate}
		\item $F_{T_1}$ is independent of everything else, with $\E [F_{T_1}^\top F_{T_1} ] = I_m$. $G_{T_1 + 1}$ is only a function of $W^{T_1 + 1}$, satisfying
		\begin{equation*}
			\E \left[ G_{T_1 + 1} \left( W^{T_1 + 1} \right)^\top G_{T_1 + 1} \left( W^{T_1 + 1} \right) \right] = \alpha I_m.
		\end{equation*} 
		\item The functions $\{ F_t \}_{t \ge T_1 + 1}$ and $\{ G_t \}_{t \ge T_1 + 2}$ have the form:
		\begin{equation*}
			F_t (V^{\le t}) = V^t \Phi_{t-1} (V^{\le t-1}), \ G_t(W^{\le t}) = W^t \Psi_{t-1} (W^{\le t-1}),
		\end{equation*}
		where \modif{$\Phi_{t-1}: \R^{m(t-1)} \to \R^{m \times m}$ and $\Psi_{t-1}: \R^{m(t-1)} \to \R^{m \times m}$} are $m \times m$ matrix-valued functions satisfying
		\begin{equation*}
			\begin{split}
				& \E \left[ \Phi_{t-1} \left( V^{\le t-1} \right)^\top \Phi_{t-1} \left( V^{\le t-1} \right) \right] = I_m, \\
				& \E \left[ \Psi_{t-1} \left( W^{\le t-1} \right)^\top \Psi_{t-1} \left( W^{\le t-1} \right) \right] = \alpha I_m.
			\end{split}
		\end{equation*}
	\end{enumerate}
\end{ass}

\modif{It is straightforward to show that there exist functions $\{ F_t \}_{t \ge T_1}$ and $\{ G_t \}_{t \ge T_1 + 1}$ satisfying the above assumption. With such choices, the state evolution of the IAMP stage is characterized by the following:}

\begin{prop}\label{prop:state_evolution_iamp}
	Under Assumption~\ref{ass:F_t_and_G_t}, we have $(Z_t)_{t \ge T_1 + 1} \sim_{\iid} \normal (0, I_m)$ is independent of $(Z_t)_{1 \le t \le T_1}$, and $(\overline{Z}_t)_{t \ge T_1 + 1} \sim_{\iid} \normal (0, I_m)$ is independent of $(\overline{Z}_t)_{1 \le t \le T_1}$.
\end{prop}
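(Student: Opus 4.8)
The plan is to recast the statement as an identity between laws and prove it by induction on the time horizon. Writing $(V^t)_{t\ge 1}$ and $(W^t)_{t\ge 1}$ for the auxiliary sequences introduced in \cref{ass:F_t_and_G_t}, it suffices to show that for every $T$,
\[
\Law\big(\overline{Z}_{\le T}\big)=\Law\big(V^{\le T}\big),\qquad \Law\big(Z_{\le T}\big)=\Law\big(W^{\le T}\big),
\]
since the right-hand laws are exactly those asserted in the proposition. Because the state-evolution recursion \eqref{eq:covariance} expresses the covariance of $Z_{\le T}$ through $\overline{Z}_{\le T-1}$ and that of $\overline{Z}_{\le T}$ through $Z_{\le T}$, and because all iterates are jointly centered Gaussian, it is enough to check at each step that the newly added block is $\normal(0,I_m)$ and uncorrelated with all previously defined iterates. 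The induction on $T$ then closes, alternating between the $Z$- and $\overline{Z}$-recursions.

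For $T\le T_1$ the identity holds by the very definition of $(V^t)_{t\le T_1}$, $(W^t)_{t\le T_1}$ in \cref{ass:F_t_and_G_t}. For $T=T_1+1$ one uses the special structure of this block: by \eqref{eq:covariance}, $\E[Z_{T_1+1}^\top Z_{T_1+1}]=\E[F_{T_1}^\top F_{T_1}]=I_m$, while for $j\le T_1$, independence of $F_{T_1}$ from everything else (and its centering) gives $\E[Z_{T_1+1}^\top Z_j]=\E[F_{T_1}]^\top\E[F_{j-1}(\overline{Z}_{\le j-1})]=0$; hence $Z_{\le T_1+1}$ has the law of $W^{\le T_1+1}$. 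Feeding this into the $\overline{Z}$-recursion and using that $G_{T_1+1}$ depends only on its last argument, is centered, and satisfies $\E[G_{T_1+1}(W^{T_1+1})^\top G_{T_1+1}(W^{T_1+1})]=\alpha I_m$, together with the $1/\alpha$ prefactor in \eqref{eq:covariance}, one gets $\E[\overline{Z}_{T_1+1}^\top\overline{Z}_{T_1+1}]=I_m$ and $\E[\overline{Z}_{T_1+1}^\top\overline{Z}_j]=0$ for $j\le T_1$, so $\overline{Z}_{\le T_1+1}$ has the law of $V^{\le T_1+1}$.

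For the inductive step, fix $T\ge T_1+2$ and assume the claim for $T-1$. The key device is the product form $F_{T-1}(V^{\le T-1})=V^{T-1}\,\Phi_{T-2}(V^{\le T-2})$ from \cref{ass:F_t_and_G_t}: by the induction hypothesis $\Law(\overline{Z}_{\le T-1})=\Law(V^{\le T-1})$, so in computing $\E[Z_T^\top Z_j]$ via \eqref{eq:covariance} the factor $V^{T-1}$ is a fresh $\normal(0,I_m)$ vector independent of $V^{\le T-2}$ (and of any auxiliary randomness in the earlier nonlinearities). Conditioning on $V^{\le T-2}$, the identity $\E[(V^{T-1})^\top V^{T-1}\mid V^{\le T-2}]=I_m$ yields $\E[Z_T^\top Z_T]=\E[\Phi_{T-2}(V^{\le T-2})^\top\Phi_{T-2}(V^{\le T-2})]=I_m$, while $\E[(V^{T-1})^\top\mid V^{\le T-2}]=0$ kills every cross term, $\E[Z_T^\top Z_j]=\E[F_{T-1}(V^{\le T-1})^\top F_{j-1}(V^{\le j-1})]=0$ for $j<T$ (note $F_{j-1}(V^{\le j-1})$ is measurable with respect to $V^{\le j-1}\subseteq V^{\le T-2}$ and the auxiliary randomness). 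Hence $\Law(Z_{\le T})=\Law(W^{\le T})$. Running the identical argument on the $\overline{Z}$-recursion with $G_T(W^{\le T})=W^T\,\Psi_{T-1}(W^{\le T-1})$, the constraint $\E[\Psi_{T-1}^\top\Psi_{T-1}]=\alpha I_m$ and the $1/\alpha$ prefactor give $\E[\overline{Z}_T^\top\overline{Z}_T]=I_m$ and $\E[\overline{Z}_T^\top\overline{Z}_j]=0$ for $j<T$, so $\Law(\overline{Z}_{\le T})=\Law(V^{\le T})$. This closes the induction and proves the proposition.

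I do not expect a deep obstacle here; the work is essentially bookkeeping. The two points that need care are: keeping the alternation between the $Z$- and $\overline{Z}$-recursions straight, and invoking the second-moment constraints of \cref{ass:F_t_and_G_t}—which are phrased in terms of the auxiliary variables $(V^t),(W^t)$—only after the induction hypothesis has identified $\Law(\overline{Z}_{\le T-1})$ with $\Law(V^{\le T-1})$ (resp. $Z$ with $W$). The centering of $F_{T_1}$ and $G_{T_1+1}$ is exactly what makes the new block decouple from the first-stage iterates, and should be recorded as part of the construction guaranteed before the proposition.
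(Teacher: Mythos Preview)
Your proposal is correct and follows essentially the same route as the paper's proof: both argue by induction, using that the state-evolution iterates are jointly centered Gaussian so it suffices to check covariances, and exploiting the product structure $F_t=V^t\Phi_{t-1}$, $G_t=W^t\Psi_{t-1}$ together with the independence of the fresh block to kill cross-covariances. Your packaging via $\Law(\overline{Z}_{\le T})=\Law(V^{\le T})$ is a tidy way to record the induction hypothesis, and your closing remark about the implicit centering of $F_{T_1}$ is on point (the paper uses $\E[F_{T_1}^\top F(\overline Z_{t-1})]=0$ without stating $\E[F_{T_1}]=0$ explicitly).
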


\begin{proof}
	According to Proposition~\ref{prop:state_evolution}, we already know that $(Z_t)_{t \ge T_1 + 1}$ and $(\overline{Z}_t)_{t \ge T_1 + 1}$ are centered multivariate Gaussians, hence it suffices to show that for any $k \ge 1$,
	\begin{equation}\label{eq:induction_claim_1}
		\begin{split}
			& (Z_t)_{T_1 + 1 \le t \le T_1 + k} \sim_{\iid} \normal (0, I_m), \quad (Z_t)_{T_1 + 1 \le t \le T_1 + k} \perp (Z_t)_{1 \le t \le T_1}, \\ 
			& (\overline{Z}_t)_{T_1 + 1 \le t \le T_1 + k} \sim_{\iid} \normal (0, I_m), \quad (\overline{Z}_t)_{T_1 + 1 \le t \le T_1 + k} \perp (\overline{Z}_t)_{1 \le t \le T_1}.
		\end{split}
	\end{equation}
	We prove the above claim via induction on $k$. For $k=1$, using Eq.~\eqref{eq:covariance} and Assumption~\ref{ass:F_t_and_G_t}, we obtain that
	\begin{equation*}
		\E \left[ Z_{T_1 + 1}^\top Z_{T_1 + 1} \right] = \E \left[ F_{T_1}^\top F_{T_1} \right] = I_m.    
	\end{equation*}
	Further, for any $1 \le t \le T_1$, we have
	\begin{align*}
		\E \left[ Z_{T_1 + 1}^\top Z_{t} \right] = \, \E \left[ F_{T_1}^\top F \left( \overline{Z}_{t-1} \right) \right] = 0.
	\end{align*}
	Therefore, $Z_{T_1 + 1} \sim \normal (0, I_m)$, and is independent of $(Z_t)_{1 \le t \le T_1}$. Similarly, by Eq.~\eqref{eq:covariance} and Assumption~\ref{ass:F_t_and_G_t}, we know that for all $1 \le t \le T_1$,
	\begin{align*}
		\E \left[ \overline{Z}_{T_1 + 1}^\top \overline{Z}_{T_1 + 1} \right] = \, & \frac{1}{\alpha} \E \left[ G_{T_1 + 1} \left( W^{T_1 + 1} \right)^\top G_{T_1 + 1} \left( W^{T_1 + 1} \right) \right] = I_m, \\
		\E \left[ \overline{Z}_{T_1 + 1}^\top \overline{Z}_t \right] = \, & \frac{1}{\alpha} \E \left[ G_{T_1 + 1} \left( W^{T_1 + 1} \right)^\top W^t \right] = 0,
	\end{align*}
	where the last line follows from the fact that $W^t \perp W^{T_1 + 1}$.
	As a consequence, we deduce that $\overline{Z}_{T_1 + 1} \sim \normal (0, I_m)$ and is independent of $(\overline{Z}_t)_{1 \le t \le T_1}$.
	This completes the base case of our induction.
	
	Now assume that our claim~\eqref{eq:induction_claim_1} holds for $k \in \mathbb{N}$. For $k+1$, we have
	\begin{align*}
		& \E \left[ Z_{T_1 + k+1}^\top Z_{T_1 + k+1} \right] = \E \left[ F_{T_1 + k} (\overline{Z}_{\le T_1 + k})^\top F_{T_1+k} (\overline{Z}_{\le T_1+k}) \right] \\
		=\, & \E \left[ \Phi_{T_1+k-1} (\overline{Z}_{\le T_1+k-1})^\top \overline{Z}_{T_1+k}^\top \overline{Z}_{T_1+k} \Phi_{T_1+k-1} (\overline{Z}_{\le T_1+k-1}) \right] \\
		=\, & \E \left[ \Phi_{T_1 + k-1} (\overline{Z}_{\le T_1+k-1})^\top \E \left[ \overline{Z}_{T_1+k}^\top \overline{Z}_{T_1+k} \right] \Phi_{T_1+k-1} (\overline{Z}_{\le T_1+k-1}) \right] \\
		=\, & \E \left[ \Phi_{T_1+k-1} (\overline{Z}_{\le T_1+k-1})^\top \Phi_{T_1+k-1} (\overline{Z}_{\le T_1+k-1}) \right] = I_m,
	\end{align*}
	and for all $t \le T_1 + k$,
	\begin{align*}
		& \E \left[ Z_{T_1+k+1}^\top Z_{t} \right] = \E \left[ F_{T_1+k} (\overline{Z}_{\le T_1+k})^\top F_{t-1} (\overline{Z}_{\le t-1}) \right] \\
		=\, & \E \left[ \Phi_{T_1+k-1} (\overline{Z}_{\le T_1+k-1})^\top \overline{Z}_{T_1+k}^\top F_{t-1} (\overline{Z}_{\le t-1}) \right] \\
		=\, & \E \left[ \Phi_{T_1+k-1} (\overline{Z}_{\le T_1+k-1})^\top \E \left[ \overline{Z}_{T_1+k} \right]^\top F_{t-1} (\overline{Z}_{\le t-1}) \right] = 0,
	\end{align*}
	where the last equality follows from our induction hypothesis.
	This proves that 
	$$
	(Z_t)_{T_1 + 1 \le t \le T_1 + k + 1} \sim_{\iid} \normal (0, I_m),
	$$
	and are independent of $(Z_t)_{1 \le t \le T_1}$. With a similar calculation, we can show that 
	$$
	(\overline{Z}_t)_{T_1 + 1 \le t \le T_1 + k + 1} \sim_{\iid} \normal (0, I_m),
	$$
	and are independent of $(\overline{Z}_t)_{1 \le t \le T_1}$.
	This completes the induction step and the proof of the proposition.
\end{proof}
The IAMP stage is run for $T_2$ steps after the fixed-point AMP stage, for some $T_2 \in \mathbb{N}_+$ to be determined. In what follows, we combine the two stages of our AMP algorithm to complete the proof of \cref{thm:general_amp_achievability}.

\subsection{Combining the two stages}
Let $\WW_F = \WW^{T_1} / \sqrt{n}$ be the output of the fixed-point AMP stage. By state evolution,
\begin{align*}
	\WW_F^\top \WW_F = \, & \frac{1}{n} \left( \WW^{T_1} \right)^\top \WW^{T_1} = \frac{1}{n} \sum_{i=1}^{d} \left( \ww_i^{T_1} \right)^\top \ww_i^{T_1} \\
	\to \, & \frac{1}{\alpha} \E \left[ Z_{T_1}^\top Z_{T_1} \right] = \E \left[ \overline{Z}_{T_1}^\top \overline{Z}_{T_1} \right] = Q
\end{align*}
almost surely as $n \to \infty$. Let $Q_1, \cdots, Q_{T_2}$ be $T_2$ non-random $m \times m$ matrices such that
\begin{equation*}
	\sum_{t=1}^{T_2} Q_t^\top Q_t = I_m - Q,
\end{equation*}
we define
\begin{align*}
	\WW_I &= \frac{1}{\sqrt{n}} \sum_{t=1}^{T_2} G_{T_1 + t+1} \left( \WW^{\le T_1 + t + 1} \right) Q_t\,
\end{align*}
as the output of the IAMP stage. Following \cref{sec:IAMP}, we construct the final output of our two-stage algorithm by letting $\WW_Q = \WW_F + \WW_I$, and setting
\begin{equation*}
	\widehat{\WW}_n^{\sAMP} = \WW_Q (\WW_Q^\top \WW_Q)^{-1/2} \in O(d, m).
\end{equation*}
The theorem below characterizes the set of $(\alpha, m)$-feasible distributions achievable by our AMP algorithm:
\begin{thm}\label{thm:iamp_feasibility}
	Let Assumption~\ref{ass:F_t_and_G_t} hold, and further assume that for all $t \ge 0$, $F_t$ is continuous, and for all $t \ge 1$, $G_t$ is Lipschitz continuous. Let the weight matrix $\widehat{\WW}_n^{\sAMP}$ be constructed as above, then we have almost surely,
	\begin{equation}\label{eq:AMP_convergence_final_disc}
		\frac{1}{n} \sum_{i=1}^{n} \delta_{ \xx_i^\top \widehat{\WW}_n^{\sAMP}} \stackrel{w}{\Rightarrow} \operatorname{Law} \left( \overline{Z}_{T_1} + \frac{1}{\alpha} F \left( \overline{Z}_{T_1-1} \right) + \sum_{t=1}^{T_2} \left( \overline{Z}_{T_1 + t+1} + F_{T_1+t} \left( \overline{Z}_{\le T_1+t} \right) A_t \right) Q_t \right)
	\end{equation}
	as $n \to \infty$, where
	\begin{equation*}
		A_t = \frac{1}{\alpha} \E \left[ \Psi_{T_1+t} \left( Z_{\le T_1+t} \right) \right], \ 1 \le t \le T_2.
	\end{equation*}
	Moreover, for any sequence of $m \times m$ matrices $(A_t)_{1 \le t \le T_2}$ satisfying $A_{t}^\top A_{t} \preceq I_m / \alpha$, and functions $(F_{T_1 + t})_{1 \le t \le T_2}$ satisfying point 2 of Assumption~\ref{ass:F_t_and_G_t} (not necessarily continuous),
	\begin{equation}\label{eq:AMP_feasible_disc}
		\operatorname{Law} \left( \overline{Z}_{T_1} + \frac{1}{\alpha} F \left( \overline{Z}_{T_1-1} \right) + \sum_{t=1}^{T_2} \left( \overline{Z}_{T_1 + t+1} + F_{T_1+t} \left( \overline{Z}_{\le T_1+t} \right) A_t \right) Q_t \right) \in \cuF_{m, \alpha}^{\salg}.
	\end{equation}
\end{thm}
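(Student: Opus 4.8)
\medskip

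\noindent\textbf{Proof strategy for \cref{thm:iamp_feasibility}.}
The plan is to unwind the AMP recursions \eqref{eq:amp_iter} so that each projection $\xx_i^\top\widehat{\WW}^{\sAMP}_n$ becomes an explicit, almost fixed function of the iterates, and then read off the limiting empirical distribution from \cref{prop:state_evolution} and \cref{cor:se_weak_conv}. First I would remove the normalization. Writing $\WW_Q=\WW_F+\WW_I$ as in \cref{sec:iamp}, I would show $\WW_Q^\top\WW_Q\to I_m$ almost surely: $\WW_F^\top\WW_F\to Q$ by the computation in ``Combining the two stages'', and, using \cref{prop:state_evolution_iamp} (the second-stage iterates $\overline{Z}_{T_1+1},\overline{Z}_{T_1+2},\dots$ are i.i.d.\ $\normal(0,I_m)$) together with \eqref{eq:state_evolution}, \eqref{eq:covariance}, one gets $\WW_I^\top\WW_I\to\sum_{t=1}^{T_2}Q_t^\top Q_t=I_m-Q$ while the cross term $\WW_F^\top\WW_I\to 0$, because $G_{T_1+t+1}(Z_{\le T_1+t+1})=Z_{T_1+t+1}\Psi_{T_1+t}(Z_{\le T_1+t})$ carries the centered factor $Z_{T_1+t+1}$, independent of $Z_{\le T_1+t}$. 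Hence $(\WW_Q^\top\WW_Q)^{-1/2}\to I_m$, and since $\frac1n\sum_i\|\xx_i^\top\WW_Q\|_2^2\le(\|\XX^\top\XX\|_{\op}/n)\,\|\WW_Q\|_F^2$ is almost surely bounded (both factors converge), a routine perturbation argument shows $\frac1n\sum_i\delta_{\xx_i^\top\widehat{\WW}^{\sAMP}_n}$ and $\frac1n\sum_i\delta_{\xx_i^\top\WW_Q}$ have the same weak limit.

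Next I would express $\xx_i^\top\WW_Q$ through the iterates by solving the $\VV^t$ update in \eqref{eq:amp_iter} for the term containing $\XX$: this gives $\tfrac1{\sqrt n}\xx_i^\top\WW^{T_1}=\vv_i^{T_1}+\tfrac dn F(\vv_i^{T_1-1})$ in the first stage, and $\tfrac1{\sqrt n}\xx_i^\top G_{T_1+t+1}(\WW^{\le T_1+t+1})=\vv_i^{T_1+t+1}+\sum_{s}F_{s-1}(\vv_i^{\le s-1})D_{T_1+t+1,s}^\top$ in the second. The key bookkeeping observation is that among these Onsager coefficients only $D_{T_1+t+1,\,T_1+t+1}$ survives in the limit: by \eqref{eq:onsager_terms} and the structure $G_{T_1+t+1}(W^{\le T_1+t+1})=W^{T_1+t+1}\Psi_{T_1+t}(W^{\le T_1+t})$, for $s\le T_1+t$ the derivative $\partial G_{T_1+t+1}/\partial w^s$ contains the factor $W^{T_1+t+1}$, which by \cref{prop:state_evolution_iamp} is centered and independent of $W^{\le T_1+t}$, so $\plim_{n}D_{T_1+t+1,s}=0$ via \cref{prop:state_evolution}; whereas $\plim_n D_{T_1+t+1,\,T_1+t+1}=\tfrac1\alpha\E[\Psi_{T_1+t}(Z_{\le T_1+t})]=A_t$. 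Substituting into the definitions of $\WW_F$, $\WW_I$ and $\widehat{\WW}^{\sAMP}_n$ shows $\xx_i^\top\WW_Q=g_n(\vv_i^{\le T_1+T_2+1})$, where $g_n$ is a fixed continuous map composed with deterministic coefficients ($d/n$ and the $D_{\cdot,\cdot}$) converging to their limits. Since by \cref{cor:se_weak_conv} the empirical law of $(\vv_i^{\le T_1+T_2+1})_{i}$ converges almost surely weakly to $\Law(\overline{Z}_{\le T_1+T_2+1})$, a continuous-mapping-plus-perturbation argument (uniform convergence $g_n\to g$ on compacts together with tightness) yields \eqref{eq:AMP_convergence_final_disc}. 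The constraint $A_t^\top A_t\preceq I_m/\alpha$ is then just Jensen's inequality: $A_t^\top A_t=\alpha^{-2}\E[\Psi_{T_1+t}]^\top\E[\Psi_{T_1+t}]\preceq\alpha^{-2}\E[\Psi_{T_1+t}^\top\Psi_{T_1+t}]=I_m/\alpha$.

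For the feasibility claim \eqref{eq:AMP_feasible_disc} I would argue in two steps. First, any $A_t$ with $A_t^\top A_t\preceq I_m/\alpha$ is realized as $\tfrac1\alpha\E[\Psi_{T_1+t}(Z_{\le T_1+t})]$ by a bounded Lipschitz $\Psi_{T_1+t}$ with $\E[\Psi_{T_1+t}^\top\Psi_{T_1+t}]=\alpha I_m$: take $\Psi_{T_1+t}=\alpha A_t+\rho$ with $\rho$ a bounded Lipschitz centered function of a fresh Gaussian coordinate among $W^{\le T_1+t}$, normalized so that $\E[\rho^\top\rho]=\alpha I_m-\alpha^2 A_t^\top A_t\succeq0$. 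Second, given $F_{T_1+t}(V^{\le T_1+t})=V^{T_1+t}\Phi_{T_1+t-1}(V^{\le T_1+t-1})$ with $\Phi_{T_1+t-1}$ only measurable (and $\E[\Phi_{T_1+t-1}^\top\Phi_{T_1+t-1}]=I_m$), mollify to get continuous $\Phi^{(k)}\to\Phi_{T_1+t-1}$ in $L^2(\Law(\overline{Z}_{\le T_1+t-1}))$ and rescale by $(\E[(\Phi^{(k)})^\top\Phi^{(k)}])^{-1/2}$ to restore the constraint. By \cref{prop:state_evolution_iamp} the joint law of $(\overline{Z}_{\le T_1+T_2+1})$ is the same for any admissible choice of nonlinearities, so the corresponding random variables $U^{(k)}$ converge to $U$ in $L^2$, hence in distribution; each $\Law(U^{(k)})$ lies in $\cuF_{m,\alpha}^{\salg}$ by the first part of the theorem, and since $\cuF_{m,\alpha}^{\salg}$ is closed under weak limits, $\Law(U)\in\cuF_{m,\alpha}^{\salg}$.

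The main obstacle is the regularity gap: \cref{prop:state_evolution} is cleanest for Lipschitz (pseudo-Lipschitz) nonlinearities, whereas the theorem allows the first-stage $F$ and the $F_t$ to be merely continuous (part one) or merely measurable (the ``moreover'' part), and the incremental choice $G_t(W^{\le t})=W^t\Psi_{t-1}(W^{\le t-1})$ is only locally Lipschitz. Closing this gap requires proving everything first for Lipschitz nonlinearities and then removing the restriction by approximation --- exploiting the stability of the AMP iterates and of the state-evolution recursion under $L^2$-perturbations of the nonlinearities, the fact that one only tests against bounded continuous functions, and the closedness of $\cuF_{m,\alpha}^{\salg}$ under weak limits --- while checking at each step that the second-moment constraints of \cref{ass:F_t_and_G_t} are preserved. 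The Onsager-term bookkeeping of the second paragraph is the other delicate point, but it reduces entirely to the independence structure recorded in \cref{prop:state_evolution_iamp}.
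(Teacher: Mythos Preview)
Your proposal is correct and follows essentially the same route as the paper: remove the normalization by showing $\WW_Q^\top\WW_Q\to I_m$ (via state evolution and the independence in \cref{prop:state_evolution_iamp}), unwind the $\VV$-recursion to express $\XX\WW_Q$ row-wise, observe that all Onsager terms $D_{T_1+t+1,s}$ with $s\le T_1+t$ vanish in the limit because the derivative carries a centered $Z_{T_1+t+1}$ independent of the rest, and conclude via \cref{cor:se_weak_conv} plus the continuous mapping theorem; then realize any admissible $A_t$ by a suitable $\Psi_{T_1+t}$ and use closedness of $\cuF_{m,\alpha}^{\salg}$ under weak limits to drop the continuity assumption on $F_{T_1+t}$.

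One small point: you overstate the ``regularity gap.'' In the paper, the Lipschitz assumption on $G_t$ is part of the theorem's hypothesis (so $G_sG_t$ is automatically pseudo-Lipschitz and \cref{prop:state_evolution} applies directly to the Gram computations), and for the merely continuous $F_t$'s the argument does \emph{not} first pass through Lipschitz approximants: once \cref{cor:se_weak_conv} gives weak convergence of the empirical law of $(\vv_i^{\le T_1+T_2+1})$, the continuous mapping theorem handles any continuous $F_t$ in one step. The only place an approximation argument is actually needed is in the ``moreover'' part, to pass from continuous to merely measurable $F_{T_1+t}$, and there your mollification-plus-rescaling idea is exactly what the paper does (phrased simply as ``approximate general $L^2$-integrable functions by continuous functions''). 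So your plan would work, but the detour through Lipschitz nonlinearities for the first part is unnecessary.
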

\begin{proof}
	By our assumption, we know that for all $1 \le s, t \le T$, $G_t G_s$ is pseudo-Lipschitz of order $2$. Hence, we have almost surely,
	\begin{align*}
		\WW_I^\top \WW_I =\, & \frac{1}{n} \sum_{t=1}^{T_2} \sum_{s=1}^{T_2} Q_t^\top G_{T_1 + t + 1} \left( \WW^{\le T_1 + t + 1} \right)^\top G_{T_1+s+1} \left( \WW^{\le T_1+s+1} \right) Q_s \\
		=\, & \sum_{t=1}^{T_2} \sum_{s=1}^{T_2} Q_t^\top \left( \frac{1}{n} \sum_{i=1}^{d} G_{T_1+t+1} (\ww_i^{\le T_1+t+1})^\top G_{T_1+s+1} (\ww_i^{\le T_1+s+1}) \right) Q_s \\
		\to\, & \frac{1}{\alpha} \sum_{t=1}^{T_2} \sum_{s=1}^{T_2} Q_t^\top \E \left[ G_{T_1+t+1} \left( Z_{\leq T_1+t+1} \right)^\top G_{T_1+s+1} \left( Z_{\leq T_1+s+1} \right) \right] Q_s \\
		=\, & \sum_{t=1}^{T_2} \sum_{s=1}^{T_2} Q_t^\top \E \left[ \overline{Z}_{T_1+t+1}^\top \overline{Z}_{T_1+s+1} \right] Q_s = \sum_{t=1}^{T_2} Q_t^\top Q_t = I_m - Q.
	\end{align*}
	Similarly, we can show that $\WW_F^\top \WW_I \to 0$ almost surely. As we already know $\WW_F^\top \WW_F \to Q$, this implies that $\WW_Q^\top \WW_Q \to I_m$ almost surely as $n \to \infty$. Using Slutsky's theorem, it now suffices to consider the limiting empirical distribution of the rows of $ \XX \WW_Q$. By direct calculation, we obtain that
	\begin{align*}
		\XX \WW_{Q} = \, & \XX \WW_{F} + \XX \WW_{I} = \VV^{T_1} + \frac{d}{n} F(\VV^{T_1-1}) + \frac{1}{\sqrt{n}} \sum_{t=1}^{T_2} \XX G_{T_1 + t+1} \left( \WW^{\le T_1 + t + 1} \right) Q_t \\
		= \, & \VV^{T_1} + \frac{d}{n} F(\VV^{T_1-1}) + \sum_{t=1}^{T_2} \left( \VV^{T_1 + t+1} + \sum_{s=1}^{T_1+t+1} F_{s-1} (\VV^{\le s-1}) D_{T_1+t+1, s}^\top \right) Q_t,
	\end{align*}
	where by state evolution,
	\begin{align*}
		D_{T_1+t+1, s} = \, & \frac{1}{n} \sum_{i=1}^{d} \frac{\partial G_{T_1+t+1}}{\partial \ww_i^s} ( \ww_i^{\le T_1+t+1}) \stackrel{a.s.}{\to} \, \frac{1}{\alpha} \E \left[ \frac{\partial G_{T_1+t+1}}{\partial w^s} \left( Z_{\leq T_1+t+1} \right) \right] \\
		\stackrel{(i)}{=} \, & \frac{\bone_{s = T_1+t+1}}{\alpha} \E \left[ \Psi_{T_1+t} \left( Z_{\le T_1+t} \right) \right]^\top = \bone_{s = T_1+t+1} A_t^\top.
	\end{align*}
	\modif{In the above display, $(i)$ is due to the observation that if $s < T_1 + t + 1$, then
		\begin{align*}
			& \E \left[ \frac{\partial G_{T_1+t+1}}{\partial w^s} \left( Z_{\leq T_1+t+1} \right) \right] = \, \E \left[ Z_{T_1 + t + 1} \frac{\partial \Psi_{T_1+t}}{\partial w^s} \left( Z_{\leq T_1+t} \right)  \right] \\
			= \, & \E \left[ Z_{T_1 + t + 1} \right] \E \left[ \frac{\partial \Psi_{T_1+t}}{\partial w^s} \left( Z_{\leq T_1+t} \right)  \right] = 0.
		\end{align*}
	}
	Therefore, it suffices to consider the limiting empirical distribution of
	\begin{equation}\label{eq:simplified_AMP_final}
		\VV^{T_1} + \frac{d}{n} F(\VV^{T_1-1}) + \sum_{t=1}^{T_2} \left( \VV^{T_1 + t+1} + F_{T_1+t} \left( \VV^{\le T_1+t} \right) A_t \right) Q_t.
	\end{equation}
	Now, since $F_{T_1+t}$ and $F$ are continuous, \modif{applying \cref{cor:se_weak_conv}} and continuous mapping theorem implies that the limiting empirical distribution of the rows of \eqref{eq:simplified_AMP_final} almost surely weakly converges to
	\begin{equation*}
		\Law \left( \overline{Z}_{T_1} + \frac{1}{\alpha} F \left( \overline{Z}_{T_1-1} \right) + \sum_{t=1}^{T_2} \left( \overline{Z}_{T_1 + t+1} + F_{T_1+t} \left( \overline{Z}_{\le T_1+t} \right) A_t \right) Q_t \right),
	\end{equation*}
	which proves \cref{eq:AMP_convergence_final_disc}. 
	
	To show the second part of the theorem, note that in the IAMP stage, the only requirement for the function $\Psi_{T_1+t}$ is
	\begin{equation*}
		\E \left[ \Psi_{T_1+t} \left( Z_{\le T_1+t} \right)^\top \Psi_{T_1+t} \left( Z_{\le T_1+t} \right) \right] = \alpha I_m.
	\end{equation*}
	Hence, for any $t \ge 1$ and $A_{t} \in \R^{m \times m}$ such that $A_{t}^\top A_{t} \preceq I_m / \alpha$, there exists a function $\Psi_{T_1+t}$ that satisfies the condition of \cref{thm:iamp_feasibility}, and that
	\begin{equation*}
		\E \left[ \Psi_{T_1+t} \left( Z_{\le T_1+t} \right) \right] = \alpha A_t.
	\end{equation*}
	This proves that if $F$ and $F_{T_1 + t}$ are continuous, then
	\begin{equation*}
		\operatorname{Law} \left( \overline{Z}_{T_1} + \frac{1}{\alpha} F \left( \overline{Z}_{T_1-1} \right) + \sum_{t=1}^{T_2} \left( \overline{Z}_{T_1 + t+1} + F_{T_1+t} \left( \overline{Z}_{\le T_1+t} \right) A_t \right) Q_t \right) \in \cuF_{m, \alpha}^{\salg},
	\end{equation*}
	and can be achieved by our two-stage AMP algorithm, where the only constraint on $\{ A_t \}_{1 \le t \le T_2}$ is $A_t^\top A_t \preceq I_m / \alpha$. The second part of Theorem~\ref{thm:iamp_feasibility} then follows immediately by combining this result and the fact that $\cuF_{m, \alpha}^{\salg}$ is closed under weak limits, since we can approximate general $L^2$-integrable functions by continuous functions to arbitrary accuracy.
\end{proof}

We next take the limit $T_1 \to \infty$. Recall that \cref{lem:cov_mu_Q} implies
\begin{equation*}
	\lim_{T_1 \to \infty} \E \left[ \left\| \overline{Z}_{T_1} - \overline{Z}_{T_1-1} \right\|_2^2 \right]  = 0.
\end{equation*}
Leveraging our choice of $F_{T_1 + t}$ for the IAMP stage, the $(\alpha, m)$-feasible distribution given by \cref{eq:AMP_feasible_disc} can be further simplified. Note that in the following theorem, we recast $T_2$ as $T$, the $\overline{Z}_t$'s as $V$ and $V^t$'s, \smodif{and absorb the $A_t$'s into the $\Phi_{t-1}$'s.}

\begin{thm}\label{thm:FeasibleDiscrete}
	For any $T \in \mathbb{N}_+$, let $(V^t)_{1 \le t \le T} \sim_{\iid} \sN (0, I_m)$ be independent of $V \sim \normal (0, Q)$. Define
	\begin{equation}\label{eq:discrete_riemann_sum}
		U = V + \frac{1}{\alpha} F \left( V \right) + \sum_{t=1}^{T} \left( V^{t+1} + V^{t} \Phi_{t-1} \left( V^{\le t-1} \right) \right) Q_t,
	\end{equation}
	where $\sum_{t=1}^{T} Q_t^\top Q_t = I_m - Q$, and
	\begin{equation*}
		\E \left[ \Phi_{t-1} \left( V^{\le t-1} \right)^\top \Phi_{t-1} \left( V^{\le t-1} \right) \right] \preceq \frac{I_m}{\alpha}, \ \forall t \ge 1.
	\end{equation*}
	Then, we have $\Law(U) \in \cuF_{m, \alpha}^{\salg}$.
\end{thm}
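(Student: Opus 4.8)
The plan is to deduce \cref{thm:FeasibleDiscrete} from the second part of \cref{thm:iamp_feasibility} (i.e.\ from \eqref{eq:AMP_feasible_disc}) by sending the number $T_1$ of fixed-point iterations to infinity. The one structural mismatch to reconcile is that \eqref{eq:AMP_feasible_disc} only produces increments of the form $\overline{Z}_{T_1+t}\,\Phi_{T_1+t-1}(\cdot)\,A_t$, in which the inner nonlinearity is normalized so that $\E[\Phi_{T_1+t-1}^\top\Phi_{T_1+t-1}]=I_m$ while all the ``budget'' is carried by the constant matrix $A_t$ with $A_t^\top A_t\preceq I_m/\alpha$, whereas \eqref{eq:discrete_riemann_sum} allows a general $\Phi_{t-1}$ constrained only by $\E[\Phi_{t-1}^\top\Phi_{t-1}]\preceq I_m/\alpha$. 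I would first treat the case in which $M_{t-1}:=\E[\Phi_{t-1}(V^{\le t-1})^\top\Phi_{t-1}(V^{\le t-1})]\succ 0$ for every $t$: put $A_t:=M_{t-1}^{1/2}$ and $\hat\Phi_{t-1}:=\Phi_{t-1}M_{t-1}^{-1/2}$, so that $A_t^\top A_t=M_{t-1}\preceq I_m/\alpha$, $\E[\hat\Phi_{t-1}^\top\hat\Phi_{t-1}]=I_m$, and $\hat\Phi_{t-1}(\cdot)A_t=\Phi_{t-1}(\cdot)$. Taking, in the IAMP stage, $F_{T_1+t}(\overline{Z}_{\le T_1+t}):=\overline{Z}_{T_1+t}\,\hat\Phi_{t-1}(\overline{Z}_{T_1+1},\dots,\overline{Z}_{T_1+t-1})$ for $1\le t\le T$ --- a legitimate choice under \cref{ass:F_t_and_G_t}, since a nonlinearity may ignore the coordinates $\overline{Z}_{\le T_1}$ --- together with the remaining $F_{T_1}$ and $G_{T_1+s}$ chosen so that \cref{ass:F_t_and_G_t} holds with these $A_t$'s, \cref{thm:iamp_feasibility} gives, for every $T_1\in\mathbb{N}$,
\[
\Law(U_{T_1})\in\cuF_{m,\alpha}^{\salg},\qquad U_{T_1}:=\overline{Z}_{T_1}+\tfrac1\alpha F(\overline{Z}_{T_1-1})+\underbrace{\sum_{t=1}^{T}\big(\overline{Z}_{T_1+t+1}+\overline{Z}_{T_1+t}\,\hat\Phi_{t-1}(\overline{Z}_{T_1+1},\dots,\overline{Z}_{T_1+t-1})A_t\big)Q_t}_{=:S_{T_1}}.
\]

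Next I would pass to $T_1\to\infty$. By \cref{prop:state_evolution_iamp}, for every $T_1$ the vectors $\overline{Z}_{T_1+1},\dots,\overline{Z}_{T_1+T+1}$ are i.i.d.\ $\normal(0,I_m)$ and independent of $(\overline{Z}_{T_1-1},\overline{Z}_{T_1})$; hence the law of $S_{T_1}$ does not depend on $T_1$ and equals the law of $S:=\sum_{t=1}^T(V^{t+1}+V^t\,\hat\Phi_{t-1}(V^1,\dots,V^{t-1})A_t)Q_t$, with $S_{T_1}$ independent of $(\overline{Z}_{T_1-1},\overline{Z}_{T_1})$. Meanwhile $(\overline{Z}_{T_1-1},\overline{Z}_{T_1})$ is centered jointly Gaussian with covariance that, by \cref{lem:cov_mu_Q}, tends as $T_1\to\infty$ to the block matrix with every block equal to $Q$, so it converges in distribution to $(V,V)$ with $V\sim\normal(0,Q)$; by continuity of $F$ (\cref{defn:Q_contraction}) and the continuous mapping theorem, $\overline{Z}_{T_1}+\tfrac1\alpha F(\overline{Z}_{T_1-1})$ converges in distribution to $V+\tfrac1\alpha F(V)$. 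Since $S_{T_1}$ is an independent summand with a fixed law, $U_{T_1}$ converges in distribution to $U:=V+\tfrac1\alpha F(V)+S$ (this can be read off directly from characteristic functions, so no continuity of the $\hat\Phi_{t-1}$ is needed). As $\cuF_{m,\alpha}^{\salg}$ is closed under weak limits, $\Law(U)\in\cuF_{m,\alpha}^{\salg}$; and since $\hat\Phi_{t-1}(V^1,\dots,V^{t-1})A_t=\Phi_{t-1}(V^{\le t-1})M_{t-1}^{-1/2}M_{t-1}^{1/2}=\Phi_{t-1}(V^{\le t-1})$, this $U$ is precisely the random variable \eqref{eq:discrete_riemann_sum}. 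This proves \cref{thm:FeasibleDiscrete} whenever all the $M_{t-1}$ are nonsingular.

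Finally, the rank-deficient case would be handled by one further approximation. Using that randomized AMP nonlinearities are allowed (\cref{rmk:Randomness}), replace $\Phi_{t-1}$ by $\Phi_{t-1}^{(k)}(V^{\le t-1},\xi^t):=\Phi_{t-1}(V^{\le t-1})+\tfrac1k\,\xi^t P_{t-1}$, where $(\xi^t)$ are fresh i.i.d.\ standard Gaussians and $P_{t-1}$ is the orthogonal projection onto $\ker M_{t-1}$; then $\E[(\Phi_{t-1}^{(k)})^\top\Phi_{t-1}^{(k)}]=M_{t-1}+\tfrac1{k^2}P_{t-1}$ is positive definite and, for $k\ge\sqrt\alpha$, still $\preceq I_m/\alpha$. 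The argument of the previous two paragraphs applied to $\Phi_{t-1}^{(k)}$ yields $\Law(U^{(k)})\in\cuF_{m,\alpha}^{\salg}$, and $U^{(k)}\to U$ in $L^1$ as $k\to\infty$, so $\Law(U)\in\cuF_{m,\alpha}^{\salg}$ by closedness again. I expect the genuinely delicate step to be the $T_1\to\infty$ passage: one must verify that the Onsager-lagged argument $\overline{Z}_{T_1-1}$ inside $F$ may be replaced by its limit $V$, that the IAMP iterates contribute a noise term whose law is $T_1$-independent and independent of the fixed-point iterates (so their possibly discontinuous nonlinearities do no harm), and that the resulting limiting law is absorbed by the closedness of $\cuF_{m,\alpha}^{\salg}$; the remainder is bookkeeping and elementary linear algebra.
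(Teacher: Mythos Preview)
Your approach is correct and matches the paper's, which proves \cref{thm:FeasibleDiscrete} simply by taking $T_1\to\infty$ in \eqref{eq:AMP_feasible_disc} and recasting $\overline{Z}_{T_1}\mapsto V$, $\overline{Z}_{T_1+t}\mapsto V^t$. You are in fact more careful than the paper on two points it leaves implicit: the factorization $\Phi_{t-1}=\hat\Phi_{t-1}A_t$ needed to match the normalization in \cref{ass:F_t_and_G_t} to the constraint $\E[\Phi_{t-1}^\top\Phi_{t-1}]\preceq I_m/\alpha$, and the handling of the singular-$M_{t-1}$ case via randomized perturbation.
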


Finally, we take the scaling limit $T \to \infty$ for the feasible distributions described in 
Theorem \ref{thm:FeasibleDiscrete}, yielding a stochastic integral representation for  probability measures achieved by our two-stage AMP algorithm. As a consequence, we prove \cref{thm:general_amp_achievability}. For ease of exposition, we only consider the case $m=1$, as the proof for general $m\ge 2$ follows similarly.

For $m=1$, Eq.~\eqref{eq:discrete_riemann_sum} reads
\begin{align*}
	& U = v + \frac{1}{\alpha} F (v) + \sum_{t=1}^{T} q_t ( v^{t+1} + \phi_{t-1} (v^{\le t-1}) v^{t} ), \\
	& \sum_{t=1}^{T} q_t^2 = 1 - q, \ \E \left[ F (v)^2 \right] = \alpha q, \ \E \left[ F'(v)^2 \right] \le \alpha, \\
	& \E \left[ \phi_{t-1} (v^{\le t-1})^2 \right] \le \frac{1}{\alpha}, \ \forall t \ge 1,
\end{align*}
where $(v^t)_{1 \le t \le T} \sim_{\iid} \sN (0, 1)$ is independent of $v \sim \normal(0, q)$. We first show that the It\^{o} integral with respect to a family of simple adapted processes can be approximated by the $U$ defined above in $L^2$-metric to arbitrary accuracy.
\begin{lem}\label{lem:simple_process}
	Let $(B_t)_{0 \le t \le 1}$ be a standard Brownian motion independent of $v \sim \normal(0, q)$, and $F$ be a $Q$-contraction. Define the filtration $\{ \cF_t \}_{0 \le t \le 1}$ by
	\begin{equation*}
		\cF_t = \sigma \left( v, (B_s)_{0 \le s \le t} \right), \ 0 \le t \le 1.
	\end{equation*}
	Assume $0 = t_0 < t_1 < \cdots < t_T = 1$ is an arbitrary discretization of $[0, 1]$, $\{ q(t_j) \}_{0 \le j \le T-1}$ is a sequence of scalars satisfying
	\begin{equation*}
		\sum_{j=0}^{T-1} q(t_j)^2 (t_{j+1} - t_j) = 1 - q,
	\end{equation*}
	$\{ \phi_{t_j} \}_{0 \le j \le T-1}$ is a sequence of random variables adapted to $\{ \cF_{t_j} \}_{0 \le j \le T-1}$, and that
	\begin{equation*}
		\E \left[ \phi_{t_j}^2 \right] \le \frac{1}{\alpha}, \ \forall j = 0, \cdots, T-1.
	\end{equation*}
	Then, $\operatorname{Law} (U) \in \cuF_{1, \alpha}^{\salg}$, where
	\begin{equation*}
		U = v + \frac{1}{\alpha} F(v) + \sum_{j=0}^{T-1} q(t_j) (1 + \phi_{t_j}) (B_{t_{j+1}} - B_{t_j}).
	\end{equation*}
\end{lem}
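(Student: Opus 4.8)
The plan is to approximate $U_s$ in $L^2$ by random variables of the form appearing in \cref{thm:FeasibleDiscrete} and then invoke the closedness of $\cuF_{1,\alpha}^{\salg}$ under weak limits. Begin by normalizing the Brownian increments: writing $\delta_j:=t_{j+1}-t_j$, $\Delta_j:=B_{t_{j+1}}-B_{t_j}\sim\normal(0,\delta_j)$, $\beta_j:=q(t_j)^2\delta_j$ (so that $\sum_{j=0}^{T-1}\beta_j=1-q$), and $\xi_j:=\sgn(q(t_j))\,\Delta_j/\sqrt{\delta_j}$ (i.i.d.\ $\normal(0,1)$, independent of $v$), one has
\[
U_s=v+\tfrac1\alpha F(v)+\sum_{j=0}^{T-1}\sqrt{\beta_j}\,(1+\phi_{t_j})\,\xi_j,
\]
where each $\phi_{t_j}$ is a measurable function of $(v,\xi_0,\dots,\xi_{j-1})$ with $\E[\phi_{t_j}^2]\le1/\alpha$.

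Here is the obstruction and its resolution. One cannot simply match the $T$ increments to $T$ innovations in \cref{thm:FeasibleDiscrete}: there, innovation $v^s$ picks up a deterministic weight $q_{s-1}$ and a random weight of $L^2$-size at most $q_s/\sqrt\alpha$, so reproducing the profile $\sqrt{\beta_j}(1+\phi_{t_j})$ would essentially force the $\beta_j$ to be nondecreasing, which need not hold. To circumvent this, fix a small $\epsilon>0$ with $K_j:=\beta_j/\epsilon\in\mathbb N$ for all $j$ (a dense set of such $\epsilon$ exists after a harmless rational approximation of the $\beta_j$), split $\sqrt{\beta_j}\,\xi_j=\sum_{k=1}^{K_j}\sqrt\epsilon\,\zeta_{j,k}$ with $(\zeta_{j,k})$ i.i.d.\ $\normal(0,1)$ independent of $v$, and relabel the $N:=(1-q)/\epsilon$ sub-increments in lexicographic (block-by-block) order as $w^2,\dots,w^{N+1}$, with $w^1$ an auxiliary $\normal(0,1)$ variable. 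Apply \cref{thm:FeasibleDiscrete} with $T=N$, all weights $q_t\equiv\sqrt\epsilon$ (so $\sum_tq_t^2=N\epsilon=1-q$), the same function $F$, and the recycling nonlinearity multiplying $w^t$ taken to be $\phi_{t_j}$ where $j$ is the block containing $w^t$ (and $\phi_0:=0$). This is admissible: $\phi_{t_j}$ is a function only of $v$ and of sub-increments with label $\le t-1$, and the recycling nonlinearities in \cref{thm:FeasibleDiscrete} may be taken to depend on $v=\overline Z_{T_1}$ as well, since in the IAMP construction they are built from iterates that see the first-stage output. Regrouping by innovation, each $w^s$ with $2\le s\le N$ then carries coefficient $q_{s-1}+q_s\phi_{s-1}=\sqrt\epsilon(1+\phi_{t_{j(s)}})$, matching $U_s$ exactly; the point is that equalizing all deterministic weights to $\sqrt\epsilon$ makes the budget $\sqrt\epsilon/\sqrt\alpha$ for the random part available at every innovation, which is exactly what $\E[\phi_{t_j}^2]\le1/\alpha$ requires.

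It remains to handle the two boundary innovations: $w^1$ gets coefficient $0$ (harmless), while $w^{N+1}$ gets coefficient $\sqrt\epsilon$ instead of $\sqrt\epsilon(1+\phi_{t_{T-1}})$, so that the resulting \cref{thm:FeasibleDiscrete}-type random variable differs from $U_s$ only by $\sqrt\epsilon\,\phi_{t_{T-1}}\,w^{N+1}$ (plus the $O(\sqrt\epsilon)$ error from the rational approximation), which has $L^2$-norm $\le\sqrt\epsilon/\sqrt\alpha+O(\sqrt\epsilon)$. Letting $\epsilon\downarrow0$ exhibits a sequence in $\cuF_{1,\alpha}^{\salg}$ converging weakly to $\Law(U_s)$, and closedness finishes the proof. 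The main obstacle is precisely the budget/monotonicity issue in the second paragraph; the subdivision with $K_j\propto\beta_j$ is the device that removes it, and the leftover points — the two boundary terms and the rounding of the $K_j$ — are routine once one accepts an $L^2$-error that vanishes with $\epsilon$.
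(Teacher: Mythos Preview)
Your overall strategy—subdivide so that all deterministic weights become equal, then invoke \cref{thm:FeasibleDiscrete} and pass to the limit—is sound and is close in spirit to what the paper does. But there is a genuine gap at the very first step: you assert that ``each $\phi_{t_j}$ is a measurable function of $(v,\xi_0,\dots,\xi_{j-1})$,'' and this is not true. The hypothesis is only that $\phi_{t_j}\in\cF_{t_j}=\sigma\bigl(v,(B_s)_{s\le t_j}\bigr)$, the \emph{continuous} Brownian filtration, whereas $(v,\xi_0,\dots,\xi_{j-1})$ generates only the strictly smaller $\sigma$-algebra $\sigma(v,B_{t_1},\dots,B_{t_j})$. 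Your subsequent split $\sqrt{\beta_j}\,\xi_j=\sum_k\sqrt{\epsilon}\,\zeta_{j,k}$ inherits the same problem: even if you take the $\zeta_{j,k}$ to be genuine Brownian sub-increments, the discrete grid they generate does not recover $\cF_{t_j}$ for any fixed $\epsilon$, so $\phi_{t_j}$ is not a function of the preceding $w$'s and the nonlinearities you feed into \cref{thm:FeasibleDiscrete} are not admissible as written.

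The fix is to insert a projection step: replace $\phi_{t_j}$ by its conditional expectation given the discrete $\sigma$-algebra generated by $v$ and the sub-increments preceding block $j$, and then let the grid become dense as $\epsilon\downarrow 0$. By L\'evy's upward martingale theorem the projected variables converge to $\phi_{t_j}$ in $L^2$, and by Jensen the projected variables still satisfy the second-moment bound $1/\alpha$. This is precisely the device the paper uses (with a uniform dyadic refinement rather than your weight-proportional one): it refines each interval into $2^k$ equal pieces, projects $\phi_{j/T}$ onto $\sigma(v,(B_{r/2^kT})_{r})$, and shows the resulting telescoping boundary error vanishes as $k\to\infty$. Your equalized-weight subdivision is a legitimate alternative once the projection is added—indeed it makes the boundary analysis slightly cleaner (a single leftover term of size $\sqrt{\epsilon}/\sqrt{\alpha}$ instead of a telescoping sum over block boundaries)—but without the projection the argument does not go through.
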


\begin{proof}
	Without loss of generality, we may assume that each $t_j = j/T$. Otherwise, one can always reparametrize the time argument. Under this simplification, we have $\forall k \in \mathbb{N}$:
	\begin{align*}
		U = \, & v + \frac{1}{\alpha} F(v) + \sum_{j=0}^{T-1} q(j/T) (1 + \phi_{j/T}) (B_{(j+1)/T} - B_{j/T}) \\
		= \, & v + \frac{1}{\alpha} F(v) + \sum_{j=0}^{T-1} \sum_{i=1}^{2^k} q(j/T) (1 + \phi_{j/T}) (B_{(j 2^k + i)/2^k T} - B_{(j 2^k + i - 1)/2^k T}) \\
		= \, & v + \frac{1}{\alpha} F(v) + \sum_{l=0}^{2^k T-1} q(l/2^k T, k) (1 + \phi_{l/2^k T, k}) (B_{(l+1)/2^k T} - B_{l/2^k T}),
	\end{align*}
	where $q(l/2^k T, k) = q(j/T)$, $\phi_{l/2^k T, k} = \phi_{j/T}$ if $l = j 2^k + i - 1$ for some $1 \le i \le 2^k$. We further notice that the sequences $\{ q(l/2^k T, k) \}_{0 \le l \le 2^k T - 1}$ and $\{ \phi_{l/2^k T, k} \}_{0 \le l \le 2^k T - 1}$ satisfy the conditions in the statement of Lemma~\ref{lem:simple_process}, since $\{ l / 2^k T \}_{0 \le l \le 2^k T}$ is just a more refined discretization of $[0, 1]$. Now for each $1 \le l \le 2^k T$, define the $\sigma$-algebra 
	\begin{equation*}
		\cF_{l/2^k T, k} = \sigma \left( v, (B_{r/2^k T})_{1 \le r \le l} \right) = \sigma \left( v, (B_{r/2^k T} - B_{(r-1)/2^k T})_{1 \le r \le l} \right),
	\end{equation*}
	it then follows that
	\begin{equation*}
		\E \left[ \phi_{l/2^k T, k} \vert \cF_{l/2^k T, k} \right] = \E \left[ \phi_{j/T} \vert \cF_{l/2^k T, k} \right] = \E \left[ \phi_{j/T} \vert \cF_{j/T, k} \right].
	\end{equation*}
	According to Paul L\'{e}vy's construction of Brownian motion (cf. Chapter 1 of \cite{morters2010brownian}), we know that $\cF_{j/T, k} \uparrow \cF_{j/T}$ as $k \to \infty$. Since $\E [\phi_{j/T}^2] \le 1/\alpha$, we know that $\phi_{j/T}$ is integrable. Applying L\'{e}vy's upwards theorem yields
	\begin{equation*}
		\lim_{k \to \infty} \E \left[ \phi_{j/T} \vert \cF_{j/T, k} \right] = \E \left[ \phi_{j/T} \vert \cF_{j/T} \right] = \phi_{j/T}, \ \mbox{almost surely and in $L^2$}.
	\end{equation*}
	As a consequence, we deduce that
	\begin{equation*}
		U_{k} = v + \frac{1}{\alpha} F(v) + \sum_{l=0}^{2^k T-1} q(l/2^k T, k) \left( 1 + \E \left[ \phi_{l/2^k T, k} \vert \cF_{l/ 2^k T, k} \right] \right) (B_{(l+1)/2^k T} - B_{l/2^k T})
	\end{equation*}
	converges to $U$, almost surely and in $L^2$ as $k \to \infty$.
	
	We next approximate $U_{k}$ by a sequence of random variables whose distributions are in $\cuF_{1, \alpha}^{\salg}$. For future convenience, we denote $T_k = 2^k T$, and define for $1 \le t \le T_k$:
	\begin{align*}
		& v^t = \sqrt{T_k} \left( B_{t/T_k} - B_{(t-1)/T_k} \right), \ q_t = \frac{q((t-1)/T_k, k)}{\sqrt{T_k}}, \\ & \phi_{t-1} \left( v^{\le t-1}\right) = \E \left[ \phi_{(t-1)/T_k, k} \vert \cF_{(t-1)/ T_k, k} \right].
	\end{align*}
	Note that the above quantities also depend on $k$, but we supress this dependence to avoid heavy notation. We can then write
	\begin{align*}
		U_{k} & = v + \frac{1}{\alpha} F(v) + \sum_{t=1}^{T_k} \sqrt{T_k} q_t \left( 1 + \phi_{t-1} \left( v^{\le t-1} \right) \right) \frac{v^t}{\sqrt{T_k}} \\
		& = v + \frac{1}{\alpha} F(v) + \sum_{t=1}^{T_k} q_t \left( 1 + \phi_{t-1} \left( v^{\le t-1} \right) \right) v^t,
	\end{align*}
	where by Jensen's inequality,
	\begin{equation*}
		\E \left[ \phi_{t-1} \left( v^{\le t-1} \right)^2 \right] \le \E \left[ \phi_{(t-1)/T_k, k}^2 \right] \le \frac{1}{\alpha}.
	\end{equation*}
	
	We are now in position to complete the proof. Define
	\begin{align*}
		U_{k}^{(1)} = \, v + \frac{1}{\alpha} F(v) + \sum_{t=1}^{T_k} q_t \left( v^{t+1} + \phi_{t-1} \left( v^{\le t-1} \right) v^{t} \right),
	\end{align*}
	then we have the following estimate:
	\begin{align*}
		& \E \left[ \left( U_{k} - U_{k}^{(1)} \right)^2 \right] \\
		= \, & \E \left[ \left( \sum_{t=1}^{T_k - 1} (q_t - q_{t+1}) v^{t+1} + q_{T_k} v^{T_k+1} - q_1 v^1 \right)^2 \right] \\
		= \, & \sum_{t=1}^{T_k - 1} (q_t - q_{t+1})^2 + q_1^2 + q_{T_k}^2 \\
		= \, & \frac{1}{T_k} \left( \sum_{t=1}^{T_k - 1} (q((t-1)/T_k, k) - q(t/T_k, k))^2 + q(0, k)^2 + q((T_k-1)/T_k, k)^2 \right) \\
		\le \, & \frac{1}{T_k} \left( \sum_{j=0}^{T-1} \left( q(j/T) - q((j+1)/T) \right)^2 + q(0)^2 + q(1)^2 \right) \to 0 \ \text{as} \ k \to \infty,
	\end{align*}
	leading to $U_{k}^{(1)} - U_{k} \stackrel{p}{\to} 0$. As we've shown $U_{k} \stackrel{a.s.}{\to} U$, it follows that $U_{k}^{(1)} \stackrel{p}{\to} U$, which implies $\Law (U_{k}^{(1)}) \stackrel{w}{\Rightarrow} \Law (U)$ as $k \to \infty$. Now since $\E[ \phi_{t-1} ( v^{\le t-1} )^2 ] \le 1/\alpha$, and
	\begin{equation*}
		\sum_{t=1}^{T_k} q_t^2 = \frac{1}{T_k} \sum_{l=0}^{T_k - 1} q (l/T_k, k)^2 = \frac{1}{T} \sum_{j=0}^{T - 1} q (j/T)^2 = 1 - q
	\end{equation*}
	by our assumption, \cref{thm:FeasibleDiscrete} implies that $\operatorname{Law} (U_{k}^{(1)}) \in \cuF_{1, \alpha}^{\salg}$. Using again the fact that $\cuF_{1, \alpha}^{\salg}$ is closed under weak limits, we know that $\operatorname{Law} (U) \in \cuF_{1, \alpha}^{\salg}$.
\end{proof}

Next we move from simple adapted stochastic processes to general progressively measurable stochastic processes, thus completing the proof of Theorem~\ref{thm:general_amp_achievability} for the case $m=1$. 
\begin{proof}[Proof of Theorem \ref{thm:general_amp_achievability} for $m=1$]
	We prove this theorem via standard approximation arguments in stochastic analysis. 
	We will use several times the fact that $\cuF^{\salg}_{1,\alpha}$ is closed under weak convergence. First, note that if we define
	\begin{equation*}
		\widetilde{U} = v + \frac{1}{\alpha} F(v) + \int_{0}^{1} \widetilde{q}(t) \left( 1 + \phi_t \right) \d B_t
	\end{equation*}
	for another $\widetilde{q} \in L^2[0, 1]$ satisfying $\norm{\widetilde{q}}_{L^2}^2 = \norm{q}_{L^2}^2 = 1 - q$, then It\^{o}'s isometry implies
	\begin{align*}
		& \E \left[ \left( U - \widetilde{U} \right)^2 \right] = \int_{0}^{1} \E \left[ (q(t) - \widetilde{q}(t))^2 (1 + \phi_t)^2 \right] \d t \\
		=\, & \int_{0}^{1} (q(t) - \widetilde{q}(t))^2 \E \left[ (1 + \phi_t)^2 \right] \d t \le \left( 1 + \frac{1}{\sqrt{\alpha}} \right)^2 \norm{q - \widetilde{q}}_{L^2}^2,
	\end{align*}
	where the last inequality follows from the fact $\E [\phi_t^2] \le 1 / \alpha$.
	Since $C[0, 1]$ is dense in $L^2 [0, 1]$, we know that $\{ \widetilde{U}: \widetilde{q} \in C[0, 1] \}$ is a dense subset of $\{ U: q \in L^2 [0, 1] \}$ in the space of $L^2$-integrable random variables, which further implies that $\{ \operatorname{Law} (\widetilde{U}): \widetilde{q} \in C[0, 1] \}$ is dense in $\{ \operatorname{Law} (U): q \in L^2[0, 1] \}$ under weak limit. Since $\cuF^{\salg}_{1,\alpha}$ is closed, we can assume without loss of generality that \smodif{$q \in C[0, 1]$}. Now for any $M > 0$, we define the truncated process
	\begin{equation*}
		\phi_t^M = \phi_t \bone_{\vert \phi_t \vert \le M}, \ \mbox{and} \ U^M = v + \frac{1}{\alpha} F(v) + \int_{0}^{1} q(t) \left( 1 + \phi_t^M \right) \d B_t,
	\end{equation*}
	then we obtain that
	\begin{equation*}
		\E \left[ \left( U - U^M \right)^2 \right] = \E \left[ \left( \int_{0}^{1} q(t) \phi_t \bone_{\vert \phi_t \vert > M} \d B_t \right)^2 \right] = \int_{0}^{1} q(t)^2 \E \left[ \phi_t^2 \bone_{\vert \phi_t \vert > M} \right] \d t \to 0
	\end{equation*}
	as $M \to \infty$ by bounded convergence theorem.
	Therefore, $\operatorname{Law} (U^M) \stackrel{w}{\Rightarrow} \operatorname{Law} (U)$ as $M \to \infty$.
	
	Now it suffices to consider $U^M$, namely assuming $q(t)$ is continuous and $\phi_t$ is bounded by $M$ without loss of generality. Note that by our assumption, the stochastic process $X_t^M = q(t) (1 + \phi_t^M)$ is bounded and progressively measurable. According to Lemma 2.4 and the discussion of Problem 2.5 in \cite{karatzas2012brownian}, $X_t^M$ can be arbitrarily approximated in $L^2([0, 1] \times \Omega)$ by a sequence of simple adapted processes as described in the statement of Lemma~\ref{lem:simple_process}. As a consequence, there exists a sequence $\{U_k^M\}_{k \in \mathbb{N}}$ such that for all $k \in \mathbb{N}$, $\operatorname{Law} ( U_k^M) \in \cuF_{1, \alpha}^{\salg}$, and that $U_k^M \stackrel{L^2}{\to} U^M$. Since $\cuF^{\salg}_{1,\alpha}$ is closed, we know that $\operatorname{Law} (U^M)$ is $(\alpha, 1)$-feasible, $\forall M > 0$. Letting $M \to \infty$ and using again the fact that $\cuF^{\salg}_{1,\alpha}$ is closed, it finally follows that $\operatorname{Law} (U) \in \cuF^{\salg}_{1,\alpha}$, completing the proof of \cref{thm:general_amp_achievability}.
\end{proof}

%
%
\section{Dual characterization and Parisi formula: Proof of Theorem \ref{thm:two_stage_strong_duality}}\label{sec:char_opt_ctrl}

This section will be devoted to the proof
of Theorem \ref{thm:two_stage_strong_duality}, thus yielding a dual characterization for 
$\VH_{1,\alpha}^{\sAMP}(h)$. While our final result is limited to the case $m=1$,
we will prove several intermediate results for general $m \ge 1$.

\modif{Recall that $\cuF^{\sAMP}_{m,\alpha}$ denotes the set of distributions achievable by our two-stage AMP algorithm, as described in \cref{thm:general_amp_achievability}.} We begin by defining a subset of $\cuF^{\sAMP}_{m,\alpha}$
corresponding to a fixed pair of matrix-valued parameters $(Q, \{ Q (t) \}_{0 \le t \le 1})$, satisfying 
\begin{equation*}
	\int_{0}^{1} Q(t) Q(t)^\top \d t = I_m - Q\, ,
\end{equation*}
\modif{and a fixed $Q$-contraction $F$.}
With an abuse of notation, we denote such a \modif{collection of parameters} by \modif{$(Q, F)$}, and define
the following set of probability distributions on $\R^m$:
\begin{equation}\label{eq:mean_0_integral}
	\begin{split}
		\cuF^{\sAMP}_{m, \alpha}(Q, \modif{F}) :=\, \bigg\{ & \operatorname{Law} \left( V + \frac{1}{\alpha} F(V) + \int_{0}^{1} Q(t) \left( I_m + \Phi_t \right) \d B_t \right): \\
		& \Phi_t \in D [0, 1], \ \mbox{and} \ \E \left[ \Phi_t \Phi_t^\top \right] \preceq \frac{I_m}{\alpha}, \ \forall t \in [0, 1] \bigg\}.
	\end{split}
\end{equation}

\modif{In what follows, we formally establish the dual relationship between $\cuF^{\sAMP}_{m,\alpha}$ and $\VH_{m, \alpha}^{\sAMP} (\cdot)$ for any fixed $(Q, F)$, and introduce the associated value function $V_{\gamma}$ for $m=1$, which will play a crucial role in the proof of \cref{thm:two_stage_strong_duality}.}

\subsection{A duality principle}\label{sec:lagrange_dual}

The proposition below establishes a dual characterization for $\close \cuF_{m, \alpha}^{\sAMP} (Q, F)$, the closure of $ \cuF_{m, \alpha}^{\sAMP} (Q, F)$ with respect to the weak topology.
\begin{prop}\label{thm:convexity_fix_Q}
	$\close \cuF^{\sAMP}_{m, \alpha} (Q, F)$ is closed under weak limit and convex. As a consequence, for any $\mu \in \mathscr{P} (\R^m)$, $\mu \in \close \cuF^{\sAMP}_{m, \alpha}(Q, F)$ if and only if $\forall h \in C_b (\R^m)$,
	\begin{equation*}
		\int_{\R^m} h \d \mu \le \sup_{\nu \in \close \cuF^{\sAMP}_{m, \alpha}(Q, F)} \left\{ \int_{\R^m} h \d \nu \right\} = \sup_{\nu \in \cuF^{\sAMP}_{m, \alpha}(Q, F)} \left\{ \int_{\R^m} h \d \nu \right\}.
	\end{equation*}
\end{prop}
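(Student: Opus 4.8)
The plan is to establish the two structural properties—closedness under weak limits and convexity—and then invoke \cref{thm:dual_char_prob} to obtain the stated dual characterization for free. Closedness under weak limits is essentially by construction: $\close \cuF^{\sAMP}_{m, \alpha}(Q, F)$ is defined as a closure, so I would only need to check that the ambient space of probability measures is metrizable (which it is, via the topology of weak convergence on $\cuP(\R^m)$), so that the closure is itself closed. The substance of the proof is therefore convexity of $\cuF^{\sAMP}_{m, \alpha}(Q, F)$ itself (convexity then passes to the closure).

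For convexity, fix two admissible controls $\{\Phi_t^{(0)}\}$ and $\{\Phi_t^{(1)}\}$, with associated outputs $U^{(0)}, U^{(1)}$ of the form \eqref{eq:mean_0_integral}, and fix $\lambda \in (0,1)$; I want to produce an admissible control whose output has law $\lambda \operatorname{Law}(U^{(0)}) + (1-\lambda)\operatorname{Law}(U^{(1)})$. The standard trick is randomization: enlarge the probability space by an independent Bernoulli variable $\xi$ with $\P(\xi = 1) = \lambda$, but—crucially—$\xi$ must be made adapted at time $0$ so that the resulting control is still progressively measurable with respect to a filtration satisfying the hypotheses. Concretely, I would work on the filtration $\cF_t' = \sigma(\xi) \vee \cF_t = \sigma(\xi, V, (B_s)_{s\le t})$, note that $\{B_t\}$ remains a Brownian motion with respect to $\{\cF_t'\}$ since $\xi \perp (V, B)$, and set $\Phi_t = \Phi_t^{(\xi)} := \mathbbm{1}_{\xi = 0}\Phi_t^{(0)} + \mathbbm{1}_{\xi = 1}\Phi_t^{(1)}$. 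This process is $\{\cF_t'\}$-progressively measurable, and the second-moment constraint is preserved because $\E[\Phi_t \Phi_t^\top] = \lambda \E[\Phi_t^{(1)}(\Phi_t^{(1)})^\top] + (1-\lambda)\E[\Phi_t^{(0)}(\Phi_t^{(0)})^\top] \preceq I_m/\alpha$ by convexity of the PSD cone. Conditioning on $\xi$, the output $U = V + \frac{1}{\alpha}F(V) + \int_0^1 Q(t)(I_m + \Phi_t)\,\d B_t$ has conditional law $\operatorname{Law}(U^{(\xi)})$, so its unconditional law is exactly the desired mixture. One subtlety I would flag: the definition \eqref{eq:mean_0_integral} is stated with the specific filtration $\cF_t = \sigma(V, (B_s)_{s\le t})$, so I need to either argue that enlarging by an independent $\sigma(\xi)$ does not change the achievable set of laws (it cannot decrease it, and any $\{\cF_t'\}$-control can be conditioned on $\xi$ to exhibit its law as a mixture of laws from $\{\cF_t\}$-controls, hence lies in the closure), or simply note that $\close\cuF^{\sAMP}_{m,\alpha}(Q,F)$ is what appears in the statement and the mixture construction lands there. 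I would phrase it so the Bernoulli randomization produces an element of the closure directly.

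With convexity and weak-closedness in hand, the final equivalence is an immediate application of \cref{thm:dual_char_prob} with $E = \close\cuF^{\sAMP}_{m, \alpha}(Q, F)$: $\mu \in E$ iff $\int h\,\d\mu \le \sup_{\nu\in E}\int h\,\d\nu$ for all $h\in C_b(\R^m)$. The last equality in the display, $\sup_{\nu \in \close\cuF^{\sAMP}_{m,\alpha}(Q,F)}\int h\,\d\nu = \sup_{\nu\in\cuF^{\sAMP}_{m,\alpha}(Q,F)}\int h\,\d\nu$, follows since $h\in C_b$ means $\nu\mapsto\int h\,\d\nu$ is weakly continuous, so the supremum over a set equals the supremum over its closure.

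I expect the main obstacle to be the bookkeeping around the filtration: making sure the Bernoulli-randomized control is genuinely admissible in the sense required (progressive measurability, $\{B_t\}$ still Brownian, the moment bound preserved) and that the resulting law lands in the set as defined rather than in a strictly larger randomized version. This is routine but must be done carefully; everything else is a direct citation of \cref{thm:dual_char_prob} and weak-continuity of integration against bounded continuous functions.
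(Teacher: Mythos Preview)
Your overall architecture is correct and matches the paper: prove closedness (immediate from the definition of closure) and convexity, then invoke \cref{thm:dual_char_prob}, with the final equality following from weak continuity of $\nu\mapsto\int h\,\d\nu$ for $h\in C_b$. The convexity argument via a Bernoulli selector is also the right idea. However, the filtration issue you flag is not just bookkeeping---it is the entire content of the proof, and your two proposed resolutions are both circular.

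Specifically: let $\mathcal{A}=\cuF^{\sAMP}_{m,\alpha}(Q,F)$ denote the set of laws achievable with controls adapted to $\cF_t=\sigma(V,(B_s)_{s\le t})$, and $\mathcal{A}'$ the analogous set with the enlarged filtration $\cF_t'=\sigma(\xi)\vee\cF_t$. Your mixing construction shows $\mathcal{A}'=\{\lambda P_0+(1-\lambda)P_1:P_i\in\mathcal{A}\}$, i.e., $\mathcal{A}'$ is exactly the set of $\lambda$-mixtures of $\mathcal{A}$. Asserting that ``enlarging by $\sigma(\xi)$ does not change the achievable set'' is the statement $\mathcal{A}'=\mathcal{A}$, which is precisely the convexity you are trying to prove. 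Asserting that ``the mixture lands in the closure'' is the statement $\mathcal{A}'\subseteq\close\mathcal{A}$, which is again the convexity of $\close\mathcal{A}$. Neither route escapes the circle.

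The paper's resolution is a genuine additional idea you are missing: a time-reparametrization that manufactures the Bernoulli \emph{inside} a Brownian filtration. Given $\veps\in(0,1)$, one builds a new Brownian motion $(W_t)_{t\in[0,1]}$ whose increments on $[0,\veps]$ are fresh (independent of everything) and whose increments on $[\veps,1]$ are a time-rescaled copy of $B$; correspondingly one stretches $Q$ to a $\widetilde Q$ supported on $[\veps,1]$ and time-shifts the controls $\Phi^{(i)}$. The Bernoulli $T$ is then taken to be $\cF^W_\veps$-measurable (e.g.\ $T=\bone\{\|W_\veps\|>c_\lambda\}$), hence independent of both $U_1,U_2$, and the mixed control $T\widetilde\Phi^{(1)}+(1-T)\widetilde\Phi^{(2)}$ is adapted to the filtration of $W$. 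This produces the mixture law inside $\cuF^{\sAMP}_{m,\alpha}(\widetilde Q,F)$ for the modified $\widetilde Q$; one then shows $\|\widetilde Q-Q\|_{L^2}\to 0$ as $\veps\to 0$, so the corresponding outputs converge in $L^2$, placing the mixture in $\close\cuF^{\sAMP}_{m,\alpha}(Q,F)$. This $\veps\to 0$ step is precisely where the closure is used, and is what your sketch lacks.
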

\begin{proof}
	By definition, $\close \cuF^{\sAMP}_{m,\alpha}(Q, F)$ is automatically closed, next we show its convexity. Assume $P_1, P_2 \in \close \cuF^{\sAMP}_{m,\alpha}(Q, F)$, then there exist two sequences of probability measures $\{ P_{1, k} \}$ and $\{  P_{2, k} \}$ \modif{in $\cuF^{\sAMP}_{m,\alpha}(Q, F)$}, such that $P_{1, k} \stackrel{w}{\Rightarrow} P_1$, $\ P_{2, k} \stackrel{w}{\Rightarrow} P_2$ as $k \to \infty$. For each $\alpha \in [0, 1]$, we aim to prove
	\begin{equation*}
		\alpha P_1 + (1 - \alpha) P_2 \in \close \cuF^{\sAMP}_{m,\alpha}(Q, F).
	\end{equation*}
	Since $\alpha P_{1, k} + (1 - \alpha) P_{2, k} \stackrel{w}{\Rightarrow} \alpha P_1 + (1 - \alpha) P_2$, it suffices to show
	\begin{equation}\label{eq:convex_prop}
		\alpha P_{1, k} + (1 - \alpha) P_{2, k} \in \close \cuF^{\sAMP}_{m,\alpha}(Q, F), \ \forall k.
	\end{equation}
	Fix $k$, by definition of $\cuF^{\sAMP}_{m,\alpha}(Q, F)$, we can write
	\begin{equation*}
		P_{1, k} = \operatorname{Law} (U_1), \ P_{2, k} = \operatorname{Law} (U_2),
	\end{equation*}
	where
	\begin{align*}
		U_i =\, V + \frac{1}{\alpha} F(V) + \int_{0}^{1} Q(t) (I_m + \Phi_t^{(i)}) \d B_t, \, i = 1, 2.
	\end{align*}
	Let $0 < \veps < 1$, define a new standard Brownian motion $(W_t)_{0 \le t \le 1}$ by requiring that $(W_t)_{0 \le t \le \veps}$ is independent of $\cF_1 = \sigma(V, (B_t)_{0 \le t \le 1})$, and for $\veps \le t \le 1$,
	\begin{equation*}
		W_t = W_{\veps} + \sqrt{1 - \veps} B_{\frac{t-\veps}{1-\veps}}.
	\end{equation*}
	We further define 	$\widetilde{Q}(t) =0$ for $0\le t\le \eps$ and
	\begin{equation*}
		\widetilde{Q}(t) = \frac{1}{\sqrt{1 - \veps}} Q \left( \frac{t - \veps}{1 - \veps} \right), \ \widetilde{\Phi}_t^{(i)} = \Phi_{\frac{t - \veps}{1 - \veps}}^{(i)}, \ i = 1, 2.
	\end{equation*}
	Then, it follows that for $i = 1, 2$,
	\begin{align*}
		U_i =\, & V + \frac{1}{\alpha} F(V) + \int_{0}^{1} Q(t) (I_m + \Phi_t^{(i)}) \d B_t \\
		=\, & V + \frac{1}{\alpha} F(V) + \int_{\veps}^{1} Q \left( \frac{t - \veps}{1 - \veps} \right) \left( I_m + \Phi_{\frac{t - \veps}{1 - \veps}}^{(i)} \right) \d B_{\frac{t-\veps}{1-\veps}} \\
		=\, & V + \frac{1}{\alpha} F(V) + \int_{\veps}^{1} \frac{1}{\sqrt{1 - \veps}} Q \left( \frac{t - \veps}{1 - \veps} \right) \left( I_m + \Phi_{\frac{t - \veps}{1 - \veps}}^{(i)}\right) \sqrt{1 - \veps} \d B_{\frac{t-\veps}{1-\veps}} \\
		=\, & V + \frac{1}{\alpha} F(V) + \int_{\veps}^{1} \widetilde{Q} (t) (I_m + \widetilde{\Phi}_t^{(i)}) \d W_t.
	\end{align*}
	Note that there exists a Bernoulli random variable $T$ such that $T \in \cF_{\veps}^{W}$, and that
	\begin{equation*}
		\P (T = 1) = \alpha = 1 - \P(T = 0).
	\end{equation*}
	For example, we can take $T = \bone_{\norm{W_{\veps}}_2 > C_{\alpha}}$ where $\P(\norm{W_{\veps}}_2 > C_{\alpha}) = \alpha$. Therefore, $T$ is independent of $(U_1, U_2)$. Set $U = T U_1 + (1 - T) U_2$, then we have
	\begin{equation*}
		\operatorname{Law} (U) = \alpha \operatorname{Law} (U_1) + (1 - \alpha) \operatorname{Law} (U_2) = \alpha P_{1, k} + (1 - \alpha) P_{2, k},
	\end{equation*}
	and
	\begin{align*}
		U =\, & V + \frac{1}{\alpha} F(V) + \int_{\veps}^{1} \widetilde{Q} (t) \left( I_m + T \widetilde{\Phi}_t^{(1)} + (1 - T) \widetilde{\Phi}_t^{(2)} \right) \d W_t \\
		=\, & V + \frac{1}{\alpha} F(V) + \int_{0}^{1} \widetilde{Q} (t) \left( I_m + T \widetilde{\Phi}_t^{(1)} + (1 - T) \widetilde{\Phi}_t^{(2)} \right) \d W_t,
	\end{align*}
	where the last line is due to the fact that $\widetilde{Q}(t) = 0$ when $0 \le t \le \veps$. By definition of $\widetilde{Q}$, $\widetilde{\Phi}$ and $T$, we know that $\operatorname{Law} (U) \in \cuF^{\sAMP}_{m,\alpha}(\widetilde{Q}, F)$. Moreover, if we denote
	\begin{equation*}
		U' = V + \frac{1}{\alpha} F(V) + \int_{0}^{1} Q (t) \left( I_m + T \widetilde{\Phi}_t^{(1)} + (1 - T) \widetilde{\Phi}_t^{(2)} \right) \d W_t,
	\end{equation*}
	then $\Law(U') \in \cuF^{\sAMP}_{m,\alpha}(Q, F)$. \modif{We next show that $\E [\norm{U' - U}_2^2] \to 0$ as $\veps \to 0$. Using It\^{o}'s isometry, and the fact that $\widetilde{\Phi}_t^{(1)}$ and $\widetilde{\Phi}_t^{(2)}$ have uniformly bounded second moments, it suffices to prove
		\begin{align*}
			\lim_{\veps \to 0} \, \norm{\widetilde{Q} - Q}_{L^2 [0, 1]}^2  = \, 0.
		\end{align*}
		Since $\widetilde{Q}(t) = Q ( (t - \veps) / (1 - \veps) ) / \sqrt{1 - \veps}$, the above equation holds for any continuous $Q$. We can then use continuous matrix-valued functions to approximate general $Q \in L^2 ([0, 1] \to \R^{m \times m})$ to conclude the proof for $Q$.
	}
	This finally implies that $\operatorname{Law} (U) \in \close \cuF^{\sAMP}_{m,\alpha}(Q, F)$, thus proving Eq.~\eqref{eq:convex_prop}, and the desired result follows immediately.
\end{proof}

\cref{thm:convexity_fix_Q} shows that,
characterizing $\close \cuF^{\sAMP}_{m, \alpha}(Q, F)$ is equivalent to computing the following quantity for each $h \in C_b(\R^m)$:
\begin{equation}\label{eq:dual_value_fixed_Q_F}
	\begin{split}
		\VH_{m,\alpha}^{\sAMP}(Q, F, h) := \, & \sup_{\Phi \in D[0, 1]} \E \left[ h \left( V + \frac{1}{\alpha} F(V) + \int_{0}^{1} Q(t) (I_m + \Phi_t) \d B_t \right) \right], \\
		\, & \text{subject to} \,\, \E \left[ \Phi_t \Phi_t^\top \right] \preceq \frac{I_m}{\alpha}, \ \forall t \in [0, 1].
	\end{split}
\end{equation}
The above constrained optimization problem can be transformed  into an unconstrained one using the method of Lagrange multipliers, based on the following theorem:
\begin{thm}[Thm. 2.9.2 in \cite{zalinescu2002convex}]\label{thm:strong_duality}
	Let $\cuX$ and $\cuY$ be two topological vector spaces, where $\cuY$ is ordered by a closed convex cone $\cuC \subset \cuY$ (namely $y_1\ge_{\cuC} y_2$ if and only if
	$y_1-y_2\in \cuC$). Assume that $f$ is a proper convex function on $\cuX$, and $H: \cuX \rightarrow (\modif{\cuY \cup \{ \infty \}}, \cuC)$ is a $\cuC$-convex map, i.e., $H((1-\lambda)x_1+\lambda x_2) \le_\cuC (1-\lambda)H(x_1)+\lambda H(x_2)$. 
	Define the following primal optimization problem, whose value we denote by $v(P_0)$:
	\begin{equation}\label{eq:primal_opt}
		\begin{split}
			\mbox{\rm minimize} \ f(x), \quad \mbox{\rm subject to} \ H(x) \le_{\cuC} 0\, ,
		\end{split}
		\tag{$P_0$}
	\end{equation}
	and the Lagrange function
	\begin{equation}\label{eq:lagrange_fct}
		L: \cuX \times \cuC^{+} \rightarrow \overline{\mathbb{R}}, \quad L\left(x, y^{*}\right):= \begin{cases}f(x)+\left\langle H(x), y^{*}\right\rangle & \text { if } x \in \operatorname{dom} H, \\ \infty & \text { if } x \notin \operatorname{dom} H, \end{cases}
	\end{equation}
	where \smodif{$\operatorname{dom} H = \{ x \in \cuX: H(x) < \infty \}$}, and $\cuC^{+}$ is the dual cone of $\cuC$. Moreover, we define the dual problem of $(P_0)$ (whose value we denote by $v(D_0)$) as
	\begin{equation}\label{eq:dual_opt}
		\begin{split}
			\mbox{\rm maximize} \ \inf_{x \in \cuX} L (x, y^*), \quad \mbox{\rm subject to} \ y^* \in \cuC^{+}. 
		\end{split}\tag{$D_0$}
	\end{equation}
	Suppose that the following Slater's condition holds:
	\begin{equation}\label{eq:slater}
		\exists x_0 \in \operatorname{dom} f: \ - H(x_0) \in \operatorname{int} \cuC. 
	\end{equation}
	Then the problem $(D_0)$ has optimal solutions and $v(P_0) = v(D_0)$, i.e., there exists $\overline{y}^* \in \cuC^+$ such that
	\begin{equation}\label{eq:optimality}
		\inf \left\{f(x) \mid H(x) \leq_{\cuC} 0\right\}=\inf \left\{L\left(x, \overline{y}^{*}\right) \mid x \in \cuX \right\}.
	\end{equation}
	Furthermore, the following statements are equivalent for any $\overline{x} \in \operatorname{dom} f$:
	\begin{enumerate}
		\item[(i)] $\overline{x}$ is a solution of $( P_{0} )$,
		\item[(ii)] $H(\overline{x}) \le_{\cuC} 0$ and there exists $\overline{y}^{*} \in \cuC^{+}$ such that
		\begin{equation*}
			0 \in \partial\left(f+\overline{y}^{*} \circ H\right)(\overline{x}) \quad \text { and } \quad\left\langle H(\overline{x}), \overline{y}^{*}\right\rangle=0,
		\end{equation*}
		\smodif{where for two functions $g$ and $h$, $\partial g$ denotes the subdifferential of $g$ and $g \circ h$ represents the function composition of $g$ and $h$.}
		\item[(iii)] There exists $\overline{y}^{*} \in \cuC^{+}$ such that $(\overline{x}, \overline{y}^{*} )$ is a saddle point for $L$, i.e.,
		\begin{equation*}
			\forall x \in \cuX, \ \forall y^{*} \in \cuC^{+}: L\left(\overline{x}, y^{*}\right) \leq L\left(\overline{x}, \overline{y}^{*}\right) \leq L\left(x, \overline{y}^{*}\right).
		\end{equation*}
	\end{enumerate}
\end{thm}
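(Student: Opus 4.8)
The plan is to prove this by the standard perturbation-function (optimal value function) argument, which is the route taken in \cite{zalinescu2002convex}; I sketch it here for completeness. Define $\varphi:\cuY\to\overline{\R}$ by
$$\varphi(u):=\inf\{\,f(x):H(x)\le_{\cuC}u\,\}=\inf\{\,f(x):u-H(x)\in\cuC\,\},$$
so that $v(P_0)=\varphi(0)$. First I would verify that $\varphi$ is convex: this uses the convexity of $f$ together with the $\cuC$-convexity of $H$, since $H(x_i)\le_{\cuC}u_i$ implies $H((1-\lambda)x_1+\lambda x_2)\le_{\cuC}(1-\lambda)H(x_1)+\lambda H(x_2)\le_{\cuC}(1-\lambda)u_1+\lambda u_2$. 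I would also record that $\varphi$ is $\cuC$-nonincreasing, i.e.\ $u_1\le_{\cuC}u_2$ implies $\varphi(u_1)\ge\varphi(u_2)$, because enlarging the right-hand side enlarges the feasible set.

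The heart of the argument is to show that Slater's condition \eqref{eq:slater} forces $\varphi$ to be bounded above on a neighbourhood of $0$. Indeed, choosing $x_0$ with $-H(x_0)\in\operatorname{int}\cuC$, there is a neighbourhood $N$ of $0$ in $\cuY$ with $-H(x_0)+N\subseteq\cuC$, so for every $u\in N$ we have $u-H(x_0)\in\cuC$, i.e.\ $H(x_0)\le_{\cuC}u$, hence $\varphi(u)\le f(x_0)<\infty$. A convex function bounded above on a neighbourhood of a point is continuous there, hence subdifferentiable there; so (assuming $\varphi(0)=v(P_0)$ is finite, the degenerate case being handled separately below) there exists $\bar{y}^{*}\in\cuY^{*}$ with $-\bar{y}^{*}\in\partial\varphi(0)$. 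Combining the subgradient inequality $\varphi(u)\ge\varphi(0)-\langle u,\bar{y}^{*}\rangle$ with the $\cuC$-monotonicity fact $\varphi(u)\le\varphi(0)$ for $u\in\cuC$ gives $\langle u,\bar{y}^{*}\rangle\ge0$ for all $u\in\cuC$, i.e.\ $\bar{y}^{*}\in\cuC^{+}$.

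Strong duality then drops out by chaining inequalities. Weak duality $v(D_0)\le v(P_0)$ is immediate: for any feasible $x$ and any $y^{*}\in\cuC^{+}$ one has $\langle H(x),y^{*}\rangle\le0$, so $L(x,y^{*})\le f(x)$ and thus $\inf_x L(x,y^{*})\le v(P_0)$. Conversely, for any $x\in\operatorname{dom}f$ with $H(x)\ne\infty$, taking $u=H(x)$ gives $\varphi(H(x))\le f(x)$, and the subgradient inequality at $0$ gives $\varphi(H(x))\ge\varphi(0)-\langle H(x),\bar{y}^{*}\rangle$; hence $L(x,\bar{y}^{*})=f(x)+\langle H(x),\bar{y}^{*}\rangle\ge\varphi(0)=v(P_0)$, and taking the infimum over $x$ yields $\inf_x L(x,\bar{y}^{*})\ge v(P_0)$. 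Together with weak duality this gives $v(P_0)=v(D_0)=\inf_x L(x,\bar{y}^{*})$ and identifies $\bar{y}^{*}$ as a dual optimizer, which is exactly \eqref{eq:optimality}. The equivalence of (i), (ii), (iii) is then bookkeeping around this $\bar{y}^{*}$: if $\bar{x}$ solves $(P_0)$ then $f(\bar{x})=v(P_0)=\inf_x L(x,\bar{y}^{*})\le L(\bar{x},\bar{y}^{*})=f(\bar{x})+\langle H(\bar{x}),\bar{y}^{*}\rangle\le f(\bar{x})$, forcing the complementary-slackness relation $\langle H(\bar{x}),\bar{y}^{*}\rangle=0$ and $0\in\partial(f+\bar{y}^{*}\circ H)(\bar{x})$, which is (ii); conversely (ii) says $\bar{x}$ is feasible and minimizes $L(\cdot,\bar{y}^{*})$, so $f(\bar{x})=L(\bar{x},\bar{y}^{*})\le f(x)$ for all feasible $x$, giving (i); and (ii) $\Leftrightarrow$ (iii) because $L(\bar{x},y^{*})\le L(\bar{x},\bar{y}^{*})$ for all $y^{*}\in\cuC^{+}$ is equivalent to $H(\bar{x})\le_{\cuC}0$ together with $\langle H(\bar{x}),\bar{y}^{*}\rangle=0$ (since $\sup_{y^{*}\in\cuC^{+}}\langle H(\bar{x}),y^{*}\rangle$ equals $0$ when $H(\bar{x})\le_{\cuC}0$ and $+\infty$ otherwise), while $L(\bar{x},\bar{y}^{*})\le L(x,\bar{y}^{*})$ for all $x$ is exactly $0\in\partial(f+\bar{y}^{*}\circ H)(\bar{x})$.

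The step I expect to be the main obstacle is the topological one: establishing, in a general topological vector space, that boundedness above of the convex function $\varphi$ on a neighbourhood of $0$ yields continuity and hence a nonempty subdifferential at $0$, and that the resulting subgradient can be normalized into $\cuC^{+}$. This is precisely where the Hahn--Banach separation machinery of \cite{zalinescu2002convex} is invoked, and it is also the point at which one must treat with care the degenerate cases $v(P_0)=-\infty$ (the dual value is then also $-\infty$ by weak duality and every $y^{*}\in\cuC^{+}$ is trivially optimal) and $v(P_0)=+\infty$ (infeasibility or $f\equiv+\infty$ on the feasible set).
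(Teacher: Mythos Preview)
The paper does not prove this theorem: it is simply quoted from \cite{zalinescu2002convex} (Thm.~2.9.2) and then applied, so there is no ``paper's own proof'' to compare against. Your perturbation-function sketch is the standard route and is essentially correct; the only point worth flagging is that in the degenerate case $v(P_0)=-\infty$ the assertion ``$(D_0)$ has optimal solutions'' still needs a one-line justification (any $y^*\in\cuC^+$ attains the value $-\infty$), which you note but could make explicit.
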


Let $\cuX$ be $\{ \operatorname{Law} (V, \{ \Phi_t \}_{0 \le t \le 1}): \{ \Phi_t \} \in D[0, 1] \}$ equipped with the weak topology, $\cuY = L^1 ([0, 1], \R^{m \times m})$ and
$\cuC = \{ \Gamma \in \cuY: \Gamma(t) \in \S_+^m, \mbox{ a.e. } t \in [0, 1] \}$. We further define for $x \in \cuX$:
\begin{align*}
	g(x) =\, & \E \left[ h \left( V + \frac{1}{\alpha} F(V) + \int_{0}^{1} Q(t) (I_m + \Phi_t) \d B_t \right) \right], \\
	H(x) =\, & \E \left[ \Phi_t \Phi_t^\top \right] - \frac{I_m}{\alpha} \in \cuY.
\end{align*}
Then, the stochastic optimal control problem~\eqref{eq:dual_value_fixed_Q_F} can be re-written as \eqref{eq:primal_opt} with $f = - g$. Since $f$ is a linear functional, $H$ is a $\cuC$-convex operator, and Slater's condition~\eqref{eq:slater} holds by definition, we can apply \cref{thm:strong_duality} to deduce that
\begin{equation}\label{eq:strong_duality}
	\begin{split}
		\VH_{m,\alpha}^{\sAMP}(Q, F, h) = \, \inf_{\Gamma \in \cuC^{+}} \sup_{\Phi \in D[0, 1]} \E \bigg[ \, & h \left(V + \frac{1}{\alpha} F(V) + \int_{0}^{1} Q(t) (I_m + \Phi_t) \d B_t \right) \\
		& - \frac{1}{2} \int_{0}^{1} \left\langle \Gamma(t), \Phi_t \Phi_t^\top - \frac{I_m}{\alpha} \right\rangle \d t \bigg],
	\end{split}
\end{equation}
where
\begin{equation*}
	\cuC^{+} = \left\{ \Gamma \in L^{\infty} ([0, 1], \R^{m \times m}): \,\,\Gamma(t) \in \S_+^m, 
	\mbox{ a.e. } t \in [0, 1] \right\}.
\end{equation*}
Further, the infimum in \cref{eq:strong_duality} is achieved at some $\overline{\Gamma} \in \cuC^{+}$.

\paragraph*{Reduction for the case $m = 1$}
\label{sec:one_dim_reduction}
Similar to \cref{prop:feasible_simple_1dim}, the function parameter $\{ q(t) \}_{t \in [0, 1]}$ in \cref{eq:dual_value_fixed_Q_F} can be eliminated when $m=1$, leading to the simplified definition:
\begin{equation}\label{eq:dual_value_fixed_Q_F_1dim}
	\begin{split}
		\VH_{1,\alpha}^{\sAMP}(q, F, h) := \, & \sup_{\phi \in D[q, 1]} \E \left[ h \left( v + \frac{1}{\alpha} F(v) + \int_{q}^{1} (1 + \phi_t) \d B_t \right) \right], \\
		\, & \text{subject to} \ \sup_{t \in [q, 1]} \E \left[ \phi_t^2 \right] \le \frac{1}{\alpha}.
	\end{split}
\end{equation}
Under this simplification, the dual characterization~\eqref{eq:strong_duality} reduces to
\begin{equation}\label{eq:strong_duality_m=1}
	\begin{split}
		\VH_{1,\alpha}^{\sAMP}(q, F, h) 
		= \, \inf_{\gamma \in L_+^{\infty} [0, 1] } \sup_{\phi \in D[q, 1]} \E \bigg[ \, & h \left( v + \frac{1}{\alpha} F(v) + \int_{q}^{1} (1 + \phi_t) \d B_t \right) \\
		& - \frac{1}{2} \int_{q}^{1} \gamma(t) \left( \phi_t^2 - \frac{1}{\alpha} \right) \d t \bigg],
	\end{split}
\end{equation}
where
\begin{equation*}
	L_+^{\infty} [0, 1] := \left\{ \gamma \in L^{\infty} [0, 1]: \gamma(t) \ge 0, \text{ a.e. } t \in [0, 1] \right\}.
\end{equation*}
For any $\gamma \in L_+^{\infty} [0, 1]$, define
\begin{equation}\label{eq:def_VAL_gamma}
	V_\gamma(q) := \sup_{\phi \in D[q, 1]} \E \left[ h \left( v + \frac{1}{\alpha} F(v) + \int_{q}^{1} (1 + \phi_t) \d B_t \right) - \frac{1}{2} \int_{q}^{1} \gamma(t) \left( \phi_t^2 - \frac{1}{\alpha} \right) \d t \right].
\end{equation}
The following lemma shows that, in order to compute $V_{\gamma} (q)$, it suffices to consider the same quantity with $v + F(v) / \alpha$ replaced by a deterministic value.
\begin{lem}\label{lem:reduce_value_function}
	Define for any $(t, z) \in [0, 1] \times \R$, the following value function:
	\begin{equation}\label{eq:redefine_value_func}
		V_{\gamma} (t,z) = \sup_{\phi \in D [t, 1]} \E \left[ h \left( z+ \int_{t}^{1} \left( 1 + \phi_s \right) \d B_s \right) - \frac{1}{2} \int_{t}^{1} \gamma(s) \left( \phi_s^2 - \frac{1}{\alpha} \right) \d s \right],
	\end{equation}
	then $V_{\gamma}(q) = \E_{v \sim \normal (0, q)} [V_\gamma(q,v+F(v) / \alpha)]$.
\end{lem}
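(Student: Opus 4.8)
\emph{Proof plan for \cref{lem:reduce_value_function}.} The plan is to prove the two inequalities $V_\gamma(q)\le\E_v[V_\gamma(q,v+\alpha^{-1}F(v))]$ and $V_\gamma(q)\ge\E_v[V_\gamma(q,v+\alpha^{-1}F(v))]$ separately, in both cases by conditioning on $\cF_q=\sigma(v,(B_s)_{0\le s\le q})$ and using the tower property together with the independent-increments structure of Brownian motion. As preliminaries I would first record that $z:=v+\alpha^{-1}F(v)$ is $\sigma(v)$-measurable, hence $\cF_q$-measurable, and that under the standing assumption that $h$ is $L$-Lipschitz the map $z\mapsto V_\gamma(q,z)$ is itself $L$-Lipschitz: for any fixed admissible control the penalty term $\tfrac12\int_q^1\gamma(s)(\phi_s^2-\alpha^{-1})\,\d s$ does not involve $z$, so $|V_\gamma(q,z)-V_\gamma(q,z')|\le L|z-z'|$ after taking suprema. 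In particular $V_\gamma(q,\cdot)$ is Borel, and since $h$ is bounded above and $\E|z|<\infty$ (using $\E[F(v)^2]=\alpha q<\infty$), the quantity $\E_v[V_\gamma(q,v+\alpha^{-1}F(v))]$ is well defined and finite.

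For the ``$\le$'' direction, take any $\phi\in D[q,1]$ in \eqref{eq:def_VAL_gamma} and condition on $\cF_q$. Conditionally on $\cF_q$, the process $\{B_{q+u}-B_q\}_{0\le u\le 1-q}$ is a standard Brownian motion independent of $\cF_q$, the value $z$ is a constant, and $\phi_{q+\cdot}$ is a progressively measurable control for this fresh Brownian motion; its possible dependence on the frozen $\cF_q$-data only amounts to additional independent randomization, which never increases the value of such a control problem. Hence the $\cF_q$-conditional expectation of the objective in \eqref{eq:def_VAL_gamma} is $\le V_\gamma(q,z)$ almost surely; taking the outer expectation gives $\E[\cdot]\le\E_v[V_\gamma(q,v+\alpha^{-1}F(v))]$, and then the supremum over $\phi$ gives the claimed inequality.

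For the ``$\ge$'' direction, fix $\veps>0$ and $R>0$. Cover $[-R,R]$ by finitely many points $z_1,\dots,z_N$ at scale $\veps$, pick for each $k$ a control $\phi^{(k)}$ that is $\veps$-optimal for $V_\gamma(q,z_k)$ (possible since $V_\gamma(q,z_k)<\infty$), form the Borel partition $\{A_k\}$ of $[-R,R]$ assigning each $z$ to its nearest $z_k$, and on $\{|z|>R\}$ use the control $\phi\equiv 0$. Define $\phi_s:=\bone_{\{|z|\le R\}}\sum_{k} \bone_{\{z\in A_k\}}\phi^{(k)}_s$; since $\{z\in A_k\}\in\sigma(v)\subseteq\cF_q$ and each $\phi^{(k)}$ is progressively measurable, $\phi$ is progressively measurable, and $\E[\int_q^1\phi_s^2\,\d s]\le\max_{k\le N}\E[\int_q^1(\phi^{(k)}_s)^2\,\d s]<\infty$, so $\phi\in D[q,1]$. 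Conditioning on $v$ once more, on $\{z\in A_k\}$ the conditional objective equals that of $\phi^{(k)}$ started from $z$, which by the $\veps$-optimality of $\phi^{(k)}$ at $z_k$ and the uniform-in-$\phi$ Lipschitz bound in $z$ is at least $V_\gamma(q,z_k)-\veps-L\veps\ge V_\gamma(q,z)-(1+2L)\veps$; on $\{|z|>R\}$ the loss relative to $V_\gamma(q,z)$ is bounded by $\E[(C+L|z|)\bone_{\{|z|>R\}}]$ for a constant $C$ (using $V_\gamma(q,z)\le$ const and $V_\gamma(q,z)\ge h(z)-$ const via $\phi\equiv 0$), which tends to $0$ as $R\to\infty$ by dominated convergence. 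Taking expectations, then $R\to\infty$, then $\veps\to 0$, yields $V_\gamma(q)\ge\E_v[V_\gamma(q,v+\alpha^{-1}F(v))]$, and combining with the first inequality proves the lemma. I expect the main obstacle to be precisely this gluing step in the ``$\ge$'' direction: one must paste the $z$-indexed near-optimal controls into a single admissible (progressively measurable and square-integrable) control, and the truncation at radius $R$ is genuinely needed because $\gamma\in L^\infty_+[0,1]$ is only nonnegative, so the individual $\veps$-optimal controls carry no a priori uniform $L^2$ bound.
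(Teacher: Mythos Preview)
Your argument is correct. The paper does not actually prove this lemma: it says only that ``the proof of \cref{lem:reduce_value_function} uses standard arguments in probability theory, so we omit it for brevity.'' Your conditioning-plus-gluing argument is precisely the standard argument they have in mind, and your truncation at radius $R$ cleanly handles the one nontrivial point (that $\gamma\ge 0$ gives no a~priori uniform $L^2$ bound on the $\veps$-optimal controls).
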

\modif{The proof of \cref{lem:reduce_value_function} uses standard arguments in probability theory, so we omit it for brevity. In \cref{prop:two_stage_veri_arg} below, we employ tools from stochastic optimal control---specifically the Hamilton-Jacobi-Bellman (HJB) equation and verification argument---to compute $V_{\gamma} (q, z)$ for $\gamma \in \FSG$.}

%
%

\subsection{Proof of Theorem~\ref{thm:two_stage_strong_duality}}\label{sec:proof_general_q}
In this section we present the proof of Theorem~\ref{thm:two_stage_strong_duality}. This proof is based on some key propositions, whose proofs are deferred to subsequent sections.

For the proof of parts $(a)$ and $(b)$, we need the following proposition regarding the dual relationship between $V_{\gamma}$ and $f_{\mu}$, the solution to the Parisi PDE. The proof of \cref{prop:two_stage_veri_arg} is presented in \cref{sec:verification_argument}.
\begin{prop}\label{prop:two_stage_veri_arg}
	Recall the value function $V_{\gamma}$ defined in \cref{eq:redefine_value_func} and that $f_{\mu}$ denotes the unique solution of \cref{eq:parisi_mu_gen}, as established in Theorem \ref{thm:solve_Parisi_PDE}. Under the conditions of \cref{thm:two_stage_strong_duality}, we have for all $t \in [0, 1]$ and $x, z \in \R$:
	\begin{equation}\label{eq:two_stage_V_and_f}
		\begin{split}
			V_{\gamma} (t, z) =\, & \inf_{x \in \R} \left\{ f_{\mu} (t, x) + \frac{\gamma(t)}{2} (x - z)^2 \right\} + \frac{1}{2 \alpha} \int_{t}^{1} \gamma(s) \d s, \\
			f_{\mu} (t, x) =\, & \sup_{z \in \R} \left\{ V_{\gamma} (t, z) - \frac{\gamma(t)}{2} (z - x)^2 \right\} - \frac{1}{2 \alpha} \int_{t}^{1} \gamma(s) \d s.
		\end{split}
	\end{equation}
\end{prop}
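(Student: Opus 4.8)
The two identities in \eqref{eq:two_stage_V_and_f} are Legendre--Fenchel duals of one another, so it suffices to prove the first, namely that
\[
V_\gamma(t,z) \;=\; \widetilde V_\gamma(t,z) \;:=\; \inf_{x\in\R}\Big\{ f_\mu(t,x) + \tfrac{\gamma(t)}{2}(x-z)^2 \Big\} + \frac{1}{2\alpha}\int_t^1 \gamma(s)\,\d s .
\]
Indeed, by \cref{thm:solve_Parisi_PDE} we have $\partial_x^2 f_\mu(t,x) > -\gamma(t)$, so $\psi(\cdot):=f_\mu(t,\cdot)+\tfrac{\gamma(t)}{2}(\cdot)^2$ is strictly convex, continuous and superlinear (the superlinearity uses $|\partial_x f_\mu|\le\|h'\|_{L^\infty}$ from \cref{thm:solve_Parisi_PDE}); for such a $\psi$ one has $\psi^{**}=\psi$, which rewritten is exactly the statement that the quadratic transforms $f\mapsto \inf_x\{f(x)+\tfrac{\gamma(t)}{2}(x-\cdot)^2\}$ and $g\mapsto \sup_z\{g(z)-\tfrac{\gamma(t)}{2}(z-\cdot)^2\}$ are mutually inverse on these functions, and the additive constant $\tfrac1{2\alpha}\int_t^1\gamma$ is common to both. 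Hence the second identity follows from the first. The same convexity shows that the infimum defining $\widetilde V_\gamma(t,z)$ is attained at a unique $x^*(t,z)$ with $\partial_x f_\mu(t,x^*)=\gamma(t)(z-x^*)$, that $z\mapsto x^*(t,z)$ is $C^1$ (implicit function theorem), and that $\widetilde V_\gamma(t,\cdot)\in C^2$ with $\partial_z\widetilde V_\gamma(t,z)=\partial_x f_\mu(t,x^*(t,z))$, in particular $|\partial_z\widetilde V_\gamma|\le\|h'\|_{L^\infty}$.

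\textbf{HJB equation and the upper bound $V_\gamma\le\widetilde V_\gamma$.} The value function $V_\gamma$ of \eqref{eq:redefine_value_func} is the value of a stochastic control problem with dynamics $\d X_s=(1+\phi_s)\,\d B_s$, running reward $-\tfrac12\gamma(s)(\phi_s^2-\tfrac1\alpha)$ and terminal reward $h$, whose Hamilton--Jacobi--Bellman equation is
\[
\partial_t V + \sup_{\phi\in\R}\Big\{ \tfrac12(1+\phi)^2\,\partial_z^2 V - \tfrac12\gamma(t)\big(\phi^2-\tfrac1\alpha\big)\Big\} = 0 , \qquad V(1,z)=h(z) .
\]
The first step is to check that $\widetilde V_\gamma$ solves the same equation. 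Differentiating the inf-convolution via the envelope theorem gives $\partial_z^2\widetilde V_\gamma = \tfrac{\gamma(t)\,\partial_x^2 f_\mu}{\partial_x^2 f_\mu+\gamma(t)}<\gamma(t)$ (evaluated at $x^*$), and, after substituting the Parisi PDE \eqref{eq:parisi_mu_gen} with $\mu=\gamma'/\gamma^2$ together with $\partial_x f_\mu(t,x^*)=\gamma(t)(z-x^*)$, one finds $\partial_t\widetilde V_\gamma = -\tfrac12\partial_x^2 f_\mu(t,x^*) - \tfrac{\gamma(t)}{2\alpha}$; plugging these into the Hamiltonian, which evaluates explicitly to $\tfrac{\gamma p}{2(\gamma-p)}+\tfrac{\gamma}{2\alpha}$ for $p=\partial_z^2\widetilde V_\gamma<\gamma$, verifies the HJB identity pointwise. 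Because the time regularity of the weak solution $f_\mu$ is limited, this computation is first done for smooth (mollified) $\gamma$ with classical $f_{\mu}$ and then passed to the limit by stability. Granting the HJB equation, the upper bound is a verification estimate: for admissible $\phi\in D[t,1]$ and $X_s=z+\int_t^s(1+\phi_r)\,\d B_r$, It\^{o}'s formula plus the HJB inequality give $\d\widetilde V_\gamma(s,X_s)\le \tfrac12\gamma(s)(\phi_s^2-\tfrac1\alpha)\,\d s + \partial_z\widetilde V_\gamma(s,X_s)(1+\phi_s)\,\d B_s$; integrating on $[t,1]$ and taking expectations (the stochastic integral is a genuine martingale since $\partial_z\widetilde V_\gamma$ is bounded and $\E\int_t^1(1+\phi_s)^2\,\d s<\infty$) yields $\E[\widetilde V_\gamma(1,X_1)] - \widetilde V_\gamma(t,z) \le \tfrac12\E\int_t^1\gamma(\phi_s^2-\tfrac1\alpha)\,\d s$. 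Finally $\widetilde V_\gamma(1,\cdot)\ge h$ — since $f_\mu(1,y)=\sup_u\{h(y+u)-\tfrac{u^2}{2c}\}\ge h(y)$ and, choosing $u=z-x$ inside the sup, $\inf_x\{f_\mu(1,x)+\tfrac1{2c}(x-z)^2\}\ge h(z)$ — so $\E[h(X_1)]-\tfrac12\E\int_t^1\gamma(\phi_s^2-\tfrac1\alpha)\,\d s\le\widetilde V_\gamma(t,z)$, and taking the supremum over $\phi$ gives $V_\gamma\le\widetilde V_\gamma$.

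\textbf{The lower bound $V_\gamma\ge\widetilde V_\gamma$, and the main obstacle.} For the reverse inequality one exhibits a nearly optimal control: the feedback law $\phi_s^\ast=\tfrac{\partial_z^2\widetilde V_\gamma(s,X_s^\ast)}{\gamma(s)-\partial_z^2\widetilde V_\gamma(s,X_s^\ast)}$ along $\d X_s^\ast=(1+\phi_s^\ast)\,\d B_s$, for which It\^{o}'s formula turns the above inequality into an equality. The delicate point is that $\widetilde V_\gamma(1,\cdot)$ need not coincide with $h$: at $t=1$ there is no control interval left so $V_\gamma(1,\cdot)=h$, whereas for $t<1$ an instantaneous burst of large volatility can reach a concavified value, and one shows $\lim_{t\to1^-}V_\gamma(t,z)=\inf_x\{f_\mu(1,x)+\tfrac1{2c}(x-z)^2\}=\widetilde V_\gamma(1,z)=\operatorname{conc}\big(h-\tfrac{|\cdot|^2}{2c}\big)(z)+\tfrac{z^2}{2c}$, i.e.\ the effective terminal data is this inf--sup quadratic regularization, not $h$ itself. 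One therefore runs $\phi^\ast$ on $[t,1-\delta]$, then on $[1-\delta,1]$ uses a large-volatility control that, started from $X_{1-\delta}^\ast$, realizes to within $\varepsilon$ the (at most) two-point law attaining this concavified value with mean $X_{1-\delta}^\ast$; combining the exact verification identity on $[t,1-\delta]$ with this near-terminal construction and letting $\delta,\varepsilon\to0$ produces controls whose payoff converges to $\widetilde V_\gamma(t,z)$, hence $V_\gamma(t,z)\ge\widetilde V_\gamma(t,z)$. I expect the two genuinely delicate ingredients in \cref{sec:verification_argument} to be (i) justifying the It\^{o}/HJB computation under only the weak-solution regularity of $f_\mu$ in time (via mollification of $\gamma$ and a stability argument), and (ii) the terminal-layer analysis as $t\to1^-$ — identifying the effective terminal data and building the matching near-terminal control for the lower bound, the regularity estimate $\partial_x^2 f_\mu\le C$ of \cref{thm:solve_Parisi_PDE} being the clean case $\sup h''<1/c$.
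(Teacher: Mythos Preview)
Your overall strategy---verify that the inf-convolution $\widetilde V_\gamma$ solves the HJB equation, obtain $V_\gamma\le\widetilde V_\gamma$ by a standard verification estimate, and obtain the reverse inequality by exhibiting an (approximately) optimal feedback control---is correct and matches the paper's in spirit. The paper carries it out through a preliminary change of variables, setting $\Phi(t,x)=f_\mu(t,x/\gamma(t))+x^2/(2\gamma(t))+\tfrac12\int_t^1\gamma(s)\,\d s$ so that the Parisi PDE becomes a cleaner nonlinear heat equation for $\Phi$ and $\widetilde V_\gamma(t,z)=\tfrac{\gamma(t)}{2}z^2-\Phi^*(t,z)-\tfrac12(1-\tfrac1\alpha)\int_t^1\gamma$; the HJB verification and the optimal control $\phi_s^\gamma=\gamma(s)\partial_x^2\Phi(s,X_s^\gamma)-1$ along a Parisi SDE are then done in these variables (Proposition~\ref{prop:veri_arg_SF_gen}). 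This is a reparametrization of what you propose and buys only notational economy. Your diagnosis of the terminal layer is also correct: when $\sup_z h''(z)\ge 1/c$ one has $\widetilde V_\gamma(1,\cdot)=h_c\neq h$, and a large-volatility burst near $t=1$ recovers the concavified value.

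Where your proposal diverges from the paper, and where there is a genuine gap, is in the approximation scheme. First, ``mollify $\gamma$'' targets the wrong parameter: smoothing $\gamma$ makes $\mu$ bounded but does not produce the missing higher $x$-derivatives of $f_\mu$ needed to apply It\^o in the optimal-control step; the paper instead mollifies the terminal data $f_\mu(1,\cdot)$ (equivalently $h$) to reduce to the $C^4$ setting of Assumption~\ref{ass:h_regularity_gen}. Second, and more seriously, in the regime $\sup_z h''(z)\ge 1/c$ there is \emph{no} upper bound on $\partial_x^2 f_\mu(t,x)$ anywhere on $[0,1]\times\R$ (this is exactly the missing half of \cref{thm:solve_Parisi_PDE}), so your feedback $\phi_s^*=\partial_z^2\widetilde V_\gamma/(\gamma-\partial_z^2\widetilde V_\gamma)$ need not be bounded or a priori square-integrable even on $[t,1-\delta]$, and the It\^o computation for the lower bound is not justified. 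The paper handles this differently: it first isolates your terminal-layer observation as a separate statement $V_\gamma^h=V_\gamma^{h_c}$ (Proposition~\ref{prop:Equivalence_h_hc}), then works entirely with $h_c$ and approximates $c_n\uparrow c$ so that $1/c_n>\sup_z h_c''(z)$ and the clean-case analysis applies; the candidate optimal control is defined not via $\partial_x^2 f_\mu$ but via the martingale representation of $M_s:=X_s+\partial_x f_\mu(s,X_s)/\gamma(s)$ (Lemma~\ref{lem:phi_via_martingale} and Proposition~\ref{prop:less_achievability}), which is automatically square-integrable, and optimality is obtained by passing to the limit from the regular case.
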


\vspace{0.5em}

\noindent \textbf{Proof of $(a)$: Variational formula.}  
For any fixed $F: \R \to \R$ and $v \in \R$, by definition of $V_{\gamma}$ in Eq.~\eqref{eq:redefine_value_func}, we have
\begin{equation}\label{eq:verif_rep_q}
	\begin{split}
		& V_{\gamma} \left( q, v + \frac{1}{\alpha} F(v) \right) \\
		= \, & \sup_{\phi \in D[q, 1]} \E \left[ h \left( v + \frac{1}{\alpha} F(v) + \int_{q}^{1} \left( 1 + \phi_t \right) \d B_t \right) - \frac{1}{2} \int_{q}^{1} \gamma(t) \left( \phi_t^2 - \frac{1}{\alpha} \right) \d t \right].
	\end{split}
\end{equation}
According to \cref{prop:two_stage_veri_arg}, we know that
\begin{align*}
	V_{\gamma} \left( q, v + u \right) = \, & \inf_{x \in \R} \left\{ f_{\mu} (q, x) + \frac{\gamma(q)}{2} \left( x - v - u \right)^2 \right\} + \frac{1}{2 \alpha} \int_{q}^{1} \gamma(s) \d s, \\
	f_{\mu} (q, v) =\, & \sup_{z \in \R} \left\{ V_{\gamma} (q, z) - \frac{\gamma(q)}{2} (z - v)^2 \right\} - \frac{1}{2 \alpha} \int_{q}^{1} \gamma(s) \d s.
\end{align*}
Now since $\mu = 0$ on $[0, q]$, the Parisi PDE degenerates to a standard heat equation:
\begin{equation*}
	\partial_t f_\mu (t, x) + \frac{1}{2} \partial_x^2 f_\mu (t, x) = \, 0, \ (t, x) \in [0, q] \times \R.
\end{equation*}
As a consequence, we deduce that
\begin{align*}
	& f_{\mu} (0, 0) = \E_{v \sim \normal(0, q)} \left[ f_{\mu} (q, v) \right] \\
	= \, & \E_{v \sim \normal(0, q)} \left[ \sup_{z \in \R} \left\{ V_{\gamma} (q, z) - \frac{\gamma(q)}{2} (z - v)^2 \right\} \right] - \frac{1}{2 \alpha} \int_{q}^{1} \gamma(s) \d s,
\end{align*}
which further implies that
\begin{align*}
	\mathsf{F} (\mu, c) = \, & f_{\mu} (0, 0) + \frac{1}{2 \alpha} \left( q \gamma(q) + \int_{q}^{1} \gamma(s) \d s \right) \\
	= \, & \E_{v \sim \normal(0, q)} \left[ \sup_{z \in \R} \left\{ V_{\gamma} (q, z) - \frac{\gamma(q)}{2} \left( (z - v)^2 - \frac{q}{\alpha} \right) \right\} \right] \\
	= \, & \E_{v \sim \normal(0, q)} \left[ \sup_{u \in \R} \left\{ V_{\gamma} (q, v + u) - \frac{\gamma(q)}{2} \left( u^2 - \frac{q}{\alpha} \right) \right\} \right] \\
	= \, & \sup_{F: \R \to \R} \E_{v \sim \normal(0, q)} \left[ V_{\gamma} \left( q, v + \frac{1}{\alpha} F(v) \right) - \frac{\gamma(q)}{2 \alpha} \left( \frac{F(v)^2}{\alpha} - q \right) \right].
\end{align*}
By \cref{lem:reduce_value_function}, we know that for any fixed $F$:
\begin{align*}
	& \E_{v \sim \normal(0, q)} \left[ V_{\gamma} \left( q, v + \frac{1}{\alpha} F(v) \right) \right] \\
	= \, & \sup_{\phi \in D[q, 1]} \E \left[ h \left( v + \frac{1}{\alpha} F(v) + \int_{q}^{1} (1 + \phi_t) \d B_t \right) - \frac{1}{2} \int_{q}^{1} \gamma(t) \left( \phi_t^2 - \frac{1}{\alpha} \right) \d t \right],
\end{align*}
which concludes the proof of part $(a)$.

\vspace{0.5em}

\noindent \textbf{Proof of $(b)$: Weak duality.} By definition of $\VH_{1,\alpha}^{\sAMP} (q,h)$, we deduce that
\begin{align*}
	\VH_{1,\alpha}^{\sAMP} (q,h) \le \, & \sup_{\begin{subarray}{c}
			F: \R \to \R, \, \E [F(v)^2] \le \, \alpha q \\
			\phi \in D[q, 1], \, \sup_{t \in [q, 1]} \E [\phi_t^2] \le \, 1/\alpha
	\end{subarray}} \E \left[ h \left( v + \frac{1}{\alpha} F(v) + \int_{q}^{1} \left( 1 + \phi_t \right) \d B_t \right) \right] \\
	\le \, & \sup_{\substack{F: \R \to \R \\ \phi \in D[q,1]}} \inf_{\gamma \in \FSG (q)} \E \bigg[ \, h \left( v + \frac{1}{\alpha} F(v) + \int_{q}^{1} \left( 1 + \phi_t \right) \d B_t \right) - \frac{1}{2} \int_{q}^{1} \gamma(t) \left( \phi_t^2 - \frac{1}{\alpha} \right) \d t \\
	& \quad\quad\quad\quad\quad\quad - \frac{\gamma(q)}{2 \alpha} \left( \frac{1}{\alpha} F(v)^2 - q \right) \bigg] \\
	\stackrel{(i)}{\le} \, & \inf_{\gamma \in \FSG (q)} \sup_{\substack{F: \R \to \R \\ \phi \in D[q,1]}} \E \bigg[ \, h \left( v + \frac{1}{\alpha} F(v) + \int_{q}^{1} \left( 1 + \phi_t \right) \d B_t \right) - \frac{1}{2} \int_{q}^{1} \gamma(t) \left( \phi_t^2 - \frac{1}{\alpha} \right) \d t \\
	& \quad\quad\quad\quad\quad\quad - \frac{\gamma(q)}{2 \alpha} \left( \frac{1}{\alpha} F(v)^2 - q \right) \bigg] \\
	\stackrel{(ii)}{=} \, & \inf_{(\mu, c) \in \FS (q)} \mathsf{F} (\mu, c),
\end{align*}
where $(i)$ follows from minimax inequality, $(ii)$ follows from the variational formula of part $(a)$. This proves part $(b)$.

\vspace{0.5em}
\noindent \textbf{Proof of $(c)$: Strong duality.} For the sake of simplicity, in this section we only consider the case $1 / c_* > \sup_{z \in \R} h''(z)$. The proof for $1 / c_* \le \sup_{z \in \R} h''(z)$ is technically more complicated but not substantially different, and we defer it to \cref{sec:less_than_case}. We will need the following proposition regarding the first-order variation of the Parisi functional $\mathsf{F} (\mu, c)$, whose proof is presented in \cref{sec:greater_than_case}.
\begin{prop}\label{prop:first_order_var_Parisi}
	Under the settings of \cref{thm:two_stage_strong_duality}, for any $(\mu, c) \in \FS (q)$ such that $1/c > \sup_{z \in \R} h''(z)$, let $(X_t)_{t \in [0, 1]}$ solve the SDE (existence and uniqueness of solution will be proved in \cref{sec:verification_argument}):
	\begin{equation}\label{eq:two_stage_SDE}
		X_0 = 0, \quad \d X_t = \mu(t) \partial_x f_{\mu} (t, X_t) \d t + \d B_t, \ t \in [0, 1].
	\end{equation}
	Let $\gamma \in \FSG (q)$ be such that $\gamma' / \gamma^2 = \mu$, $\gamma(1) = 1/c$, and define
	\begin{equation}\label{eq:two_stage_opt_control}
		F(x) = \frac{\alpha}{\gamma(q)} \partial_x f_{\mu} (q, x), \ \phi_t = \frac{1}{\gamma(t)} \partial_x^2 f_{\mu} (t, X_t), \ \forall t \in [0, 1].
	\end{equation}
	Then, we have
	\begin{itemize}
		\item [(i)] $X_q = B_q \sim \normal(0, q)$, and
		\begin{equation}\label{eq:var_rep_F_opt}
			\begin{split}
				\mathsf{F} (\mu, c) = \, \E \bigg[ \, & h \left( X_q + \frac{1}{\alpha} F(X_q) + \int_{q}^{1} \left( 1 + \phi_t \right) \d B_t \right) - \frac{1}{2} \int_{q}^{1} \gamma(t) \left( \phi_t^2 - \frac{1}{\alpha} \right) \d t \\
				& - \frac{\gamma(q)}{2 \alpha} \left( \frac{1}{\alpha} F(X_q)^2 - q \right) \bigg].
			\end{split}
		\end{equation}
		\item [(ii)] $\forall 0 \le s < t \le 1$, $\E [(\partial_x f_{\mu} (t, X_t))^2] - \E [(\partial_x f_{\mu} (s, X_s))^2] = \int_{s}^{t} \gamma(u)^2 \E [\phi_u^2] \d u$. \modif{Further, the mapping $u \mapsto \E [\phi_u^2]$ is continuous in $u$.} 
		
		\item [(iii)] Assume that $\delta: [0, 1] \to \R$ is in $L^1 [0, 1]$, \smodif{$\delta \vert_{[0, t]} \in L^{\infty} [0, t]$ for any $t \in [0, 1)$}, \modif{and furthermore,} $\delta \equiv 0$ on $[0, q]$. Then, $(\mu + s \delta, c) \in \mathscr{L} (q)$ for sufficiently small $s \in \R$, and
		\begin{equation}\label{eq:first_variation_statement}
			\frac{\d}{\d s} \mathsf{F} \left( \mu + s \delta, c \right) \bigg\vert_{s=0} = \,  \frac{1}{2} \int_{q}^{1} \delta(t) \left( \E \left[ \left( \partial_x f_\mu (t, X_t) \right)^2 \right] - \frac{1}{\alpha} \int_{0}^{t} \gamma(s)^2 \d s \right) \d t.
		\end{equation}
	\end{itemize}
\end{prop}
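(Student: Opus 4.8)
The engine of the proof is Itô's formula applied to $f_\mu(t,\cdot)$ and $\partial_x f_\mu(t,\cdot)$ evaluated along the diffusion $X_t$ of \eqref{eq:two_stage_SDE}, whose backward Kolmogorov operator is exactly $\partial_t+\mu(t)\partial_x f_\mu(t,x)\,\partial_x+\tfrac{1}{2}\partial_x^2$. Throughout we use \cref{thm:solve_Parisi_PDE}: $\|\partial_x f_\mu(t,\cdot)\|_{L^\infty}\le\|h'\|_{L^\infty}$ for all $t$, and --- since the proposition assumes $1/c>\sup_z h''(z)$ --- the two-sided bound $-\gamma(t)<\partial_x^2 f_\mu(t,x)\le C$. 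Because $1/\gamma(t)=c+\int_t^1\mu(s)\,\d s$ is continuous and strictly positive on $[0,1]$ (built into $\FS$), $\gamma$ is bounded above and below by positive constants, so $\partial_x^2 f_\mu$ is globally bounded; hence every stochastic integral below is a true $L^2$-martingale and \eqref{eq:two_stage_SDE} is well posed (as proved in \cref{sec:verification_argument}). We prove part (ii) first and use it for part (i).

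\textbf{Parts (ii) and (i).} Differentiating the Parisi PDE \eqref{eq:parisi_mu_gen} once in $x$ gives $\partial_t(\partial_x f_\mu)+\mu\,\partial_x f_\mu\,\partial_x^2 f_\mu+\tfrac{1}{2}\partial_x^2(\partial_x f_\mu)=0$, so Itô's formula makes $M_t:=\partial_x f_\mu(t,X_t)$ a martingale with $\d M_t=\partial_x^2 f_\mu(t,X_t)\,\d B_t=\gamma(t)\phi_t\,\d B_t$; taking expectations of $\d(M_t^2)$ yields $\E[M_t^2]-\E[M_s^2]=\int_s^t\E[(\partial_x^2 f_\mu(u,X_u))^2]\,\d u=\int_s^t\gamma(u)^2\E[\phi_u^2]\,\d u$, which is (ii). Continuity of $u\mapsto\E[\phi_u^2]$ follows from continuity of $\gamma$, joint continuity of $\partial_x^2 f_\mu$ in $(t,x)$ (interior parabolic regularity for the linear equation it solves), continuity of the paths $u\mapsto X_u$, and dominated convergence. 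For (i), $\mu\equiv0$ on $[0,q]$ makes \eqref{eq:two_stage_SDE} driftless there, so $X_q=B_q\sim\normal(0,q)$. A second Itô computation on $Z_s:=X_s+\gamma(s)^{-1}\partial_x f_\mu(s,X_s)$, using $\gamma'/\gamma^2=\mu$ and the differentiated PDE, shows the drift cancels and $\d Z_s=(1+\phi_s)\,\d B_s$ on $[q,1]$; since $Z_q=X_q+\alpha^{-1}F(X_q)$ and the terminal condition gives $h(Z_1)=f_\mu(1,X_1)+\tfrac{c}{2}(\partial_x f_\mu(1,X_1))^2$ (via the envelope relation identifying $c\,\partial_x f_\mu(1,x)$ with the maximizing $u$), the argument of $h$ in \eqref{eq:var_rep_F_opt} equals $Z_1$. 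Applying Itô to $f_\mu(t,X_t)$ and using \eqref{eq:parisi_mu_gen} yields $\E[f_\mu(1,X_1)]=f_\mu(0,0)+\tfrac{1}{2}\int_q^1\mu(t)\E[M_t^2]\,\d t$; substituting this into the right-hand side of \eqref{eq:var_rep_F_opt}, rewriting $\gamma(t)\E[\phi_t^2]$ and $\E[F(X_q)^2]$ in terms of $m(t):=\E[M_t^2]$ via (ii), and integrating by parts ($\tfrac{\d}{\d t}\gamma(t)^{-1}=-\mu(t)$), the whole expression collapses to $f_\mu(0,0)+\tfrac{1}{2\alpha}\int_0^1\gamma(t)\,\d t=\mathsf F(\mu,c)$. (Equivalently, (i) can be read off from part (a) of \cref{thm:two_stage_strong_duality} once one checks, using \cref{prop:two_stage_veri_arg} and its verification argument, that this $(F,\phi)$ attains the supremum there.)

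\textbf{Part (iii).} Admissibility $(\mu+s\delta,c)\in\FS(q)$ for $|s|$ small is immediate: $\mu+s\delta\in L^1[0,1]$ and lies in $L^\infty[0,t]$ for $t<1$, and $c+\int_t^1(\mu+s\delta)=\gamma(t)^{-1}+s\int_t^1\delta$ stays positive uniformly on $[0,1)$ for small $|s|$ because $\gamma^{-1}$ is bounded below by a positive constant while $|\int_t^1\delta|\le\|\delta\|_{L^1}$; there is no sign or monotonicity constraint in $\FS(q)$ to check. Write $\mathsf F(\mu+s\delta,c)=f_{\mu+s\delta}(0,0)+\tfrac{1}{2\alpha}\int_0^1\gamma_s(t)\,\d t$ with $\gamma_s(t)^{-1}=\gamma(t)^{-1}+s\int_t^1\delta$; then $\partial_s\gamma_s(t)|_{s=0}=-\gamma(t)^2\int_t^1\delta(u)\,\d u$, and by Fubini together with $\delta\equiv0$ on $[0,q]$ the second term contributes $-\tfrac{1}{2\alpha}\int_q^1\delta(t)\big(\int_0^t\gamma(s)^2\,\d s\big)\,\d t$. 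For the first term, set $\dot f:=\partial_s f_{\mu+s\delta}(0,0)|_{s=0}$; differentiating \eqref{eq:parisi_mu_gen} in $s$ at $s=0$ (the terminal data is $\mu$-independent) shows that $(t,x)\mapsto\partial_s f_{\mu+s\delta}(t,x)|_{s=0}$ solves the \emph{linear} equation $\partial_t\dot f+\mu\,\partial_x f_\mu\,\partial_x\dot f+\tfrac{1}{2}\partial_x^2\dot f=-\tfrac{1}{2}\delta(\partial_x f_\mu)^2$ with zero terminal condition --- again the backward operator of \eqref{eq:two_stage_SDE} --- so the Feynman--Kac formula gives $\dot f=\tfrac{1}{2}\E\big[\int_0^1\delta(t)(\partial_x f_\mu(t,X_t))^2\,\d t\big]=\tfrac{1}{2}\int_q^1\delta(t)\,m(t)\,\d t$. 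Summing the two contributions produces exactly \eqref{eq:first_variation_statement}.

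\textbf{Main obstacle.} The genuinely delicate step is the parameter differentiation in (iii): showing that $s\mapsto f_{\mu+s\delta}$ is differentiable at $s=0$ with derivative the (sufficiently regular) solution of the stated linear PDE, so that Feynman--Kac applies. The plan is to first prove the formula for smooth $\mu$ and $h$, where the Parisi PDE linearizes via a Hopf--Cole transform to a backward heat equation and smooth dependence on the coefficient is classical, controlling all terms by the $\|h'\|_{L^\infty}$- and $\gamma$-dependent bounds of \cref{thm:solve_Parisi_PDE}; and then to remove the smoothness by mollifying $(\mu,h)$ and passing to the limit, using stability of $f_\mu,\partial_x f_\mu,\partial_x^2 f_\mu$ under such approximation (part of \cref{thm:solve_Parisi_PDE}, proved in \cref{sec:solve_Parisi_PDE}) together with the uniform a priori bounds to pass to the limit on both sides of \eqref{eq:first_variation_statement}. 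The same approximation also absorbs the minor regularity gap in parts (i)--(ii) --- e.g. applying Itô to $\partial_x f_\mu(t,X_t)$, which a priori requires one more $x$-derivative than $C^2$.
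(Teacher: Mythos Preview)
Your proposal is correct, and for part (ii) it coincides with the paper's proof (their Lemma 8.1 is exactly your Itô computation on $M_t=\partial_x f_\mu(t,X_t)$). Parts (i) and (iii) are handled differently, and the differences are worth noting.

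For part (i), the paper does \emph{not} do your direct Itô computation on $Z_s=X_s+\gamma(s)^{-1}\partial_x f_\mu(s,X_s)$ followed by the integration-by-parts cancellation. Instead, it routes through \cref{prop:two_stage_veri_arg}: since $\phi$ is the optimal control, $\E[V_\gamma(q,X_q+\alpha^{-1}F(X_q))]$ equals the first two terms on the right of \eqref{eq:var_rep_F_opt}; then the Legendre-type relation \eqref{eq:two_stage_V_and_f} between $V_\gamma$ and $f_\mu$, together with the specific choice of $F$, collapses everything to $\E[f_\mu(q,X_q)]+\tfrac{1}{2\alpha}(q\gamma(q)+\int_q^1\gamma)$, and the heat-equation step on $[0,q]$ finishes. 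Your direct argument is more self-contained and never touches $V_\gamma$; the paper's argument is shorter because it reuses the verification machinery that is proved anyway. You in fact mention the paper's route as an alternative in your last sentence for (i).

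For part (iii), the key methodological difference is in how the derivative $\partial_s f_{\mu+s\delta}(0,0)|_{s=0}$ is obtained. You formally differentiate the PDE in $s$ and appeal to Feynman--Kac for the linearized equation, deferring the justification to an approximation step you outline. The paper avoids this entirely: following \cite{jagannath2016dynamic}, it writes the \emph{exact} finite increment
\[
f_{\mu+s\delta}(0,x)-f_\mu(0,x)=\frac{s}{2}\int_0^1\delta(t)\,\E_{X_0^s=x}\big[(\partial_x f_\mu(t,X_t^s))^2\big]\,\d t,
\]
where $X^s$ solves $\d X_t^s=\tfrac{1}{2}\mu(t)\big(\partial_x f_{\mu+s\delta}+\partial_x f_\mu\big)(t,X_t^s)\,\d t+\d B_t$, and then sends $s\to0$ using weak convergence of $X^s$ to $X$ (their Proposition on approximation of SDEs) and dominated convergence. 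This sidesteps your ``main obstacle'' --- no parameter-differentiation of the nonlinear PDE, no regularity of $\dot f$, and no need to justify Feynman--Kac for the linearized problem --- at the cost of importing one identity from the Parisi-PDE literature. Your approximation plan would also work, but the finite-difference identity is the cleaner shortcut.
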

We are now in position to complete the proof of part $(c)$. Since the infimum of $\sF$ is achieved at $(\mu_*, c_*)$, the first-order variation of $\sF$ at $\mu_*$ must be equal to $0$ for any $\delta \in L^1 [0, 1]$ such that  $\delta \vert_{[0, t]} \in L^{\infty} [0, t]$ for any $t \in [0, 1)$, and $\delta = 0$ on $[0, q]$. According to \cref{prop:first_order_var_Parisi} $(iii)$, we must have
\begin{equation*}
	\frac{1}{2} \int_{q}^{1} \delta(t) \left( \E \left[ \left( \partial_x f_{\mu_*} (t, X_t) \right)^2 \right] - \frac{1}{\alpha} \int_{0}^{t} \gamma_* (s)^2 \d s \right) \d t = 0
\end{equation*}
for all such $\delta$. Note that \cref{prop:first_order_var_Parisi} $(ii)$ implies that $\E [ ( \partial_x f_{\mu_*} (t, X_t) )^2 ]$ is continuous in $t$, we therefore deduce that
\begin{equation}\label{eq:two_stage_FOC}
	\E \left[ \left( \partial_x f_{\mu_*} (t, X_t) \right)^2 \right] = \frac{1}{\alpha} \int_{0}^{t} \gamma_* (s)^2 \d s, \ \forall t \in [q, 1].
\end{equation}
Now we define $(\phi_t^*)_{t \in [q, 1]}$ and $F^*$ according to \cref{eq:two_stage_opt_control}. Then from \cref{prop:first_order_var_Parisi} $(ii)$, we immediately know that
\begin{equation*}
	\E \left[ \left( \phi_t^* \right)^2 \right] = \frac{1}{\alpha}, \ \forall t \in [q, 1],
\end{equation*}
namely, $(\phi_t^*)_{t \in [q, 1]}$ is feasible. 
It suffices to show that $F^*$ is feasible, and
\begin{equation}\label{eq:two_stage_achievability}
	\mathsf{F} (\mu_*, c_*) = \, \E \left[ h \left( X_q + \frac{1}{\alpha} F^*(X_q) + \int_{q}^{1} \left( 1 + \phi_t^* \right) \d B_t \right) \right],
\end{equation}
since $X_q = B_q \sim \normal (0, q)$.

We first establish the feasibility of $F^*$, i.e.,
\begin{equation*}
	\E [F^*(v)^2] = \alpha q, \ \E [(F^*)'(v)^2] \le \alpha, \ v \sim \normal(0, q).
\end{equation*}
Note that since $X_q \sim \normal (0, q)$, we have
\begin{equation*}
	\E [F^*(v)^2] = \frac{\alpha^2}{\gamma_* (q)^2} \E \left[ \left( \partial_x f_{\mu_*} (q, X_q) \right)^2 \right] = \frac{\alpha}{\gamma_* (q)^2} \int_{0}^{q} \gamma_* (s)^2 \d s = \alpha q,
\end{equation*}
which follows from \cref{eq:two_stage_FOC} and the fact that $\gamma_*$ is constant on $[0, q]$. Further,
\begin{equation*}
	\E [(F^*)'(v)^2] = \frac{\alpha^2}{\gamma_* (q)^2} \E \left[ \left( \partial_x^2 f_{\mu_*} (q, X_q) \right)^2 \right] = \alpha^2 \E \left[ \left( \phi_q^* \right)^2 \right] = \alpha.
\end{equation*}
This proves that $F^*$ is feasible. \cref{eq:two_stage_achievability} then automatically follows 
from \cref{prop:first_order_var_Parisi} $(i)$. \smodif{Since $(F^*, \phi^*)$ is feasible, we know that
	\begin{equation}
		\VH_{1,\alpha}^{\sAMP} (q,h) \ge \, \E \left[ h \left( X_q + \frac{1}{\alpha} F^*(X_q) + \int_{q}^{1} \left( 1 + \phi_t^* \right) \d B_t \right) \right] = \mathsf{F} (\mu_*, c_*).
	\end{equation}
	Combining the above inequality with our conclusion from part $(b)$ yields \cref{eq:strong_dual}.
} This completes the proof of part $(c)$.

\section{Solving the Parisi PDE}\label{sec:solve_Parisi_PDE}

This section is devoted to the proof of \cref{thm:solve_Parisi_PDE}. We begin by dealing with the case $\sup_{z \in \R} h''(z) < 1/c$, in which we construct a solution to the Parisi PDE, establish its uniqueness and regularity, and prove \cref{eq:parisi_regularity,eq:parisi_additional_regularity}. We then consider the case $\sup_{z \in \R} h''(z) \ge 1/c$, and extend our results from the previous setting (except for \cref{eq:parisi_additional_regularity}) to this setting.

Recall the function spaces $\FS$ and $\FSG$ in Definition~\ref{defn:gamma_space}. In what follows, we define the convergence of sequences in these two spaces.
\begin{defn}[Convergence in $\FS$ and $\FSG$]\label{def:conv_in_L_gen}
	Let $\{(\mu_n, c_n)\}_{n=1}^{\infty}$ be a sequence in $\FS$, or equivalently, the corresponding $\{ \gamma_n \}_{n=1}^{\infty} \subset \FSG$. For any $(\mu, c) \in \FS$, we say that $(\mu_n, c_n) \stackrel{\FS}{\longrightarrow} (\mu, c)$ if $c_n \to c$, $\mu_n \to \mu$ in $L^1 [0, 1]$, and 
	\begin{equation*}
		\norm{\mu_n \big\vert_{[0, t]}}_{L^{\infty} [0, t]} \longrightarrow \norm{\mu \big\vert_{[0, t]}}_{L^{\infty} [0, t]} \ \text{for all} \ t \in [0, 1).
	\end{equation*}
	For $\gamma \in \FSG$ associated with $(\mu, c)$, we say $\gamma_n \stackrel{\FSG}{\longrightarrow} \gamma$ if $(\mu_n, c_n) \stackrel{\FS}{\longrightarrow} (\mu, c)$.
\end{defn}

\modif{
	\begin{rem}
		The convergence of sequence defined above can be metrized by the following metric on $\FS$:
		\begin{equation}\label{eq:metric_FS}
			d_{\FS} ((\mu, c), (\mu', c')) = \norm{\mu - \mu'}_{L^1 [0, 1]} + \vert c - c' \vert + \sum_{k \in \mathbb{N}} 2^{- k} \frac{| s_k(\mu)-s_k(\mu') | }{1+| s_k(\mu)-s_k(\mu') |},
		\end{equation}
		where $\{ t_k \}_{k \in \mathbb{N}}$ is a dense subset of $[0, 1]$, and $s_k (\mu) = \, \| \mu \vert_{[0, t_k]}\|_{L^{\infty} [0, t_k]}$ for $(\mu, c) \in \FS$ and $k \in \mathbb{N}$. Namely, $(\mu_n, c_n) \stackrel{\FS}{\longrightarrow} (\mu, c)$ if and only if $d_{\FS} ((\mu_n, c_n), (\mu, c)) \to 0$. 
		We note that $\FS$ is not complete in this metric. For instance, taking $\mu_n = \bone_{[0, 1/n]}$ and any $c > 0$, then $(\mu_n, c)$ is a Cauchy sequence without a limit in $\FS$.
	\end{rem}
}

\subsection{The case $\sup_{z \in \R} h''(z) < 1/c$}

We now prove \cref{thm:solve_Parisi_PDE} in the case $\sup_{z \in \R} h''(z) < 1/c = \gamma(1)$. We will first work under the stronger assumption that $h \in C^4 (\R)$ (instead of $h \in C^2 (\R)$ assumed by \cref{thm:solve_Parisi_PDE}), and then remove this assumption via an approximation argument. 
\begin{ass}\label{ass:h_regularity_gen}
	The test function $h: \R \to \R$ satisfies:
	\begin{itemize}
		\item [$(a)$] $h$ is bounded from above, i.e., $\sup_{x \in \R} h(x) < + \infty$.
		\item [$(b)$] $h \in C^{4} (\R)$. Further, for $1 \le k \le 4$,
		\begin{equation*}
			\norm{h^{(k)}}_{L^{\infty} (\R)} = \sup_{x \in \R} \left\vert h^{(k)} (x) \right\vert < \infty.
		\end{equation*}
	\end{itemize}
\end{ass}

To begin with, we establish that the terminal value $f_{\mu} (1, \cdot)$ has sufficient regularity under \cref{ass:h_regularity_gen}.

\begin{lem}\label{lem:terminal_regularity_gen}
	Assume $\sup_{z \in \R} h''(z) < \gamma(1)$, and $h$ satisfies \cref{ass:h_regularity_gen}. Recall that
	\begin{equation*}
		f_\mu (1, x) = \sup_{u \in \R} \left\{ h(x+u) - \frac{u^2}{2 c} \right\}.
	\end{equation*}
	Then, we have $\norm{\partial_x f_{\mu} (1, \cdot)}_{L^{\infty}(\R)} \le \norm{h'}_{L^{\infty} (\R)}$ and
	\begin{equation}\label{eq:curvature_bd_f_mu_1}
		-\gamma(1) < \partial_x^2 f_{\mu} (1, x) \le \frac{\gamma(1) \cdot \sup_{z \in \R} h''(z)}{\gamma(1) - \sup_{z \in \R} h''(z)}, \ \forall x \in \R.
	\end{equation}
	Further, for $k = 3, 4$, there exists constants $C = C(c, k) > 0$ such that
	\begin{equation*}
		\norm{\partial_x^k f_{\mu} (1, \cdot)}_{L^{\infty}(\R)} \le C(c, k).
	\end{equation*}
\end{lem}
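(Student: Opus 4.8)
The key object is the Moreau-envelope–type terminal condition $f_\mu(1,x) = \sup_{u\in\R}\{h(x+u) - u^2/(2c)\}$, equivalently $f_\mu(1,x) = \sup_{y\in\R}\{h(y) - (y-x)^2/(2c)\}$. Since $h$ is Lipschitz and bounded above while the quadratic penalty grows, the supremum is attained; the plan is to study the optimizer $y^*(x)$ (equivalently $u^*(x) = y^*(x)-x$) as a function of $x$, then read off derivative bounds. First I would establish that the first-order condition $h'(y^*) = (y^*-x)/c$ characterizes maximizers, and that under $\sup_z h''(z) < \gamma(1) = 1/c$ the map $y\mapsto h(y) - (y-x)^2/(2c)$ is strictly concave (its second derivative is $h''(y) - 1/c < \sup h'' - 1/c < 0$), so $y^*(x)$ is unique. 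By the envelope theorem, $\partial_x f_\mu(1,x) = -\partial_x\big[(y^*-x)^2/(2c)\big]\big|_{y^* \text{ fixed}} = (y^*(x)-x)/c = h'(y^*(x))$, which immediately gives $\|\partial_x f_\mu(1,\cdot)\|_{L^\infty} \le \|h'\|_{L^\infty}$.

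Next, for the curvature bounds I would differentiate the first-order condition $h'(y^*(x)) = (y^*(x)-x)/c$ in $x$ (valid since $y^*$ is $C^{1}$ by the implicit function theorem, as $h''(y^*) - 1/c \neq 0$), obtaining $(y^*)'(x)\,[h''(y^*) - 1/c] = -1/c$, hence $(y^*)'(x) = \dfrac{1/c}{1/c - h''(y^*(x))} \in \big(0, \tfrac{1/c}{1/c - \sup_z h''}\big)$; note the denominator is bounded below by $1/c - \sup_z h'' > 0$ and above (when $h''$ can be negative) we get $(y^*)' > 0$. Since $\partial_x f_\mu(1,x) = h'(y^*(x))$, we get $\partial_x^2 f_\mu(1,x) = h''(y^*(x))(y^*)'(x) = \dfrac{h''(y^*(x))/c}{1/c - h''(y^*(x))}$. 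The function $s\mapsto (s/c)/(1/c - s)$ is increasing on $(-\infty, 1/c)$, so it is maximized at $s = \sup_z h''(z)$, giving the upper bound $\dfrac{\gamma(1)\sup_z h''(z)}{\gamma(1) - \sup_z h''(z)}$ in \eqref{eq:curvature_bd_f_mu_1}, and as $s\to -\infty$ the expression tends to $-1/c = -\gamma(1)$ from above, giving the strict lower bound $\partial_x^2 f_\mu(1,x) > -\gamma(1)$.

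For the third and fourth derivatives, I would iterate: from $(y^*)'(x) = g(y^*(x))$ with $g(s) = (1/c)/(1/c - h''(s))$ a $C^{2}$ function (since $h\in C^4$) that is bounded (numerator fixed, denominator bounded away from $0$) with bounded derivatives (using $\|h'''\|_\infty, \|h''''\|_\infty < \infty$), one shows inductively that $y^*\in C^{3}$ with $(y^*)', (y^*)'', (y^*)'''$ all bounded by constants depending only on $c$ and $\|h^{(k)}\|_\infty$. Then $\partial_x^k f_\mu(1,x)$ for $k=3,4$ is a polynomial in $h''(y^*), h'''(y^*), h''''(y^*)$ and the bounded derivatives of $y^*$, hence bounded by some $C(c,k)$. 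The only mild obstacle is bookkeeping the chain-rule expansions for $k=3,4$ and verifying boundedness of each factor; this is routine once the formula $(y^*)' = g\circ y^*$ and the uniform bound $1/c - h''(y^*) \ge 1/c - \sup_z h'' > 0$ are in hand. Everything reduces to one-dimensional calculus on the explicitly characterized optimizer, so no genuinely hard step arises — the substance is entirely in the strict concavity afforded by the hypothesis $\sup_z h''(z) < 1/c$.
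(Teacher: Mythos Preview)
Your proposal is correct and follows essentially the same approach as the paper: both arguments identify the unique maximizer via the first-order condition (guaranteed by the strong concavity from $\sup_z h''(z) < 1/c$), read off $\partial_x f_\mu(1,x) = h'(y^*(x))$, and then differentiate the first-order condition to obtain the second- and higher-order derivative bounds. The paper packages the same computation through the Legendre--Fenchel transform, writing $f_\mu(1,x) = -x^2/(2c) + g^*(x)$ with $g(z) = cz^2/2 - h(cz)$ and using the standard duality formula $(g^*)'' = 1/(g''\circ (g')^{-1})$, whereas you work directly with the implicit-function-theorem formula for $(y^*)'$; these are the same calculation in different coordinates.
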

\begin{proof}
	The claim on $\partial_x f_{\mu} (1, x)$ follows from the simple observation
	\begin{align*}
		\left\vert f_\mu (1, x) - f_\mu (1, y) \right\vert \le \sup_{u \in \R} \left\vert h(x+u) - h(y+u) \right\vert \le \norm{h'}_{L^{\infty} (\R)} \vert x - y \vert.
	\end{align*}
	To prove the estimates for higher-order derivatives, note that $f_{\mu} (1, \cdot)$ can be rewritten as
	\begin{align*}
		f_\mu (1, x) = \, & \sup_{u \in \R} \left\{ h(x+u) - \frac{u^2}{2 c} \right\} = \sup_{z \in \R} \left\{ h(z) - \frac{(z - x)^2}{2 c} \right\} \\
		= \, & - \frac{x^2}{2 c} + \sup_{z \in \R} \left\{ xz - \frac{c z^2}{2} + h(c z) \right\} \modif{= - \frac{x^2}{2 c} + g^* (x)},
	\end{align*}
	where we define $g(z) = cz^2/2 - h(cz)$, \modif{and $g^*$ denotes the Legendre-Fenchel transform (convex conjugate) of $g$.} It then follows that $g$ is $c_h$-strongly convex, where
	\begin{equation*}
		c_h = c - c^2 \sup_{z \in \R} h''(z) > 0.
	\end{equation*}
	Since $f_{\mu} (1, x) = - x^2 / 2c + g^* (x)$, the bounds on $\partial_x^2 f_{\mu} (1, x)$ follows immediately. Further, we have
	\begin{equation*}
		\max \left\{ \norm{g^{(3)}}_{L^{\infty} (\R)}, \norm{g^{(4)}}_{L^{\infty} (\R)} \right\} < \infty.
	\end{equation*}
	It then suffices to show that
	\begin{equation*}
		\max \left\{ \norm{(g^*)^{(3)}}_{L^{\infty} (\R)}, \norm{(g^*)^{(4)}}_{L^{\infty} (\R)} \right\} < \infty.
	\end{equation*}
	By Legendre-Fenchel duality, we have
	\begin{equation*}
		(g*)'' (x) = \frac{1}{g''(u (x))}, \ u = (g')^{-1} \implies u'(x) = \frac{1}{g''(u(x))}.
	\end{equation*}
	Since $g$ is $c_h$-strongly convex, we always have $g'' \ge c_h$. Therefore,
	\begin{align*}
		(g*)^{(3)} (x) = \, & - \frac{g^{(3)} (u(x)) u'(x)}{g''(u (x))^2} = - \frac{g^{(3)} (u(x))}{g''(u (x))^3} \in L^{\infty} (\R), \\
		(g*)^{(4)} (x) = \, & \frac{3 g^{(3)} (u(x))^2 g''(u(x))^2 u'(x) - g^{(4)} (u(x)) g''(u(x))^3 u'(x)}{g''(u (x))^6} \\
		= \, & \frac{3 g^{(3)} (u(x))^2}{g''(u (x))^5} - \frac{g^{(4)} (u(x))}{g''(u (x))^4} \in L^{\infty} (\R),
	\end{align*}
	which completes the proof.
\end{proof}

We next establish the regularity of $f_{\mu}$ on $[0, 1] \times \R$ for $\mu \in \mathsf{SF} [0, 1]$, the space of all simple functions on $[0, 1]$:
\begin{equation*}
	\mathsf{SF} [0, 1] = \left\{\mu(t)=\sum_{i=1}^m \mu_i \mathbf{1}_{\left[t_{i-1}, t_i\right)}(t): 0=t_0<t_1<\cdots<t_m=1 \right\}.
\end{equation*}

\begin{prop}\label{prop:curv_bd_gen}
	Let $h$ satisfy \cref{ass:h_regularity_gen}, and $\mu \in \mathsf{SF} [0, 1]$ be such that $(\mu, c) \in \FS$, i.e., $c + \int_{t}^{1} \mu(s) \d s > 0$ for all $t \in [0, 1]$. Assume that $\gamma(1) = 1/c > \sup_{z \in \R} h''(z)$. 
	Then,
	\begin{align}\label{eq:curv_bd_gen_1}
		\partial_x^2 f_{\mu} (t, x) > \, & - \gamma(t), \ \forall (t, x) \in [0, 1] \times \R\, .
	\end{align}
	Further, for any $\theta \in [0, 1)$ such that $\inf_{t \in [\theta, 1]} \gamma(t) > \sup_{z \in \R} h''(z)$, we have
	\begin{align}\label{eq:curv_bd_gen_2}
		\partial_x^2 f_{\mu} (t, x) \le \, & \frac{\gamma(t) \cdot \sup_{z \in \R} h''(z)}{\gamma(t) - \sup_{z \in \R} h''(z)}, \ \forall (t, x) \in [\theta, 1] \times \R.
	\end{align}
\end{prop}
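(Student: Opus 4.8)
The plan is to recast both inequalities as a comparison principle for the scalar quantity $u:=\partial_x^2 f_\mu$. Since $\mu\in\mathsf{SF}[0,1]$, on each interval $[t_{i-1},t_i)$ of constancy of $\mu$ (say $\mu\equiv\mu_i$) the Parisi PDE~\eqref{eq:parisi_mu_gen} is, for $\mu_i>0$, linearized by the Hopf--Cole change of variables $f_\mu=\mu_i^{-1}\log\phi_i$ into the backward heat equation $\partial_t\phi_i+\tfrac12\partial_x^2\phi_i=0$ with $\phi_i(t_i,\cdot)=e^{\mu_i f_\mu(t_i,\cdot)}$ (and is the heat equation itself when $\mu_i=0$); hence $f_\mu$ is smooth in $x$ on each such interval, and one may differentiate~\eqref{eq:parisi_mu_gen} twice. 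Writing $v:=\partial_x f_\mu$, this gives the Riccati-type backward parabolic equation
\begin{equation*}
  \partial_t u + \mu(t)\,v\,\partial_x u + \mu(t)\,u^2 + \tfrac12\,\partial_x^2 u = 0 .
\end{equation*}
Set $a:=\sup_{z\in\R}h''(z)$. Using $\gamma'=\mu\gamma^2$ one checks directly that the spatially constant functions $\psi(t):=-\gamma(t)$ and $\chi(t):=\dfrac{a\,\gamma(t)}{\gamma(t)-a}$ (the latter finite, and for $a\ge0$ positive, exactly when $\gamma(t)>a$) both solve $y'+\mu y^2=0$, hence are exact solutions of the $u$-equation. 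Consequently $w_1:=u-\psi$ solves the linear equation $\partial_t w_1+\mu v\,\partial_x w_1+\tfrac12\partial_x^2 w_1+\mu(u+\psi)w_1=0$, and $w_2:=\chi-u$ solves $\partial_t w_2+\mu v\,\partial_x w_2+\tfrac12\partial_x^2 w_2+\mu(\chi+u)w_2=0$. By \cref{lem:terminal_regularity_gen}, at $t=1$ we have $w_1(1,\cdot)>0$ pointwise and, since the hypothesis gives $\gamma(1)=1/c>a$, also $w_2(1,\cdot)\ge0$; so~\eqref{eq:curv_bd_gen_1} and~\eqref{eq:curv_bd_gen_2} amount to propagating the signs of $w_1$ and $w_2$ backward in time.

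\textbf{A priori bounds via Hopf--Cole.} Before invoking the comparison principle on the unbounded domain $\R$ I first establish, by backward induction over the finitely many pieces, that $f_\mu(t,\cdot)\in C^2(\R)$ for every $t$ (with $\partial_x f_\mu,\partial_x^2 f_\mu$ continuous in $(t,x)$ and matching across junctions), that $\|v(t,\cdot)\|_{L^\infty}\le\|h'\|_{L^\infty}$ (maximum principle for the viscous Burgers equation satisfied by $v$), and --- crucially --- that $u=\partial_x^2 f_\mu$ is \emph{bounded} on each $[t_{i-1},t_i]\times\R$: on a $\mu_i>0$ piece, $\phi_i(t,\cdot)$ is the Gaussian smoothing of data $e^{\mu_i f_\mu(t_i,\cdot)}$ whose logarithm has bounded first and second derivatives (by the inductive hypothesis, with base case $t=1$ supplied by \cref{lem:terminal_regularity_gen}), so $\partial_x^2\log\phi_i$ stays bounded; on a $\mu_i=0$ piece this is immediate. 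In particular the zeroth‑order coefficients $\mu v$, $\mu(u+\psi)$, $\mu(\chi+u)$ appearing above are bounded on each piece (using boundedness of $\gamma$, and of $\chi$ on the range where $\gamma>a$).

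\textbf{Backward comparison.} Run the induction from $t=1$ to $t=0$, one piece at a time. On a piece $[t_{i-1},t_i)$ assume $w_1(t_i,\cdot)>0$ pointwise (true at $t=1$). After the time reversal $s=1-t$ and the substitution $w_1=e^{Ks}\widehat w$ with $K$ exceeding the bounded zeroth‑order coefficient, $\widehat w$ is a bounded solution of a uniformly parabolic equation on $\R$ with bounded coefficients and nonpositive zeroth‑order term; the minimum principle gives $\widehat w\ge0$ on the piece, and the strong maximum principle upgrades this to $\widehat w>0$ --- hence $w_1>0$ --- for every $t$ strictly below $t_i$, in particular $w_1(t_{i-1},\cdot)>0$, which is the strict datum for the next piece. (On a $\mu_i=0$ piece this is trivial: $u$ solves the heat equation and $\gamma$ is constant there, so a function $>-\gamma$ stays $>-\gamma$ under Gaussian averaging.) This yields $\partial_x^2 f_\mu(t,x)>-\gamma(t)$ on $[0,1)\times\R$, hence on $[0,1]\times\R$, i.e.~\eqref{eq:curv_bd_gen_1}. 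The identical scheme applied to $w_2=\chi-u$, but only over the pieces contained in $[\theta,1]$ --- where $\inf_{[\theta,1]}\gamma>a$ keeps $\chi$ finite and $\chi+u$ bounded, and where $w_2(1,\cdot)\ge0$ --- gives $w_2\ge0$ there, i.e.~\eqref{eq:curv_bd_gen_2}.

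\textbf{Main obstacle.} The only real difficulty is that the comparison must be carried out on the unbounded domain $\R$ before $u$ is controlled, and the Riccati term $\mu u^2$ can in principle drive a backward blow‑up of $u$ --- indeed the super‑solution $\chi$ itself blows up precisely as $\gamma(t)\downarrow a$, which is exactly why~\eqref{eq:curv_bd_gen_2} is restricted to $[\theta,1]$. The piecewise Hopf--Cole representation is what resolves this: on each constant‑$\mu$ interval $f_\mu$ is an explicit Gaussian smoothing of its right‑endpoint data, which forces $\partial_x f_\mu$ and $\partial_x^2 f_\mu$ to remain bounded there regardless of sign structure, so every maximum‑principle step uses genuinely bounded coefficients and bounded (sub/super)solutions. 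The remaining ingredients --- the Hopf--Cole identity, the standard parabolic maximum and strong maximum principles, and the elementary verification that $\psi,\chi$ solve $y'+\mu y^2=0$ --- are routine.
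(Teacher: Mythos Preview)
Your proposal is correct and takes a genuinely different route from the paper.

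The paper first passes to the auxiliary function $\Phi$ via $f_\mu(t,x)=-\tfrac{\gamma(t)}{2}x^2+\Phi(t,\gamma(t)x)-\tfrac12\int_t^1\gamma$, so that the two bounds become $0<\partial_x^2\Phi\le(\gamma-h_2)^{-1}$. On each piece it then writes $\partial_x^2\Phi$ as a tilted expectation under a density $p_{t,x}\propto \exp(\kappa g - (z-x)^2/2C^2)$ and establishes the sign/size by a case analysis on $\kappa=\mu_i$: Cauchy--Schwarz when $\kappa\ge0$ and the Brascamp--Lieb Poincar\'e inequality for log-concave densities when $\kappa<0$ for the lower bound; Brascamp--Lieb when $\kappa\ge0$ and a fairly delicate interpolation-plus-two-point inequality when $\kappa<0$ for the upper bound.

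You stay in $f_\mu$-coordinates and exploit the single algebraic fact that $\gamma'=\mu\gamma^2$ forces both barrier functions $\psi(t)=-\gamma(t)$ and $\chi(t)=a\gamma(t)/(\gamma(t)-a)$ to solve the Riccati ODE $y'+\mu y^2=0$, hence to be spatially constant exact solutions of the $u$-equation. This turns both curvature bounds into a linear parabolic comparison, uniformly in the sign of $\mu_i$, with no Brascamp--Lieb and no special treatment of the $\kappa<0$ upper bound. The only analytical input you need is the piecewise boundedness of $u$ via Hopf--Cole (which you argue correctly: the tilted-measure representation gives $|\partial_x\log\phi_i|$ and $|\partial_x^2\phi_i/\phi_i|$ bounded by their terminal values, hence $\partial_x^2\log\phi_i$ bounded). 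What the paper's approach buys in exchange is a more explicit probabilistic picture of how curvature evolves under Gaussian tilting; what your approach buys is uniformity and economy. Two minor remarks: the Hopf--Cole substitution $f_\mu=\mu_i^{-1}\log\phi_i$ works equally well for $\mu_i<0$ (you only stated $\mu_i>0$ and $\mu_i=0$), and for the $w_2$ argument $\theta$ need not coincide with a breakpoint of $\mu$, so the last step should be phrased as running the comparison on $[\theta,t_j]$ for the piece containing $\theta$ rather than ``on the pieces contained in $[\theta,1]$''.
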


\begin{proof}
	For future convenience we denote $h_2 = \sup_{z \in \R} h''(z)$. In what follows we will consider the following non-linear parabolic equation instead of the Parisi PDE:
	\begin{equation}\label{eq:Parisi_Phi_gen}
		\begin{split}
			&\partial_t \Phi (t, x) + \frac{\gamma'(t)}{2} \left( \partial_x \Phi(t, x) \right)^2 + \frac{\gamma(t)^2}{2} \partial_x^2 \Phi (t, x) = 0, \\
			&\Phi (1, x) = \left( \frac{\gamma(1)}{2} z^2 - h(z) \right)^*.
		\end{split}
	\end{equation}
	Note that the relation between $\Phi$ and $f_{\mu}$ is given by the following transform, which can be verified by direct calculation:
	\begin{equation}\label{eq:f_Phi_gen}
		\begin{split}
			f_{\mu} (t, x) = \, & - \frac{\gamma(t)}{2} x^2 + \Phi \left( t, \gamma(t) x \right) - \frac{1}{2} \int_{t}^{1} \gamma(s) \d s, \\
			\Phi(t, x) = \, & f_{\mu} \left( t, \frac{x}{\gamma(t)} \right) + \frac{x^2}{2 \gamma(t)} + \frac{1}{2} \int_{t}^{1} \gamma(s) \d s.
		\end{split}
	\end{equation}
	Further, Eqs.~\eqref{eq:curv_bd_gen_1}, \eqref{eq:curv_bd_gen_2} are equivalent to
	\begin{equation}\label{eq:curv_bd_Phi_gen}
		\begin{split}
			\partial_x^2 \Phi (t, x) > \, & 0, \ \forall (t, x) \in [0, 1] \times \R, \\
			\partial_x^2 \Phi (t, x) \le \, & \frac{1}{\gamma(t) - h_2}, \ \forall (t, x) \in [\theta, 1] \times \R.
		\end{split}
	\end{equation}
	It remains to prove the curvature bound on $\Phi (t, x)$, i.e., Eq.~\eqref{eq:curv_bd_Phi_gen}. Since $\gamma'(t) / \gamma(t)^2$ is piecewise constant, we may assume that
	\begin{equation*}
		- \left( \frac{1}{\gamma(t)} \right)' = \frac{\gamma'(t)}{\gamma(t)^2} = c_i \ \text{for} \ t \in [t_{i - 1}, t_i), \ i = 1, \cdots, m,
	\end{equation*}
	where $0 = t_0 < t_1 < \cdots < t_m = 1$ is a discretization of $[0, 1]$. \modif{We also note that \cref{eq:curvature_bd_f_mu_1} of Lemma~\ref{lem:terminal_regularity_gen} and \cref{eq:f_Phi_gen} together imply}
	\begin{equation*}
		0 < \partial_x^2 \Phi (1, x) \le \frac{1}{\gamma(1) - h_2}.
	\end{equation*}
	For $t \in [t_{m - 1}, 1)$, the Parisi PDE~\eqref{eq:Parisi_Phi_gen} reads
	\begin{equation*}
		\partial_t \Phi(t, x) + \frac{\gamma(t)^2}{2} \left( \partial_x^2 \Phi(t, x) + c_m \left( \partial_x \Phi(t, x) \right)^2 \right) = 0,
	\end{equation*}
	whose solution can be explicitly expressed using Cole-Hopf transform:
	\begin{equation*}
		\Phi (t, x) = \frac{1}{c_m} \log \E \left[ \exp \left( c_m \Phi \left( 1, x + \sqrt{\int_{t}^{1} \gamma(s)^2 \d s}  \cdot G \right) \right) \right], \ G \sim \sN (0, 1).
	\end{equation*}
	If $c_m = 0$, it is understood that
	\begin{equation*}
		\Phi (t, x) = \E \left[ \Phi \left( 1, x + \sqrt{\int_{t}^{1} \gamma(s)^2 \d s}  \cdot G \right) \right].
	\end{equation*}
	For simplicity, we denote $g (x) = \Phi(1, x)$, 
	$\kappa = c_m$ and $C(t, \gamma) = \sqrt{\int_{t}^{1} \gamma(s)^2 \d s}$, then exploiting the above expression yields that
	\begin{align*}
		\partial_x^2 \Phi (t, x)
		= \, & \frac{\kappa \E \left[ \exp \left( \kappa g \left( x + C(t, \gamma) G \right) \right) g' \left( x + C(t, \gamma) G \right)^2 \right]}{\E \left[ \exp \left( \kappa g \left( x + C(t, \gamma) G \right) \right) \right]} \\
		& + \frac{\E \left[ \exp \left( \kappa g \left( x + C(t, \gamma) G \right) \right) g'' \left( x + C(t, \gamma) G \right) \right]}{\E \left[ \exp \left( \kappa g \left( x + C(t, \gamma) G \right) \right) \right]} \\
		& - \frac{\kappa \E \left[ \exp \left( \kappa g \left( x + C(t, \gamma) G \right) \right) g' \left( x + C(t, \gamma) G \right) \right]^2}{\E \left[ \exp \left( \kappa g \left( x + C(t, \gamma) G \right) \right) \right]^2}.
	\end{align*}
	Note that applying Cauchy-Schwarz inequality gives
	\begin{align*}
		& \E \left[ \exp \left( \kappa g \left( x + C(t, \gamma) G \right) \right) g' \left( x + C(t, \gamma) G \right)^2 \right] \cdot \E \left[ \exp \left( \kappa g \left( x + C(t, \gamma) G \right) \right) \right] \\
		\ge \, & \E \left[ \exp \left( \kappa g \left( x + C(t, \gamma) G \right) \right) g' \left( x + C(t, \gamma) G \right) \right]^2.
	\end{align*}
	Hence, if $\kappa \ge 0$, we obtain that
	\begin{equation*}
		\partial_x^2 \Phi(t, x) \ge \frac{\E \left[ \exp \left( \kappa g \left( x + C(t, \gamma) G \right) \right) g'' \left( x + C(t, \gamma) G \right) \right]}{\E \left[ \exp \left( \kappa g \left( x + C(t, \gamma) G \right) \right) \right]} > 0,
	\end{equation*}
	where the last inequality follows from the fact that $g$ is strictly convex. 
	
	Next we assume that $\kappa < 0$, for any integrable test function $\phi$, we have
	\begin{align*}
		& \frac{\E \left[ \exp \left( \kappa g \left( x + C(t, \gamma) G \right) \right) \phi \left( x + C(t, \gamma) G \right) \right]}{\E \left[ \exp \left( \kappa g \left( x + C(t, \gamma) G \right) \right) \right]} \\
		= \, & \frac{1}{\E \left[ \exp \left( \kappa g \left( x + C(t, \gamma) G \right) \right) \right]} \int_{\R} \frac{1}{\sqrt{2 \pi} C(t, \gamma)} \exp \left( \kappa g(z) - \frac{(z - x)^2}{2 C(t, \gamma)^2} \right) \phi(z) \d z \\
		= \, & \int_{\R} p_{t, x} (z) \phi(z) \d z := \E_{t, x} \left[ \phi(Z) \right], 
	\end{align*}
	where we denote
	\begin{equation*}
		p_{t, x} (z) \propto \exp \left( \kappa g(z) - \frac{(z - x)^2}{2 C(t, \gamma)^2} \right) = \exp \left( - \frac{(z - x)^2}{2 C(t, \gamma)^2} - \vert \kappa \vert g(z) \right).
	\end{equation*}
	We further denote $\E_{t, x}$ and $\mathsf{Var}_{t, x}$ as the expectation and variance operator with respect to the density $p_{t, x}$, then it follows that
	\begin{align*}
		\partial_x^2 \Phi(t, x) = \, & \E_{t, x} \left[ g''(Z) \right] + \kappa \cdot \left( \E_{t, x} \left[ g'(Z)^2 \right] - \E_{t, x} \left[ g'(Z) \right]^2 \right) \\
		= \, & \E_{t, x} \left[ g''(Z) \right] - \vert \kappa \vert \cdot \mathsf{Var}_{t, x} \left( g'(Z) \right).
	\end{align*}
	Before proceeding with the proof, we recall a celebrated inequality by Brascamp and Lieb \cite{brascamp2002extensions}.
	\begin{thm}[Theorem 4.1 in \cite{brascamp2002extensions}: Poincar\'{e} inequality for log-concave densities]\label{thm:poincare_ineq}
		Let $p (z) = \exp (- u(z))$, where $u$ is twice continuously differentiable and strictly convex. Assume that $u$ has a minimum, so that $p$ decreases exponentially \modif{as $\vert x \vert \to +\infty$}, then $ \int_{\R} p (z) \d z < \infty $. \modif{Let $Z$ be a random variable with density function $p(z) / \int_{\R} p (z) \d z$.} Assume $\phi \in C^1 (\R)$ is such that $\mathsf{Var} (\phi(Z)) < \infty$. Then
		\begin{equation*}
			\mathsf{Var} (\phi(Z)) \le \E \left[ \frac{\phi'(Z)^2}{u''(Z)} \right].
		\end{equation*}
	\end{thm}
	Now we turn to the proof of strict convexity of $\Phi(t, x)$ when $\kappa < 0$. Note that  since $u(z) = \vert \kappa \vert g(z) + (z - x)^2 / 2 C(t, \gamma)^2$ is strongly convex, $p_{t, x} (z) \propto \exp (- u(z))$ is a log-concave density. According to Theorem~\ref{thm:poincare_ineq}, we have
	\begin{equation*}
		\mathsf{Var}_{t, x} \left( g'(Z) \right) \le \E_{t, x} \left[ \frac{g''(Z)^2}{u''(Z)} \right] = \E_{t, x} \left[ \frac{g''(Z)^2}{\vert \kappa \vert g''(Z) + 1 / C(t, \gamma)^2} \right] < \frac{1}{\vert \kappa \vert} \E_{t, x} \left[ g''(Z) \right],
	\end{equation*}
	which leads to
	\begin{equation*}
		\partial_x^2 \Phi(t, x) = \E_{t, x} \left[ g''(Z) \right] - \vert \kappa \vert \cdot \mathsf{Var}_{t, x} \left( g'(Z) \right) > 0.
	\end{equation*}
	\modif{In particular, we have proved that $\partial_x^2 \Phi (t_{m-1}, x) > 0$. Now, we can apply the same argument to $t \in [t_{m-2}, t_{m-1})$, viewing $\Phi(t_{m-1}, \cdot)$ as the new terminal condition, and conclude that $\partial_x^2 \Phi(t, x) > 0$ for all $(t, x) \in [t_{m-2}, t_{m-1}) \times \R$. Repeating this procedure (or using induction), it finally follows that $\partial_x^2 \Phi(t, x) > 0$ for all $(t, x) \in [0, 1] \times \R$.} 
	This proves the first part of Eq.~\eqref{eq:curv_bd_Phi_gen}.
	
	To prove the second part of Eq.~\eqref{eq:curv_bd_Phi_gen} \smodif{for $t \in [t_{m-1}, 1)$}, it suffices to show that
	\begin{equation*}
		\gamma(t) > h_2 \implies \partial_x^2 \Phi(t, x) \le \frac{1}{\gamma(t) - h_2}, \ \forall x \in \R,\, t\in [t_{m-1},1)\, .
	\end{equation*}
	Consider the situation $\kappa \ge 0$. In this case we define $p_{t, x} (z) \propto \exp (- u(z))$, where $u(z) = (z - x)^2 / 2 C(t, \gamma)^2 - \kappa g(z)$. Note that for all $z \in \R$,
	\begin{align*}
		u''(z) = \, & \frac{1}{C(t, \gamma)^2} - \kappa g''(z) \ge \frac{1}{C(t, \gamma)^2} - \frac{\kappa}{\gamma(1) - h_2} = \frac{1}{\int_{t}^{1} \gamma(s)^2 \d s} - \frac{\kappa}{\gamma(1) - h_2} \\
		= \, & \frac{\kappa}{\int_{t}^{1} \gamma'(s) \d s} - \frac{\kappa}{\gamma(1) - h_2} = \frac{\kappa}{\gamma(1) - \gamma(t)} - \frac{\kappa}{\gamma(1) - h_2} \\
		= \, & \frac{\kappa}{\gamma(1) - \gamma(t)} \cdot \frac{\gamma(t) - h_2}{\gamma(1) - h_2} \ge \frac{1}{(1 - t) \gamma(1)^2} \cdot \frac{\gamma(t) - h_2}{\gamma(1) - h_2} > 0,
	\end{align*}
	which implies that $u$ is strongly convex, \modif{since $\gamma(1), \gamma(t) > h_2$.} Therefore, applying the 
	Brascamp-Lieb inequality (\cref{thm:poincare_ineq}) again yields
	\begin{align*}
		\partial_x^2 \Phi(t, x) = \, & \E_{t, x} \left[ g''(Z) \right] + \kappa \cdot \mathsf{Var}_{t, x} \left( g'(Z) \right) \le \E_{t, x} \left[ g''(Z) \right] + \kappa \cdot \E_{t, x} \left[ \frac{g''(Z)^2}{u''(Z)} \right] \\
		= \, & \E_{t, x} \left[ \frac{g''(Z) \left( u''(Z) + \kappa g''(Z) \right)}{u''(Z)} \right] = \E_{t, x} \left[ \frac{g''(Z)}{C(t, \gamma)^2 u''(Z)} \right] \\
		= \, & \E_{t, x} \left[ \frac{g''(Z)}{1 - \kappa C(t, \gamma)^2 g''(Z)} \right] 
		\stackrel{(i)}{=}  \E_{t, x} \left[ \frac{g''(Z)}{1 - (\gamma(1) - \gamma(t)) g''(Z)} \right] \\
		\stackrel{(ii)}{\le} \, & \E_{t, x} \left[ \frac{1 / (\gamma(1) - h_2)}{1 - (\gamma(1) - \gamma(t)) / (\gamma(1) - h_2)} \right] = \frac{1}{\gamma(t) - h_2},
	\end{align*}
	where $(i)$ holds because $\kappa \gamma^2(t) = \gamma'(t)$  for $t\in [t_{m-1},1)$,
	whence $\kappa C(t,\gamma)^2 = \int_t^1\gamma'(s)\d s =\gamma(1)-\gamma(t)$,
	and $(ii)$ is due to the fact that $\sup_{z \in \R} \{ g''(z) \} \le 1 / (\gamma(1) - h_2)$. 
	
	Finally, we deal with the case $\kappa < 0$ in the proof of  Eq.~\eqref{eq:curv_bd_Phi_gen}. Note that without loss of generality we may assume that $\gamma(t) - h_2 \le (1 + 1 / \sqrt{2}) (\gamma(1) - h_2)$, otherwise it is always possible to find interpolating points $t = s_0 < s_1 < \cdots < s_{k - 1} < s_k = 1$ such that
	\begin{equation*}
		\gamma(s_{i - 1}) - h_2 \le (1 + 1 / \sqrt{2}) (\gamma(s_i) - h_2), \ \text{for all} \ i = 1, \cdots, k.
	\end{equation*}
	Given $\partial_x^2 \Phi(1, x) \le 1 / (\gamma(1) - h_2)$, if we can show that $\partial_x^2 \Phi(s_{k - 1}, x) \le 1 / (\gamma(s_{k - 1}) - h_2)$, then proceeding with the same argument we obtain that $\partial_x^2 \Phi(s_{k - 2}, x) \le 1 / (\gamma(s_{k - 2}) - h_2)$, and eventually $\partial_x^2 \Phi(t, x) \le 1 / (\gamma(t) - h_2)$. Hence, it suffices to prove that under the additional assumption $\gamma(t) - h_2 \le (1 + 1 / \sqrt{2}) (\gamma(1) - h_2)$, we have $\partial_x^2 \Phi(t, x) \le 1 / (\gamma(t) - h_2)$.
	
	For future convenience, let us denote
	\begin{equation*}
		b = \frac{1}{\gamma(t) - \gamma(1)}, \ d = \frac{1}{\gamma(1) - h_2},
	\end{equation*}
	then it follows that $0 \le g''(z) \le d$, and
	\begin{align*}
		u (z) = \vert \kappa \vert g(z) + \frac{(z - x)^2}{2 C(t, \gamma)^2} = \vert \kappa \vert g(z) + \frac{\vert \kappa \vert (z - x)^2}{2 (\gamma(t) - \gamma(1))} = \vert \kappa \vert \left( g(z) + \frac{b}{2} (z - x)^2 \right).
	\end{align*}
	To simplify the notation, we drop the subscripts from $p_{t, x}$, $\E_{t, x}$ and $\mathsf{Var}_{t, x}$ whenever no confusion arises. Since $p(z) \propto \exp (- u(z))$, by integration by parts we obtain that
	\begin{equation*}
		\E \left[ u'(Z) \right] = 0, \ \mathsf{Var} \left( u'(Z) \right) = \E \left[ u''(Z) \right],
	\end{equation*}
	which leads to
	\begin{align*}
		& \E \left[ g''(Z) \right] + b = \frac{1}{\vert \kappa \vert} \E \left[ u''(Z) \right] \\
		= \, & \frac{1}{\vert \kappa \vert} \mathsf{Var} \left( u'(Z) \right) = \vert \kappa \vert \cdot \mathsf{Var} \left( g'(Z) + b(Z -x) \right).
	\end{align*}
	Using Cauchy-Schwarz inequality and Theorem~\ref{thm:poincare_ineq}, we obtain the following estimate:
	\begin{align*}
		& \E \left[ g''(Z) \right] + b - \vert \kappa \vert \cdot \mathsf{Var} \left( g'(Z) \right) = \vert \kappa \vert \cdot \mathsf{Var} \left( b(Z -x) \right) + 2 \vert \kappa \vert  \cdot \mathsf{Cov} \left( g'(Z), b(Z - x) \right) \\
		= \, & \vert \kappa \vert \cdot \mathsf{Cov} \left( b(Z - x), b(Z - x) + 2 g'(Z) \right) \le \vert \kappa \vert \cdot \sqrt{\mathsf{Var} \left( b(Z - x) \right) \mathsf{Var} \left( b(Z - x) + 2 g'(Z) \right)} \\
		\le \, & \vert \kappa \vert \cdot \sqrt{\E \left[ \frac{b^2}{u''(Z)} \right] \E \left[ \frac{(b + 2 g''(Z))^2}{u''(Z)} \right]} = \sqrt{\E \left[ \frac{b^2}{g''(Z) + b} \right] \E \left[ \frac{(b + 2 g''(Z))^2}{g''(Z) + b} \right]},
	\end{align*}
	where the last inequality follows from the fact $u''(z) = \vert \kappa \vert (g''(z) + b)$.
	Denote $A = g''(Z)$, then $A$ is a bounded random variable such that $0 \le A \le d$. Our previous assumption $\gamma(t) - h_2 \le (1 + 1 / \sqrt{2}) (\gamma(1) - h_2)$ is equivalent to $b \ge \sqrt{2} d$. We claim that
	\begin{equation}\label{eq:expc_ineq}
		\sqrt{\E \left[ \frac{b^2}{A + b} \right] \E \left[ \frac{(b + 2 A)^2}{A + b} \right]} \le \frac{b(2 d + b)}{d + b},
	\end{equation}
	which further implies
	\begin{equation*}
		\partial_x^2 \Phi(t, x) \modif{=} \, \E \left[ g''(Z) \right] - \vert \kappa \vert \cdot \mathsf{Var} \left( g'(Z) \right) \le \frac{b(2 d + b)}{d + b} - b = \frac{b d}{d + b} = \frac{1}{\gamma(t) - h_2},
	\end{equation*}
	the desired curvature upper bound.
	
	Now it suffices to prove the claimed inequality~\eqref{eq:expc_ineq}. First, note that
	\begin{equation*}
		\E \left[ \frac{(b + 2 A)^2}{A + b} \right] = 4 \E \left[ A + b \right] + \E \left[ \frac{b^2}{A + b} \right] - 4 b. 
	\end{equation*}
	Hence, when $\E [b^2 / (A + b)]$ is fixed, $\E [A + b]$ is maximized if and only if $A$ only takes its extreme values, i.e., $A \in \{ 0, d \}$. In particular, there exists $p \in [0, 1]$ such that
	\begin{equation*}
		\P (A = d) = p, \quad \P (A = 0) = 1 - p.
	\end{equation*}
	With this choice of $A$, we define
	\begin{equation*}
		\psi (p) = \E \left[ \frac{b^2}{A + b} \right] \E \left[ \frac{(b + 2 A)^2}{A + b} \right] = \left( p \cdot \frac{b^2}{d + b} + (1 - p) \cdot b \right) \cdot \left( p \cdot \frac{(2d + b)^2}{d + b} + (1 - p) \cdot b \right).
	\end{equation*}
	Direct calculation reveals that
	\begin{equation*}
		\psi'(p) = \frac{2bd (2d + b)}{d + b} - \frac{2 b d^2 (4d + 3b)}{(b + d)^2} \cdot p,
	\end{equation*}
	which is decreasing in $p$, and (recall that $b \ge \sqrt{2} d$)
	\begin{equation*}
		\psi'(1) = \frac{2bd (b^2 - 2 d^2)}{(d + b)^2} \ge 0.
	\end{equation*}
	Therefore, $\psi'(p) \ge 0$ for all $p \in [0, 1]$, meaning that 
	\begin{equation*}
		\E \left[ \frac{b^2}{A + b} \right] \E \left[ \frac{(b + 2 A)^2}{A + b} \right] \le \psi(1) = \frac{b^2 (b + 2 d)^2}{(d + b)^2},
	\end{equation*}
	which finally leads to our claim~\eqref{eq:expc_ineq}.
	
	Now we have proved that for all $t \in [t_{m - 1}, 1]$, $\gamma(t) > h_2$ implies that $\sup_{x \in \R} \partial_x^2 \Phi(t, x) \le 1 / (\gamma(t) - h_2)$. Repeating this argument for smaller $t$ until $\gamma(t) \le h_2$ gives us the second part of Eq.~\eqref{eq:curv_bd_Phi_gen}. This completes the proof.
\end{proof}

\begin{prop}\label{prop:a_priori_est_gen}
	Let $h$ satisfy \cref{ass:h_regularity_gen}. Assume $\mu \in \mathsf{SF} [0, 1]$ is such that $(\mu, c) \in \FS$, i.e., $c + \int_{t}^{1} \mu(s) \d s > 0$ for all $t \in [0, 1]$, \modif{and $\sup_{z \in \R} h''(z) < 1/c$}. Then, $f_{\mu} (t, \cdot) \in C^{\infty} (\R)$ for any $t \in [0, 1]$, and $f_{\mu} \in C^{\infty, \infty}$ at all continuity points of $\mu$. Further, the following estimates hold:
	\begin{align}
		\label{eq:estimate_df}
		\norm{\partial_x f_{\mu} (t, \cdot)}_{L^{\infty} (\R)} \le \, & \norm{h'}_{L^{\infty} (\R)}, \\
		\label{eq:estimate_d2f}
		- \gamma(t) < \, \partial_x^2 f_{\mu} (t, x) \le \, & C(\mu, 2), \,\, \forall (t, x) \in [0, 1] \times \R, \\
		\label{eq:estimate_dkf}
		\norm{\partial_x^k f_{\mu} (t, \cdot)}_{L^{\infty} (\R)} \le \, & C(\mu, k), \ k = 3, 4.
	\end{align}
	Here, for all $2 \le k \le 4$, $C(\mu, k)$ has the following property: for any sequence $\{ \mu_n \}$ such that $(\mu_n, c) \stackrel{\FS}{\longrightarrow} (\mu, c)$, $C(\mu_n, k) \to C(\mu, k)$ as $n \to \infty$.
\end{prop}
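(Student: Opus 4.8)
The plan is to exploit the fact that $\mu\in\mathsf{SF}[0,1]$ is a simple function, so that on each interval $[t_{i-1},t_i)$ of its defining partition the Parisi PDE~\eqref{eq:parisi_mu_gen} has the constant coefficient $\mu_i$ and may be linearized by the Cole--Hopf transform \emph{directly at the level of} $f_\mu$ (rather than through the auxiliary $\Phi$ of \cref{prop:curv_bd_gen}). Writing $\sigma^2=t_i-t$, and using only that $f_\mu(t_i,\cdot)$ is Lipschitz (so $e^{\mu_i f_\mu(t_i,\cdot)}$ has at most exponential growth and all integrals below converge), one checks that $g:=e^{\mu_i f_\mu}$ solves the backward heat equation on $[t_{i-1},t_i)$, whence $f_\mu(t,x)=\mu_i^{-1}\log\E\big[\exp(\mu_i f_\mu(t_i,x+\sigma G))\big]$ for $\mu_i\ne0$ and $f_\mu(t,x)=\E[f_\mu(t_i,x+\sigma G)]$ for $\mu_i=0$, with $G\sim\normal(0,1)$. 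The smoothness assertions then follow by backward induction over the finitely many intervals: the heat kernel is $C^\infty$-smoothing, so $f_\mu(t,\cdot)\in C^\infty(\R)$ for $t$ strictly below the right endpoint of its interval, hence for every $t\in[0,1)$ (with $f_\mu(1,\cdot)\in C^4(\R)$ supplied by \cref{lem:terminal_regularity_gen}), and at an interior continuity point of $\mu$ the above representation is jointly smooth in $(t,x)$, giving $f_\mu\in C^{\infty,\infty}$ there.

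Next I would establish the gradient bound~\eqref{eq:estimate_df}. Differentiating the Cole--Hopf representation once gives $\partial_x f_\mu(t,x)=\E_{t,x}[\partial_x f_\mu(t_i,\cdot)]$, where $\E_{t,x}$ is expectation under the tilted probability density $p_{t,x}(y)\propto\exp\big(\mu_i f_\mu(t_i,y)-(y-x)^2/(2\sigma^2)\big)$; since $\partial_x f_\mu(t,\cdot)$ is then an average of values of $\partial_x f_\mu(t_i,\cdot)$, one gets $\norm{\partial_x f_\mu(t,\cdot)}_{L^\infty(\R)}\le\norm{\partial_x f_\mu(t_i,\cdot)}_{L^\infty(\R)}$, and backward induction from the terminal bound $\norm{\partial_x f_\mu(1,\cdot)}_{L^\infty(\R)}\le\norm{h'}_{L^\infty(\R)}$ of \cref{lem:terminal_regularity_gen} yields~\eqref{eq:estimate_df}. (Equivalently, $v=\partial_x f_\mu$ solves a viscous Burgers equation on each interval and one invokes the maximum principle.) It is important that this step is carried out completely, for all intervals, before turning to the second derivative.

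The heart of the proof is the upper bound in~\eqref{eq:estimate_d2f}; the lower bound $\partial_x^2 f_\mu(t,x)>-\gamma(t)$ is precisely Eq.~\eqref{eq:curv_bd_gen_1} of \cref{prop:curv_bd_gen}. Write $h_2=\sup_{z\in\R}h''(z)$, so $h_2<1/c=\gamma(1)$ by hypothesis. Differentiating the Cole--Hopf representation twice produces the identity $\partial_x^2 f_\mu(t,x)=\E_{t,x}[\partial_x^2 f_\mu(t_i,\cdot)]+\mu_i\,\mathsf{Var}_{t,x}\big(\partial_x f_\mu(t_i,\cdot)\big)$. Crucially, the variance term is controlled by the \emph{already established} bound~\eqref{eq:estimate_df}: since $\partial_x f_\mu(t_i,\cdot)$ takes values in $[-\norm{h'}_{L^\infty(\R)},\norm{h'}_{L^\infty(\R)}]$ we have $\mathsf{Var}_{t,x}(\partial_x f_\mu(t_i,\cdot))\le\norm{h'}_{L^\infty(\R)}^2$, hence $\sup_x\partial_x^2 f_\mu(t,x)\le\sup_x\partial_x^2 f_\mu(t_i,x)+\mu_i^+\norm{h'}_{L^\infty(\R)}^2$ with $\mu_i^+=\max(\mu_i,0)$ (this also holds trivially when $\mu_i\le0$, the variance term then being nonpositive, and when $\mu_i=0$ by the heat flow). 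Iterating backward across the partition, starting from $\sup_x\partial_x^2 f_\mu(1,x)\le\gamma(1)h_2/(\gamma(1)-h_2)$ of \cref{lem:terminal_regularity_gen} (finite because $\gamma(1)>h_2$), yields $\partial_x^2 f_\mu(t,x)\le C(\mu,2)$ with $C(\mu,2)=\big(\gamma(1)h_2/(\gamma(1)-h_2)\big)^+ +\norm{h'}_{L^\infty(\R)}^2\int_0^1\mu^+(s)\,\d s$. Using the $L^\infty$ bound on $\partial_x f_\mu$ is exactly what sidesteps the Riccati-type finite-time blow-up one would encounter from a naive comparison for $w=\partial_x^2 f_\mu$; this is why we do not need the sharper, $\gamma(t)$-dependent estimate~\eqref{eq:curv_bd_gen_2} here, even though $\gamma$ need not stay above $h_2$ on all of $[0,1]$.

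Finally, for~\eqref{eq:estimate_dkf} with $k=3,4$, differentiating~\eqref{eq:parisi_mu_gen} three and four times in $x$ shows that $p:=\partial_x^3 f_\mu$ solves on each interval the linear parabolic equation $\partial_t p+\mu_i(\partial_x f_\mu)\partial_x p+3\mu_i(\partial_x^2 f_\mu)p+\tfrac12\partial_x^2 p=0$, and $r:=\partial_x^4 f_\mu$ an analogous linear equation with an extra source made of products of $\partial_x^{\le3}f_\mu$; since $\partial_x f_\mu$, $\partial_x^2 f_\mu$ are now bounded (and $\gamma$ is itself bounded on $[0,1]$, because $c+\int_t^1\mu(s)\,\d s$ is continuous and strictly positive on $[0,1]$), all coefficients and sources are bounded, so the maximum principle plus backward induction over the partition propagates the terminal bounds $\norm{\partial_x^k f_\mu(1,\cdot)}_{L^\infty(\R)}\le C(c,k)$ of \cref{lem:terminal_regularity_gen} to constants $C(\mu,3),C(\mu,4)$ expressed through $\norm{h'}_{L^\infty(\R)}$, $C(\mu,2)$, $\sup_{[0,1]}\gamma$, $\norm{\mu}_{L^1[0,1]}$ and the $C(c,\ell)$'s. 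The continuity claim is then read off these formulas: if $(\mu_n,c)\stackrel{\FS}{\longrightarrow}(\mu,c)$ then $\mu_n\to\mu$ in $L^1[0,1]$ forces $\int_0^1\mu_n^+\to\int_0^1\mu^+$ and $\norm{\mu_n}_{L^1}\to\norm{\mu}_{L^1}$, and (using the $\FS$ condition to bound $c+\int_t^1\mu_n(s)\,\d s$ below uniformly in $t$ for $n$ large) $\gamma_n\to\gamma$ uniformly on $[0,1]$, so $C(\mu_n,k)\to C(\mu,k)$. I expect the main obstacle, beyond the curvature argument just sketched, to be the bookkeeping needed to make the constants depend only on the quantities above (and not on the number of pieces in the partition of $\mu$), so that the continuity statement survives a possible mismatch between the partitions of $\mu_n$ and of $\mu$; the per-interval additivity of the Cole--Hopf increments is precisely what makes this partition-independent.
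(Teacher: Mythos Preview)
Your argument for the upper bound on $\partial_x^2 f_\mu$ contains a genuine accounting error. The identity $\partial_x^2 f_\mu(t,x)=\E_{t,x}[\partial_x^2 f_\mu(t_i,\cdot)]+\mu_i\,\mathsf{Var}_{t,x}(\partial_x f_\mu(t_i,\cdot))$ is correct, and bounding the variance by $\|h'\|_{L^\infty}^2$ does give the per-interval estimate $\sup_x\partial_x^2 f_\mu(t_{i-1},x)\le\sup_x\partial_x^2 f_\mu(t_i,x)+\mu_i^+\|h'\|_{L^\infty}^2$. But iterating this across the $m$ intervals produces $\sum_{i=1}^m\mu_i^+$, \emph{not} $\int_0^1\mu^+(s)\,\d s=\sum_i\mu_i^+(t_i-t_{i-1})$: the crude variance bound $\le\|h'\|_{L^\infty}^2$ carries no factor of the interval length. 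The resulting constant is partition-dependent and fails the required continuity under $\FS$-convergence. For a concrete obstruction, take $\mu\equiv 1$ and let $\mu_n$ alternate between $1+1/n$ and $1-1/n$ on $2n$ equal subintervals; then $(\mu_n,c)\stackrel{\FS}{\to}(\mu,c)$ while $\sum_i(\mu_n)_i^+\sim 2n\to\infty$. Trying to refine the partition within an interval to recover a length factor only throws you back into the Riccati regime you identified, since for small steps the variance scales like $(t_i-t)\cdot\sup_x(\partial_x^2 f_\mu)^2$.

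The paper's proof is structurally different precisely to obtain a constant that is stable under $\FS$-convergence. It first picks $\theta=\theta(\mu)<1$ with $\inf_{[\theta,1]}\gamma\ge h_2+\tfrac12(\gamma(1)-h_2)$ and on $[\theta,1]$ invokes the sharp, $\gamma$-dependent upper bound~\eqref{eq:curv_bd_gen_2} from \cref{prop:curv_bd_gen} (so one \emph{does} need that part of the proposition, not only the lower bound). On $[0,\theta]$ it applies Duhamel's principle to the Parisi PDE with terminal datum $f_\mu(\theta,\cdot)$, then H\"older and Gr\"onwall, producing a constant of the form $C\exp(C\|\mu\|_{L^\infty[0,\theta]}^3)$. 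This is where the definition of $\FS$ earns its keep: the requirement $\mu|_{[0,t]}\in L^\infty$ for $t<1$ is exactly what makes the Gr\"onwall step go through, and the definition of $\FS$-convergence (which includes $\|\mu_n\|_{L^\infty[0,t]}\to\|\mu\|_{L^\infty[0,t]}$) is what forces $\theta(\mu_n)\to\theta(\mu)$ and hence $C(\mu_n,2)\to C(\mu,2)$. Your first-derivative bound and the $k=3,4$ strategy are fine in spirit and close to what the paper does, but they ride on $C(\mu,2)$, so the gap above must be closed first.
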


\begin{proof}
	Throughout the proof we denote $h_2 = \sup_{z \in \R} h''(z)$. The estimate~\eqref{eq:estimate_df} follows directly from Cole-Hopf transform and Lemma~\ref{lem:terminal_regularity_gen}. As for \eqref{eq:estimate_d2f}, we already know $\partial_x^2 f_{\mu} (t, x) > - \gamma(t)$ from Proposition~\ref{prop:curv_bd_gen}, it suffices to prove the upper bound. Since $\gamma(1)= 1/c > h_2$ and $\mu = \gamma' / \gamma^2 \in L^1 [0, 1]$, there exists $\theta = \theta(\mu) \in [0, 1)$ such that
	\begin{equation*}
		\inf_{t \in [\theta, 1]} \gamma(t) - h_2 \ge \frac{1}{2} \left( \gamma(1) - h_2 \right).
	\end{equation*}
	According to Proposition~\ref{prop:curv_bd_gen}, on $[\theta, 1]$ we always have
	\begin{equation*}
		\partial_x^2 f_{\mu} (t, x) \le \, \frac{\gamma(t) h_2}{\gamma(t) - h_2} \le \frac{(\gamma(1) + h_2) h_2}{\gamma(1) - h_2}.
	\end{equation*}
	On $[0, \theta]$, we can apply Duhamel's principle to the Parisi PDE (defined on $[0, \theta]$, with $f_{\mu} (\theta, \cdot)$ being the new terminal condition) to obtain that
	\begin{equation}\label{eq:duhamel_gen}
		\begin{split}
			f_{\mu} (t, x) = \, & \int_{\R} K \left( \frac{\theta-t}{2}, x-y \right) f_{\mu} (\theta, y) \d y \\
			& + \frac{1}{2} \int_{t}^{\theta} \mu(s) \d s \int_{\R} K \left( \frac{s-t}{2}, x-y \right) (\partial_x f_{\mu} (s, y))^2 \d y,
		\end{split}
	\end{equation}
	where
	\begin{equation*}
		K (t, x) = \frac{1}{\sqrt{4 \pi t}} \exp \left( - \frac{x^2}{4 t} \right)
	\end{equation*}
	is the heat kernel. Note that Eq.~\eqref{eq:duhamel_gen} implies
	\begin{equation*}
		\begin{split}
			\partial_x^2 f_{\mu} (t, x) = \, & \int_{\R} K \left( \frac{\theta-t}{2}, x-y \right) \partial_x^2 f_{\mu} (\theta, y) \d y \\
			& + \int_{t}^{\theta} \mu(s) \d s \int_{\R} \partial_x K \left( \frac{s-t}{2}, x-y \right) \partial_x f_{\mu} (s, y) \partial_x^2 f_{\mu} (s, y) \d y,
		\end{split}
	\end{equation*}
	which leads to the estimate
	\begin{align*}
		\norm{\partial_x^2 f_{\mu} (t, \cdot)}_{L^{\infty} (\R)} \stackrel{(i)}{\le} \, & \norm{\partial_x^2 f_{\mu} (\theta, \cdot)}_{L^{\infty} (\R)} \\
		& + C \int_{t}^{\theta} \frac{\mu(s)}{\sqrt{s-t}} \norm{\partial_x f_{\mu} (s, \cdot)}_{L^{\infty} (\R)} \norm{\partial_x^2 f_{\mu} (s, \cdot)}_{L^{\infty} (\R)} \d s \\
		\stackrel{(ii)}{\le} \, & C + C \norm{\mu}_{L^{\infty} [0, \theta]} \int_{t}^{\theta} \frac{1}{\sqrt{s-t}} \norm{\partial_x^2 f_{\mu} (s, \cdot)}_{L^{\infty} (\R)} \d s,
	\end{align*}
	where $(i)$ follows from Young's inequality \smodif{and the fact that
		\begin{equation*}
			\left\| \partial_x K \left( \frac{s-t}{2}, \cdot \right) \right\|_{L^1 (\R)} \lesssim \, \frac{1}{(s-t)^{3/2}} \int_{\R} \vert x \vert \exp \left( - \frac{x^2}{2 (s-t)} \right) \d x \lesssim \frac{1}{\sqrt{s - t}},
		\end{equation*}
	}$(ii)$ follows from the estimate~\eqref{eq:estimate_df}. Note that the constant $C$ does not depend on $\mu$. Denote $g(t) = \norm{\partial_x^2 f_{\mu} (t, \cdot)}_{L^{\infty} (\R)}$, then we have
	\begin{align*}
		g(t) \le \, & C + C \norm{\mu}_{L^{\infty} [0, \theta]} \int_{t}^{\theta} \frac{1}{\sqrt{s-t}} g(s) \d s \\
		\stackrel{(i)}{\le} \, & C + C \norm{\mu}_{L^{\infty} [0, \theta]} \left( \int_{t}^{\theta} (s-t)^{-3/4} \d s \right)^{2/3} \left( \int_{t}^{\theta} g(s)^{3} \d s \right)^{1/3} \\
		\le \, & C + C \norm{\mu}_{L^{\infty} [0, \theta]} \left( \int_{t}^{\theta} g(s)^{3} \d s \right)^{1/3},
	\end{align*}
	where $(i)$ is due to H\"{o}lder's inequality. We thus obtain that
	\begin{align*}
		g(t)^3 \le \, C + C \norm{\mu}_{L^{\infty} [0, \theta]}^3 \int_{t}^{\theta} g(s)^{3} \d s.
	\end{align*}
	Using Gr\"{o}nwall's inequality, it follows that
	\begin{align*}
		g(t)^3 \le \, C \exp \left( C (\theta - t) \norm{\mu}_{L^{\infty} [0, \theta]}^3 \right) \le C \exp \left( C \norm{\mu}_{L^{\infty} [0, \theta]}^3 \right),
	\end{align*}
	which further implies $g(t) \le C \exp ( C \norm{\mu}_{L^{\infty} [0, \theta]}^3 )$.
	We can then define
	\begin{equation*}
		C(\mu, 2) = \max \left\{ C \exp \left( C \norm{\mu}_{L^{\infty} [0, \theta]}^3 \right), \ \frac{(\gamma(1) + h_2) h_2}{\gamma(1) - h_2} \right\}.
	\end{equation*}
	For any sequence $\{ \mu_n \}$ that converges to $\mu$ in the sense of Definition~\ref{def:conv_in_L_gen}, we know that the corresponding $\theta_n$ must converge to $\theta$. Therefore, $C(\mu_n, 2) \to C(\mu, 2)$ as $n \to \infty$. 
	
	In order to prove the estimate~\eqref{eq:estimate_dkf} on $\partial_x^k f_{\mu}$ for $k = 3, 4$, we can use a similar stochastic calculus argument as the proof of Proposition 6 in \cite{el2022algorithmic}. The resulting constants $C(\mu, k)$ depends continuously on $C(\mu, 2)$ and $\norm{\mu}_{L^1 [0, 1]}$, thus naturally satisfying the desired property. This completes the proof.
\end{proof}
\modif{\begin{rem}
		Note that the application of Duhamel's principle in the above proof critically relies on the assumption that $\norm{\mu}_{L^{\infty} [0, \theta]} < \infty$ for all $\theta \in [0, 1)$. Since we do not assume $\norm{\mu}_{L^{\infty} [0, 1]} < \infty$, we have to employ \cref{prop:curv_bd_gen} to derive an a priori upper bound for $\partial_x^2 f_{\mu} (t, \cdot)$ over $t \in [\theta, 1]$.
\end{rem}}
We are now in position to establish the following:
\begin{thm}[Solution to the Parisi PDE]\label{thm:existence_parisi_gtr}
	Let $h$ satisfy \cref{ass:h_regularity_gen}. For any $(\mu, c) \in \FS$ satisfying $\sup_{z \in \R} h''(z) < 1/c$, the Parisi PDE~\eqref{eq:parisi_mu_gen} admits a weak solution $f_{\mu}$ such that $f_{\mu} (t, \cdot) \in C^4 (\R)$ for all $t \in [0, 1]$, and
	\begin{align}
		\label{eq:estimate_df_gen}
		\norm{\partial_x f_{\mu} (t, \cdot)}_{L^{\infty} (\R)} \le \, & \norm{h'}_{L^{\infty} (\R)}, \\
		\label{eq:estimate_d2f_gen}
		- \gamma(t) < \, \partial_x^2 f_{\mu} (t, x) \le \, & C(\mu, 2), \, \forall (t, x) \in [0, 1] \times \R, \\
		\label{eq:estimate_dkf_gen}
		\norm{\partial_x^k f_{\mu} (t, \cdot)}_{L^{\infty} (\R)} \le \, & C(\mu, k), \ k = 3, 4.
	\end{align}
	Further, for any $\theta < 1$ and $0 \le k \le 2$, one has $\partial_t \partial_x^k f_{\mu} \in L^{\infty} ([0, \theta] \times \R)$.
\end{thm}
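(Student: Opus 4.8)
The plan is to obtain $f_\mu$ as a locally uniform limit of the smooth solutions $f_{\mu_n}$ produced by \cref{prop:a_priori_est_gen} for a sequence of \emph{simple} approximants $\mu_n\in\mathsf{SF}[0,1]$, exploiting that the a priori bounds there come with constants $C(\mu_n,k)$ that are stable under $\stackrel{\FS}{\longrightarrow}$-convergence; throughout we work under \cref{ass:h_regularity_gen} and write $h_2:=\sup_{z\in\R}h''(z)<1/c$. \textbf{Step 1 (approximation within $\FS$).} Since $\mu\in L^1[0,1]$, the map $t\mapsto c+\int_t^1\mu(s)\,\d s$ is continuous on $[0,1]$, strictly positive on $[0,1)$, and equals $c>0$ at $t=1$; hence $\inf_{t\in[0,1]}\big(c+\int_t^1\mu(s)\,\d s\big)=\delta>0$, so $\gamma\le 1/\delta$ is bounded. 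I would produce $\mu_n\in\mathsf{SF}[0,1]$ with $(\mu_n,c)\stackrel{\FS}{\longrightarrow}(\mu,c)$ by first truncating, $\tilde\mu_n:=\mu\,\mathbf{1}_{\{|\mu|\le n\}}$ (which already gives $\tilde\mu_n\to\mu$ in $L^1$, $\norm{\tilde\mu_n\vert_{[0,t]}}_{L^\infty[0,t]}\to\norm{\mu\vert_{[0,t]}}_{L^\infty[0,t]}$ for every $t\in[0,1)$, and $\tilde\mu_n\in L^\infty[0,1]$), and then approximating the bounded function $\tilde\mu_n$ by interval-simple functions; the underlying statement is that $\mathsf{SF}[0,1]$ is $\stackrel{\FS}{\longrightarrow}$-dense in $\FS$. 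Since $\norm{\mu_n-\mu}_{L^1}\to0$ we get $\big|\int_t^1\mu_n-\int_t^1\mu\big|\to0$ uniformly in $t$, hence $c+\int_t^1\mu_n\ge\delta/2$ for $n$ large, giving $(\mu_n,c)\in\FS$ with $\gamma_n\to\gamma$ uniformly on $[0,1]$, and $\sup_z h''(z)<1/c=\gamma_n(1)$ unchanged. By the last assertion of \cref{prop:a_priori_est_gen}, $C(\mu_n,k)\to C(\mu,k)$ for $k=2,3,4$.

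\textbf{Step 2 (compactness).} By \cref{prop:a_priori_est_gen} each $f_{\mu_n}$ is smooth in $x$ and satisfies $\norm{\partial_x f_{\mu_n}(t,\cdot)}_{L^\infty}\le\norm{h'}_{L^\infty}$, $-\gamma_n(t)<\partial_x^2 f_{\mu_n}\le C(\mu_n,2)$, and $\norm{\partial_x^k f_{\mu_n}(t,\cdot)}_{L^\infty}\le C(\mu_n,k)$ for $k=3,4$; by Step 1 these are uniform in $n$. Differentiating the PDE, $\partial_t\partial_x^j f_{\mu_n}=-\tfrac12\partial_x^j\big(\mu_n(\partial_x f_{\mu_n})^2+\partial_x^2 f_{\mu_n}\big)$, so $|\partial_t\partial_x^j f_{\mu_n}(t,x)|\le C(1+|\mu_n(t)|)$ for $j=0,1,2$ using the $C^4$-bound. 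As $\{\mu_n\}$ is $L^1$-convergent it is uniformly integrable, so for each $R$ the families $\{\partial_x^j f_{\mu_n}\}_{j\le 2}$ are equi-Lipschitz in $x$ and equicontinuous in $t$ on $[0,1]\times[-R,R]$. Arzelà--Ascoli plus a diagonal argument over $R\uparrow\infty$ extracts a subsequence along which $f_{\mu_n},\partial_x f_{\mu_n},\partial_x^2 f_{\mu_n}$ converge locally uniformly on $[0,1]\times\R$ to $f_\mu,\partial_x f_\mu,\partial_x^2 f_\mu$; the uniform $C^4$-in-$x$ bound forces $\partial_x^2 f_\mu(t,\cdot)$ to be Lipschitz with constant $C(\mu,4)$, so $f_\mu(t,\cdot)\in C^{3,1}(\R)$. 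Passing the above bounds to the limit (using $\gamma_n\to\gamma$ uniformly and $C(\mu_n,k)\to C(\mu,k)$) yields \eqref{eq:estimate_df_gen}, \eqref{eq:estimate_dkf_gen}, and the non-strict version $-\gamma(t)\le\partial_x^2 f_\mu\le C(\mu,2)$ of \eqref{eq:estimate_d2f_gen}.

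\textbf{Step 3 (limiting equation, terminal data, regularity upgrade).} I would pass to the limit in the weak form of \eqref{eq:parisi_mu_gen}: for $\varphi\in C_c^{\infty}([0,1)\times\R)$ the identity $\int\!\!\int\big(f_{\mu_n}(\partial_t\varphi+\tfrac12\partial_x^2\varphi)+\tfrac12\mu_n(\partial_x f_{\mu_n})^2\varphi\big)\,\d x\,\d t=0$ survives the limit because $f_{\mu_n}\to f_\mu$ locally uniformly, $(\partial_x f_{\mu_n})^2\to(\partial_x f_\mu)^2$ locally uniformly and boundedly, and $\mu_n\to\mu$ in $L^1$. The terminal condition in \eqref{eq:parisi_mu_gen} depends only on $c$, hence is the \emph{same} for every $n$; combined with the equicontinuity of $\{f_{\mu_n}\}$ up to $t=1$ from Step 2 (which uses uniform integrability of $\{\mu_n\}$ near $t=1$, not a uniform $L^\infty$ bound), $f_\mu$ extends continuously to $t=1$ with $f_\mu(1,x)=\sup_u\{h(x+u)-u^2/(2c)\}$. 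On each slab $[0,\theta]\times\R$ with $\theta<1$ we have $\mu\in L^\infty[0,\theta]$, so $f_\mu$ is a bounded weak solution, with bounded Lipschitz gradient, of a uniformly parabolic quasilinear equation; interior parabolic (Schauder-type) estimates then upgrade $f_\mu(t,\cdot)$ to $C^4(\R)$ for every $t$, and reading $\partial_t\partial_x^j f_\mu=-\tfrac12\partial_x^j(\mu(\partial_x f_\mu)^2+\partial_x^2 f_\mu)$ gives $\partial_t\partial_x^j f_\mu\in L^\infty([0,\theta]\times\R)$ for $0\le j\le 2$. To recover the \emph{strict} inequality $\partial_x^2 f_\mu>-\gamma(t)$ I would not take limits but argue directly on $f_\mu$: transform to $\Phi_\mu$ via \eqref{eq:f_Phi_gen} and apply a strong-maximum-principle argument to $\partial_x^2\Phi_\mu\ge 0$, started from the strict terminal/late-time positivity on $[\theta,1]$ (where $\gamma>h_2$, so \cref{lem:terminal_regularity_gen} and \eqref{eq:curv_bd_gen_2} apply, and the Brascamp--Lieb mechanism of \cref{prop:curv_bd_gen} gives strictness) and propagated backward in time.

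\textbf{Main obstacle.} The two points that are not soft are (i) the approximation lemma in Step 1: one must produce $\mu_n\in\mathsf{SF}[0,1]$ that remain in $\FS$ with $\gamma_n$ uniformly bounded \emph{and} realize $\stackrel{\FS}{\longrightarrow}$-convergence, the subtlety being the $L^\infty$-on-$[0,t]$ requirement, which is why a plain mollification or dyadic average is not enough and the preliminary truncation $\mu\,\mathbf{1}_{\{|\mu|\le n\}}$ is needed to control $\norm{\mu_n\vert_{[0,t]}}_{L^\infty[0,t]}$ from both sides; and (ii) upgrading \eqref{eq:estimate_d2f_gen} from $\ge$ to strict, which forces one to revisit the convexity/Cole--Hopf argument of \cref{prop:curv_bd_gen} rather than merely take limits. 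Everything else (the uniform a priori bounds, the compactness, the weak-limit passage, and the elementary $\partial_t\partial_x^k$ bounds on slabs $[0,\theta]\times\R$) is routine once Steps 1 and 2 are in place.
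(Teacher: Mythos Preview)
Your proposal is correct in outline and reaches the same conclusion, but the mechanism in Step~2 differs from the paper's. Rather than Arzel\`a--Ascoli plus subsequence extraction, the paper invokes a direct Lipschitz estimate (following \cite[Lemma~14]{jagannath2016dynamic}):
\[
\norm{f_{\mu_n}-f_{\mu_m}}_{L^{\infty}([0,1]\times\R)}\;\le\;\tfrac{1}{2}\norm{h'}_{L^{\infty}(\R)}^{2}\,\norm{\mu_n-\mu_m}_{L^{1}[0,1]},
\]
so the \emph{full} sequence $f_{\mu_n}$ is Cauchy in $L^\infty$ and converges to a limit $f_\mu$ that is automatically independent of the approximating sequence. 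Convergence of the spatial derivatives is then bootstrapped not via equicontinuity but via an elementary lemma (\cref{lem:diff_convergence_gen}): if $\varphi_n\to\varphi$ uniformly on compacts and $\sup_n\sup_K|\varphi_n''|<\infty$, then $\varphi_n'\to\varphi'$ uniformly on compacts. Iterating this with the uniform bounds from \cref{prop:a_priori_est_gen} gives $\partial_x^k f_{\mu_n}\to\partial_x^k f_\mu$ for $k=1,2,3$ directly, with no diagonal argument and no subsequences. Your compactness route works but is heavier: you must argue that all subsequential limits coincide (or else you only construct \emph{a} solution, not \emph{the} solution associated to $\mu$), whereas the Lipschitz estimate gives this, and continuity of $(\mu,c)\mapsto f_\mu$, for free. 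On the strict lower bound in \eqref{eq:estimate_d2f_gen}, you are right to flag it as not following from a naive limit; the paper's proof is terse on this point, and your proposal to revisit the $\Phi$-transform and the Brascamp--Lieb/strong-maximum-principle mechanism of \cref{prop:curv_bd_gen} on the limiting $f_\mu$ is the natural fix.
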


\begin{proof}
	We will establish the existence of a weak solution to the Parisi PDE~\eqref{eq:parisi_mu_gen} for general $\mu$ (not necessarily in $\mathsf{SF} [0, 1]$) via an approximation procedure. Let $(\mu, c) \in \FS$ be such that $\sup_{z \in \R} h''(z) < 1/c$. Then, there exists a sequence $\{ \mu_n \}_{n=1}^{\infty} \subset \mathsf{SF} [0, 1]$ such that $(\mu_n, c) \stackrel{\FS}{\longrightarrow} (\mu, c)$. Let $f_{\mu_n}$ be the solution to the Parisi PDE associated with $\mu_n$, we can follow the proof of Lemma 14 in \cite{jagannath2016dynamic} to show that
	\begin{equation*}
		\norm{f_{\mu_n} - f_{\mu_m}}_{L^{\infty} ([0, 1] \times \R)} \le \frac{\norm{h'}_{L^{\infty} (\R)}^2}{2} \norm{\mu_n - \mu_m}_{L^1 [0, 1]} \to 0 \ \text{as} \ n, m \to \infty.
	\end{equation*}
	Therefore, we know that as $n \to \infty$, $f_{\mu_n}$ converges pointwise to some function $f_{\mu}: [0, 1] \times \R \to \R$, and this convergence is uniform on $[0, 1] \times K$ for any compact set $K \subset \R$. Now since $\{ \partial_x^2 f_{\mu_n} \}_{n \ge 1}$ is uniformly bounded on compact sets (Proposition~\ref{prop:a_priori_est_gen}), we deduce from Lemma~\ref{lem:diff_convergence_gen} that $\partial_x f_{\mu}$ exists and $\partial_x f_{\mu_n} \to \partial_x f_{\mu}$ uniformly on any compact set. Exploiting the bounds~\eqref{eq:estimate_dkf} and repeating the same argument, we know that $\partial_x^k f_{\mu}$ exists and $\partial_x^k f_{\mu_n} \to \partial_x^k f_{\mu}$ uniformly on compact sets for $k = 2, 3$. Further, since $\partial_x^3 f_{\mu_n} (t, \cdot)$ is $C(\mu_n, 4)$-Lipschitz and $C(\mu_n, 4) \to C(\mu, 4)$, it follows that $\partial_x^3 f_{\mu} (t, \cdot)$ is $C(\mu, 4)$-Lipschitz. Therefore, $\partial_x^4 f_{\mu}$ exists and is upper bounded by $C(\mu, 4)$ almost everywhere. As a consequence, the estimates~\eqref{eq:estimate_df} to \eqref{eq:estimate_dkf} hold for $f_{\mu}$ up to $k=4$ as well. This proves Eqs.~\eqref{eq:estimate_df_gen}, \eqref{eq:estimate_d2f_gen}, and \eqref{eq:estimate_dkf_gen}.
	
	Finally, similar to the proof of Lemma 6.2 in \cite{el2021optimization}, we can show that $f_{\mu}$ is a weak solution to the Parisi PDE~\eqref{eq:parisi_mu_gen}. Further, as $\mu \in L^{\infty} [0, \theta]$ for any $\theta \in [0, 1)$, we know that $\partial_t \partial_x^k f_{\mu} \in L^{\infty} ([0, \theta] \times \R)$ for $k = 0, 1, 2$. This establishes the desired regularity of $f_{\mu}$.
\end{proof}
In the proof of \cref{thm:existence_parisi_gtr}, we make use of the following auxiliary lemma, which we state and prove below for the reader's convenience.
\begin{lem}\label{lem:diff_convergence_gen}
	Let $\{ \varphi_n \}_{n \ge 1}$ be a sequence of twice-differentiable real-valued functions satisfying:
	\begin{itemize}
		\item [$(a)$] For any compact set $K \subset \R$, $\sup_{x \in K} \vert \varphi_n (x) - \varphi (x) \vert \to 0$ as $n \to \infty$.
		\item [$(b)$] $\sup_{n \in \mathbb{N}} \sup_{x \in K} \vert \varphi_n''(x) \vert < +\infty$ for any compact $K$.
	\end{itemize}
	Then, $\varphi$ is differentiable and $\varphi_n' \to \varphi'$ uniformly on any compact set as $n \to \infty$.
\end{lem}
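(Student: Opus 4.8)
The plan is to reduce to a fixed compact interval and combine the Arzel\`a--Ascoli theorem with the fundamental theorem of calculus. Fix a compact set $K \subset \R$; without loss of generality $K = [a,b]$. By hypothesis $(b)$, $L := \sup_{n} \sup_{x \in [a,b]} |\varphi_n''(x)| < \infty$, so each $\varphi_n'$ is $L$-Lipschitz on $[a,b]$; in particular the family $\{\varphi_n'\}_{n \ge 1}$ is equicontinuous on $[a,b]$. To upgrade this to uniform boundedness, apply the mean value theorem: there is $\xi_n \in (a,b)$ with $\varphi_n'(\xi_n) = (\varphi_n(b) - \varphi_n(a))/(b-a)$, and the right-hand side is bounded uniformly in $n$ since $\varphi_n(a) \to \varphi(a)$ and $\varphi_n(b) \to \varphi(b)$ by hypothesis $(a)$. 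Combining this anchor bound with the equi-Lipschitz estimate yields $\sup_n \sup_{x \in [a,b]} |\varphi_n'(x)| < \infty$.

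By Arzel\`a--Ascoli, every subsequence of $\{\varphi_n'\}$ admits a further subsequence $\{\varphi_{n_k}'\}$ that converges uniformly on $[a,b]$ to some $g \in C([a,b])$. Since $\varphi_n''$ exists, $\varphi_n'$ is continuous, so the fundamental theorem of calculus gives $\varphi_{n_k}(x) = \varphi_{n_k}(a) + \int_a^x \varphi_{n_k}'(t)\,\d t$ for all $x \in [a,b]$. Letting $k \to \infty$, using $(a)$ and the uniform convergence $\varphi_{n_k}' \to g$ on the bounded interval $[a,b]$ to pass the limit inside the integral, we obtain $\varphi(x) = \varphi(a) + \int_a^x g(t)\,\d t$ for $x \in [a,b]$. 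Hence $\varphi$ is differentiable on $[a,b]$ with $\varphi' = g$. Since this identifies the subsequential limit as $\varphi'|_{[a,b]}$, independently of the subsequence chosen, the standard subsequence principle forces $\varphi_n' \to \varphi'$ uniformly on $[a,b]$. As $[a,b]$ was an arbitrary compact interval, the lemma follows.

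There is no substantial obstacle here; the only points needing minor care are extracting uniform boundedness of $\{\varphi_n'\}$ from a single mean value theorem anchor point together with the equi-Lipschitz estimate, and justifying the interchange of limit and integral, which is immediate from uniform convergence on the bounded interval $[a,b]$.
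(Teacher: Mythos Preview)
Your proof is correct. It differs from the paper's argument in method: the paper bypasses Arzel\`a--Ascoli entirely and shows directly that $\{\varphi_n'\}$ is Cauchy in $L^\infty(K)$ by the elementary estimate
\[
|\varphi_n'(x) - \varphi_m'(x)| \le 2C_{2,K}\,\varepsilon + \frac{2}{\varepsilon}\sup_{K}|\varphi_n - \varphi_m|,
\]
obtained by approximating each derivative with a difference quotient over an interval of length $\varepsilon$ and bounding the discrepancy via the second-derivative bound. Sending $m,n\to\infty$ and then $\varepsilon\to 0$ gives the Cauchy property directly. Your route---equicontinuity from the Lipschitz bound on $\varphi_n'$, uniform boundedness from a single MVT anchor, then Arzel\`a--Ascoli plus subsequence identification via the fundamental theorem of calculus---is a perfectly standard alternative. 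The paper's argument is slightly more self-contained and yields an explicit quantitative bound; yours is more structural and perhaps more recognizable as a textbook pattern. Either is fine here.
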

\begin{proof}
	Fix a compact set $K$ and denote $C_{2, K} = \sup_{n \in \mathbb{N}} \sup_{x \in K} \vert \varphi_n''(x) \vert$. We first show that $\{ \varphi_n' \}_{n \ge 1}$ is a Cauchy sequence in $L^{\infty} (K)$. To this end, note that for any $x, y \in K$ and $m, n \in \mathbb{N}$,
	\begin{align*}
		\left\vert \varphi_n'(x) - \varphi_m' (x) \right\vert \le \, & \left\vert \varphi_n'(x) - \frac{\varphi_n (y) - \varphi_n(x)}{y - x} \right\vert  + \left\vert \varphi_m'(x) - \frac{\varphi_m (y) - \varphi_m(x)}{y - x} \right\vert \\
		& + \left\vert \frac{\varphi_m (y) - \varphi_m(x)}{y - x} - \frac{\varphi_n (y) - \varphi_n(x)}{y - x} \right\vert \\
		\le \, & 2C_{2, K} \vert x - y \vert + \frac{2}{\vert x - y \vert} \sup_{x \in K} \vert \varphi_n (x) - \varphi_m (x) \vert,
	\end{align*}
	which implies that for any $\veps > 0$,
	\begin{align*}
		& \sup_{x \in K} \left\vert \varphi_n'(x) - \varphi_m' (x) \right\vert \le 2C_{2, K} \veps + \frac{2}{\veps} \sup_{x \in K} \vert \varphi_n (x) - \varphi_m (x) \vert \\
		\implies \, & \limsup_{m, n \to \infty} \sup_{x \in K} \left\vert \varphi_n'(x) - \varphi_m' (x) \right\vert \le 2 C_{2, K} \veps.
	\end{align*}
	Since $\veps$ is arbitrary, this proves $\norm{\varphi_n' - \varphi_m'}_{L^{\infty} (K)} \to 0$ as $m, n \to \infty$. As a consequence, $\varphi_n'$ uniformly converges to some $f$ in $C(K)$. It remains to show that $f = \varphi'$. For any $x, y \in K$, we have
	\begin{equation*}
		\varphi (x) - \varphi(y) = \lim_{n \to \infty} \left\{ \varphi_n (x) - \varphi_n (y) \right\} = \lim_{n \to \infty} \int_{x}^{y} \varphi_n'(z) \d z = \int_{x}^{y} f (z) \d z,
	\end{equation*}
	where the last inequality follows by dominated convergence. Since $f$ is continuous, we know that $\varphi' = f$. This completes the proof.
\end{proof}

We now prove \cref{thm:solve_Parisi_PDE} without imposing \cref{ass:h_regularity_gen}, via an approximation argument. Note that under the conditions of \cref{thm:solve_Parisi_PDE}, we still have $\norm{\partial_x f_{\mu} (1, \cdot)}_{L^{\infty} (\R)} \le \norm{h'}_{L^{\infty} (\R)}$ and $\partial_x^2 f_{\mu} (1, \cdot) \in C_b (\R)$, as established in the proof of \cref{lem:terminal_regularity_gen}.

For any $\veps > 0$, define $f_{\mu}^{\veps} (1, \cdot)$ to be the $\veps$-mollifier of $f_{\mu} (1, \cdot)$ via the heat kernel, namely
\begin{equation}\label{eq:eps_mollifier}
	f_{\mu}^{\veps} (1, x) = \, \E_{G \sim \normal(0, 1)} \left[ f_{\mu} \left( 1, x + \veps G \right) \right].
\end{equation}
Of course, $f_{\mu}^{\veps} (1, \cdot) \in C^{\infty} (\R)$, and we have the following $L^{\infty}$-norm bounds regarding its partial derivatives:
\begin{prop}\label{prop:terminal_approximation}
	For any $\veps > 0$, we have
	\begin{itemize}
		\item [$(a)$] $\norm{f_{\mu}^{\veps} (1, \cdot) - f_{\mu} (1, \cdot)}_{L^{\infty} (\R)} \le \veps^2 \norm{\partial_x^2 f_{\mu} (1, \cdot)}_{L^{\infty} (\R)}$.
		\item [$(b)$] For any compact set $K \subset \R$, we have
		\begin{align*}
			\norm{\partial_x f_{\mu}^{\veps} (1, \cdot) - \partial_x f_{\mu} (1, \cdot)}_{L^{\infty} (K)} \le \, & \veps \sup_{x \in K} \sup_{t \in [0, \veps]} \left\vert \E_{G} \left[ G \cdot \partial_x^2 f_{\mu} (1, x + t G) \right] \right\vert, \\
			\norm{\partial_x^2 f_{\mu}^{\veps} (1, \cdot) - \partial_x^2 f_{\mu} (1, \cdot)}_{L^{\infty} (K)} \to \, & 0 \ \text{as} \ \veps \to 0, \\
			\norm{\partial_x^3 f_{\mu}^{\veps} (1, \cdot)}_{L^{\infty} (K)} \le \, & \frac{1}{\veps} \sup_{x \in K} \sup_{t \in [0, \veps]} \left\vert \E_{G} \left[ G \cdot \partial_x^2 f_{\mu} (1, x + t G) \right] \right\vert.
		\end{align*}
		Further, we have
		\begin{equation*}
			\sup_{x \in K} \sup_{t \in [0, \veps]} \left\vert \E_{G} \left[ G \cdot \partial_x^2 f_{\mu} (1, x + t G) \right] \right\vert \to 0 \ \text{as} \ \veps \to 0.
		\end{equation*}
	\end{itemize}
\end{prop}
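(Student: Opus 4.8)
\textbf{Proof proposal for Proposition \ref{prop:terminal_approximation}.}
All four statements are elementary consequences of Gaussian smoothing, and the plan is to reduce everything to bounds on the single quantity
$\Psi(x,t) := \E_{G \sim \normal(0,1)}\!\left[ G\,\partial_x^2 f_{\mu}(1, x+tG) \right]$, using only the two facts recalled just above the statement, namely $f_{\mu}(1,\cdot) \in C^2(\R)$ with $\norm{\partial_x f_{\mu}(1,\cdot)}_{L^{\infty}(\R)} \le \norm{h'}_{L^{\infty}(\R)}$ and $\partial_x^2 f_{\mu}(1,\cdot) \in C_b(\R)$. Since these derivatives are bounded, one may differentiate under the expectation in \eqref{eq:eps_mollifier} up to order two, so $\partial_x^k f_{\mu}^{\veps}(1,x) = \E_G[\partial_x^k f_{\mu}(1,x+\veps G)]$ for $k\in\{0,1,2\}$. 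I will also use repeatedly the identity $\Psi(x,t) = \E_G[G(\partial_x^2 f_{\mu}(1,x+tG) - \partial_x^2 f_{\mu}(1,x))]$, valid because $\E[G]=0$.

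For part $(a)$, I would Taylor-expand $f_{\mu}(1,x+\veps G)$ to second order with integral remainder; the first-order term vanishes in expectation, so $|f_{\mu}^{\veps}(1,x) - f_{\mu}(1,x)| \le \tfrac12\veps^2\E[G^2]\,\norm{\partial_x^2 f_{\mu}(1,\cdot)}_{L^{\infty}(\R)} \le \veps^2\norm{\partial_x^2 f_{\mu}(1,\cdot)}_{L^{\infty}(\R)}$. For the first estimate of $(b)$, the fundamental theorem of calculus gives $\partial_x f_{\mu}(1,x+\veps G)-\partial_x f_{\mu}(1,x) = \veps G\int_0^1 \partial_x^2 f_{\mu}(1,x+\theta\veps G)\,\d\theta$; taking expectations yields $\partial_x f_{\mu}^{\veps}(1,x)-\partial_x f_{\mu}(1,x) = \veps\int_0^1 \Psi(x,\theta\veps)\,\d\theta$, hence the bound with $\sup_{t\in[0,\veps]}|\Psi(x,t)|$. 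The second estimate, $\norm{\partial_x^2 f_{\mu}^{\veps}(1,\cdot)-\partial_x^2 f_{\mu}(1,\cdot)}_{L^{\infty}(K)} \to 0$, is the standard fact that Gaussian mollification converges uniformly on compacts to a bounded continuous function: write it as $\E_G[\partial_x^2 f_{\mu}(1,x+\veps G)-\partial_x^2 f_{\mu}(1,x)]$, split the expectation over $\{|G|\le R\}$ (control by the modulus of continuity of $\partial_x^2 f_{\mu}(1,\cdot)$ on the $R$-neighbourhood of $K$, evaluated at $\veps R$) and $\{|G|>R\}$ (control by $2\norm{\partial_x^2 f_{\mu}(1,\cdot)}_{L^{\infty}}\,\P(|G|>R)$), then send $\veps\to 0$ followed by $R\to\infty$.

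The one slightly non-routine point is the bound on $\partial_x^3 f_{\mu}^{\veps}(1,\cdot)$, because $\partial_x^2 f_{\mu}(1,\cdot)$ is merely continuous and one cannot differentiate a third time under the expectation. Here I would instead write $\partial_x^2 f_{\mu}^{\veps}(1,\cdot) = \varphi_{\veps}*\partial_x^2 f_{\mu}(1,\cdot)$ with $\varphi_{\veps}$ the density of $\normal(0,\veps^2)$, move the $x$-derivative onto the smooth integrable kernel, and use the Stein-type identity $\varphi_{\veps}'(z) = -\veps^{-2} z\,\varphi_{\veps}(z)$ together with the change of variables $y=x+\veps G$ to get $\partial_x^3 f_{\mu}^{\veps}(1,x) = (\varphi_{\veps}'*\partial_x^2 f_{\mu}(1,\cdot))(x) = \veps^{-1}\Psi(x,\veps)$, which is exactly the claimed bound. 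For the final limit $\sup_{x\in K}\sup_{t\in[0,\veps]}|\Psi(x,t)|\to 0$, apply the same $\{|G|\le R\}$/$\{|G|>R\}$ decomposition to $\Psi(x,t)=\E_G[G(\partial_x^2 f_{\mu}(1,x+tG)-\partial_x^2 f_{\mu}(1,x))]$: on the first event $x+tG\in\{z:\dist(z,K)\le R\}$ for $t\le\veps\le 1$, so the increment is at most the modulus of continuity of $\partial_x^2 f_{\mu}(1,\cdot)$ on that compact set evaluated at $\veps R$, while on the second event it is bounded by $2\norm{\partial_x^2 f_{\mu}(1,\cdot)}_{L^{\infty}}|G|$ with $\E[|G|\mathbf{1}_{|G|>R}]\to 0$; letting $\veps\to 0$ and then $R\to\infty$ concludes. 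No genuine obstacle arises: the entire content is keeping the uniform-on-compacts estimates honest and substituting the Stein identity for the missing third derivative.
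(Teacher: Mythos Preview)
Your proposal is correct and follows essentially the same approach as the paper: Taylor expansion for $(a)$, the mean-value form of the increment for the first estimate in $(b)$, and uniform continuity of $\partial_x^2 f_{\mu}(1,\cdot)$ for the final limit. The paper's proof is in fact more terse than yours---it writes out only $(a)$ and the first estimate of $(b)$ and dismisses the rest with ``the other estimates in $(b)$ follow similarly''---so your explicit use of the Stein identity $\varphi_{\veps}'(z)=-\veps^{-2}z\,\varphi_{\veps}(z)$ to obtain $\partial_x^3 f_{\mu}^{\veps}(1,x)=\veps^{-1}\Psi(x,\veps)$ and your $\{|G|\le R\}/\{|G|>R\}$ truncation for the uniform limits are a welcome clarification rather than a departure.
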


\begin{proof}
	We first prove $(a)$. By definition, we have
	\begin{align*}
		& \norm{f_{\mu}^{\veps} (1, \cdot) - f_{\mu} (1, \cdot)}_{L^{\infty} (\R)} = \, \sup_{x \in \R} \left\vert f_{\mu}^{\veps} (1, x) - f_{\mu} (1, x) \right\vert \\
		= \, & \sup_{x \in \R} \left\vert \E_{G} \left[ f_{\mu} \left( 1, x + \veps G \right) - f_{\mu} (1, x) \right] \right\vert \stackrel{(i)}{=} \sup_{x \in \R} \left\vert \E_{G} \left[ \partial_x f_{\mu} ( 1, x) \cdot \veps G + \frac{1}{2} \partial_x^2 f_{\mu} (1, x^*) \cdot \veps^2 G^2 \right] \right\vert \\
		\le \, & \frac{\veps^2}{2} \sup_{x \in \R} \left\vert \partial_x^2 f_{\mu} (1, x) \right\vert \cdot \E_{G} [G^2] \le \veps^2 \norm{\partial_x^2 f_{\mu} (1, \cdot)}_{L^{\infty} (\R)},
	\end{align*}
	\smodif{where in $(i)$ we use Taylor expansion and $x^*$ is between $x$ and $x + \veps G$.}
	Similarly, we can show that
	\begin{align*}
		& \norm{\partial_x f_{\mu}^{\veps} (1, \cdot) - \partial_x f_{\mu} (1, \cdot)}_{L^{\infty} (K)} = \, \sup_{x \in K} \left\vert \partial_x f_{\mu}^{\veps} (1, x) - \partial_x f_{\mu} (1, x) \right\vert \\
		= \, & \sup_{x \in K} \left\vert \E_{G} \left[ \partial_x f_{\mu} \left( 1, x + \veps G \right) - \partial_x f_{\mu} (1, x) \right] \right\vert \le \sup_{x \in K} \left\{ \veps \cdot \sup_{t \in [0, \veps]} \left\vert \E_{G} \left[ G \cdot \partial_x^2 f_{\mu} (1, x + t G) \right] \right\vert \right\} \\
		= \, & \veps \sup_{x \in K} \sup_{t \in [0, \veps]} \left\vert \E_{G} \left[ G \cdot \partial_x^2 f_{\mu} (1, x + t G) \right] \right\vert,
	\end{align*}
	where we have
	\begin{equation*}
		\sup_{x \in K} \sup_{t \in [0, \veps]} \left\vert \E_{G} \left[ G \cdot \partial_x^2 f_{\mu} (1, x + t G) \right] \right\vert \to 0
	\end{equation*}
	since $\partial_x^2 f_{\mu} (1, \cdot)$ is uniformly continuous on $K$. \smodif{To establish the other estimates in $(b)$, we note that
		\begin{align*}
			& \norm{\partial_x^2 f_{\mu}^{\veps} (1, \cdot) - \partial_x^2 f_{\mu} (1, \cdot)}_{L^{\infty} (K)} = \, \sup_{x \in K} \left\vert \E_{G} \left[ \partial_x^2 f_{\mu} \left( 1, x + \veps G \right) - \partial_x^2 f_{\mu} (1, x) \right] \right\vert \\
			\le \, & \E_{G} \left[ \sup_{x \in K} \left\vert \partial_x^2 f_{\mu} \left( 1, x + \veps G \right) - \partial_x^2 f_{\mu} (1, x) \right\vert \right] \to 0
		\end{align*}
		almost surely as $\veps \to 0$ by bounded convergence theorem, since $\partial_x^2 f_{\mu} (1, \cdot)$ is uniformly continuous and bounded on $K$. Further, denoting by $\phi$ the Gaussian PDF, we have
		\begin{align*}
			\partial_x^3 f_{\mu}^{\veps} (1, x) = \, & - \int_{\R} \partial_y^2 f_{\mu} (1, y) \frac{1}{\veps^2} \phi' \left( \frac{y - x}{\veps} \right) \d y = - \int_{\R} \partial_x^2 f_{\mu} (1, x + \veps z ) \frac{1}{\veps} \phi' ( z ) \d z \\
			= \, & \frac{1}{\veps} \int_{\R} \partial_x^2 f_{\mu} (1, x + \veps z ) z \phi ( z ) \d z = \frac{1}{\veps} \E_G \left[ G \cdot \partial_x^2 f_{\mu} (1, x + \veps G ) \right],
		\end{align*}
		which immediately leads to
		\begin{equation*}
			\norm{\partial_x^3 f_{\mu}^{\veps} (1, \cdot)}_{L^{\infty} (K)} \le \, \frac{1}{\veps} \sup_{x \in K} \sup_{t \in [0, \veps]} \left\vert \E_{G} \left[ G \cdot \partial_x^2 f_{\mu} (1, x + t G) \right] \right\vert.
		\end{equation*}
		This completes the proof.}
\end{proof}
\begin{rem}
	These estimates are still valid if we replace $f_{\mu}$ by $f_{\mu}^{\eta}$ for some $\eta \le \veps$, since $f_{\mu}^{\veps}$ can be viewed as a mollifier of $f_{\mu}^{\eta}$ as well.
\end{rem}
Now, let $f_{\mu}^{\veps} : [0, 1] \times \R \to \R$ be the solution to the Parisi PDE~\eqref{eq:parisi_mu_gen} with terminal value $f_{\mu}^{\veps} (1, \cdot)$. We will show that for any sequence $\veps_n \to 0$, $\{ f_{\mu}^{\veps_n} \}_{n \ge 1}$ is a Cauchy sequence in the following sense:
\begin{thm}\label{thm:C2_approx_gen}
	The following holds for the sequence $\{ f_{\mu}^{\veps_n} \}_{n \ge 1}$ as $\veps_n \to 0$:
	\begin{itemize}
		\item [$(a)$] $\lim_{m, n \to \infty} \norm{f_{\mu}^{\veps_m} - f_{\mu}^{\veps_n}}_{L^{\infty} ([0, 1] \times \R)} = 0$.
		\item [$(b)$] $\lim_{m, n \to \infty} \norm{ \partial_x f_{\mu}^{\veps_m} - \partial_x f_{\mu}^{\veps_n}}_{L^{\infty} ([0, 1] \times \R)} = 0$.
		\item [$(c)$] For any compact set $K \subset \R$, we have
		\begin{equation*}
			\lim_{m, n \to \infty} \norm{ \partial_x^2 f_{\mu}^{\veps_m} - \partial_x^2 f_{\mu}^{\veps_n}}_{L^{\infty} ([0, 1] \times K)} = 0.
		\end{equation*}
	\end{itemize}
\end{thm}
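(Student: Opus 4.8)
# Proof Proposal for Theorem \ref{thm:C2_approx_gen}

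The plan is to leverage the explicit estimates already established for solutions of the Parisi PDE with $C^4$ terminal data (Theorem \ref{thm:existence_parisi_gtr} and Proposition \ref{prop:a_priori_est_gen}), combined with the stability estimates on the mollified terminal conditions (Proposition \ref{prop:terminal_approximation}). Throughout, write $f_\mu^{\veps_n}$ for the solution with terminal value $f_\mu^{\veps_n}(1,\cdot)$; since each $f_\mu^{\veps_n}(1,\cdot)$ is $C^\infty$ with bounded derivatives up to order $4$, Theorem \ref{thm:existence_parisi_gtr} applies and all the a priori bounds \eqref{eq:estimate_df_gen}--\eqref{eq:estimate_dkf_gen} hold uniformly in $n$ (the constants $C(\mu,k)$ depend only on $(\mu,c)$, not on the terminal data, provided the terminal second derivative bound is uniform, which follows from Proposition \ref{prop:terminal_approximation}(b) since $\partial_x^2 f_\mu^{\veps_n}(1,\cdot)\to\partial_x^2 f_\mu(1,\cdot)$ locally uniformly and $\partial_x^2 f_\mu(1,\cdot)\in C_b(\R)$).

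\medskip

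\noindent\textbf{Step 1: The $L^\infty$ estimate (part $(a)$).} First I would establish a comparison/contraction principle for the Parisi PDE in the sup norm: if $f_\mu^{(1)}$, $f_\mu^{(2)}$ solve \eqref{eq:parisi_mu_gen} with terminal data $g_1$, $g_2$ respectively, then
\begin{equation*}
	\norm{f_\mu^{(1)} - f_\mu^{(2)}}_{L^\infty([0,1]\times\R)} \le \norm{g_1 - g_2}_{L^\infty(\R)}.
\end{equation*}
This follows from the Cole-Hopf (exponential) substitution on each interval where $\mu$ is essentially bounded—or more directly by the maximum principle for the quasilinear parabolic equation, using that the difference of the two quadratic gradient terms can be written as $\frac12\mu(t)(\partial_x f^{(1)}+\partial_x f^{(2)})\partial_x(f^{(1)}-f^{(2)})$, a linear first-order term in $f^{(1)}-f^{(2)}$ with coefficient controlled by \eqref{eq:estimate_df_gen}. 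Combining this with Proposition \ref{prop:terminal_approximation}(a), $\norm{f_\mu^{\veps_m} - f_\mu^{\veps_n}}_{L^\infty} \le \norm{f_\mu^{\veps_m}(1,\cdot) - f_\mu^{\veps_n}(1,\cdot)}_{L^\infty} \le (\veps_m^2+\veps_n^2)\norm{\partial_x^2 f_\mu(1,\cdot)}_{L^\infty(\R)} \to 0$, giving $(a)$.

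\medskip

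\noindent\textbf{Step 2: The gradient estimate (part $(b)$).} Differentiating \eqref{eq:parisi_mu_gen} in $x$, the function $p^{(i)} := \partial_x f_\mu^{(i)}$ solves a viscous Burgers-type equation $\partial_t p + \mu(t)\, p\,\partial_x p + \frac12\partial_x^2 p = 0$. The difference $w := p^{(1)}-p^{(2)}$ then satisfies a linear parabolic equation whose drift coefficient is bounded by $\norm{\mu}_{L^1}\cdot\norm{h'}_{L^\infty}$ (using \eqref{eq:estimate_df_gen}) and whose zeroth-order coefficient involves $\mu(t)\,\partial_x p^{(2)}$, bounded in $L^1$ in time by $\norm{\mu}_{L^1}\cdot C(\mu,2)$ via \eqref{eq:estimate_d2f_gen}. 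A Grönwall argument in the sup norm then yields $\norm{\partial_x f_\mu^{\veps_m} - \partial_x f_\mu^{\veps_n}}_{L^\infty([0,1]\times\R)} \le e^{C(\mu)}\norm{\partial_x f_\mu^{\veps_m}(1,\cdot) - \partial_x f_\mu^{\veps_n}(1,\cdot)}_{L^\infty(\R)}$. The terminal gradients converge uniformly on $\R$ (not merely on compacts) because $\partial_x f_\mu^\veps(1,x)=\E_G[\partial_x f_\mu(1,x+\veps G)]$ and $\partial_x f_\mu(1,\cdot)$ is the derivative of a Lipschitz, semiconcave function; its modulus of continuity away from kinks, together with the bound $\norm{\partial_x^2 f_\mu(1,\cdot)}_{L^\infty}<\infty$, forces uniform convergence. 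This proves $(b)$.

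\medskip

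\noindent\textbf{Step 3: The Hessian estimate (part $(c)$), the main obstacle.} This is the hard part, because $\partial_x^2 f_\mu(1,\cdot)$ need only be bounded and continuous, so $\partial_x^3 f_\mu^{\veps}(1,\cdot)$ blows up like $1/\veps$ (Proposition \ref{prop:terminal_approximation}(b)). Thus one cannot simply propagate a terminal $L^\infty$ bound on the third derivative. Instead, the plan is: (i) fix $\theta<1$ with $\gamma$ bounded below by $h_2$ on $[\theta,1]$; on $[\theta,1]$, the Cole-Hopf representation plus the Brascamp-Lieb argument of Proposition \ref{prop:curv_bd_gen} gives uniform two-sided bounds on $\partial_x^2\Phi$ hence on $\partial_x^2 f_\mu^{\veps_n}$, and—crucially—a \emph{uniform modulus of continuity in $x$} of $\partial_x^2 f_\mu^{\veps_n}(t,\cdot)$ for $t$ slightly less than $1$, obtained by smoothing from the heat semigroup in that regime. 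Quantitatively, let $q_n := \partial_x^2 f_\mu^{\veps_n}$; on $(\theta,1)$ it solves a linear parabolic equation $\partial_t q_n + \mu p_n \partial_x q_n + \mu q_n^2 + \frac12\partial_x^2 q_n = 0$ (where $p_n=\partial_x f_\mu^{\veps_n}$), and the heat kernel regularization on the time interval $[t,1]$ with $t<1$ converts the merely-continuous terminal datum into a Hölder- (indeed Lipschitz-) bounded one at time $t$, with a bound that is uniform in $n$ since the terminal data converge locally uniformly. (ii) Then on $[0,\theta]$ apply Duhamel exactly as in Proposition \ref{prop:a_priori_est_gen}: the difference $q_m-q_n$ satisfies an integral equation with a kernel singularity $(s-t)^{-1/2}$ integrable against the uniformly bounded $\mu\in L^\infty[0,\theta]$, and a Grönwall/Picard iteration closes to give $\norm{q_m - q_n}_{L^\infty([0,\theta]\times K)} \le C(\mu,K)\big(\text{data difference at time }\theta\big)$. (iii) Finally, to pass the data difference at time $\theta$ to zero, note that by the above smoothing $q_n(\theta,\cdot)$ is a Cauchy sequence in $L^\infty_{\mathrm{loc}}$: one reruns Steps 1--2 plus the uniform modulus of continuity on $[\theta,1]$ to show $q_m(\theta,\cdot)-q_n(\theta,\cdot)\to0$ locally uniformly, using that on $[\theta,1]$ the equation for $q_m-q_n$ has a bounded zeroth-order coefficient $\mu(q_m+q_n)$ and a bounded drift, and that at time $1$ the (unmollified) terminal Hessians agree in the limit. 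Chaining (i)--(iii) gives $(c)$. The delicate point throughout is that no uniform third-derivative bound is available up to $t=1$, so all gain of one derivative must come from parabolic smoothing strictly inside $[0,1)$, and one must track carefully that the constants depend only on $(\mu,c)$ and the compact set $K$, not on $n$ or $\veps_n$.
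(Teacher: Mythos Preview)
Your arguments for parts $(a)$ and $(b)$ are essentially correct and equivalent to the paper's: where you invoke the maximum principle and a Gr\"onwall inequality, the paper uses the probabilistic counterpart (Feynman--Kac/stochastic representation) to obtain the same contraction estimates. Both routes reduce the problem to the terminal-data difference, which is controlled by Proposition~\ref{prop:terminal_approximation}.

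Part $(c)$, however, has a genuine gap. In step (iii) you write that the equation for $q_m-q_n$ on $[\theta,1]$ ``has a bounded zeroth-order coefficient $\mu(q_m+q_n)$ and a bounded drift''. This overlooks the source term: differentiating the PDE for $q_n$ and subtracting yields
\[
\partial_t(q_m-q_n)+\mu\, p_m\,\partial_x(q_m-q_n)+\tfrac12\partial_x^2(q_m-q_n)+\mu(q_m+q_n)(q_m-q_n)
+\mu\, w_1\,\partial_x^3 f_\mu^{\veps_n}=0,
\]
so the forcing $\mu\, w_1\,\partial_x^3 f_\mu^{\veps_n}$ is unavoidable. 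Since $\partial_x^3 f_\mu^{\veps_n}(1,\cdot)$ blows up like $\veps_n^{-1}$, your smoothing-on-$[\theta,1]$ detour does not sidestep this term; any propagation argument for $q_m-q_n$ must confront it.

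The paper handles $(c)$ more directly: it writes the Feynman--Kac representation for $w_2=\partial_x^2 w$ on all of $[t,1]$, producing exactly this source term, and then observes that the \emph{product} $w_1\cdot\partial_x^3 f_\mu^{\veps}$ vanishes in the limit. The mechanism is the matched rates in Proposition~\ref{prop:terminal_approximation}$(b)$: on compacts, $|w_1(1,\cdot)|\lesssim \veps\, r_K(\veps)$ while $|\partial_x^3 f_\mu^{\veps}(1,\cdot)|\lesssim \veps^{-1} r_K(\veps)$ with $r_K(\veps)\to 0$, so the product is $O(r_K(\veps)^2)$; these terminal estimates are then propagated inward via Feynman--Kac (using tightness of the driving diffusion to localize to compacts). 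The homogeneous term is handled by the local-uniform convergence of $w_2(1,\cdot)$ from Proposition~\ref{prop:terminal_approximation}$(b)$ together with the same tightness. In short, the missing idea in your plan is precisely this rate-matching for the product $w_1\cdot\partial_x^3 f_\mu^\veps$; once you add it, the two-regime splitting becomes unnecessary.
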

\begin{proof}
	Throughout the proof we denote $\veps = \veps_n$, $\eta = \veps_m$, and $w = f_{\mu}^{\veps_m} - f_{\mu}^{\veps_n}$.
	
	\vspace{0.5em} 
	\noindent \textbf{Proof of $(a)$.} Note that $w$ satisfies the following PDE:
	\begin{equation*}
		\partial_t w + \frac{1}{2} \mu(t) \left( \partial_x f_{\mu}^{\veps} + \partial_x f_{\mu}^{\eta} \right) \partial_x w + \frac{1}{2} \partial_x^2 w = 0.
	\end{equation*}
	Fix $(t, x) \in [0, 1] \times \R$, and let $\{ X_s \}_{s \in [t, 1]}$ be the solution to the SDE:
	\begin{equation*}
		X_t = x, \ \d X_s = \frac{1}{2} \mu(s) \left( \partial_x f_{\mu}^{\veps} (s, X_s) + \partial_x f_{\mu}^{\eta} (s, X_s) \right) \d s + \d B_s.
	\end{equation*}
	Note that the solution uniquely exists since $\partial_x f_{\mu}^{\veps}$ and $\partial_x f_{\mu}^{\eta}$ are Lipschitz. Using It\^{o}'s formula, we obtain that
	\begin{align*}
		\d w(s, X_s) = \partial_xw(s, X_s) \d B_s \implies w(t, x) = \E_{X_t = x} \left[ w(1, X_1) \right].
	\end{align*}
	The conclusion then follows from Proposition~\ref{prop:terminal_approximation} $(a)$.
	
	\vspace{0.5em} 
	\noindent \textbf{Proof of $(b)$.} Define $w_1 = \partial_x w$, then we know that $w_1$ satisfies
	\begin{equation*}
		\partial_t w_1 + \mu(t) \partial_x f_{\mu}^{\veps} \cdot \partial_x w_1 + \frac{1}{2} \partial_x^2 w_1 + \mu(t) \partial_x^2 f_{\mu}^{\eta} \cdot w_1 = 0.
	\end{equation*}
	Fix $(t, x) \in [0, 1] \times \R$, and let $\{ Y_s \}_{s \in [t, 1]}$ solve the SDE:
	\begin{equation*}
		Y_t = x, \ \d Y_s = \mu(s) \partial_x f_{\mu}^{\veps} (s, Y_s) \d s + \d B_s.
	\end{equation*}
	Similarly, we know that the solution exists uniquely. Using Feynman-Kac formula, it follows that
	\begin{equation*}
		w_1 (t, x) = \E_{Y_t = x} \left[ \exp \left( \int_{t}^{1} \mu(s) \partial_x^2 f_{\mu}^{\eta} (s, Y_s) \d s \right) w_1 (1, Y_1) \right].
	\end{equation*}
	Since $\partial_x^2 f_{\mu}^{\eta}$ is uniformly bounded, and from the proof of Proposition~\ref{prop:terminal_approximation} we know that
	\begin{equation*}
		\norm{w_1 (1, \cdot)}_{L^{\infty} (\R)} = \sup_{x \in \R} \left\vert \partial_x f_{\mu}^{\veps} (1, x) - \partial_x f_{\mu}^{\eta} (1, x) \right\vert \to 0 \ \text{as} \ \veps, \eta \to 0,
	\end{equation*}
	we deduce that $\norm{w_1}_{L^{\infty} ([0, 1] \times \R)} \to 0$ as $\veps, \eta \to 0$. This proves part $(b)$.
	
	\vspace{0.5em} 
	\noindent \textbf{Proof of $(c)$.} Define $w_2 = \partial_x^2 w$. Then, we know that $w_2$ satisfies the follwing PDE:
	\begin{equation*}
		\partial_t w_2 + \mu(t) \partial_x f_{\mu}^{\eta} \cdot \partial_x w_2 + \frac{1}{2} \partial_x^2 w_2 + \mu(t) \left( \partial_x^2 f_{\mu}^{\veps} + \partial_x^2 f_{\mu}^{\eta} \right) \cdot w_2 + \mu(t) w_1 \cdot \partial_x^3 f_{\mu}^{\veps} = 0.
	\end{equation*}
	Fix $(t, x) \in [0, 1] \times \R$, and let $\{Z_s \}_{s \in [t, 1]}$ be the unique solution to the SDE:
	\begin{equation*}
		Z_t = x, \ \d Z_s = \mu(s) \partial_x f_{\mu}^{\eta} (s, Z_s) \d s + \d B_s.
	\end{equation*}
	Using again Feynman-Kac formula, we obtain that
	\begin{align*}
		& w_2 (t, x) \\
		= \, & \E_{Z_t = x} \left[ \int_{t}^{1} \exp \left( \int_{t}^{\tau} \mu(s) \left( \partial_x^2 f_{\mu}^{\veps} (s, Z_s) + \partial_x^2 f_{\mu}^{\eta} (s, Z_s) \right) \d s \right) \mu(\tau) w_1 (\tau, Z_{\tau}) \partial_x^3 f_{\mu}^{\veps} (\tau, Z_\tau) \d \tau \right] \\
		& + \E_{Z_t = x} \left[ \exp \left( \int_{t}^{1} \mu(s) \left( \partial_x^2 f_{\mu}^{\veps} (s, Z_s) + \partial_x^2 f_{\mu}^{\eta} (s, Z_s) \right) \d s \right) w_2 (1, Z_1) \right].
	\end{align*}
	The second term converges to $0$ uniformly on $[0, 1] \times K$, since $\{ \operatorname{Law} (Z_1 \vert Z_t = x): (t, x) \in [0, 1] \times K \}$ is a tight family of probability distributions (see, e.g., the proof of Proposition~\ref{prop:approximation_process_gen}), and we recall from Proposition~\ref{prop:terminal_approximation}~$(b)$ that $w_2 (1, x) \to 0$ uniformly on $K$. To prove that the first term converges to $0$ uniformly on $[0, 1] \times K$, we can use Feynman-Kac formula to estimate $\partial_x^3 f_{\mu}^{\veps}$ and combine this with the estimate of $w_1$ in part~$(b)$. Finally, one can show that their product uniformly converges to $0$ on $[0, 1] \times K$ using the estimates in Proposition~\ref{prop:terminal_approximation}~$(b)$.
\end{proof}

\cref{thm:C2_approx_gen} $(a)$ immediately implies that as $\veps \to 0$, $f_{\mu}^{\veps}$ converges to some $f_{\mu}$ uniformly on $[0, 1] \times \R$ (hence of course on any compact set). Further, \cref{thm:C2_approx_gen} $(b)$ tells us that $\partial_x f_{\mu}^{\veps}$ uniformly converges to some $g_{\mu}$. Applying dominated convergence theorem, we know that $\partial_x f_{\mu}$ exists and equals $g_{\mu}$, namely $\partial_x f_{\mu}^{\veps}$ uniformly converges to $\partial_x f_{\mu}$. Repeating the same argument and using \cref{thm:C2_approx_gen} 
$(c)$, we know that $\partial_x^2 f_{\mu}$ exists and $\partial_x^2 f_{\mu}^{\veps}$ converges to $\partial_x^2 f_{\mu}$ uniformly on any compact set as $\veps \to 0$. \modif{Therefore, $\partial_x^2 f_{\mu}$ is continuous in $x$ for any $t \in [0, 1]$, and \cref{eq:estimate_df_gen,eq:estimate_d2f_gen} hold for $f_{\mu}$.}

According to the Parisi PDEs observed by $f_{\mu}^{\veps}$, we know that $\partial_t f_{\mu}^{\veps}$ converges to some $h_{\mu}$ uniformly on compact sets as $\veps \to 0$. Using again the dominated convergence theorem, we know that $h_{\mu} = \partial_t f_{\mu}$, i.e., $\partial_t f_{\mu}^{\veps}$ converges to $\partial_t f_{\mu}$ uniformly on any compact set as $\veps \to 0$. Similar to the proof of \cite[Lemma 6.2]{el2021optimization}, we obtain that $f_{\mu}$ is a weak solution of Eq.~\eqref{eq:parisi_mu_gen}. \modif{Since we already obtain the regularity estimates for $f_{\mu}$, this completes the proof of \cref{thm:solve_Parisi_PDE} for general $h \in C^2 (\R)$ and $(\mu, c) \in \FS$ satisfying $\sup_{z \in \R} h''(z) < 1/c$.}

\subsection{The case $\sup_{z \in \R} h'' (z) \ge 1/c$}    

To construct solutions to the Parisi PDE~\eqref{eq:parisi_mu_gen} under this setting, we first reduce the problem to the case $\sup_{z \in \R} h'' (z) \le 1/c$. Let $\operatorname{conc} (g(x))$ denote the upper concave envelope of a function $g$.
For $c = 1 / \gamma(1)$, we define 
\begin{equation}\label{eq:defn_h_c}
	h_c (x) = \operatorname{conc} \left( h(x) - \frac{x^2}{2 c} \right) + \frac{x^2}{2 c} = \operatorname{conc} \left( h(x) - \frac{\gamma(1) x^2}{2} \right) + \frac{\gamma(1) x^2}{2}.
\end{equation}
Then, we know that $h_c \in C^2 (\R)$ is also Lipschitz continuous, and bounded from above. Further, we have $\gamma(1) \ge \sup_{z \in \R} h_c''(z)$. By direct calculation, we get
\begin{equation}\label{eq:equiv_f_mu_h_c}
	f_\mu (1, x) = \, \sup_{u \in \R} \left\{ h_c (x+u) - \frac{u^2}{2 c} \right\},
\end{equation}
which means that one can use $h_c$ instead of $h$ when defining the Parisi PDE and the Parisi functional.
Under this simplification, we proceed to constructing solutions to the Parisi PDE~\eqref{eq:parisi_mu_gen} for general $h$.
\begin{thm}\label{prop:GeneralPDE_Constr}
	Assume $h\in C^2(\R)$ is Lipschitz continuous and bounded from above,
	and $(\mu,c)\in\FS$. Then, a weak solution to the Parisi PDE below (equivalent to \cref{eq:parisi_mu_gen}) exists and is unique:
	\begin{equation}\label{eq:parisi_mu_gen_c}
		\begin{split}
			\partial_t f_\mu (t, x) +& \frac{1}{2} \mu(t) \left( \partial_x f_\mu (t, x) \right)^2 + \frac{1}{2} \partial_x^2 f_\mu (t, x) = \, 0, \\
			f_\mu (1, x) = \, & \sup_{u \in \R} \left\{ h_c(x+u) - \frac{u^2}{2 c} \right\}.
		\end{split}
	\end{equation}
	Further, for a sequence $c_n < c$, let $f_{\mu}^n(t,x)$ denote the solution to the same PDE with terminal condition 
	$$
	f_\mu^n (1, x) = \, \sup_{u \in \R} \left\{ h_c(x+u) - \frac{u^2}{2 c_n} \right\}.
	$$
	Then, we have 
	$f_{\mu}^n \to f_{\mu}$ uniformly as $c_n \uparrow c$.
	Finally,
	\begin{align}
		\norm{\partial_x f_{\mu} (t, \cdot)}_{L^{\infty} (\R)} \le \, & \norm{h'}_{L^{\infty} (\R)}, \ \forall t \in [0, 1], \label{eq:GeneralPDE_Est1}\\
		\partial_x^2 f_{\mu} (t, x) > \, & - \gamma(t), \ \forall (t, x) \in [0, 1] \times \R.\label{eq:GeneralPDE_Est2}
	\end{align}
\end{thm}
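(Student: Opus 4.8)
The plan is to deduce this from the case $\sup_z h''(z)<1/c$ settled in the previous subsection, by approximating the parameter $c$ from below. By \eqref{eq:equiv_f_mu_h_c} we may work with $h_c$ in the terminal condition; recall $h_c$ is Lipschitz, bounded above, in $C^2$, with $\|h_c'\|_{L^\infty}\le\|h'\|_{L^\infty}$ (at a contact point of the concave envelope the affine piece has slope $\le g'$ there, which gives the bound) and $\sup_z h_c''(z)\le 1/c=\gamma(1)$. If this inequality is strict, \cref{thm:solve_Parisi_PDE,thm:existence_parisi_gtr} apply directly with $h$ replaced by $h_c$, so assume $\sup_z h_c''(z)=1/c$. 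Since $t\mapsto c+\int_t^1\mu(s)\,\d s$ is continuous and strictly positive on the compact set $[0,1]$, it is bounded below by some $m_0>0$; hence for any $c_n\uparrow c$ with $0<c-c_n<m_0$ one has $(\mu,c_n)\in\FS$ and $\sup_z h_c''(z)=1/c<1/c_n$, so the strict-regime theory yields a unique weak solution $f_\mu^n$ of the Parisi PDE with parameter $c_n$ and terminal datum $\sup_u\{h_c(\cdot+u)-u^2/(2c_n)\}$, satisfying $f_\mu^n(t,\cdot)\in C^2(\R)$, $\|\partial_x f_\mu^n(t,\cdot)\|_{L^\infty}\le\|h'\|_{L^\infty}$, and $\partial_x^2 f_\mu^n(t,x)>-\gamma_n(t):=-(c_n+\int_t^1\mu)^{-1}$.

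Next I would prove that $\{f_\mu^n\}$ converges uniformly. The terminal data increase in $n$ (because $u^2/(2c_n)\downarrow u^2/(2c)$) and, writing $u^*(x)$ for the maximizer defining $f_\mu(1,x)=\sup_u\{h_c(x+u)-u^2/(2c)\}$, stationarity $u^*(x)=c\,h_c'(x+u^*(x))$ gives $|u^*(x)|\le c\|h'\|_{L^\infty}$ uniformly in $x$; therefore
\[
0\le f_\mu(1,x)-f_\mu^n(1,x)\le\tfrac12 u^*(x)^2\Big(\tfrac1{c_n}-\tfrac1c\Big)\le\tfrac12 c\,\|h'\|_{L^\infty}^2\Big(\tfrac{c}{c_n}-1\Big)\longrightarrow 0
\]
as $n\to\infty$, uniformly in $x$. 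On the other hand the Parisi solution map is an $L^\infty$-contraction in the terminal datum: if $f,g$ solve the PDE with the same $\mu$, then $w:=f-g$ satisfies $\partial_t w+\tfrac12\mu(t)(\partial_x f+\partial_x g)\partial_x w+\tfrac12\partial_x^2 w=0$, whence by It\^o along $\d X_s=\tfrac12\mu(s)(\partial_x f+\partial_x g)(s,X_s)\,\d s+\d B_s$ (well posed since the drift is Lipschitz in $x$) $w(t,x)=\E[w(1,X_1)]$, so $\|f-g\|_{L^\infty([0,1]\times\R)}\le\|f(1,\cdot)-g(1,\cdot)\|_{L^\infty}$, exactly as in the proof of \cref{thm:C2_approx_gen}(a). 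Combining the two bounds, $\{f_\mu^n\}$ is Cauchy in $L^\infty([0,1]\times\R)$ and converges uniformly to some $f_\mu$, which is the asserted limit.

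It remains to transfer regularity to $f_\mu$, and here I would avoid any two-sided Hessian bound. For each $t$ the functions $x\mapsto f_\mu^n(t,x)+\tfrac12\gamma_n(t)x^2$ are convex with linear part of Lipschitz constant $\le\|h'\|_{L^\infty}$, hence locally equi-Lipschitz; since $\gamma_n(t)\to\gamma(t)$ they converge pointwise to $x\mapsto f_\mu(t,x)+\tfrac12\gamma(t)x^2$, hence locally uniformly, and their derivatives converge to that of the limit wherever the limit is differentiable. Thus $\partial_x f_\mu^n\to\partial_x f_\mu$ a.e., $\|\partial_x f_\mu(t,\cdot)\|_{L^\infty}\le\|h'\|_{L^\infty}$ (proving \eqref{eq:GeneralPDE_Est1}), and $x\mapsto f_\mu(t,x)+\tfrac12\gamma(t)x^2$ is convex. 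Passing to the limit in the weak formulation is then routine: $|\mu(t)(\partial_x f_\mu^n)^2|\le|\mu(t)|\,\|h'\|_{L^\infty}^2\in L^1$, so dominated convergence handles the nonlinear term and the terminal datum converges uniformly; hence $f_\mu$ is a weak solution of \eqref{eq:parisi_mu_gen_c}, and uniqueness follows from the same $L^\infty$-contraction (equivalently, a comparison principle). For $t<1$, $\mu\in L^\infty[0,t]$ and parabolic smoothing give $f_\mu(t,\cdot)\in C^\infty(\R)$, so the convexity just obtained reads $\partial_x^2 f_\mu(t,x)\ge-\gamma(t)$ pointwise; to upgrade this to the strict bound \eqref{eq:GeneralPDE_Est2} I would re-run the Cole--Hopf/Brascamp--Lieb argument of \cref{prop:curv_bd_gen} (approximating $\mu$ by simple functions and passing to the limit as in \cref{thm:existence_parisi_gtr}) starting from the terminal slice $f_\mu(1,x)+\tfrac1{2c}x^2=g^*(x)$, where $g(z):=\tfrac c2 z^2-h_c(cz)\in C^2(\R)$ is convex with $g''>0$ on a set of positive Lebesgue measure (since $h_c$, being bounded above, cannot be exactly quadratic), which yields strict convexity of the spatial profile of the transformed equation \eqref{eq:Parisi_Phi_gen} and hence \eqref{eq:GeneralPDE_Est2} on $[0,1)\times\R$, the boundary slice being covered by the strict convexity of $g^*$ itself.

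I expect the main obstacle to be precisely this transfer of curvature information. The quantitative Hessian upper bounds of the strict regime (\cref{prop:curv_bd_gen,prop:a_priori_est_gen}) blow up as $c_n\uparrow c$: both because $\gamma_n(1)-\sup_z h_c''=1/c_n-1/c\to 0$ and because the cutoff time $\theta_n$ beyond which the bound $\gamma_n/(\gamma_n-h_2)$ holds is pushed toward $1$, so $\|\mu\|_{L^\infty[0,\theta_n]}$ may diverge. Hence one cannot extract convergent gradients via the equicontinuity route of \cref{lem:diff_convergence_gen}; the substitute is to use only the one-sided curvature bound (which survives the limit), repackaged as convexity of $f_\mu^n(t,\cdot)+\tfrac12\gamma_n(t)(\cdot)^2$, together with the uniform Lipschitz bound. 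Establishing the strict, as opposed to merely non-strict, lower curvature bound for the limit in the absence of any companion upper bound is the other delicate step, and is where the Brascamp--Lieb/Poincar\'e input is essential.
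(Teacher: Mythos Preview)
Your proposal follows essentially the same route as the paper: approximate $(\mu,c)$ by $(\mu,c_n)$ with $c_n\uparrow c$, invoke the strict-regime theory for $h_c$ (since $\sup h_c''\le 1/c<1/c_n$), pass to the limit via the $L^\infty$-contraction of the Parisi solution map, and extract gradient convergence from the surviving one-sided curvature bound together with the uniform Lipschitz bound. The paper phrases the last step as ``$\partial_x f_\mu^n(t,\cdot)$ is of bounded variation on finite intervals'' and appeals to an argument of \cite[Lemma~6.2]{el2021optimization}, whereas you phrase it as convexity of $f_\mu^n(t,\cdot)+\tfrac12\gamma_n(t)(\cdot)^2$ plus equi-Lipschitzness; these are two packagings of the same fact. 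The paper then applies Duhamel's principle to get $\partial_x^2 f_\mu$ continuous in $x$ and a Dini/P\'olya-type argument (monotone functions converging to a continuous limit) to upgrade $\partial_x f_\mu^n\to\partial_x f_\mu$ to locally uniform, from which the two estimates are read off.

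Two small points. First, in your contraction/uniqueness step you assert the drift $\tfrac12\mu(s)(\partial_x f+\partial_x g)$ is Lipschitz in $x$; in the limit regime you have no Hessian upper bound, so this drift is only bounded. The SDE is still well posed for bounded measurable drift (the paper invokes \cite[Proposition~1.10]{cherny2005singular} elsewhere for exactly this), and the paper sidesteps the issue by quoting uniqueness for weak solutions of the heat equation with at most linear growth. Second, on the strict inequality $\partial_x^2 f_\mu>-\gamma$: the paper's limiting argument, as written, delivers only $\ge$ directly; your proposal to rerun the Cole--Hopf/Brascamp--Lieb computation of \cref{prop:curv_bd_gen} on the limit (after simple-function approximation of $\mu$) is the right way to recover strictness, and it goes through because $g^*=\Phi(1,\cdot)$ is strictly convex (no three collinear points) even though $g$ may fail to be strictly convex---the density $p_{t,x}$ is everywhere positive so $\E_{t,x}[(g^*)'']>0$, and the Poincar\'e bound is strict. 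You should be aware, though, that when $h_c''=1/c$ on an interval (which happens on the ``bridging'' pieces of the concave envelope whenever $\sup h''>1/c$), the terminal datum $f_\mu(1,\cdot)$ has upward kinks, so the strict bound at $t=1$ must be read distributionally; for $t<1$ parabolic smoothing makes it classical, as you say.
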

\begin{proof}
	Let  $c_n \in \R_+$ be a sequence such that $c_n < c$ for each $n$ and $c_n \to c$. As in the theorem statement, denote the (weak) solution to Eq.~\eqref{eq:parisi_mu_gen_c} with $c$ replaced by $c_n$ as $f_{\mu}^n$. Then, from the conclusions of \cref{thm:solve_Parisi_PDE} (applied to $(\mu, c_n)$ and $h_c$) we know that
	\begin{align*}
		\norm{\partial_x f_{\mu}^n (t, \cdot)}_{L^{\infty} (\R)} \le \, & \norm{h_c'}_{L^{\infty} (\R)} \le \norm{h'}_{L^{\infty} (\R)}, \ \forall t \in [0, 1], \\
		- \gamma_n (t) < \, \partial_x^2 f_{\mu}^n (t, x) \le \, & C_n (\mu, 2), \ \forall (t, x) \in [0, 1] \times \R.
	\end{align*}
	By direct calculation, we know that $f_{\mu}^n (1, \cdot)$ converges uniformly to $f_{\mu} (1, \cdot)$. According to the maximum principle (or similar to the proof of \cref{thm:C2_approx_gen} $(a)$), it follows that $f_{\mu}^n (t, \cdot)$ converges uniformly to some $f_{\mu} (t, \cdot)$ for all $t \in [0, 1]$. Further, since $\partial_x^2 f_{\mu}^n (t, x) > -\gamma_n (t)$ and the sequence $\{ \partial_x f_{\mu}^n (t, \cdot) \}_{n \ge 1}$ is uniformly bounded in $L^{\infty}$, we know that $\partial_x f_{\mu}^n (t, \cdot)$ is of bounded variation on any finite interval.
	Using an argument similar to that in the proof of \cite[Lemma 6.2]{el2021optimization}, we deduce that $\partial_x f_{\mu}$ exists and $\partial_x f_{\mu}^n \to \partial_x f_{\mu}$ almost everywhere. As a consequence, $\partial_x f_{\mu} (t, \cdot)$ is of bounded variation on any finite interval as well, which implies that $\partial_x^2 f_{\mu}$ exists almost everywhere. Via a similar argument to the proof of \cite[Lemma 6.2]{el2021optimization}, we know that $f_{\mu}$ weakly solves the Parisi PDE~\eqref{eq:parisi_mu_gen_c}, thus establishing existence. Uniqueness follows from the uniqueness theorem for weak solutions of the heat equation with at most linear growth (since $\partial_x f_{\mu}$ is bounded we know that $f_{\mu}$ has at most linear growth).
	
	Further, applying Duhamel's principle implies that $\partial_x^2 f_{\mu}$ is continuous in $x$. Hence, it follows that $\forall t \in [0, 1]$, $\partial_x f_{\mu} (t, \cdot) \in C^1 (\R)$, and consequently $\partial_x f_{\mu}^n (t, \cdot)$ uniformly converges to $\partial_x f_{\mu} (t, \cdot)$ on any compact set (Dini's theorem). This in turns implies the estimates
	\eqref{eq:GeneralPDE_Est1}, \eqref{eq:GeneralPDE_Est2}, completing the proof of \cref{prop:GeneralPDE_Constr}.
\end{proof}
\begin{rem}
	Note that  in the case of general $h$, we do not have an upper bound on $\partial_x^2 f_{\mu} (t, x)$ because $C_n (\mu, 2) \to \infty$ as $n \to \infty$. 
\end{rem}

\modif{Combining the two settings discussed in this section concludes the proof of \cref{thm:solve_Parisi_PDE}.}

\section{Verification argument}\label{sec:verification_argument}

This section is devoted to the proof of \cref{prop:two_stage_veri_arg}, i.e., the duality between $V_{\gamma}$ and $f_{\mu}$. This is achieved by first establishing a connection between the Hamilton-Jacobi-Bellman (HJB) equation and the Parisi PDE, then constructing a control process and proving its optimality via the so-called ``verification argument''. To begin with, we recall the definition of $V_{\gamma}$ from \cref{eq:redefine_value_func}:
\begin{equation}\label{val_fct_gen}
	V_{\gamma} (t, z) = \sup_{\phi \in D[t, 1]} \E \left[ h \left( z + \int_{t}^{1} (1 + \phi_s) \d B_s \right) - \frac{1}{2} \int_{t}^{1} \gamma(s) \left( \phi_s^2 - \frac{1}{\alpha} \right) \d s \right],
\end{equation}
and define the HJB equation:
\begin{equation}\label{eq:HJB_1d_gen}
	\begin{split}
		\partial_t V_{\gamma} (t, z) + \frac{1}{2} \frac{\gamma(t) \partial_z^2 V_{\gamma} (t, z)}{\gamma(t) - \partial_z^2 V_{\gamma} (t, z)} + \frac{\gamma(t)}{2 \alpha} & = 0, \\
		V_{\gamma} (1, z) & = h(z).
	\end{split}
\end{equation}
\modif{Our proof follows a similar strategy as that used in the proof of \cref{thm:solve_Parisi_PDE}. Namely, we first establish the verification argument under $\sup_{z \in \R} h''(z) < 1/c$ and \cref{ass:h_regularity_gen}, and then extend our results to general $h \in C^2 (\R)$ and $(\mu, c) \in \FS$ via an approximation argument.}

\subsection{The case $\sup_{z \in \R} h''(z) < 1/c$} \modif{We first proceed with the verification argument for $h$ satisfying \cref{ass:h_regularity_gen}.}

\begin{prop}\label{prop:veri_arg_SF_gen}
	\modif{Let $h$ satisfy \cref{ass:h_regularity_gen} and $\sup_{z \in \R} h''(z) < 1/c$}. \modif{For $(\mu,c) \in \FS$ and the associated Lagrange multiplier $\gamma \in \FSG$,} denote $f_{\mu}$ as the solution to the Parisi PDE~\eqref{eq:parisi_mu_gen}. Then, we have
	\begin{equation}\label{eq:V_gamma_and_f_mu_gen}
		\begin{split}
			V_{\gamma} (t, z) =\, & \inf_{x \in \R} \left\{ f_{\mu} (t, x) + \frac{\gamma(t)}{2} (x - z)^2 \right\} + \frac{1}{2 \alpha} \int_{t}^{1} \gamma(s) \d s, \\
			f_{\mu} (t, x) =\, & \sup_{z \in \R} \left\{ V_{\gamma} (t, z) - \frac{\gamma(t)}{2} (z - x)^2 \right\} - \frac{1}{2 \alpha} \int_{t}^{1} \gamma(s) \d s
		\end{split}
	\end{equation}
	for all $t \in [0, 1]$ and $x, z \in \R$.
	Further, $V_{\gamma}$ solves Eq.~\eqref{eq:HJB_1d_gen}, and the supremum in the definition of $V_{\gamma} (t, z)$ is achieved at $(\phi_s^z)_{s \in [t, 1]}$ satisfying
	\begin{equation}\label{eq:def_ctrl_proc_gen}
		\phi_s^z = \frac{1}{\gamma(s)} \partial_x^2 f_{\mu} (s, X_s^z),
	\end{equation}
	where $\{ X_s^z \}_{s \in [t, 1]}$ \modif{is the unique solution to the SDE}
	\begin{equation}\label{eq:def_par_sde_gen}
		\frac{1}{\gamma(t)} \partial_x f_{\mu} (t, X_t^z) + X_t^z = z, \ \d X_s^z = \mu(s) \partial_x f_{\mu} (s, X_s^z) \d s + \d B_s, \ s \in [t, 1].
	\end{equation}
	\modif{(Existence and uniqueness of the solution will be established in the proof.)}
\end{prop}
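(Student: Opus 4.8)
The plan is to build an explicit candidate value function from the Parisi solution $f_\mu$ (whose existence and regularity are guaranteed by \cref{thm:solve_Parisi_PDE}/\cref{thm:existence_parisi_gtr} under $\sup_z h''(z)<1/c$ and \cref{ass:h_regularity_gen}) and then identify it with $V_\gamma$ by a verification argument. Set
\[
\widetilde V_\gamma(t,z):=\inf_{x\in\R}\Big\{f_\mu(t,x)+\tfrac{\gamma(t)}{2}(x-z)^2\Big\}+\tfrac{1}{2\alpha}\int_t^1\gamma(s)\,\d s .
\]
Since $\partial_x^2 f_\mu(t,x)>-\gamma(t)$ (\cref{prop:curv_bd_gen}, or \eqref{eq:estimate_d2f_gen}), the map $x\mapsto p(t,x):=f_\mu(t,x)+\tfrac{\gamma(t)}{2}x^2$ is strictly convex with $\partial_x^2 p\le C(\mu,2)+\gamma(t)$; hence the infimum is attained at the unique $x^*=x^*(t,z)$ solving $\partial_x f_\mu(t,x^*)+\gamma(t)(x^*-z)=0$, and $z\mapsto x^*(t,z)$ is the inverse of the strictly increasing, onto map $x\mapsto x+\gamma(t)^{-1}\partial_x f_\mu(t,x)$ (onto because $\|\partial_x f_\mu(t,\cdot)\|_{L^\infty}\le\|h'\|_{L^\infty}$). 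Legendre duality for $p(t,\cdot)$ turns the first line of \eqref{eq:V_gamma_and_f_mu_gen} into the second, yields $\partial_z\widetilde V_\gamma(t,z)=\partial_x f_\mu(t,x^*)$ (so $\|\partial_z\widetilde V_\gamma\|_{L^\infty}\le\|h'\|_{L^\infty}$) and $\partial_z^2\widetilde V_\gamma(t,z)=\gamma(t)-\gamma(t)^2/\partial_x^2 p(t,x^*)<\gamma(t)$, and transfers the regularity of $f_\mu$ from \cref{thm:existence_parisi_gtr} to $\widetilde V_\gamma$ (continuous, $C^2$ in $z$, with $\partial_t\widetilde V_\gamma\in L^\infty([0,\theta]\times\R)$ for every $\theta<1$). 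A short computation --- plug in the feedback $\phi=\gamma(t)^{-1}\partial_x^2 f_\mu(t,x^*)$, which is exactly the Hamiltonian maximizer since $\partial_z^2\widetilde V_\gamma<\gamma(t)$, and use the chain rule together with the Parisi PDE \eqref{eq:parisi_mu_gen} --- shows $\widetilde V_\gamma$ solves \eqref{eq:HJB_1d_gen} for a.e.\ $t\in[0,1)$. Finally $\widetilde V_\gamma(1,\cdot)=h$: writing $f_\mu(1,x)=-x^2/(2c)+\ell^*(x/c)$ with $\ell(w)=w^2/(2c)-h(w)$ strongly convex (here $\sup_z h''(z)<1/c$ is used), the inf-convolution collapses via $\ell^{**}=\ell$. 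One may, if preferred, prove these facts first for $\mu\in\mathsf{SF}[0,1]$ (where $f_\mu$ is smooth off the jump set of $\mu$) and then extend to general $(\mu,c)\in\FS$ along $(\mu_n,c)\stackrel{\FS}{\longrightarrow}(\mu,c)$, $\mu_n\in\mathsf{SF}[0,1]$, using $f_{\mu_n}\to f_\mu$ and stability of the estimates.

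\textbf{Optimality (lower bound).} First solve \eqref{eq:def_par_sde_gen}: the initial relation $x+\gamma(t)^{-1}\partial_x f_\mu(t,x)=z$ has a unique root $X_t^z$ by the monotonicity above, and the drift $s\mapsto\mu(s)\partial_x f_\mu(s,x)$ is Lipschitz in $x$ with $L^1$-integrable constant $\mu(s)\,\|\partial_x^2 f_\mu(s,\cdot)\|_{L^\infty}$ and $\int_0^1\mu(s)\,\|\partial_x f_\mu(s,\cdot)\|_{L^\infty}\,\d s<\infty$, so Picard iteration/Gr\"onwall give a unique strong solution $\{X_s^z\}_{s\in[t,1]}$. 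Set $\phi_s^z:=\gamma(s)^{-1}\partial_x^2 f_\mu(s,X_s^z)$ as in \eqref{eq:def_ctrl_proc_gen} (bounded, hence in $D[t,1]$) and $Z_s:=X_s^z+\gamma(s)^{-1}\partial_x f_\mu(s,X_s^z)$. Applying It\^o to $\partial_x f_\mu(s,X_s^z)$ and using the $x$-derivative of \eqref{eq:parisi_mu_gen} (its drift cancels exactly) together with $\tfrac{\d}{\d s}\gamma(s)^{-1}=-\mu(s)$ gives $\d[\partial_x f_\mu(s,X_s^z)]=\partial_x^2 f_\mu(s,X_s^z)\,\d B_s$, whence $\d Z_s=(1+\phi_s^z)\,\d B_s$, $Z_t=z$; so $Z$ is the controlled state associated with $\phi^z$. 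Now consider
\[
N_s:=f_\mu(s,X_s^z)+\tfrac{1}{2\gamma(s)}\partial_x f_\mu(s,X_s^z)^2+\tfrac{1}{2\alpha}\int_s^1\gamma(u)\,\d u-\tfrac12\int_t^s\gamma(u)\big((\phi_u^z)^2-\tfrac1\alpha\big)\,\d u .
\]
A direct It\^o calculation (again using \eqref{eq:parisi_mu_gen}, the identity just derived, and the algebraic form of $\phi^z$) shows the finite-variation part of $N$ vanishes, so $N$ is a local martingale with bounded diffusion coefficient $\partial_x f_\mu\,(1+\phi^z)$, hence a true martingale on $[t,1]$. Since $X_t^z=x^*(t,z)$ is the minimizer, $N_t=\widetilde V_\gamma(t,z)$; and at $s=1$ the envelope identity --- the optimal shift in $f_\mu(1,X_1^z)=\sup_u\{h(X_1^z+u)-u^2/(2c)\}$ is $u^*=c\,h'(X_1^z+u^*)=\gamma(1)^{-1}\partial_x f_\mu(1,X_1^z)$, so $Z_1=X_1^z+u^*$ and $h(Z_1)=f_\mu(1,X_1^z)+\tfrac{1}{2\gamma(1)}\partial_x f_\mu(1,X_1^z)^2$ --- gives $N_1=h(Z_1)-\tfrac12\int_t^1\gamma(u)((\phi_u^z)^2-\tfrac1\alpha)\,\d u$. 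Taking expectations, $\widetilde V_\gamma(t,z)=\E\big[h(Z_1)-\tfrac12\int_t^1\gamma(u)((\phi_u^z)^2-\tfrac1\alpha)\,\d u\big]\le V_\gamma(t,z)$, and $\phi^z$ is an optimal control.

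\textbf{Upper bound and conclusion.} For arbitrary $\phi\in D[t,1]$, put $Z_s=z+\int_t^s(1+\phi_u)\,\d B_u$ and apply a generalized It\^o formula to $\widetilde V_\gamma(s,Z_s)$ on $[t,\theta]$ for $\theta<1$ --- licit since $\widetilde V_\gamma$ is $C^2$ in $z$ and Lipschitz in $t$ on $[0,\theta]$. Because $\widetilde V_\gamma$ solves \eqref{eq:HJB_1d_gen}, the process $s\mapsto\widetilde V_\gamma(s,Z_s)-\tfrac12\int_t^s\gamma(u)(\phi_u^2-\tfrac1\alpha)\,\d u$ is a supermartingale (its drift is $\partial_t\widetilde V_\gamma+\tfrac12(1+\phi_s)^2\partial_z^2\widetilde V_\gamma-\tfrac12\gamma(\phi_s^2-\tfrac1\alpha)\le0$ by the Hamiltonian inequality), and its stochastic-integral part is a genuine martingale since $\partial_z\widetilde V_\gamma$ is bounded. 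Hence $\E\big[\widetilde V_\gamma(\theta,Z_\theta)-\tfrac12\int_t^\theta\gamma(u)(\phi_u^2-\tfrac1\alpha)\,\d u\big]\le\widetilde V_\gamma(t,z)$; letting $\theta\uparrow1$ and using $\widetilde V_\gamma(1,\cdot)=h$, the continuity and at most linear growth of $\widetilde V_\gamma$, $\sup_x h(x)<\infty$, and $\E\!\int_t^1\gamma\phi^2<\infty$ (uniform integrability), one gets $\E\big[h(Z_1)-\tfrac12\int_t^1\gamma(u)(\phi_u^2-\tfrac1\alpha)\,\d u\big]\le\widetilde V_\gamma(t,z)$; taking the supremum over $\phi$ yields $V_\gamma(t,z)\le\widetilde V_\gamma(t,z)$. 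Combining the two bounds gives $V_\gamma=\widetilde V_\gamma$, which is \eqref{eq:V_gamma_and_f_mu_gen}, shows $V_\gamma$ solves \eqref{eq:HJB_1d_gen}, and exhibits $(\phi^z,X^z)$ from \eqref{eq:def_ctrl_proc_gen}--\eqref{eq:def_par_sde_gen} as an optimizer.

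\textbf{Main obstacle.} The delicate point is the loss of time-regularity as $t\uparrow1$: since $\mu$ is merely $L^1$ near $1$, $\gamma'=\mu\gamma^2$ and $\partial_t f_\mu,\partial_t\widetilde V_\gamma$ may blow up there, so both halves of the verification have to be run on $[t,\theta]$ with $\theta<1$ and then passed to the limit, which forces careful control of the terminal data and of uniform integrability near $s=1$. This is also where the standing hypotheses are used essentially: $\sup_z h''(z)<1/c$ gives the upper curvature bound $\partial_x^2 f_\mu\le C(\mu,2)$ (needed for the SDE drift to be Lipschitz and for $\widetilde V_\gamma$ to be $C^2$ in $z$) and the strong convexity of $w\mapsto w^2/(2c)-h(w)$ (needed for the involution yielding $\widetilde V_\gamma(1,\cdot)=h$); the general case $\sup_z h''(z)\ge1/c$ is then to be treated separately via the concave-envelope reduction $h\mapsto h_c$ of \eqref{eq:defn_h_c}.
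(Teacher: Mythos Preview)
Your proposal is correct and follows essentially the same verification-argument structure as the paper: build a candidate value function from $f_\mu$, check it solves the HJB equation with the right terminal datum, and then run the two halves of the verification (supermartingale upper bound for arbitrary $\phi$, martingale equality along the feedback control $\phi^z$). The only real difference is a change of coordinates: the paper introduces the auxiliary function $\Phi(t,x)=f_\mu(t,x/\gamma(t))+x^2/(2\gamma(t))+\tfrac12\int_t^1\gamma$ and writes the candidate as $\tfrac{\gamma(t)}{2}z^2-\Phi^*(t,z)-\tfrac12(1-\tfrac1\alpha)\int_t^1\gamma$, carrying out both the HJB check and the It\^o computations in the $\Phi$-variables (where the Parisi-type PDE looks slightly cleaner), whereas you stay in the $f_\mu$-variables and package the achievability step via the martingale $N_s=f_\mu(s,X_s^z)+\tfrac{1}{2\gamma(s)}(\partial_x f_\mu(s,X_s^z))^2+\ldots$; unwinding the Legendre relations shows the two candidates coincide and the two SDEs are related by $X_s^z=X_s^{\gamma}/\gamma(s)$. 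Your formulation is arguably more direct, at the cost of slightly heavier It\^o bookkeeping; the paper's $\Phi$-detour makes the HJB verification a one-line substitution but adds a layer of notation.
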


\begin{proof}
	In virtue of Eqs.~\eqref{eq:Parisi_Phi_gen} and \eqref{eq:f_Phi_gen} (which can be defined for general $(\mu, c) \in \FS$ and $\gamma \in \FSG$), we will prove the following statements instead:
	\begin{itemize}
		\item [$(a)$] Define for $(t, z) \in [0, 1] \times \R$:
		\begin{equation*}
			V(t, z) = \frac{\gamma(t)}{2} z^2 - \Phi^{*} (t, z) - \frac{1}{2} \left( 1 - \frac{1}{\alpha} \right) \int_{t}^{1} \gamma(s) \d s,
		\end{equation*}
		\modif{where $\Phi^{*} (t, z)$ means the convex conjugate of $\Phi$ with respect to the second variable.}
		Then, $V$ solves the HJB equation~\eqref{eq:HJB_1d_gen}.
		\item [$(b)$] The verification argument implies $V= V_{\gamma}$
		with $V_{\gamma}$ given by Eq.~\eqref{val_fct_gen} (which in particular implies uniqueness of solution to Eq.~\eqref{eq:HJB_1d_gen}), 
		and characterizes the optimal control process. 
	\end{itemize}
	\vspace{0.5em} \noindent \textbf{Proof of $(a)$.}
	Since $\Phi(t, \cdot)$ is strictly convex, we have $\partial_z^2 V(t, z) < \gamma(t)$. By direct calculation,
	\begin{align*}
		\partial_t V (t, z) =\, & \frac{\gamma'(t)}{2} z^2 - \partial_t \Phi^{*} (t, z) + \frac{1}{2} \left( 1 - \frac{1}{\alpha} \right) \gamma(t) \\
		=\, & \frac{\gamma'(t)}{2} z^2 + \partial_t \Phi (t, x_t^{*} (z)) + \frac{1}{2} \left( 1 - \frac{1}{\alpha} \right) \gamma(t),
	\end{align*}
	and
	\begin{align*}
		\partial_z^2 V (t, z) = \gamma(t) - \partial_z^2 \Phi^{*} (t, z) = \gamma(t) - \frac{1}{\partial_x^2 \Phi (t, x_t^{*} (z))},
	\end{align*}
	where $x_t^{*} (z)$ is the unique solution to the equation $z = \partial_x \Phi (t, x)$. We thus obtain that
	\begin{align*}
		& \partial_t V (t, z) + \frac{1}{2} \frac{\gamma(t) \partial_z^2 V (t, z)}{\gamma(t) - \partial_z^2 V (t, z)} + \frac{\gamma(t)}{2 \alpha} \\
		=\, & \frac{\gamma'(t)}{2} z^2 + \partial_t \Phi (t, x_t^{*} (z)) + \frac{1}{2} \left( 1 - \frac{1}{\alpha} \right) \gamma(t) + \frac{\gamma(t)^2}{2} \partial_x^2 \Phi (t, x_t^{*} (z)) - \frac{\gamma(t)}{2} + \frac{\gamma(t)}{2 \alpha} \\
		=\, & \partial_t \Phi (t, x_t^{*} (z)) + \frac{\gamma(t)^2}{2} \partial_x^2 \Phi (t, x_t^{*} (z)) + \frac{\gamma'(t)}{2} \left( \partial_x \Phi (t, x_t^{*} (z)) \right)^2 = 0,
	\end{align*}
	where the last line follows from the Parisi PDE observed by $\Phi$. The terminal condition $V (1, z) = h(z)$ is quite straightforward to verify:
	\begin{equation*}
		V (1, z) = \frac{\gamma(1)}{2} z^2 - \Phi^{*} (1, z) = \frac{\gamma(1)}{2} z^2 - \left( \frac{\gamma(1)}{2} z^2 - h(z) \right)^{**} = h(z).
	\end{equation*}
	This proves that $V$ solves the HJB equation. 
	
	\vspace{0.5em} \noindent \textbf{Proof of $(b)$.}  Fix any $(t, z) \in [0, 1] \times \R$, we will prove that $V(t, z) = V_{\gamma} (t, z)$,  
	where the latter is defined by Eq.~\eqref{val_fct_gen}. To this end, we need to define the candidate process $(\phi_s^{\gamma})_{s \in [t, 1]}$ as follows (note that this process depends on $(t, z)$):
	\begin{enumerate}
		\item Let $(X_s^{\gamma})_{s \in [t, 1]}$ be the solution to the SDE:
		\begin{equation}\label{eq:Parisi_SDE}
			\d X_s^{\gamma} = \gamma(s) \d B_s + \gamma'(s) \partial_x \Phi \left( s, X_s^{\gamma} \right) \d s
		\end{equation}
		with initial condition $\partial_x \Phi (t, X_t^{\gamma}) = z$. The existence and uniqueness of $( X_s^{\gamma} )$ is guaranteed by strict convexity of $\Phi(t, x)$ and Lipschitzness of $\partial_x \Phi (t, x)$ with respect to $x$. \modif{Using \cref{eq:f_Phi_gen}, one can verify that $(X_s^{\gamma}) = (X_s^z)$ defined in \cref{eq:def_par_sde_gen}.}
		
		\item We then define for $s \in [t, 1]$:
		\begin{equation}\label{eq:candidate_proc}
			\phi_s^{\gamma} = \gamma(s) \cdot \partial_x^2 \Phi (s, X_s^{\gamma}) - 1.
		\end{equation}
		From the curvature bound on $\Phi$ (cf. \cref{thm:existence_parisi_gtr}), we know that $\phi_s^{\gamma}$ is almost surely bounded, uniformly for all $s \in [t, 1]$. \modif{Further, $(\phi_s^{\gamma}) = (\phi_s^z)$ in \cref{eq:def_ctrl_proc_gen}.}
	\end{enumerate}
	First, we show that $V_{\gamma}(t, z) \le V (t, z)$. Let $\phi \in D[t, 1]$ be an arbitrary control process, and define for $\theta \in [t, 1]$ the continuous martingale:
	\begin{equation*}
		M_{\theta}^{\phi} = z + \int_{t}^{\theta} \left( 1 + \phi_s \right) \d B_s.
	\end{equation*}
	Then, using It\^{o}'s formula, we obtain that
	\begin{align*}
		& \E \left[ V \left( \theta, M_{\theta}^{\phi} \right) \right] - V (t, z) \\
		=\, & \E \left[ \int_{t}^{\theta} \partial_s V \left( s, M_{s}^{\phi} \right) \d s + \partial_x V \left( s, M_{s}^{\phi} \right) \d M_s^{\phi} + \frac{1}{2} \partial_x^2 V \left( s, M_{s}^{\phi} \right) \d \<M^{\phi}\>_s  \right] \\
		=\, & \E \left[ \int_{t}^{\theta} \left( \partial_s V \left( s, M_{s}^{\phi} \right) + \frac{1}{2} \partial_x^2 V \left( s, M_{s}^{\phi} \right) \left( 1 + \phi_s \right)^2 \right) \d s \right] \\
		\stackrel{(i)}{\le}\, & \E \left[ \int_{t}^{\theta} \frac{\gamma(s)}{2} \left( \phi_s^2 - \frac{1}{\alpha} \right) \d s \right],
	\end{align*}
	where $(i)$ follows from the HJB equation, \modif{and the fact $\partial_z^2 V(t, z) < \gamma(t)$}. Sending $\theta \to 1^{-}$ and using the terminal condition $V (1, z) = h(z)$, we further deduce that
	\begin{align*}
		& \E \left[ h \left( z + \int_{t}^{1} \left( 1 + \phi_s \right) \d B_s \right) \right] - V (t, z) \le \E \left[ \int_{t}^{1} \frac{\gamma(s)}{2} \left( \phi_s^2 - \frac{1}{\alpha} \right) \d s \right] \\
		\implies \, & \E \left[ h \left( z + \int_{t}^{1} \left( 1 + \phi_s \right) \d B_s \right) - \frac{1}{2} \int_{t}^{1} \gamma(s) \left( \phi_s^2 - \frac{1}{\alpha} \right) \d s \right] \le V (t, z).
	\end{align*}
	Taking supremum over all $\phi \in D[t, 1]$ gives $V_{\gamma} (t, z) \le V (t, z)$.
	
	To show the reverse bound, it suffices to find an optimal control which achieves equality in $(i)$, namely
	\begin{equation}\label{eq:condition_optimal_control}
		\phi_s = \argmax_{\phi \in \R} \left\{ \partial_z^2 V \left( s, M_s^{\phi} \right) \left( 1 + \phi \right)^2 - \gamma(s) \phi^2 \right\} = \frac{\partial_z^2 V ( s, M_s^{\phi} )}{\gamma(s) - \partial_z^2 V ( s, M_s^{\phi} )}.
	\end{equation}
	(By definition of $V$ we know that $\partial_z^2 V ( s, M_s^{\phi} ) < \gamma(s)$, so the above $\argmax$ exists.)
	Next, we verify that the candidate process defined as per Eq.~\eqref{eq:candidate_proc} satisfies the above condition. We claim that
	\begin{equation*}
		M_{\theta}^{\phi^{\gamma}} = z + \int_{t}^{\theta} \left( 1 + \phi_s^{\gamma} \right) \d B_s = \partial_x \Phi \left( \theta, X_{\theta}^{\gamma} \right), \ \forall \theta \in [t, 1].
	\end{equation*}
	Note that our claim holds trivially for $\theta = t$ from the definition of $X_t^{\gamma}$. For $\theta > t$, applying It\^{o}'s formula yields
	\begin{align*}
		\d \partial_x \Phi \left( \theta, X_{\theta}^{\gamma} \right) =\, & \partial_{tx} \Phi \left( \theta, X_{\theta}^{\gamma} \right) \d \theta + \partial_{x}^2 \Phi \left( \theta, X_{\theta}^{\gamma} \right) \d X_{\theta}^{\gamma} + \frac{1}{2} \partial_{x}^3 \Phi \left( \theta, X_{\theta}^{\gamma} \right) \d \<X^{\gamma}\>_{\theta} \\
		=\, & \partial_{tx} \Phi \left( \theta, X_{\theta}^{\gamma} \right) \d \theta + \gamma(\theta) \partial_{x}^2 \Phi \left( \theta, X_{\theta}^{\gamma} \right) \d B_{\theta} + \gamma'(\theta) \partial_{x} \Phi \left( \theta, X_{\theta}^{\gamma} \right) \partial_{x}^2 \Phi \left( \theta, X_{\theta}^{\gamma} \right) \d \theta \\
		& + \frac{1}{2} \gamma(\theta)^2 \partial_{x}^3 \Phi \left( \theta, X_{\theta}^{\gamma} \right) \d \theta \\
		=\, & \partial_x \left( \partial_t \Phi \left( \theta, X_{\theta}^{\gamma} \right) + \frac{1}{2} \gamma'(\theta) \left( \partial_{x} \Phi \left( \theta, X_{\theta}^{\gamma} \right) \right)^2 + \frac{1}{2} \gamma(\theta)^2 \partial_{x}^2 \Phi \left( \theta, X_{\theta}^{\gamma} \right) \right) \d \theta \\
		& + \gamma(\theta) \partial_{x}^2 \Phi \left( \theta, X_{\theta}^{\gamma} \right) \d B_{\theta} \\
		\stackrel{(i)}{=} \, & \gamma(\theta) \partial_{x}^2 \Phi \left( \theta, X_{\theta}^{\gamma} \right) \d B_{\theta} = \left( 1 + \phi_{\theta}^{\gamma} \right) \d B_{\theta} = \d M_{\theta}^{\phi^{\gamma}},
	\end{align*}
	where $(i)$ follows from Eq.~\eqref{eq:Parisi_Phi_gen}. This proves the claim. It then follows that for any $s \in [t, 1]$:
	\begin{align*}
		\phi_{s}^{\gamma} = \gamma(s) \cdot \partial_x^2 \Phi (s, X_s^{\gamma}) - 1 = \frac{\gamma(s)}{\gamma(s) - \partial_z^2 V (s, M_s^{\phi^{\gamma}})} - 1 = \frac{\partial_z^2 V (s, M_s^{\phi^{\gamma}})}{\gamma(s) - \partial_z^2 V (s, M_s^{\phi^{\gamma}})}.
	\end{align*}
	This justifies Eq.~\eqref{eq:condition_optimal_control} and proves that $(\phi_s^{\gamma})_{s \in [t, 1]}$ is indeed an optimal control process, which implies that $V(t, z) = V_{\gamma} (t, z)$. Further, using \cref{eq:f_Phi_gen}, one can easily verify that $\phi^{\gamma} = \phi^z$, thus establishing the optimality of $\phi^z$. This completes the proof of Proposition~\ref{prop:veri_arg_SF_gen}.
\end{proof}

We now extend the conclusions of \cref{prop:veri_arg_SF_gen} to general $h \in C^2 (\R)$, thus completing the verification argument for the setting $\sup_{z \in \R} h''(z) < 1/c$. \modif{Note that, the regularity estimates in \cref{thm:solve_Parisi_PDE} guarantees that the SDE~\eqref{eq:def_par_sde_gen} has a unique solution, and the corresponding optimal control process is well-defined.} Similar to the proof of \cref{prop:veri_arg_SF_gen}, it suffices to show that for all $t, z$:
\begin{equation}\label{eq:V_and_f_relation_1}
	V_{\gamma} (t, z) =\, \inf_{x \in \R} \left\{ f_{\mu} (t, x) + \frac{\gamma(t)}{2} (x - z)^2 \right\} + \frac{1}{2 \alpha} \int_{t}^{1} \gamma(s) \d s,
\end{equation}
and that the supremum in the definition of $V_{\gamma} (t, z)$ is achieved at $(\phi_s^z)_{s \in [t, 1]}$ defined in Eqs.~\eqref{eq:def_ctrl_proc_gen} and \eqref{eq:def_par_sde_gen}.

Without loss of generality, we assume $(t, z) = (0, 0)$, as the proof for general $(t, z) \in [0, 1] \times \R$ is identical. \modif{For notational simplicity, we use $X$ and $\phi$ to denote $(X_s^z)_{s \in [t, 1]}$ and $(\phi_s^z)_{s \in [t, 1]}$ defined in \cref{eq:def_par_sde_gen,eq:def_ctrl_proc_gen}.} For $\veps > 0$, let $f_{\mu}^{\veps}(1, \,\cdot\,)$  be the $\veps$-mollification of $f_{\mu}(1,\,\cdot\,)$ as per \cref{eq:eps_mollifier}, and define
\begin{align*}
	h^{\veps}(z) = \, & \inf_{x \in \R} \left\{ f_{\mu}^{\veps}(1, x) + \frac{\gamma (1)}{2} (x - z)^2 \right\}, \\
	V_{\gamma}^{\veps} (0, 0) = \, & \sup_{\phi \in D[0, 1]} \E \left[ h^{\veps} \left( \int_{0}^{1} (1 + \phi_t) \d B_t \right) - \frac{1}{2} \int_{0}^{1} \gamma(t) \phi_t^2 \d t \right] + \frac{1}{2 \alpha} \int_{0}^{1} \gamma(t) \d t.
\end{align*}
Since $h_{\veps}$ satisfies Assumption~\ref{ass:h_regularity_gen} by construction, we apply \cref{prop:veri_arg_SF_gen} to obtain that
\begin{align*}
	V_{\gamma}^{\veps} (0, 0) = \, & \inf_{x \in \R} \left\{ f_{\mu}^{\veps} (0, x) + \frac{x^2}{2 (c + \int_{0}^{1} \mu(t) \d t)} \right\} + \frac{1}{2 \alpha} \int_{0}^{1} \frac{\d t}{c + \int_{t}^{1} \mu(s) \d s} \\
	= \, & \E \left[ h^{\veps} \left( \int_{0}^{1} (1 + \phi_t^{\veps}) \d B_t \right) - \frac{1}{2} \int_{0}^{1} \gamma(t) (\phi_t^{\veps})^2 \d t \right] + \frac{1}{2 \alpha} \int_{0}^{1} \gamma(t) \d t,
\end{align*}
where $\phi_t^{\veps} = \partial_x^2 f_{\mu}^{\veps} (t, X_t^{\veps}) / \gamma(t)$, and $\{ X_t^{\veps} \}_{t \in [0, 1]}$ solves the SDE
\begin{equation}\label{eq:parisi_SDE_gen_veps}
	\d X_t^{\veps} = \mu(t) \partial_x f_{\mu}^{\veps} (t, X_t^{\veps}) \d t + \d B_t, \ \partial_x f_{\mu}^{\veps} (0, X_0^{\veps}) + \gamma(0) X_0^{\veps} = 0.
\end{equation}
By \cref{prop:terminal_approximation}, we know that $f_{\mu}^{\veps} (1, \cdot) \to f_{\mu} (1, \cdot)$ uniformly, leading to $h^{\veps} \to h$ uniformly. \modif{Therefore, $V_{\gamma}^{\veps} (0, 0) \to V_{\gamma} (0, 0)$ as $\veps \to 0$.} Applying \cref{thm:C2_approx_gen} yields that $f_{\mu}^{\veps} (0, \cdot) \to f_{\mu} (0, \cdot)$ uniformly, hence
\begin{align*}
	& V_{\gamma} (0, 0) = \lim_{\veps \to 0^+} V_{\gamma}^{\veps} (0, 0) \\
	= \, & \lim_{\veps \to 0^+} \inf_{x \in \R} \left\{ f_{\mu}^{\veps} (0, x) + \frac{x^2}{2 (c + \int_{0}^{1} \mu(t) \d t)} \right\} + \frac{1}{2 \alpha} \int_{0}^{1} \frac{\d t}{c + \int_{t}^{1} \mu(s) \d s} \\
	= \, & \inf_{x \in \R} \left\{ f_{\mu} (0, x) + \frac{x^2}{2 (c + \int_{0}^{1} \mu(t) \d t)} \right\} + \frac{1}{2 \alpha} \int_{0}^{1} \frac{\d t}{c + \int_{t}^{1} \mu(s) \d s}.
\end{align*}
To prove that $\phi$ achieves the supremum in the definition of $V_{\gamma} (0, 0)$, it remains to show that
\begin{align*}
	& \lim_{\veps \to 0^+} \E \left[ h^{\veps} \left( \int_{0}^{1} (1 + \phi_t^{\veps}) \d B_t \right) - \frac{1}{2} \int_{0}^{1} \gamma(t) (\phi_t^{\veps})^2 \d t \right] \\
	= \, & \E \left[ h \left( \int_{0}^{1} (1 + \phi_t) \d B_t \right) - \frac{1}{2} \int_{0}^{1} \gamma(t) \phi_t^2 \d t \right].
\end{align*}
Using Proposition~\ref{prop:approximation_process_gen}, we know that $X^{\veps}$ converges in law to $X$. Theorem~\ref{thm:C2_approx_gen} $(c)$ then implies that $\phi^{\veps}$ converges in law to $\phi$. Further, since $\phi^{\veps}$ is uniformly bounded and $h^{\veps} \to h$ uniformly, the above claim immediately follows from bounded convergence theorem. This completes the proof of \cref{prop:two_stage_veri_arg} for general $h \in C^2 (\R)$.

\subsection{The case $\sup_{z \in \R} h''(z) \ge 1/c$}

As in \cref{sec:solve_Parisi_PDE}, we first reduce the problem to the setting $\sup_{z \in \R} h''(z) \le 1/c$. Recall from \cref{eq:equiv_f_mu_h_c} that the function $h_c$ (defined in \cref{eq:defn_h_c}) is equivalent to $h$ when defining the Parisi PDE and Parisi functional. In what follows, we show that $h_c$ and $h$ also yield the same value function $V_{\gamma}$.

\begin{prop}[Equivalence of $h_c$ and $h$]\label{prop:Equivalence_h_hc}
	Recall $V_{\gamma} (t, z)$ from Eq.~\eqref{val_fct_gen}. \modif{Under the same settings as \cref{prop:GeneralPDE_Constr},} we have
	\begin{equation*}
		V_{\gamma} (t, z) = \sup_{\phi \in D[t, 1]} \E \left[ h_c \left( z + \int_{t}^{1} (1 + \phi_s) \d B_s \right) - \frac{1}{2} \int_{t}^{1} \gamma(s) \phi_s^2 \d s \right] + \frac{1}{2 \alpha} \int_{t}^{1} \gamma(s) \d s,
	\end{equation*}
	which means that one can use $h_c$ instead of $h$ when defining $V_{\gamma}$.
\end{prop}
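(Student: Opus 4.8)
\textbf{Proof strategy for Proposition~\ref{prop:Equivalence_h_hc}.}
The plan is to show that the supremum defining $V_\gamma(t,z)$ is unchanged when $h$ is replaced by $h_c$, by proving two inequalities. Since $h_c \ge h$ pointwise (the concave envelope dominates), the ``$\le$'' direction is immediate: for every admissible control $\phi \in D[t,1]$,
\[
\E\left[ h\left( z + \int_t^1 (1+\phi_s)\,\d B_s \right) - \frac12 \int_t^1 \gamma(s)\phi_s^2\,\d s \right] \le \E\left[ h_c\left( z + \int_t^1 (1+\phi_s)\,\d B_s \right) - \frac12 \int_t^1 \gamma(s)\phi_s^2\,\d s \right],
\]
and taking suprema (then adding the common term $\frac{1}{2\alpha}\int_t^1 \gamma(s)\,\d s$) gives $V_\gamma(t,z) \le V_\gamma^{h_c}(t,z)$, where $V_\gamma^{h_c}$ denotes the right-hand side of the claimed identity minus the deterministic integral.

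The substance is the reverse inequality $V_\gamma^{h_c}(t,z) \le V_\gamma(t,z)$. The key idea is that, at the terminal time, one can always ``split'' the terminal position into a convex combination realizing the concave envelope, and the cost of doing so via a final burst of control is controlled. Concretely, I would use the identity
\[
h_c(x) = \operatorname{conc}\!\left( h(x) - \frac{\gamma(1)}{2}x^2 \right) + \frac{\gamma(1)}{2}x^2,
\]
so that $\widetilde h(x) := h(x) - \tfrac{\gamma(1)}{2}x^2$ has concave envelope $\operatorname{conc}(\widetilde h)$, and by Carath\'eodory's theorem, for each $x$ there exist $p \in [0,1]$ and points $x_1,x_2$ with $x = p x_1 + (1-p)x_2$ and $\operatorname{conc}(\widetilde h)(x) = p\,\widetilde h(x_1) + (1-p)\,\widetilde h(x_2)$. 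The strategy is: given any $\phi$ achieving nearly the value $V_\gamma^{h_c}(t,z)$, run $\phi$ on $[t,1-\delta]$, then on the short interval $[1-\delta,1]$ insert an additional control that, conditionally on $\cF_{1-\delta}$, steers the endpoint to one of the two extreme points $x_1,x_2$ (chosen with the correct probability $p$ via the Brownian increment, as in the proof of \cref{thm:convexity_fix_Q}). The extra quadratic running cost $\tfrac12\int_{1-\delta}^1 \gamma(s)\phi_s^2\,\d s$ incurred by this maneuver, together with the $\tfrac{\gamma(1)}{2}x^2$ correction term absorbed into $\widetilde h$, is designed to match exactly the ``Jensen gap'' $h_c(x) - \E[h(\text{endpoint})\mid \text{present position}]$; letting $\delta \to 0$ recovers $h_c$ as the effective terminal cost. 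Here one uses that $\E[\text{(quadratic cost to move distance }\ell\text{ in time }\delta)] \approx \tfrac{\gamma(1)}{2}\cdot \tfrac{\ell^2}{\delta}$ against a Brownian increment of variance $\delta$, so the variance budget matches the quadratic in the concavification.

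The main obstacle I anticipate is making the splitting construction rigorous with a genuine \emph{adapted} control on a continuous-time interval: one must exhibit, on $[1-\delta,1]$, a progressively measurable $\phi$ whose stochastic integral sends the conditional law of the endpoint to the prescribed two-point distribution $p\,\delta_{x_1} + (1-p)\,\delta_{x_2}$ while keeping $\E[\gamma(s)\phi_s^2]$ under control, and to verify that the limit $\delta \to 0$ of the resulting values converges to the $h_c$-value. A clean route is to avoid the explicit two-point targeting and instead argue at the level of value functions: show that the HJB operator is unchanged by replacing the terminal datum $h$ with $h_c$, because the optimal feedback control near $t=1$ automatically convexifies the effective terminal cost—i.e., invoke \cref{prop:GeneralPDE_Constr} and \cref{eq:equiv_f_mu_h_c}, which already establish $f_\mu(1,x) = \sup_u\{h_c(x+u) - u^2/(2c)\} = \sup_u\{h(x+u) - u^2/(2c)\}$, and then apply the verification argument of \cref{prop:veri_arg_SF_gen} (in the regime $\sup_z h_c''(z) \le 1/c$ where it is already proven) to identify both $V_\gamma$'s with the same expression $\inf_x\{f_\mu(t,x) + \tfrac{\gamma(t)}{2}(x-z)^2\} + \tfrac{1}{2\alpha}\int_t^1\gamma(s)\,\d s$. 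This reduces the proposition to the already-established equality \cref{eq:equiv_f_mu_h_c} together with the verification argument, and the only genuinely new check is that the supremum over $u$ in the terminal condition is literally the same for $h$ and $h_c$ — which follows because $\sup_u\{h(x+u) - u^2/(2c)\}$ is, as a function of $x$, automatically $1/c$-semiconvex, hence equal to the same sup with $h$ replaced by its $\gamma(1)$-concavification-plus-quadratic, i.e.\ $h_c$.
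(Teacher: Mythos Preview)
Your first approach---the terminal ``splitting'' to realize the concave envelope---is essentially the paper's route. The paper executes it by (i) reducing via \cref{prop:VAL_continuity_gen} to $\gamma$ constant on a terminal interval $[\theta,1]$, (ii) invoking the variational identity $h_c(x) = \sup_{\E[U]=0}\E[h(x+U) - \tfrac{\gamma(1)}{2}U^2]$ (\cref{lem:var_rep_conc}), (iii) realizing the optimal $U$ as $\int_{\theta_\veps}^1 \psi_t\,\d B_t$ via martingale representation, and (iv) absorbing the mismatch between $\psi_t$ and $1+\psi_t$ using Lipschitzness of $h$ as $\theta_\veps\to 1$. The adaptedness obstacle you flag is exactly what step (iii) resolves; had you carried this through you would have recovered the paper's proof.

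Your ``clean route,'' however, is circular. The verification identity \cref{eq:V_gamma_and_f_mu_gen} reads $V_\gamma(t,z) = \inf_x\{f_\mu(t,x) + \tfrac{\gamma(t)}{2}(x-z)^2\} + \tfrac{1}{2\alpha}\int_t^1\gamma$, and you correctly observe that the right-hand side depends on the reward only through $f_\mu(1,\cdot)$, which is the same for $h$ and $h_c$ by \cref{eq:equiv_f_mu_h_c}. But \cref{prop:veri_arg_SF_gen} is proven only when the reward appearing in the \emph{definition} of $V_\gamma$ satisfies $\sup h'' < 1/c$. Applied to $h_c$ (after approximating $c_n\uparrow c$) it yields $V_\gamma^{h_c}=$ the Parisi expression; it does \emph{not} yield the same for $V_\gamma^{h}$, since $\sup h'' \ge 1/c$ here. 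If one runs the verification machinery with $h$ regardless, the auxiliary function $V$ built from $f_\mu$ has terminal value $V(1,z)=h_c(z)$, not $h(z)$, so the It\^o-plus-HJB step only produces $\E[h(M_1^\phi)] \le \E[h_c(M_1^\phi)] \le V(t,z)+\text{cost}$---the trivial direction $V_\gamma^h \le V_\gamma^{h_c}$. The nontrivial direction would require the optimal terminal position $M_1$ to lie a.s.\ on the contact set $\{h=h_c\}$, which is false for generic $(\mu,c)\in\FS$ and holds only at the Parisi minimizer (where it is established later via first-order conditions, cf.\ \cref{lem:diff_P_c_equal} and \cref{prop:hc_and_gc}). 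So \cref{prop:Equivalence_h_hc} genuinely requires the constructive argument of your first approach.
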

\begin{proof} 
	Again for simplicity, we assume $(t, z) = (0, 0)$, and use shorthand $V(\gamma)$ for $V_{\gamma} (0, 0)$. Under this simplification, we prove that $V(\gamma) = V_c (\gamma)$, where
	\begin{equation*}
		V_c (\gamma) = \sup_{\phi \in D[0, 1]} \E \left[ h_c \left( \int_{0}^{1} (1 + \phi_t) \d B_t \right) - \frac{1}{2} \int_{0}^{1} \gamma(t) \phi_t^2 \d t \right] + \frac{1}{2 \alpha} \int_{0}^{1} \gamma(t) \d t.
	\end{equation*}
	Without loss of generality, we can assume $\gamma (t) = \gamma(1)$ on $[\theta, 1]$ for some $\theta < 1$. Otherwise, one can find a sequence of $\gamma_n$, each satisfying $\gamma_n (t) = \gamma_n (1)$ on $[\theta_n, 1]$ for some $\theta_n < 1$, and $\gamma_n \stackrel{L^{\infty}}{\to} \gamma$. If we can show that $V_c (\gamma_n) = V (\gamma_n)$ for each $n$, then applying \cref{prop:VAL_continuity_gen} yields
	\begin{equation*}
		V_c (\gamma) = \lim_{n \to \infty} V_c (\gamma_n) = \lim_{n \to \infty} V (\gamma_n) = V (\gamma).
	\end{equation*}
	Based on this consideration, we will assume that $\gamma (t) = \gamma(1)$ on $[\theta, 1]$ for some $\theta < 1$. Since $h_c \ge h$, we always have $V_c (\gamma) \ge V (\gamma)$. To prove the reverse bound, note that for any $\veps > 0$, there exists $\phi \in D[0, 1]$ such that
	\begin{equation*}
		\E \left[ h_c \left( \int_{0}^{1} (1 + \phi_t) \d B_t \right) - \frac{1}{2} \int_{0}^{1} \gamma(t) \phi_t^2 \d t \right] + \frac{1}{2 \alpha} \int_{0}^{1} \gamma(t) \d t \ge V_c (\gamma) - \veps.
	\end{equation*}
	Further, by continuity, there exists $\theta_{\veps} \ge \theta$, such that
	\begin{equation*}
		\E \left[ h_c \left( \int_{0}^{\theta_{\veps}} (1 + \phi_t) \d B_t \right) - \frac{1}{2} \int_{0}^{\theta_{\veps}} \gamma(t) \phi_t^2 \d t \right] + \frac{1}{2 \alpha} \int_{0}^{1} \gamma(t) \d t \ge V_c (\gamma) - 2 \veps.
	\end{equation*}
	According to \cref{lem:var_rep_conc}, we have
	\begin{equation*}
		h_c (x) = \sup_{U \in L^2 (\Omega), \ \E [U] = 0} \E \left[ h (x + U) - \frac{\gamma(1)}{2} U^2 \right].
	\end{equation*}
	Using martingale representation theorem, we get that
	\begin{align*}
		& \E \left[ h_c \left( \int_{0}^{\theta_{\veps}} (1 + \phi_t) \d B_t \right) - \frac{1}{2} \int_{0}^{\theta_{\veps}} \gamma(t) \phi_t^2 \d t \right] \\
		\stackrel{(i)}{=} \, & \sup_{\psi \in D[\theta_{\veps}, 1]} \E \left[ h \left( \int_{0}^{\theta_{\veps}} (1 + \phi_t) \d B_t + \int_{\theta_{\veps}}^{1} \psi_t \d B_t \right) - \frac{1}{2} \int_{0}^{\theta_{\veps}} \gamma(t) \phi_t^2 \d t - \frac{1}{2} \int_{\theta_{\veps}}^{1} \gamma(t) \psi_t^2 \d t \right] \\
		\stackrel{(ii)}{\le} \, & \sup_{\psi \in D[\theta_{\veps}, 1]} \E \left[ h \left( \int_{0}^{\theta_{\veps}} (1 + \phi_t) \d B_t + \int_{\theta_{\veps}}^{1} ( 1 + \psi_t) \d B_t \right) - \frac{1}{2} \int_{0}^{\theta_{\veps}} \gamma(t) \phi_t^2 \d t - \frac{1}{2} \int_{\theta_{\veps}}^{1} \gamma(t) \psi_t^2 \d t \right] + \veps \\
		\le \, & V (\gamma) - \frac{1}{2 \alpha} \int_{0}^{1} \gamma(t) \d t + \veps
	\end{align*}
	for $\theta_{\veps}$ sufficiently close to $1$, where $(i)$ is due to $\gamma(t) = \gamma (1)$ on $[\theta_{\veps}, 1]$, $(ii)$ is because of the Lipschitzness of $h$. We thus deduce that
	\begin{align*}
		V_c (\gamma) - 2 \veps \le V (\gamma) + \veps.
	\end{align*}
	Sending $\veps \to 0$ yields that $V_c (\gamma) \le V (\gamma)$. This concludes the proof.
\end{proof}
From now on, we use $V_{\gamma}^{c} (\cdot, \cdot)$ to denote the value function when $h$ is replaced with $h_c$, which satisfies $\gamma(1) \ge \sup_{z \in \R} h_c''(z)$. The above proposition implies that $V_{\gamma}^{c} = V_{\gamma}$.

We are now ready to prove \cref{prop:two_stage_veri_arg}. First, note that \cref{eq:two_stage_V_and_f} follows immediately
from \cref{prop:Equivalence_h_hc} and \cref{prop:GeneralPDE_Constr}.
To see this, take a sequence $c_n<c$, with $c_n\uparrow c$,
and let $V^n_{\gamma}$, $f^n_{\mu}$ be defined as in the theorem statement with
$h$ replaced by $h_c$, and $(\mu, c)$ replaced by $(\mu,c_n)$.
Then we have $f^n_{\mu}\to f_{\mu}$ uniformly by \cref{prop:GeneralPDE_Constr}, and 
$V^n_{\gamma}\to V^{\gamma}$ uniformly by \cref{prop:Equivalence_h_hc} 
and \cref{prop:VAL_continuity_gen}.

Next, we will construct the corresponding optimal control process $\{ \phi_s^z \}_{s \in [t, 1]}$ and show that it achieves
\begin{equation*}
	V_{\gamma}^c (t, z) = \sup_{\phi \in D[t, 1]} \E \left[ h_c \left( z + \int_{t}^{1} (1 + \phi_s) \d B_s \right) - \frac{1}{2} \int_{t}^{1} \gamma(s) \phi_s^2 \d s \right] + \frac{1}{2 \alpha} \int_{t}^{1} \gamma(s) \d s.
\end{equation*}
One natural idea would be to use the same definition as before: 
$\phi_s^z = \partial_x^2 f_{\mu} (s, X_s^z) / \gamma(s)$,
where $(X_s^z)_{s \in [t, 1]}$ uniquely solves the SDE
\begin{equation*}
	\frac{1}{\gamma(t)} \partial_x f_{\mu} (t, X_t^z) + X_t^z = z, \ \d X_s^z = \mu(s) \partial_x f_{\mu} (s, X_s^z) \d s + \d B_s, \ s \in [t, 1].
\end{equation*}
\modif{(Existence and uniqueness follow from \cite[Proposition 1.10]{cherny2005singular}, since $\partial_x f_{\mu}$ is bounded.)}
However, due to the lack of existence and a priori estimates for third or higher-order partial derivatives of $f_{\mu}$ with respect to $x$ (they can not be established using Duhamel's principle or stochastic calculus techniques as before if we only assume $\mu \in L^1 [0, 1]$), we are not able to prove some key properties of $\{ \phi_s^z \}$ that are crucial to the verification argument. To circumvent this difficulty, we will construct $\{ \phi_s^z \}$ via martingale representation theorem instead, and show that such defined $\{ \phi_s^z \}$ has desired properties. Namely, defining
\begin{equation}
	M_s^z = \frac{1}{\gamma(s)} \partial_x f_{\mu} (s, X_s^z) + X_s^z,\label{eq:MDef}
\end{equation}
we then have the following:
\begin{lem}\label{lem:phi_via_martingale}
	$\{ M_s^z \}_{s \in [t, 1]}$ is a square integrable martingale with respect to the standard filtration.
\end{lem}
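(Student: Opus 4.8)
\textbf{Proof plan for Lemma \ref{lem:phi_via_martingale}.}

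The plan is to recognize $M_s^z$ as a (bounded-below, linear-growth) space-time harmonic function of the diffusion $X_s^z$ evaluated along its own path, and to show that its drift vanishes identically. Concretely, set $g(s,x) := \gamma(s)^{-1}\partial_x f_\mu(s,x) + x$, so that $M_s^z = g(s, X_s^z)$. Since $f_\mu$ is a weak solution of the Parisi PDE \eqref{eq:parisi_mu_gen} with the regularity guaranteed by \cref{thm:solve_Parisi_PDE} (in particular $\partial_x f_\mu$ bounded, $\partial_x^2 f_\mu$ continuous in $x$ with $\partial_x^2 f_\mu > -\gamma$, and $\partial_t\partial_x^k f_\mu \in L^\infty([0,\theta]\times\R)$ for $k\le 1$ up to any $\theta<1$), the SDE \eqref{eq:def_par_sde_gen} for $X^z_s$ has a unique strong solution (via \cite[Proposition 1.10]{cherny2005singular}, as noted in the text). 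First I would verify that $g$ is $C^1$ in $x$ (indeed $\partial_x g = 1 + \gamma(s)^{-1}\partial_x^2 f_\mu > 0$) and that it has enough regularity in $t$ on $[0,\theta]$ to justify an It\^o expansion; this is where the a priori bounds on $\partial_t\partial_x^k f_\mu$ from \cref{thm:existence_parisi_gtr}/\cref{thm:solve_Parisi_PDE} enter.

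Next I would compute $\d M_s^z$ via It\^o's formula on $[t,\theta]$. Differentiating $g(s,x) = \gamma^{-1}\partial_x f_\mu + x$ and using $\gamma'/\gamma^2 = \mu$, the drift of $M^z$ is
\begin{equation*}
\partial_s g + \mu(s)\,\partial_x f_\mu\,\partial_x g + \tfrac12 \partial_x^2 g
= -\frac{\gamma'}{\gamma^2}\partial_x f_\mu + \frac{1}{\gamma}\partial_s\partial_x f_\mu + \mu\,\partial_x f_\mu\Big(1 + \frac{1}{\gamma}\partial_x^2 f_\mu\Big) + \frac{1}{2\gamma}\partial_x^3 f_\mu,
\end{equation*}
and differentiating the Parisi PDE $\partial_s f_\mu + \tfrac12\mu(\partial_x f_\mu)^2 + \tfrac12\partial_x^2 f_\mu = 0$ in $x$ gives $\partial_s\partial_x f_\mu + \mu\,\partial_x f_\mu\,\partial_x^2 f_\mu + \tfrac12\partial_x^3 f_\mu = 0$; substituting this and $\gamma'/\gamma^2 = \mu$ makes every term cancel, so the drift is zero and $\d M_s^z = \gamma(s)^{-1}\partial_x^2 f_\mu(s,X_s^z)\,\d B_s = (1+\phi_s^z)\,\d B_s$ on $[t,\theta]$. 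The subtlety is that this computation formally uses $\partial_x^3 f_\mu$, which is not known to exist under the bare hypothesis $\mu\in L^1[0,1]$; I would handle this exactly as in the verification argument of \cref{sec:verification_argument}, by working first with the mollified data $f_\mu^\veps$ (for which $\partial_x^3 f_\mu^\veps$ exists and the identity holds classically by \cref{prop:terminal_approximation} and \cref{thm:C2_approx_gen}), deriving $M_s^{z,\veps} = M_t^{z,\veps} + \int_t^s (1+\phi_u^{z,\veps})\,\d B_u$, and then passing to the limit $\veps\to0$ using the uniform convergence of $f_\mu^\veps$, $\partial_x f_\mu^\veps$, $\partial_x^2 f_\mu^\veps$ on compacts and the convergence in law of $X^{z,\veps}$ to $X^z$ (\cref{prop:approximation_process_gen}). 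This identifies $M_s^z$ as a local martingale with $\d M_s^z = (1+\phi_s^z)\,\d B_s$ on $[t,\theta]$ for every $\theta<1$, hence on $[t,1)$.

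Finally I would upgrade ``local martingale on $[t,1)$'' to ``square integrable martingale on $[t,1]$''. Since $\partial_x f_\mu$ is bounded by $\|h'\|_{L^\infty}$ (\cref{eq:parisi_regularity}) and $1/\gamma(s) = c + \int_s^1\mu$ is bounded on $[t,1]$, we have $|M_s^z| \le c'\|h'\|_{L^\infty} + |X_s^z|$ with $X^z$ having uniformly bounded second moments on $[t,1]$ (standard, since its drift $\mu\,\partial_x f_\mu$ is in $L^1[0,1]$ and bounded by $\|h'\|_{L^\infty}\cdot|\mu(s)|$); hence $\sup_{s\le 1}\E[(M_s^z)^2] < \infty$. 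Combined with the martingale property on each $[t,\theta]$, uniform integrability gives that $M^z$ extends to a true martingale on the closed interval $[t,1]$, and the uniform $L^2$ bound gives square integrability. I expect the main obstacle to be the rigorous $\veps\to0$ passage establishing the martingale identity without access to $\partial_x^3 f_\mu$ — i.e., checking that $\phi_s^{z,\veps}\to\phi_s^z$ in a strong enough sense (convergence in law plus uniform boundedness, via \cref{thm:C2_approx_gen}(c) and the curvature bound $\partial_x^2 f_\mu > -\gamma$ together with the upper bound $\partial_x^2 f_\mu^\veps \le C$) to conclude that the limiting stochastic integral is $\int_t^s(1+\phi_u^z)\,\d B_u$; everything else is a routine application of It\^o's formula and standard moment estimates.
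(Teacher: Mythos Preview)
Your plan misreads the context. Lemma~\ref{lem:phi_via_martingale} sits in Section~7.2, the case $\sup_z h''(z) \ge 1/c$, where the paper has just explained that it \emph{cannot} run the It\^o/mollification argument you propose: the remark after \cref{prop:GeneralPDE_Constr} states that no upper bound on $\partial_x^2 f_\mu$ is available here (the constants $C_n(\mu,2)$ blow up), and the paragraph immediately preceding the lemma says explicitly that ``due to the lack of existence and a priori estimates for third or higher-order partial derivatives of $f_\mu$\ldots we are not able to prove some key properties of $\{\phi_s^z\}$.'' Your proposal invokes \cref{thm:C2_approx_gen} and the bound $\partial_x^2 f_\mu^\veps \le C$, both of which live in Section~6.1 and are proved only under $\sup_z h''(z) < 1/c$; they are not available in the regime of this lemma. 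The formal It\^o computation is correct, but the regularity to justify it and to pass to the limit in $\veps$ (in particular, uniform-on-compacts convergence of $\partial_x^2 f_\mu^\veps$) is precisely what is missing.

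The paper's proof sidesteps all of this. It does not use It\^o's formula on $M^z$ at all. Instead it approximates $c$ from below by $c_n \uparrow c$, so that for each $n$ one is back in the ``good'' case $\sup_z h_c''(z) < 1/c_n$ of Section~7.1, where the verification argument already shows $M_s^n$ is a martingale. The martingale property of $M^z$ is then obtained softly: square integrability is immediate from boundedness of $\partial_x f_\mu$; the Markov property of $X^z$ reduces $\E[M_s - M_u \mid \cF_u] = 0$ to checking $\E[M_s] = 0$; and this follows from $\E[M_s^n] = 0$, convergence in distribution $M_s^n \Rightarrow M_s$ (via \cref{prop:approximation_process_gen}), and uniform integrability. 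Note also that the paper never identifies the integrand as $\gamma^{-1}\partial_x^2 f_\mu(s,X_s^z)$; it defines $\phi_s^z$ a posteriori via martingale representation, which is the whole point of the lemma.
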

\begin{proof}
	Without loss of generality, we assume $(t, z) = (0, 0)$, and drop the superscript ``$z$" from now on. Using the definition of $\{ X_s \}$, and the fact that $\partial_xf_{\mu}(t,\,\cdot\,)$ is bounded (by \cref{prop:GeneralPDE_Constr}), it is easy to see that $\{ M_s \}$ is square integrable with $M_0 = 0$. It then remains to show that $\E [M_s-M_u \vert \cF_u] = 0$ for any $u < s$. Since $\{ X_s \}$ is a Markov process and $M_s$ only depends on $X_s$, it suffices to prove that $\E [M_s-M_u \vert X_u = x] = 0$ for any $x \in \R$. For simplicity, we will show that $\E [M_s \vert X_0] = \E [M_s] = 0$, as the proof for general $u$ and $x$ is similar. To this end, let $\{ X_s^n \}_{s \in [0, 1]}$ be the solution to the SDE:
	\begin{align*}
		&\d X_s^n = \, \mu(s) \partial_x f_\mu^n (s, X_s^n) \d s + \d B_s, \\
		&\partial_x f_{\mu}^n (0, X_0^n) + \gamma(0) X_0^n = 0\, ,
	\end{align*}
	where $f^n_{\mu}$ is defined as in \cref{prop:GeneralPDE_Constr}. Then, we know that
	\begin{equation*}
		M_s^n = \frac{1}{\gamma_n (s)} \partial_x f_{\mu}^n (s, X_s^n) + X_s^n
	\end{equation*}
	is a martingale. Hence, $\E [M_s^n] = 0$. Further, Proposition~\ref{prop:approximation_process_gen} implies that $M_s^n$ converges to $M_s$ in distribution as $n \to \infty$. Now since $\{ M_s^n \}_{n=1}^{\infty}$ is a family of uniformly integrable random variables (easily seen from its definition), we know that
	\begin{equation*}
		\E [M_s] = \lim_{n \to \infty} \E [M_s^n] = 0.
	\end{equation*}
	This completes the proof.
\end{proof}
According to martingale representation theorem, there exists $\{ \phi_s^z \} \in D [t, 1]$ such that
\begin{equation}\label{eq:def_phi_martingale}
	M_s^z = M_t^z + \int_{t}^{s} (1 + \phi_u^z) \d B_u, \ \forall s \in [t, 1].
\end{equation}
The proposition below shows that $\{ \phi_s^z \}_{s \in [t, 1]}$ indeed achieves $V_{\gamma}^c (t, z)$, \modif{thus completing the verification argument for the setting $\sup_{z \in \R} h''(z) \ge 1/c$.}
\begin{prop}\label{prop:less_achievability}
	For $\{ \phi_s^z \}_{s \in [t, 1]}$ defined as per Eq.~\eqref{eq:def_phi_martingale}, we have
	\begin{equation}\label{eq:veri_arg_gen_equal}
		V_{\gamma}^c (t, z) = \E \left[ h_c \left( z + \int_{t}^{1} (1 + \phi_s^z) \d B_s \right) - \frac{1}{2} \int_{t}^{1} \gamma(s) (\phi_s^z)^2 \d s \right] + \frac{1}{2 \alpha} \int_{t}^{1} \gamma(s) \d s,
	\end{equation}
	i.e., $\{ \phi_s^z \}_{s \in [t, 1]}$ is indeed optimal.
\end{prop}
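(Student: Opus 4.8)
The plan is to prove the two inequalities in \cref{eq:veri_arg_gen_equal} separately; the inequality ``$\ge$'' is essentially free, and all the work goes into ``$\le$''. For ``$\ge$'', note that \cref{lem:phi_via_martingale} shows $\{M_s^z\}_{s\in[t,1]}$ is a square-integrable martingale with $M_t^z=z$ deterministic, so the representation \cref{eq:def_phi_martingale} gives $\E\big[\int_t^1(1+\phi_s^z)^2\,\d s\big]=\E[(M_1^z)^2]-z^2<\infty$; hence $\{\phi^z_s\}\in D[t,1]$ is an admissible control, and substituting it into the supremum defining $V_\gamma^c(t,z)$, together with $z+\int_t^1(1+\phi^z_s)\,\d B_s=M_1^z$, yields ``$\ge$''. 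For the reverse inequality I would argue by approximation, along the same lines already used in this section to obtain \cref{eq:two_stage_V_and_f}. Fix a sequence $c_n\uparrow c$ with $c_n<c$, and let $\gamma_n$, $f_\mu^n$ be the objects attached to $(\mu,c_n)$ as in \cref{prop:GeneralPDE_Constr}. Since $\sup_{z}h_c''(z)\le 1/c<1/c_n$, the verification argument of the previous subsection (\cref{prop:veri_arg_SF_gen} together with its extension to general $h\in C^2(\R)$) applies to $(\mu,c_n)$ with terminal function $h_c$, producing the explicit optimal control $\phi^{z,n}_s=\partial_x^2f_\mu^n(s,X^{z,n}_s)/\gamma_n(s)$, where $X^{z,n}$ solves the SDE \cref{eq:def_par_sde_gen} with $(\mu,c_n)$ in place of $(\mu,c)$, together with the identity
\begin{equation*}
  V_{\gamma_n}^c(t,z)=\E\Big[h_c(M^{z,n}_1)-\tfrac12\int_t^1\gamma_n(s)(\phi^{z,n}_s)^2\,\d s\Big]+\tfrac1{2\alpha}\int_t^1\gamma_n(s)\,\d s,
\end{equation*}
where $M^{z,n}_s:=\partial_xf_\mu^n(s,X^{z,n}_s)/\gamma_n(s)+X^{z,n}_s=z+\int_t^s(1+\phi^{z,n}_u)\,\d B_u$.

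I would then pass to the limit $n\to\infty$ using: (i) $V^c_{\gamma_n}(t,z)\to V^c_\gamma(t,z)$, by \cref{prop:Equivalence_h_hc} and \cref{prop:VAL_continuity_gen}, noting that $\gamma_n$ is, for large $n$, bounded away from $0$ and $+\infty$ uniformly, since $1/\gamma_n(t)=c_n+\int_t^1\mu(s)\,\d s$ converges uniformly to the continuous positive function $1/\gamma(t)$ on $[0,1]$; (ii) $M^{z,n}\to M^z$ in law, which is precisely the convergence used in the proof of \cref{lem:phi_via_martingale} via \cref{prop:approximation_process_gen} and the uniform convergence $\partial_xf_\mu^n\to\partial_xf_\mu$ of \cref{prop:GeneralPDE_Constr}; (iii) a uniform bound $\sup_n\E[(M^{z,n}_1)^2]<\infty$ --- indeed $h_c$ is bounded above and $V^c_{\gamma_n}(t,z)$ converges, so the displayed identity forces $\sup_n\E[\int_t^1\gamma_n(s)(\phi^{z,n}_s)^2\,\d s]<\infty$, and since $\inf_n\inf_{[0,1]}\gamma_n>0$ this gives $\sup_n\E[\int_t^1(\phi^{z,n}_s)^2\,\d s]<\infty$, whence $\sup_n\E[(M_1^{z,n})^2]=z^2+\sup_n\E[\int_t^1(1+\phi^{z,n}_s)^2\,\d s]<\infty$; (iv) hence $\{M^{z,n}_1\}_n$ is uniformly integrable, so by (ii) and the Lipschitz bound on $h_c$ we get $\E[h_c(M^{z,n}_1)]\to\E[h_c(M^z_1)]$, while $\int_t^1\gamma_n\to\int_t^1\gamma$.

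It remains to pass to the limit in the quadratic cost term, which I expect to be the main obstacle. Since every other term in the displayed identity converges, so does $\E[\int_t^1\gamma_n(s)(\phi^{z,n}_s)^2\,\d s]\to L$ for some finite $L$, and it suffices to show $L\ge\E[\int_t^1\gamma(s)(\phi^z_s)^2\,\d s]$. By the bound (iii), some subsequence of $\{\phi^{z,n}\}$ converges weakly in $L^2([t,1]\times\Omega)$ to an adapted process $\psi$; by weak continuity of the It\^o isometry, $M^{z,n}_1=z+\int_t^1(1+\phi^{z,n})\,\d B$ then converges weakly in $L^2(\Omega)$ to $z+\int_t^1(1+\psi)\,\d B$. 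On the other hand, upgrading (ii) to convergence in probability of $M^{z,n}_1$ towards $M^z_1$ --- which follows from stability of the SDE \cref{eq:def_par_sde_gen} under the uniform convergence $\partial_x f_\mu^n\to\partial_x f_\mu$, exactly as in the proof of \cref{prop:approximation_process_gen} --- together with (iii), gives $M^{z,n}_1\to M^z_1$ in $L^1$, so the weak limit must be $M^z_1$; by uniqueness of the It\^o representation this forces $\psi=\phi^z$ a.e.\ on $[t,1]\times\Omega$. Thus $\phi^{z,n}\rightharpoonup\phi^z$ weakly in $L^2$ along the whole sequence, and since $\sqrt{\gamma_n}\to\sqrt{\gamma}$ uniformly this upgrades to $\sqrt{\gamma_n}\,\phi^{z,n}\rightharpoonup\sqrt{\gamma}\,\phi^z$; weak lower semicontinuity of the $L^2$-norm then yields $\E[\int_t^1\gamma(\phi^z)^2\,\d s]=\|\sqrt{\gamma}\,\phi^z\|_{L^2}^2\le\liminf_n\|\sqrt{\gamma_n}\,\phi^{z,n}\|_{L^2}^2=L$. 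Plugging this back gives $V^c_\gamma(t,z)=\lim_n V^c_{\gamma_n}(t,z)=\E[h_c(M^z_1)]-\tfrac12 L+\tfrac1{2\alpha}\int_t^1\gamma\le\E[h_c(M^z_1)]-\tfrac12\E[\int_t^1\gamma(\phi^z)^2\,\d s]+\tfrac1{2\alpha}\int_t^1\gamma$, which is ``$\le$'' and completes the proof of \cref{prop:less_achievability}; combined with \cref{prop:Equivalence_h_hc}, \cref{prop:GeneralPDE_Constr} and the identity \cref{eq:two_stage_V_and_f} already recorded, this finishes \cref{prop:two_stage_veri_arg} in the case $\sup_z h''(z)\ge 1/c$, and hence \cref{thm:two_stage_strong_duality}(c). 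The genuinely delicate point is the identification of the weak-$L^2$ limit of the approximating controls with $\phi^z$: mere convergence in law of $M^{z,n}$ is not enough, and one must exploit the common driving Brownian motion to upgrade to an $L^1$/in-probability convergence before invoking uniqueness of the stochastic-integral representation.
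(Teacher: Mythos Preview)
Your overall strategy matches the paper's: approximate by $(\mu,c_n)$ with $c_n\uparrow c$, apply the verification argument at level $n$, and pass to the limit; the treatment of $V_{\gamma_n}^c\to V_\gamma^c$ and of $\E[h_c(M_1^{z,n})]\to\E[h_c(M_1^z)]$ via convergence in law and uniform integrability is the same as in the paper.

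Where you diverge is the quadratic cost. You use weak $L^2$ compactness of $\{\phi^{z,n}\}$ and lower semicontinuity of the norm, obtaining only an inequality (sufficient once you split into ``$\ge$'' and ``$\le$''), but this requires identifying the weak limit with $\phi^z$. The paper instead proves \emph{exact} convergence of $\E\big[\int\gamma_n(\phi^{z,n})^2\big]$, and needs only convergence in law of $X^n$ to do so. Writing $N_s=M_s-B_s$ and $A(s)=\E[N_s^2]=\int_0^s\E[\phi_r^2]\,\d r$, one integrates by parts: $\int_0^1\gamma\,\d A=\gamma(1)A(1)-\int_0^1\gamma' A$. The point is that $N_s^n=\tfrac{1}{\gamma_n(s)}\partial_xf_\mu^n(s,X_s^n)+\int_0^s\mu(r)\,\partial_xf_\mu^n(r,X_r^n)\,\d r$ is \emph{deterministically uniformly bounded} (by $\|h'\|_\infty$ times a constant depending only on $\gamma$ and $\|\mu\|_{L^1}$), so $A_n(s)\to A(s)$ follows from $X^n\Rightarrow X$ and bounded convergence; dominated convergence on the right-hand side then finishes.

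The paper's route sidesteps precisely the step you flag as delicate. Your identification argument needs $M_1^{z,n}\to M_1^z$ in probability, but \cref{prop:approximation_process_gen} as stated and proved gives only convergence in law---its proof passes through Skorokhod representation and explicitly allows the driving Brownian motion to change. You can upgrade (run the tightness argument on the joint process $(X^{z,n},X^z,B)$ and invoke pathwise uniqueness of the limiting SDE, which holds by the cited Cherny--Engelbert result, to force the two $X$-components to coincide), but that is an additional argument, not ``exactly as in the proof of \cref{prop:approximation_process_gen}''. The integration-by-parts device avoids this issue entirely.
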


\begin{proof}
	Similar as before, we assume $(t, z) = (0, 0)$, drop the superscript ``$z$", and use $V_c (\gamma)$ as a shorthand for $V_{\gamma}^c (0, 0)$.
	Let $\phi^n \in D[0, 1]$ be the optimal control process associated with $h_c$ and $(\mu, c_n)$, where $c_n \to c$ from below. Then, we know that
	\begin{equation*}
		V_c (\gamma_n) = \E \left[ h_{c} \left( \int_{0}^{1} (1 + \phi_t^n) \d B_t \right) - \frac{1}{2} \int_{0}^{1} \gamma_n (t) (\phi_t^n)^2 \d t \right] + \frac{1}{2 \alpha} \int_{0}^{1} \gamma_n (t) \d t.
	\end{equation*}
	Since $V_c (\gamma_n) \to V_c (\gamma)$, $\gamma_n (t) \to \gamma(t)$ in $L^{\infty} [0, 1]$, it suffices to show that
	\begin{equation*}
		\begin{split}
			& \E \left[ h_{c} \left( \int_{0}^{1} (1 + \phi_t^n) \d B_t \right) - \frac{1}{2} \int_{0}^{1} \gamma_n (t) (\phi_t^n)^2 \d t \right] \\ 
			\to \, & \E \left[ h_c \left( \int_{0}^{1} (1 + \phi_t) \d B_t \right) - \frac{1}{2} \int_{0}^{1} \gamma(t) \phi_t^2 \d t \right]
		\end{split}
	\end{equation*}
	as $n \to \infty$. To this end, we will prove
	\begin{equation}\label{eq:limit_hc}
		\E \left[ h_{c} \left( \int_{0}^{1} (1 + \phi_t^n) \d B_t \right) \right] \to \E \left[ h_c \left( \int_{0}^{1} (1 + \phi_t) \d B_t \right) \right]
	\end{equation}
	and
	\begin{equation}\label{eq:limit_quad}
		\frac{1}{2} \int_{0}^{1} \gamma_n (t) \E \left[ (\phi_t^n)^2 \right] \d t \to \frac{1}{2} \int_{0}^{1} \gamma (t) \E \left[ \phi_t^2 \right] \d t,
	\end{equation}
	respectively. First, note that
	\begin{equation*}
		\int_{0}^{1} (1 + \phi_t) \d B_t = M_1, \ \int_{0}^{1} (1 + \phi_t^n) \d B_t = M_1^n,
	\end{equation*}
	where $\{ M_t^n \}_{t \in [0, 1]}$ is defined in the proof of Lemma~\ref{lem:phi_via_martingale}. Similarly as in that proof, we know that $\{ M_1^n \}$ is a sequence of uniformly integrable random variables that converges to $M_1$ in law. Since $h_c$ is Lipschitz, we know that $\E [h_c (M_1^n)] \to \E [h_c (M_1)]$. This proves Eq.~\eqref{eq:limit_hc}. To show Eq.~\eqref{eq:limit_quad}, let us define
	\begin{equation*}
		A(t) = \int_{0}^{t} \E [\phi_s^2] \d s, \quad N_t = M_t - B_t, \ \forall t \in [0, 1].
	\end{equation*}
	Then, by definition, we know that $A(t) = \E [N_t^2]$. Further,
	\begin{align*}
		\frac{1}{2} \int_{0}^{1} \gamma (t) \E \left[ \phi_t^2 \right] \d t = \, \frac{1}{2} \int_{0}^{1} \gamma (t) \d A(t)  = \frac{1}{2} \left( \gamma(1) A(1) - \int_{0}^{1} \gamma'(t) A(t) \d t \right).
	\end{align*}
	Similarly,
	\begin{equation*}
		\frac{1}{2} \int_{0}^{1} \gamma_n (t) \E \left[ (\phi_t^n)^2 \right] \d t = \frac{1}{2} \left( \gamma_n (1) A_n (1) - \int_{0}^{1} \gamma_n'(t) A_n(t) \d t \right).
	\end{equation*}
	Note that for all $n \in \mathbb{N}$ and $t \in [0, 1]$,
	\begin{align*}
		N_t^n = \, & M_t^n - B_t = \frac{1}{\gamma_n (t)} \partial_x f_{\mu}^n (t, X_t^n) + X_t^n - B_t \\
		= \, & \frac{1}{\gamma_n (t)} \partial_x f_{\mu}^n (t, X_t^n) + \int_{0}^{t} \mu(s) \partial_x f_{\mu}^{n} (s, X_s^n) \d s
	\end{align*}
	is uniformly bounded. Applying Proposition~\ref{prop:approximation_process_gen} and bounded convergence theorem, we deduce that $A_n (t) = \E [(N_t^n)^2] \to \E [N_t^2] = A(t)$ as $n \to \infty$. Further, $\gamma_n' (t) = \gamma_n (t)^2 \mu(t) \to \gamma(t)^2 \mu(t) = \gamma'(t)$, thus leading to (use dominated convergence theorem)
	\begin{align*}
		& \frac{1}{2} \int_{0}^{1} \gamma_n (t) \E \left[ (\phi_t^n)^2 \right] \d t = \, \frac{1}{2} \left( \gamma_n (1) A_n (1) - \int_{0}^{1} \gamma_n'(t) A_n(t) \d t \right) \\
		\to \, & \frac{1}{2} \left( \gamma (1) A (1) - \int_{0}^{1} \gamma'(t) A (t) \d t \right) = \frac{1}{2} \int_{0}^{1} \gamma (t) \E \left[ \phi_t^2 \right] \d t.
	\end{align*}
	This proves Eq.~\eqref{eq:limit_quad} and concludes Eq.~\eqref{eq:veri_arg_gen_equal}.
\end{proof}

\section{Analysis of the variational problem}\label{sec:variational_problem}

\subsection{Proof of \cref{prop:first_order_var_Parisi}}\label{sec:greater_than_case}

As in \cref{sec:solve_Parisi_PDE,sec:verification_argument}, we first prove \cref{prop:first_order_var_Parisi} under the additional \cref{ass:h_regularity_gen}, and then remove this assumption via an approximation argument. Recall that the Parisi functional is defined as
\begin{equation}\label{parisi_fct_gen}
	\mathsf{F} (\mu, c) = f_{\mu} (0, 0)
	+ \frac{1}{2 \alpha} \int_{0}^{1} \frac{\d t}{c + \int_{t}^{1} \mu(s) \d s}, \quad (\mu, c) \in \FS,
\end{equation}
where $f_{\mu}$ solves the Parisi PDE~\eqref{eq:parisi_mu_gen}.

We begin by stating a useful lemma.
\begin{lem}\label{lem:1st_2nd_moment_gen}
	\modif{Under the settings of \cref{thm:existence_parisi_gtr}}, we have for any $s, t \in [0, 1]$:
	\begin{equation*}
		\E \left[ \left( \partial_x f_{\mu} (t, X_t) \right)^2 \right] - \E \left[ \left( \partial_x f_{\mu} (s, X_s) \right)^2 \right] = \int_{s}^{t} \E \left[ \left( \partial_x^2 f_{\mu} (u, X_u) \right)^2 \right] \d u.
	\end{equation*}
\end{lem}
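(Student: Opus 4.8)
The statement is an It\^o-calculus identity for the martingale-type process $Y_t := \partial_x f_\mu(t, X_t)$, where $(X_t)$ solves the Parisi SDE~\eqref{eq:two_stage_SDE}. The plan is to apply It\^o's formula to $f_\mu(t, X_t)$ and to $\partial_x f_\mu(t, X_t)$, exploit the Parisi PDE to cancel the drift terms, and then track the quadratic variation. Concretely, working first under \cref{ass:h_regularity_gen} and $\sup_z h''(z) < 1/c$ (so that by \cref{thm:existence_parisi_gtr} we have $f_\mu(t,\cdot) \in C^4$ with all the relevant derivatives bounded on $[0,\theta]\times\R$ for $\theta<1$, and $\partial_t\partial_x^k f_\mu \in L^\infty([0,\theta]\times\R)$ for $k=0,1,2$), differentiate the Parisi PDE~\eqref{eq:parisi_mu_gen} once in $x$ to get
\begin{equation*}
	\partial_t \partial_x f_\mu(t,x) + \mu(t)\,\partial_x f_\mu(t,x)\,\partial_x^2 f_\mu(t,x) + \tfrac12 \partial_x^3 f_\mu(t,x) = 0.
\end{equation*}
Then apply It\^o's formula to $\partial_x f_\mu(t, X_t)$ using $\d X_t = \mu(t)\,\partial_x f_\mu(t,X_t)\,\d t + \d B_t$: the $\d t$-terms are exactly $\big(\partial_t \partial_x f_\mu + \mu(t)\,\partial_x f_\mu\,\partial_x^2 f_\mu + \tfrac12 \partial_x^3 f_\mu\big)(t,X_t)\,\d t = 0$ by the differentiated PDE, so $\d\big(\partial_x f_\mu(t,X_t)\big) = \partial_x^2 f_\mu(t, X_t)\,\d B_t$. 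Hence $\partial_x f_\mu(t,X_t)$ is a (local) martingale with quadratic variation $\int_0^t (\partial_x^2 f_\mu(u,X_u))^2\,\d u$.

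\textbf{From the martingale property to the moment identity.} Once $\d\big(\partial_x f_\mu(t,X_t)\big) = \partial_x^2 f_\mu(t, X_t)\,\d B_t$ is established on $[0,\theta]$, apply It\^o's formula to $\big(\partial_x f_\mu(t,X_t)\big)^2$:
\begin{equation*}
	\d\big(\partial_x f_\mu(t,X_t)\big)^2 = 2\,\partial_x f_\mu(t,X_t)\,\partial_x^2 f_\mu(t,X_t)\,\d B_t + \big(\partial_x^2 f_\mu(t,X_t)\big)^2\,\d t.
\end{equation*}
Since $\partial_x f_\mu$ is bounded (by $\|h'\|_{L^\infty}$, estimate~\eqref{eq:estimate_df_gen}) and $\partial_x^2 f_\mu$ is bounded on $[0,\theta]\times\R$, the stochastic integral is a true martingale with zero expectation on $[s,t]$ for $0\le s\le t\le\theta$; taking expectations gives the claimed identity on $[0,\theta]$. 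To pass from $[0,\theta]$ to $[0,1]$, let $\theta\uparrow 1$: the left-hand side is continuous in $t$ up to $t=1$ because $\partial_x f_\mu(t,X_t)$ is bounded and $t\mapsto f_\mu(t,\cdot)$ (hence $\partial_x f_\mu$) is continuous, while the right-hand side integrand $(\partial_x^2 f_\mu(u,X_u))^2$ is nonnegative and measurable, so $\int_s^t \E[(\partial_x^2 f_\mu(u,X_u))^2]\,\d u$ is well-defined (possibly $+\infty$ a priori) and the identity on $[0,\theta]$ forces finiteness and extends by monotone convergence as $\theta\uparrow 1$. One should also note $X_q = B_q$ when $\mu\equiv 0$ on $[0,q]$, consistent with part (i) of \cref{prop:first_order_var_Parisi}, though that is not needed here.

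\textbf{Removing the regularity assumptions.} For general $h\in C^2(\R)$ Lipschitz and bounded above, and general $(\mu,c)\in\FS$, use the approximation scheme already set up in \cref{sec:solve_Parisi_PDE} and \cref{sec:verification_argument}: first reduce to $\sup_z h''(z) \le 1/c$ via $h_c$ (\cref{eq:defn_h_c}, \cref{eq:equiv_f_mu_h_c}), which leaves $f_\mu$ unchanged; then approximate by $h^\veps$ (the $\veps$-mollification, \cref{eq:eps_mollifier}) and by $c_n\uparrow c$ as in \cref{prop:GeneralPDE_Constr}, giving solutions $f_\mu^{\veps,n}$ for which the identity holds on every $[0,\theta]$. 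By \cref{thm:C2_approx_gen} and \cref{prop:GeneralPDE_Constr}, $\partial_x f_\mu^{\veps,n}\to\partial_x f_\mu$ uniformly and $\partial_x^2 f_\mu^{\veps,n}\to\partial_x^2 f_\mu$ uniformly on compacts; combined with \cref{prop:approximation_process_gen} for convergence in law of the associated SDE solutions $X^{\veps,n}$, and using that $\partial_x f_\mu^{\veps,n}$ is uniformly bounded (so the first moments converge by bounded convergence), the left-hand side passes to the limit. For the right-hand side one needs a uniform-integrability / lower-semicontinuity argument: $(\partial_x^2 f_\mu^{\veps,n}(u,X_u^{\veps,n}))^2$ converges in law and is bounded on $[0,\theta]$ for each fixed $(\veps,n)$, but not uniformly; the clean route is to express the increment of $\E[(\partial_x f_\mu(u,X_u))^2]$ directly via the martingale identity (as in \cref{lem:1st_2nd_moment_gen}'s own proof) so that the right-hand side is \emph{defined} as this increment, which is automatically continuous and finite. \textbf{The main obstacle} is precisely this last point: justifying the interchange of limit and the $\d u$-integral of the second-moment term when $\partial_x^2 f_\mu$ is only known to be locally bounded (no global upper bound for general $h$ — see the remark after \cref{prop:GeneralPDE_Constr}), so one must either route everything through the martingale-increment characterization on $[0,\theta]$ and then let $\theta\uparrow 1$ using monotone convergence and continuity of $u\mapsto\E[(\partial_x f_\mu(u,X_u))^2]$, or invoke Fatou on one side and the reverse bound from weak convergence on the other.
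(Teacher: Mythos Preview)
Your approach is correct and matches the paper's: the proof is a direct application of It\^o's formula, and you have spelled out exactly the computation the paper has in mind (differentiate the Parisi PDE in $x$, observe that the drift of $\partial_x f_\mu(t,X_t)$ vanishes, then square). Note however that the lemma is stated \emph{under the settings of \cref{thm:existence_parisi_gtr}}, i.e., with $h$ satisfying \cref{ass:h_regularity_gen} and $\sup_z h''(z)<1/c$, so all spatial derivatives through order $4$ and $\partial_x^2 f_\mu$ are globally bounded on $[0,1]\times\R$; your entire ``Removing the regularity assumptions'' paragraph is therefore unnecessary here --- that extension is carried out later in the paper (in the proof of \cref{prop:first_order_var_Parisi}~(ii) and in \cref{prop:1st_2nd_moment_gen_equal}), not inside this lemma.
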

\begin{proof}
	This follows from a straightforward application of It\^{o}'s formula.
\end{proof}

\vspace{0.5em} \noindent \textbf{Proof of $(i)$.} The claim $X_q = B_q \sim \normal(0, q)$ follows directly from definition. To prove \cref{eq:var_rep_F_opt}, we note that the proof of \cref{prop:two_stage_veri_arg} in \cref{sec:verification_argument} implies that $\phi_t$ defined in \eqref{eq:two_stage_opt_control} achieves the value $V_{\gamma}$, whence
\begin{align*}
	& \E \left[ V_{\gamma} \left( q, X_q + \frac{1}{\alpha} F(X_q) \right) \right] \\
	= \, &
	\E \left[ h \left( X_q + \frac{1}{\alpha} F(X_q) + \int_{q}^{1} \left( 1 + \phi_t \right) \d B_t \right) - \frac{1}{2} \int_{q}^{1} \gamma(t) \left( \phi_t^2 - \frac{1}{\alpha} \right) \d t \right].
\end{align*}
Hence, the right hand side of \cref{eq:var_rep_F_opt} equals
\begin{align*}
	\E \left[ V_{\gamma} \left( q, X_q + \frac{1}{\alpha} F(X_q) \right) - \frac{\gamma(q)}{2 \alpha} \left( \frac{1}{\alpha} F(X_q)^2 - q \right) \right].
\end{align*}
By our choice of $F$ and \cref{prop:two_stage_veri_arg}, it follows that
\begin{equation*}
	f_{\mu} (\modif{q}, x) + \frac{1}{2 \alpha} \int_{q}^{1} \gamma(s) \d s = V_{\gamma} \left( q, x + \frac{1}{\alpha} F(x) \right) - \frac{\gamma(q)}{2 \alpha^2} F(x)^2, \ \forall x \in \R,
\end{equation*}
thus leading to
\begin{align*}
	& \E \left[ V_{\gamma} \left( q, X_q + \frac{1}{\alpha} F(X_q) \right) - \frac{\gamma(q)}{2 \alpha} \left( \frac{1}{\alpha} F(X_q)^2 - q \right) \right] \\
	= \, & \E \left[ f_{\mu} (\modif{q}, X_q) \right] + \frac{1}{2 \alpha} \left( q \gamma(q) + \int_{q}^{1} \gamma(s) \d s \right) \\
	\stackrel{(i)}{=} \, & f_{\mu} (0, 0) + \frac{1}{2 \alpha} \int_{0}^{1} \gamma(s) \d s = \mathsf{F} (\mu, c),
\end{align*}
where $(i)$ follows from the fact that $\mu \equiv 0$ on $[0, q]$, so the Parisi PDE degenerates to a heat equation.
This completes the proof of part $(i)$.

\vspace{0.5em} \noindent \textbf{Proof of $(ii)$.} This is a direct consequence of \cref{lem:1st_2nd_moment_gen} and \cref{thm:existence_parisi_gtr}.

\vspace{0.5em} \noindent \textbf{Proof of $(iii)$.} The proof is similar to Proposition 6.8 in \cite{el2021optimization}. Recall that
\begin{equation}
	\mathsf{F} (\mu, c) = f_{\mu} (0, 0) + \frac{1}{2 \alpha} \int_{0}^{1} \frac{\d t}{c + \int_{t}^{1} \mu(s) \d s} =:  f_{\mu} (0, 0) +
	\mathsf{S}(\mu, c).\label{eq:EntropyDerivative_0}
\end{equation}
Then, it is easy to see that the first-order variation of the entropy term $\mathsf{S}(\mu, c)$ equals
\begin{equation}
	\frac{\d}{\d s}  \mathsf{S}(\mu+s\delta, c)  \Bigg\vert_{s=0} =- \frac{1}{2 \alpha} \int_{0}^{1} \delta(t) \int_{0}^{t} \gamma(s)^2 \d s \d t,\label{eq:EntropyDerivative}
\end{equation}
so we only need to show that
\begin{equation}\label{eq:var_first_gen}
	\begin{split}
		\frac{\d}{\d s} f_{\mu + s \delta} (0, 0) \Bigg\vert_{s=0}
		= \, \frac{1}{2} \int_{0}^{1} \delta(t) \E \left[ \left( \partial_x f_\mu (t, X_t) \right)^2 \right] \d t.
	\end{split}
\end{equation}
To this end, we rewrite $f_{\mu + s \delta}$ as $f_s$. Similar to the proof of Lemma 14 in \cite{jagannath2016dynamic}, we obtain that
\begin{equation}\label{eq:int_first_gen}
	f_s (0, x) - f_0 (0, x) = \frac{s}{2} \int_{0}^{1} \delta(t) \E_{X_0^s = x} \left[ \left( \partial_x f_{0} (t, X_t^s) \right)^2 \right] \d t,
\end{equation}
where $\{ X_t^s \}_{t \in [0, 1]}$ is the unique solution to the SDE:
\begin{equation*}
	\d X_t^s = \mu(t) \frac{\partial_x f_s + \partial_x f_0}{2} (t, X_t^s) \d t + \d B_t, \ X_0^s = x.
\end{equation*}
Now since $\partial_x f_s \to \partial_x f$ as $s \to 0$ (via a similar argument as in the proof of \cite[Lemma 14]{jagannath2016dynamic}), further they are continuous and uniformly bounded and $\mu \in L^1 [0, 1]$, we deduce from Proposition~\ref{prop:approximation_process_gen} that $\operatorname{Law} (X^s) \stackrel{w}{\Rightarrow} \operatorname{Law} (X)$ as $s \to 0$. By bounded convergence theorem,
\begin{equation*}
	\int_{0}^{1} \delta(t) \E_{X_0^s = x} \left[ \left( \partial_x f_{0} (t, X_t^s) \right)^2 \right] \d t = \int_{0}^{1} \delta(t) \E_{X_0 = x} \left[ \left( \partial_x f_{0} (t, X_t) \right)^2 \right] \d t + o_s (1), \ s \to 0,
\end{equation*}
which further implies that
\begin{equation*}
	\frac{\d}{\d s} f_s (0, x)  \Bigg\vert_{s=0}= \frac{1}{2} \int_{0}^{1} \delta(t) \E_{X_0 = x} \left[ \left( \partial_x f_{0} (t, X_t) \right)^2 \right] \d t \, .
\end{equation*}
The desired result \eqref{eq:var_first_gen} follows by taking $x = 0$.

\vspace{0.5em}

We next extend our proof to general $h \in C^2 (\R)$ (not necessarily satisfying \cref{ass:h_regularity_gen}), thus completing the proof of \cref{prop:first_order_var_Parisi}.

\vspace{0.5em} \noindent \textbf{Proof of \cref{prop:first_order_var_Parisi} for $h \in C^2$.} The proof of part $(i)$ follows from a similar approximation argument as in the proof of \cref{prop:two_stage_veri_arg} for $h \in C^2$. For any $\veps > 0$, denote by $\sF^{\veps} (\mu, c)$ the Parisi functional associated with $f_{\mu}^{\veps}$ and $h^{\veps}$. We can then define $F^{\veps}$ and $\phi^{\veps}$ accordingly. Further, we know that \cref{eq:var_rep_F_opt} holds for $\sF^{\veps} (\mu, c)$, $h^{\veps}$, $F^{\veps}$ and $\phi^{\veps}$. From the proof of \cref{prop:two_stage_veri_arg} for $h \in C^2$, we know that $f_{\mu}^{\veps} \to f_{\mu}$ uniformly, $h^{\veps} \to h$ uniformly, $\phi^{\veps} \to \phi$ in law, and $\partial_x f_{\mu}^{\veps} \to \partial_x f_{\mu}$ uniformly, which further implies that $\sF^{\veps} (\mu, c) \to \sF (\mu, c)$ and $F^{\veps} \to F$ uniformly. Similar to the proof of \cref{prop:two_stage_veri_arg}, sending $\veps \to 0$ and applying dominated convergence theorem yields \cref{eq:var_rep_F_opt}, completing the proof of part $(i)$.

To prove part $(ii)$, we need to extend Lemma~\ref{lem:1st_2nd_moment_gen}, which again follows from Proposition~\ref{prop:approximation_process_gen},  Theorem~\ref{thm:C2_approx_gen}, and the same approximation argument. Indeed, as $X^{\veps} \to X$ in law, $\partial_x f_{\mu}^{\veps}, \partial_x^2 f_{\mu}^{\veps} \to \partial_x f_{\mu}, \partial_x^2 f_{\mu}$ uniformly over compact sets, and the limiting functions are bounded, we deduce from bounded convergence theorem that $\forall t \in [0, 1]$:
\begin{align*}
	& \lim_{\veps \to 0} \E \left[ \left( \partial_x f_{\mu}^{\veps} (t, X_t^{\veps}) \right)^2 \right] = \E \left[ \left( \partial_x f_{\mu} (t, X_t) \right)^2 \right], \\ 
	& \lim_{\veps \to 0} \E \left[ \left( \partial_x^2 f_{\mu}^{\veps} (t, X_t^{\veps}) \right)^2 \right] = \E \left[ \left( \partial_x^2 f_{\mu} (t, X_t) \right)^2 \right].
\end{align*}
Further, the above convergence is uniform in $t$. Part $(ii)$ then follows from the conclusion of Lemma~\ref{lem:1st_2nd_moment_gen} for $f_{\mu}^{\veps}$ and taking the limit $\veps \to 0$.

It now remains to show part $(iii)$. 
To this end, note that Eq.~\eqref{eq:int_first_gen} still holds since its proof does not involve third or higher-order partial derivatives with respect to $x$. It then suffices to show that $\partial_x f_s \to \partial_x f$ as $s \to 0$. Re-examining the proof of Theorem~\ref{thm:C2_approx_gen} $(b)$, we know that $\partial_x f_s^{\veps} \to \partial_x f_s$ uniformly for $s$ in a neighborhood of $0$ as $\veps \to 0$, since the error bound depends continuously on $\mu$. Further, for any fixed $\veps > 0$, we have $\lim_{s \to 0} \partial_x f_{s}^{\veps} = \partial_x f^{\veps}$, which follows in a similarly way as the proof of \cref{thm:existence_parisi_gtr}. We thus conclude that $\lim_{s \to 0} \partial_x f_s = \partial_x f$, and the same approximation argument as in the proof of part $(iii)$ for the $C^4$ case follows. This concludes the proof of \cref{prop:first_order_var_Parisi} $(iii)$ for $h \in C^2$.

\subsection{Proof of strong duality for $1 / c_* \le \sup_{z \in \R} h''(z)$}\label{sec:less_than_case}

In this section, we prove part $(c)$ of \cref{thm:two_stage_strong_duality} under the setting $\gamma_* (1) = 1 / c_* \le \sup_{z \in \R} h''(z)$. To this end, we need to compute the first-order variation of $\mathsf{F}$ with respect to both $\mu$ and $c$.

Fix any $(\mu, c) \in \FS (q)$. Let $\{ X_t \}_{t \in [0, 1]}$ solve the Parisi SDE~\eqref{eq:two_stage_SDE}, and define $\{ \phi_t \}_{t \in [0, 1]}$ as per \cref{eq:def_phi_martingale}, namely, via martingale representation theorem. We also define
\begin{equation}\label{eq:def_F_less_than}
	F(x) = \frac{\alpha}{\gamma(q)} \partial_x f_{\mu} (q, x).
\end{equation}
Recall from \cref{sec:verification_argument} that $V_{\gamma}^{c} (\cdot, \cdot)$ denotes the value function when $h$ is replaced by $h_c$ (defined in \eqref{eq:defn_h_c}). In the rest of this section, we also define $\mathsf{F}_c (\mu, c)$ as the Parisi functional associated with $h_c$. Then, \cref{eq:equiv_f_mu_h_c} implies that $\mathsf{F}_c (\mu, c) = \mathsf{F} (\mu, c)$.
Similar to the proof of \cref{prop:first_order_var_Parisi} $(i)$, we can use Propositions \ref{prop:two_stage_veri_arg} and 
\ref{prop:less_achievability} to show that
\begin{equation}\label{eq:Fc_DualExpression}
	\begin{split}
		\mathsf{F}_c (\mu, c) 
		= \, \E \bigg[ \, & h_c \left( X_q + \frac{1}{\alpha} F(X_q) + \int_{q}^{1} \left( 1 + \phi_t \right) \d B_t \right) - \frac{1}{2} \int_{q}^{1} \gamma(t) \left( \phi_t^2 - \frac{1}{\alpha} \right) \d t \\
		& - \frac{\gamma(q)}{2 \alpha} \left( \frac{1}{\alpha} F(X_q)^2 - q \right) \bigg].
	\end{split}
\end{equation}

Further, we establish the following result, which is analogous to \cref{prop:first_order_var_Parisi} $(ii)$:
\begin{prop}\label{prop:1st_2nd_moment_gen_equal}
	For any $0 \le s < t \le 1$, we have
	\begin{equation*}
		\E \left[ \left( \partial_x f_{\mu} (t, X_t) \right)^2 \right] - \E \left[ \left( \partial_x f_{\mu} (s, X_s) \right)^2 \right] = \int_{s}^{t} \gamma(u)^2 \E \left[ \phi_u^2 \right] \d u.
	\end{equation*}
\end{prop}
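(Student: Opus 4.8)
The plan is to recognize $s \mapsto \partial_x f_\mu(s, X_s)$ as a martingale and to read off its quadratic variation in terms of $\phi$. The starting point is the square-integrable martingale $M_s = \gamma(s)^{-1}\partial_x f_\mu(s,X_s) + X_s$ introduced in \eqref{eq:MDef}, which by \cref{lem:phi_via_martingale} together with the martingale representation \eqref{eq:def_phi_martingale} satisfies $\d M_s = (1+\phi_s)\,\d B_s$; equivalently $\partial_x f_\mu(s,X_s) = \gamma(s)(M_s - X_s)$. First I would apply integration by parts to $\gamma(s)(M_s - X_s)$ — legitimate since $\gamma$ is absolutely continuous (hence of finite variation and carrying no quadratic variation) while $M_s - X_s$ is a continuous semimartingale — and substitute $\d M_s = (1+\phi_s)\,\d B_s$ and, from the Parisi SDE \eqref{eq:two_stage_SDE}, $\d X_s = \mu(s)\partial_x f_\mu(s,X_s)\,\d s + \d B_s = \mu(s)\gamma(s)(M_s - X_s)\,\d s + \d B_s$. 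The drift contributions then combine into $[\gamma'(s) - \gamma(s)^2\mu(s)](M_s - X_s)\,\d s$, which vanishes identically because $\gamma$ is defined from $(\mu,c)$ through $\mu = \gamma'/\gamma^2$. This leaves $\d\big[\partial_x f_\mu(s, X_s)\big] = \gamma(s)\phi_s\,\d B_s$.

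Given this stochastic differential, I would invoke the uniform bound $\norm{\partial_x f_\mu(s,\cdot)}_{L^\infty(\R)} \le \norm{h'}_{L^\infty(\R)}$ from \cref{thm:solve_Parisi_PDE} (or \cref{prop:GeneralPDE_Constr} in the present regime $1/c \le \sup_z h''(z)$) to conclude that $N_s := \partial_x f_\mu(s, X_s)$ is a bounded, hence genuine, martingale. Since $\gamma$ is bounded on $[0,1]$ (it equals $1/(c+\int_{\cdot}^1\mu)$, a continuous strictly positive function on the compact $[0,1]$, with $\gamma(1)=1/c$) and $\E\big[\int_0^1 \phi_u^2\,\d u\big] < \infty$ (because $M$ is a square-integrable martingale with $\d M = (1+\phi)\,\d B$), the process $N$ has integrable quadratic variation $\langle N\rangle_t = \int_0^t \gamma(u)^2\phi_u^2\,\d u$. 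Orthogonality of martingale increments then yields, for $0 \le s < t \le 1$, $\E[N_t^2] - \E[N_s^2] = \E\big[(N_t - N_s)^2\big] = \E\big[\langle N\rangle_t - \langle N\rangle_s\big] = \int_s^t \gamma(u)^2\,\E[\phi_u^2]\,\d u$, the last step by Fubini since all integrands are nonnegative. This is exactly the asserted identity.

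The computation itself is short; the one genuine subtlety — and the reason one cannot simply reproduce the Itô-formula argument behind \cref{lem:1st_2nd_moment_gen} — is that in this regime we have no existence or a priori bounds for $\partial_x^3 f_\mu$, so the Itô expansion of $\partial_x f_\mu(s, X_s)$ is not available. Going through the martingale $M$ sidesteps this completely, because $M$ was constructed precisely via the martingale representation theorem to avoid third derivatives. The only other point requiring a word of care is the integration-by-parts step, where $\gamma'$ is merely in $L^1[0,1]$ (it equals $\gamma^2\mu$ with $\mu \in L^1$ and $\gamma$ bounded); since $\gamma$ is deterministic and absolutely continuous this is a routine Stieltjes integration by parts with no stochastic correction term.
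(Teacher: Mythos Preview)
Your proof is correct and takes a genuinely different route from the paper's.

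The paper proceeds by approximation: taking $c_n \uparrow c$ with $c_n < c$, it invokes \cref{lem:1st_2nd_moment_gen} (which is proved via It\^o's formula in the regime where $\partial_x^3 f_\mu$ is available) for each $(\mu,c_n)$, and then passes to the limit. Concretely, it sets $A(t)=\int_0^t\E[\phi_s^2]\,\d s$, writes $\int_0^t\gamma(s)^2\E[\phi_s^2]\,\d s = \gamma(t)^2 A(t) - 2\int_0^t\gamma(s)\gamma'(s)A(s)\,\d s$ (and similarly for the $n$-th approximant), and argues that $A_n(t)\to A(t)$ and $\gamma_n\to\gamma$, $\gamma_n'\to\gamma'$ pointwise with dominated convergence; the left side of the identity converges because $\partial_x f_\mu^n\to\partial_x f_\mu$ uniformly on compacts and $X^n\Rightarrow X$ by \cref{prop:approximation_process_gen}.

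Your argument avoids this second layer of approximation entirely by exploiting the algebra already encoded in the pair $(M,X)$: since $\partial_x f_\mu(s,X_s)=\gamma(s)(M_s-X_s)$, the Stieltjes product rule together with $\d M=(1+\phi)\,\d B$ and the Parisi SDE for $X$ yields a drift $[\gamma'-\gamma^2\mu](M-X)\,\d s$ that vanishes identically by the defining relation $\mu=\gamma'/\gamma^2$, leaving $\d[\partial_x f_\mu(s,X_s)]=\gamma(s)\phi_s\,\d B_s$. This is cleaner and more conceptual; it shows that the identity is really a structural consequence of how $M$, $X$, and $\gamma$ are intertwined, rather than something that has to be recovered through limits. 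The paper's approach has the minor advantage of being the same limiting machinery used throughout Sections~6.2 and~7.2, so no new ingredient is introduced; your approach buys a self-contained two-line proof once \cref{lem:phi_via_martingale} is in hand.
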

\begin{proof}
	Without loss of generality we assume $s=0$. Recall the definition of $A_n (t)$ and $A(t)$ in the proof of \cref{prop:less_achievability}. Using integration by parts, we obtain that $\forall t \in [0, 1]$:
	\begin{align*}
		\int_{0}^{t} \gamma(s)^2 \E \left[ \phi_s^2 \right] \d s = \, & \int_{0}^{t} \gamma(s)^2 \d A(s)  = \gamma(t)^2 A(t) - 2 \int_{0}^{t} \gamma(s) \gamma'(s) A(s) \d s, \\
		\int_{0}^{t} \gamma_n (s)^2 \E \left[ (\phi_s^n)^2 \right] \d s = \, & \int_{0}^{t} \gamma_n (s)^2 \d A_n(s)  = \gamma_n(t)^2 A_n(t) - 2 \int_{0}^{t} \gamma_n (s) \gamma_n'(s) A_n(s) \d s.
	\end{align*}
	Since $\gamma_n \to \gamma$, $\gamma_n' \to \gamma'$, $A_n \to A$, by dominated convergence theorem we know that
	\begin{equation*}
		\lim_{n \to \infty} \int_{0}^{t} \gamma_n (s)^2 \E \left[ (\phi_s^n)^2 \right] \d s = \int_{0}^{t} \gamma(s)^2 \E \left[ \phi_s^2 \right] \d s.
	\end{equation*}
	According to Lemma~\ref{lem:1st_2nd_moment_gen}, we have
	\begin{equation*}
		\E \left[ \left( \partial_x f_{\mu}^n (t, X_t^n) \right)^2 \right] - \partial_x f_{\mu}^n (0, X_0^n)^2 = \int_{0}^{t} \gamma_n (s)^2 \E \left[ (\phi_s^n)^2 \right] \d s.
	\end{equation*}
	Further, since $\partial_x f_{\mu}^n$ uniformly converges to $\partial_x f_{\mu}$ on any compact set, we know that $\partial_x f_{\mu}^n (t, X_t^n) \to \partial_x f_{\mu} (t, X_t)$ in distribution. By bounded convergence theorem,
	\begin{equation*}
		\lim_{n \to \infty} \E \left[ \left( \partial_x f_{\mu}^n (t, X_t^n) \right)^2 \right] = \E \left[ \left( \partial_x f_{\mu} (t, X_t) \right)^2 \right].
	\end{equation*}
	This completes the proof.
\end{proof}
We are now ready to compute the first-order variation of $\sF$ with respect to $\mu$, extending \cref{prop:first_order_var_Parisi} $(iii)$ to the case $\gamma(1) \le \sup_{z \in \R} h''(z)$.
\begin{prop}\label{prop:first_var_less_than}
	Assume that $\delta: [0, 1] \to \R$ is in $L^1 [0, 1]$, $ \delta \vert_{[0, t]} \in L^{\infty} [0, t]$ for any $t \in [0, 1)$, and that $\delta \equiv 0$ on $[0, q]$. Then, $(\mu + s \delta, c) \in \mathscr{L} (q)$ for sufficiently small $s \in \R$, and
	\begin{equation*}
		\frac{\d}{\d s} \mathsf{F} \left( \mu + s \delta, c \right) \bigg\vert_{s=0} = \,  \frac{1}{2} \int_{q}^{1} \delta(t) \left( \E \left[ \left( \partial_x f_\mu (t, X_t) \right)^2 \right] - \frac{1}{\alpha} \int_{0}^{t} \gamma(s)^2 \d s \right) \d t.
	\end{equation*}
\end{prop}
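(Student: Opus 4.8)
Following the pattern of \cref{prop:less_achievability} and \cref{prop:1st_2nd_moment_gen_equal}, the first move is to reduce to the function $h_c$ of \cref{eq:defn_h_c}, which is $C^2$, Lipschitz, bounded above, and satisfies $\gamma(1)=1/c\ge\sup_z h_c''(z)$; by \cref{eq:equiv_f_mu_h_c} we have $\mathsf{F}(\mu,c)=\mathsf{F}_c(\mu,c)$. Crucially, $h_c$ depends only on $c$, so replacing $\mu$ by $\mu+s\delta$ with $c$ held fixed does not change the terminal datum of the Parisi PDE, and it suffices to differentiate $\mathsf{F}_c(\mu+s\delta,c)$ in $s$ at $s=0$. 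Membership $(\mu+s\delta,c)\in\mathscr{L}(q)$ for $|s|$ small is immediate: $\mu\equiv\delta\equiv 0$ on $[0,q]$ forces $\mu+s\delta\equiv 0$ there, while $\delta|_{[0,t]}\in L^\infty[0,t]$ and $c+\int_t^1\mu(s)\,\d s>0$ guarantee $c+\int_t^1(\mu+s\delta)(s)\,\d s>0$ for $|s|$ small. As in \eqref{eq:EntropyDerivative_0}, I would split $\mathsf{F}_c(\mu,c)=f_\mu(0,0)+\mathsf{S}(\mu,c)$ with $\mathsf{S}(\mu,c)=\frac{1}{2\alpha}\int_0^1\frac{\d t}{c+\int_t^1\mu(s)\,\d s}$, whose first variation is the elementary computation \eqref{eq:EntropyDerivative}, namely $\frac{\d}{\d s}\mathsf{S}(\mu+s\delta,c)\big|_{s=0}=-\frac{1}{2\alpha}\int_0^1\delta(t)\int_0^t\gamma(s)^2\,\d s\,\d t$.

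It then remains to show $\frac{\d}{\d s}f_{\mu+s\delta}(0,0)\big|_{s=0}=\frac12\int_0^1\delta(t)\,\E[(\partial_x f_\mu(t,X_t))^2]\,\d t$, with $\{X_t\}$ the solution of the Parisi SDE \eqref{eq:two_stage_SDE}. Writing $f_s=f_{\mu+s\delta}$ (terminal datum $h_c$), the representation \eqref{eq:int_first_gen} remains valid here, since its derivation applies It\^o's formula / Feynman--Kac to $w=f_s-f_0$, whose PDE has a source term proportional to $s\,\delta\,(\partial_x f)^2$ and a first-order coefficient built only from $\partial_x f_s+\partial_x f_0$ --- no third or higher $x$-derivatives enter --- and the boundedness and Lipschitzness in $x$ of $\partial_x f_s,\partial_x f_0$ needed for existence/uniqueness of the driving SDE are supplied by \cref{prop:GeneralPDE_Constr}. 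Dividing \eqref{eq:int_first_gen} by $s$ and letting $s\to0$, the two remaining ingredients are $(a)$ $\partial_x f_s\to\partial_x f_0$ uniformly on compact sets as $s\to0$, and $(b)$ $\operatorname{Law}(X^s)\stackrel{w}{\Rightarrow}\operatorname{Law}(X)$ as $s\to0$; given these, \cref{prop:approximation_process_gen} and bounded convergence (using $\norm{\partial_x f_\mu}_{L^\infty}\le\norm{h'}_{L^\infty}$) yield the claimed limit. Adding the entropy variation and using $\delta\equiv0$ on $[0,q]$ to restrict all integrals to $[q,1]$ produces exactly the stated formula.

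The main obstacle is establishing $(a)$ in the regime $\gamma(1)\le\sup_z h_c''(z)$, where one lacks an a priori upper bound on $\partial_x^2 f$ near $t=1$ (only the lower bound $\partial_x^2 f_\mu(t,\cdot)>-\gamma(t)$ of \cref{prop:GeneralPDE_Constr} is available). I would handle this exactly as in the $C^2$ part of the proof of \cref{prop:first_order_var_Parisi}$(iii)$, layered with the $c_n\uparrow c$ approximation of \cref{prop:GeneralPDE_Constr}: for each fixed $s$ near $0$, $f_s$ is the uniform limit of solutions $f_s^{c_n}$ with $1/c_n>\gamma(1)\ge\sup_z h_c''(z)$, to which the estimates of \cref{thm:solve_Parisi_PDE} and their $\veps$-mollified versions apply; one checks that $\partial_x f_s^{\veps,c_n}\to\partial_x f_s$ uniformly on compacts \emph{uniformly for $s$ in a neighbourhood of $0$} (the relevant error bounds depend continuously, hence locally uniformly, on $\mu$ and therefore on $s$), while for each fixed $\veps$ and $n$ one has $\lim_{s\to0}\partial_x f_s^{\veps,c_n}=\partial_x f_0^{\veps,c_n}$; a diagonal argument then gives $\lim_{s\to0}\partial_x f_s=\partial_x f_0$. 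Claim $(b)$ follows from $(a)$ together with \cref{prop:approximation_process_gen}, since $\partial_x f_s,\partial_x f_0$ are bounded and continuous and $\mu\in L^1[0,1]$, just as in the $C^2$ case of \cref{prop:first_order_var_Parisi}$(iii)$.
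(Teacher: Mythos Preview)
Your proposal is correct and follows essentially the same high-level route as the paper: reduce to $h_c$ (so the terminal datum is fixed as $s$ varies), split $\mathsf{F}$ into the entropy term and $f_\mu(0,0)$, invoke the integral identity \eqref{eq:int_first_gen} (which only uses first and second $x$-derivatives), and then pass to the limit $s\to 0$ by combining convergence of $\partial_x f_s$ with \cref{prop:approximation_process_gen}.

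The only notable difference is in how you establish $\partial_x f_s\to\partial_x f_0$. You propose a layered approximation (take $c_n\uparrow c$ to land in the regime where $1/c_n>\sup_z h_c''(z)$, then $\veps$-mollify, check uniformity in $s$, and run a diagonal argument). The paper instead uses a shorter and more robust route: from the Feynman--Kac/maximum-principle estimate one gets $f_s\to f_0$ uniformly, and from \cref{prop:GeneralPDE_Constr} one knows each $\partial_x f_s(t,\cdot)$ is bounded and has bounded variation on finite intervals (since $\partial_x^2 f_s(t,\cdot)>-\gamma(t)$ while $\partial_x f_s$ is bounded); these two facts already force $\partial_x f_s\to\partial_x f_0$ a.e.\ by the same argument as in the proof of \cref{prop:GeneralPDE_Constr} (cf.\ \cite[Lemma 6.2]{el2021optimization}). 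This avoids the double approximation and the need to track uniformity in $s$ of the $(\veps,c_n)$ convergence. Your approach works too, but the paper's BV argument is what buys the simplification here and is worth internalizing, since it is exactly the mechanism by which one circumvents the missing upper bound on $\partial_x^2 f_\mu$ in the regime $\gamma(1)\le\sup_z h''(z)$.
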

\begin{proof}
	As in the proof of \cref{prop:first_order_var_Parisi} $(iii)$  
	(cf. Eq.~\eqref{eq:int_first_gen}),
	we still have
	\begin{equation*}
		f_{\mu + s \delta} (0, x) - f_{\mu} (0, x) = \frac{s}{2} \int_{0}^{1} \delta(t) \E_{X_0^s = x} \left[ \left( \partial_x f_{\mu} (t, X_t^s) \right)^2 \right] \d t,
	\end{equation*}
	where $\{ X_t^s \}_{t \in [0, 1]}$ uniquely solves the SDE:
	\begin{equation*}
		\d X_t^s = \mu(t) \frac{\partial_x f_{\mu + s \delta} + \partial_x f_{\mu}}{2} (t, X_t^s) \d t + \d B_t, \ X_0^s = x,
	\end{equation*}
	since the proof of this identity does not involve third of higher-order partial derivatives of $f_{\mu}$ with respect to $x$.
	Then, we know that $\partial_x f_{\mu + s \delta} \to \partial_x f_{\mu}$ as $s \to 0$ almost everywhere, which follows  from a similar argument as that in the proof of \cref{prop:GeneralPDE_Constr}, and the following facts: $(a)$ $f_{\mu + s \delta} \to f_{\mu}$ uniformly as $s \to 0$, which can be established using Feynman-Kac formula, see also in the proof of \cref{thm:existence_parisi_gtr} and \cite[Lemma 14]{jagannath2016dynamic}, 
	$(b)$ $\partial_x f_{\mu + s \delta}$ and $\partial_x f_{\mu}$ are continuous, uniformly bounded, and of bounded variation on any finite interval, which follows from \cref{prop:GeneralPDE_Constr}.
	
	As a consequence, we deduce similarly that
	\begin{equation*}
		\frac{\d}{\d s} f_{\mu + s \delta} (0, x) \bigg\vert_{s = 0} = \frac{1}{2} \int_{0}^{1} \delta(t) \E_{X_0 = x} \left[ \left( \partial_x f_{\mu} (t, X_t) \right)^2 \right] \d t,
	\end{equation*}
	which concludes the calculation, as the first-order variation of the entropy term ${\mathsf{S}}(\mu, c)$ is still the same,
	cf. Eqs.~\eqref{eq:EntropyDerivative_0} and \eqref{eq:EntropyDerivative}.
\end{proof}

The lemma below computes the first derivative of $\sF(\mu, c)$ with respect to $c$ for fixed $\mu$:
\begin{lem}[First derivative with repsect to $c$]\label{lem:diff_P_c_equal}
	We have (note that $\frac{\d}{\d c} \mathsf{F}_c (\mu, c)$ represents the derivative of $\mathsf{F}_c (\mu, c)$ taken with respect to the ``$c$'' in both its subscript and second argument):
	\begin{equation*}
		\frac{\d}{\d c} \mathsf{F} (\mu, c) = \frac{\d}{\d c} \mathsf{F}_c (\mu, c) = \, \E \left[ g_c ( M_1 ) \right] + \frac{1}{2} \E \left[ \partial_x f_{\mu} (1, X_1)^2 \right] - \frac{1}{2 \alpha} \int_{0}^{1} \gamma(t)^2 \d t ,
	\end{equation*}
	where we recall that $M_t$ is the martingale defined by Eq.~\eqref{eq:MDef}, and define $g_c (x) := (\partial / \partial c) h_c (x)$ (existence is guaranteed by monotonicity and convexity with respect to $1/c$).
\end{lem}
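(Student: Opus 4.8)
\textbf{Proof plan for \cref{lem:diff_P_c_equal}.}
The plan is to differentiate the two terms of the Parisi functional $\mathsf{F}_c(\mu, c) = f_\mu(0,0) + \mathsf{S}(\mu, c)$ separately with respect to $c$, keeping in mind that $c$ enters $\mathsf{F}_c$ in three places: the subscript (through $h_c$, which depends on $c$ via \eqref{eq:defn_h_c}), the terminal condition of the Parisi PDE (through $1/c = \gamma(1)$), and the entropy term $\mathsf{S}(\mu,c)$. First I would handle the entropy term, which is elementary: since $\mathsf{S}(\mu,c) = \frac{1}{2\alpha}\int_0^1 \d t /(c + \int_t^1 \mu(s)\,\d s)$ and $\gamma(t) = 1/(c+\int_t^1\mu(s)\,\d s)$, we get $(\d/\d c)\mathsf{S}(\mu,c) = -\frac{1}{2\alpha}\int_0^1 \gamma(t)^2\,\d t$, which accounts for the last term in the claimed formula.

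Next I would differentiate $f_\mu(0,0)$ with respect to $c$. The cleanest route is to use the dual representation of $\mathsf{F}_c(\mu,c)$ established in \eqref{eq:Fc_DualExpression}: for the optimal control $(\phi_t)$ and $F$ from \eqref{eq:def_F_less_than},
\begin{equation*}
    \mathsf{F}_c(\mu,c) = \E\left[h_c\left(M_1\right) - \frac{1}{2}\int_q^1 \gamma(t)\left(\phi_t^2 - \frac{1}{\alpha}\right)\d t - \frac{\gamma(q)}{2\alpha}\left(\frac{1}{\alpha}F(X_q)^2 - q\right)\right],
\end{equation*}
where $M_1 = X_q + \frac1\alpha F(X_q) + \int_q^1(1+\phi_t)\d B_t$. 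Since $(\phi_t, F)$ is the \emph{optimal} pair for $\mathsf{F}_c(\mu,c)$, an envelope-theorem argument shows that the derivative with respect to $c$ through the control process $(\phi_t, F)$ vanishes (the first-order stationarity conditions), so only the \emph{explicit} $c$-dependence contributes. The explicit dependence on $c$ appears in: (i) $h_c$ itself, contributing $\E[g_c(M_1)]$ where $g_c = (\partial/\partial c)h_c$; (ii) the $\gamma(t)$ weights in the quadratic penalty and the final term. To organize (ii) cleanly, I would instead differentiate the equivalent form $\mathsf{F}_c(\mu,c) = \E[f_\mu(q,X_q)] + \frac{1}{2\alpha}(q\gamma(q) + \int_q^1\gamma(s)\,\d s)$ and note that the $c$-derivative of $f_\mu(q,\cdot)$ comes from the $c$-dependence of $h_c$ and of the terminal condition $1/c$. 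For the terminal-condition piece, differentiating $f_\mu(1,x) = \sup_u\{h_c(x+u) - u^2/(2c)\}$ in $c$ (again by an envelope argument at the optimal $u$) gives $u_*^2/(2c^2)$; propagating this through the heat-equation structure on $[0,q]$ and combining with the $\gamma$-derivatives of the entropy-type terms, together with \cref{prop:1st_2nd_moment_gen_equal} (which gives $\E[\partial_x f_\mu(1,X_1)^2] - \E[\partial_x f_\mu(q,X_q)^2] = \int_q^1\gamma(u)^2\E[\phi_u^2]\,\d u$ and the identity $\E[\partial_x f_\mu(q,X_q)^2] = \frac{1}{\alpha}\int_0^q\gamma(s)^2\,\d s$ from the FOC), should assemble into $\frac12\E[\partial_x f_\mu(1,X_1)^2]$.

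I expect the main obstacle to be making the envelope/differentiation-under-expectation step rigorous: one must justify that the optimal control $(\phi_t, F)$ and the law of $M_1$ vary sufficiently nicely in $c$ (continuity and uniform integrability), and that the first-order variations with respect to the control genuinely cancel. This parallels the approximation machinery already developed (\cref{prop:approximation_process_gen}, \cref{thm:C2_approx_gen}, \cref{prop:GeneralPDE_Constr}): I would first prove the identity for $h$ satisfying \cref{ass:h_regularity_gen} with $\sup_z h''(z) < 1/c$, where all derivatives of $f_\mu$ exist and the SDEs are classical, using a direct It\^o-calculus computation of $(\d/\d c)f_\mu(0,0)$ analogous to the proof of \cref{prop:first_order_var_Parisi}$(iii)$; then pass to general $h\in C^2$ and general $(\mu,c)\in\FS$ by the mollification argument with $c_n\uparrow c$, invoking bounded convergence and the established uniform convergence $f_\mu^{\veps_n}\to f_\mu$, $\partial_x f_\mu^{\veps_n}\to\partial_x f_\mu$, $\phi^n\to\phi$ in law. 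The bookkeeping of which $\gamma$-terms come from where is delicate but routine once the envelope argument is in place.
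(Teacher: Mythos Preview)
Your proposal takes an envelope-theorem route on the infinite-dimensional variational formula, whereas the paper's proof is much more direct and never touches that representation. The paper differentiates the two pieces of $\mathsf{F}_c(\mu,c)=f_\mu^c(0,0)+\mathsf{S}(\mu,c)$ separately. The entropy piece is handled as you say. For $f_\mu^c(0,0)$, it uses the same stochastic-calculus trick as in the proof of \cref{prop:first_order_var_Parisi}$(iii)$: for two values $c,c'$ one has
\[
f_\mu^{c'}(0,x)-f_\mu^c(0,x)=\E_{X_0'=x}\bigl[(f_\mu^{c'}-f_\mu^c)(1,X_1')\bigr],
\]
with $X'$ solving an interpolating SDE. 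The terminal derivative $\partial_c f_\mu^c(1,x)$ is then computed by the (finite-dimensional) envelope theorem applied to $\sup_u\{h_c(x+u)-u^2/(2c)\}$; this is precisely \cref{lem:diff_boundary_c_equal}, which gives $g_c(x+c\,\partial_x f_\mu^c(1,x))+\tfrac12(\partial_x f_\mu^c(1,x))^2$. Since $M_1=X_1+c\,\partial_x f_\mu^c(1,X_1)$ by \eqref{eq:MDef}, the claim follows after letting $c'\to c$ and invoking \cref{prop:approximation_process_gen} to get $X_1'\Rightarrow X_1$. No dual representation, no infinite-dimensional envelope, no approximation in $\veps$ is needed.

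There is also a genuine error in your bookkeeping: the ``FOC identity'' $\E[\partial_x f_\mu(q,X_q)^2]=\tfrac{1}{\alpha}\int_0^q\gamma(s)^2\,\d s$ is the stationarity condition for the minimization over $\mu$ and holds \emph{only} at the minimizer $(\mu_*,c_*)$; the present lemma is stated for arbitrary $(\mu,c)\in\FS(q)$, so you cannot invoke it. Your envelope route is in fact salvageable without that identity: carrying out the $c$-derivative of the variational formula of \cref{thm:two_stage_strong_duality}$(a)$ carefully, the two occurrences of $\E[\partial_x f_\mu(q,X_q)^2]$ (one from $\int_q^1\gamma(t)^2\E[\phi_t^2]\,\d t$ via \cref{prop:1st_2nd_moment_gen_equal}, one from the $\gamma(q)$-weighted $F$-penalty since $F(x)=\tfrac{\alpha}{\gamma(q)}\partial_x f_\mu(q,x)$) cancel directly, and the leftover $q\gamma(q)^2$ equals $\int_0^q\gamma(s)^2\,\d s$ because $\gamma$ is constant on $[0,q]$. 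So the formula does assemble, but not for the reason you gave. Even so, making the infinite-dimensional envelope step rigorous is more work than the paper's direct argument, which you yourself identify as the fallback ``analogous to the proof of \cref{prop:first_order_var_Parisi}$(iii)$''---that fallback \emph{is} the paper's proof.
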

\begin{proof}
	We denote by $f_{\mu}^c$ the solution to Parisi PDE~\eqref{eq:parisi_mu_gen_c} to emphasize its dependence on $c$. Recall that
	\begin{equation*}
		\mathsf{F}_c (\mu, c) = f_{\mu}^c (0, 0) + \frac{1}{2 \alpha} \int_{0}^{1} \frac{\d t}{c + \int_{t}^{1} \mu(s) \d s}.
	\end{equation*}
	By dominated convergence theorem, we know that
	\begin{equation*}
		\frac{\d}{\d c} \left( \frac{1}{2 \alpha} \int_{0}^{1} \frac{\d t}{c + \int_{t}^{1} \mu(s) \d s} \right) = \frac{1}{2 \alpha} \int_{0}^{1} - \frac{\d t}{(c + \int_{t}^{1} \mu(s) \d s)^2} = - \frac{1}{2 \alpha} \int_{0}^{1} \gamma(t)^2 \d t.
	\end{equation*}
	It then suffices to compute $\d f_{\mu}^c (0, x) / \d c$ for each $x \in \R$, then we can just take $x = 0$. Using stochastic calculus, we know that for $c, c' > 0$,
	\begin{equation*}
		f_{\mu}^{c'} (0, x) - f_{\mu}^c (0, x) = \E_{X_0' = x} \left[ \left(f_{\mu}^{c'} - f_{\mu}^c \right) (1, X_1') \right],
	\end{equation*}
	where $\{ X_t' \}_{t \in [0, 1]}$ solves the SDE
	\begin{equation*}
		\d X_t' = \frac{1}{2} \mu(t) \left( \partial_x f_{\mu}^{c} + \partial_x f_{\mu}^{c'} \right) (t, X_t') \d t + \d B_t, \ X_0' = x.
	\end{equation*}
	As $c' \to c$, we know that $f_{\mu}^{c'}$ converges uniformly to $f_{\mu}^{c}$, which follows from a similar argument as in the proof of \cref{prop:GeneralPDE_Constr}. 
	Further since $\partial_x f_{\mu}^{c'}$ and $\partial_x f_{\mu}^{c}$ are continuous and have bounded total variation on any finite interval, we deduce (similarly as before) that $\partial_x f_{\mu}^{c'} \to \partial_x f_{\mu}^{c}$. Applying Proposition~\ref{prop:approximation_process_gen} implies that $\{ X_t' \}$ converges in law to $\{ X_t \}$, the solution to the Parisi SDE. In particular, $X_1'$ converges in law to $X_1$. According to Lemma~\ref{lem:diff_boundary_c_equal}, we know that
	\begin{equation*}
		\partial_c f_{\mu}^c (1, x) = g_c \left( x + c \partial_x f_{\mu}^c (1, x) \right) + \frac{\left( \partial_x f_{\mu}^c (1, x) \right)^2}{2},
	\end{equation*}
	which is bounded and continuous. Further, one can show that the above convergence is uniform on any compact set in $\R$. As a consequence, applying continuous mapping theorem and bounded convergence theorem yields that
	\begin{align*}
		\frac{\d f_{\mu}^c (0, x)}{\d c} = \, & \lim_{c' \to c} \frac{f_{\mu}^{c'} (0, x) - f_{\mu}^c (0, x)}{c' - c} \\
		= \, & \lim_{c' \to c} \E_{X_0' = x} \left[ \left(\frac{f_{\mu}^{c'} - f_{\mu}^c}{c' - c} \right) (1, X_1') \right] \\
		= \, & \E_{X_0 = x} \left[ g_c \left( X_1 + c \partial_x f_{\mu}^c (1, X_1) \right) \right] + \frac{1}{2} \E_{X_0 = x} \left[ \partial_x f_{\mu}^c (1, X_1)^2 \right].
	\end{align*}
	Choosing $x = 0$, and noting that
	\begin{equation*}
		M_1 = \frac{1}{\gamma(1)} \partial_x f_{\mu}^c (1, X_1) + X_1 = X_1 + c \partial_x f_{\mu}^c (1, X_1)
	\end{equation*}
	(since $c = 1 / \gamma(1)$) completes the proof.
\end{proof}

We are now in position to complete the proof of \cref{thm:two_stage_strong_duality} $(c)$. Assume that 
$$
\inf_{(\mu, c) \in \FS(q)} \mathsf{F} (\mu, c) = \inf_{(\mu, c) \in \FS(q)} \mathsf{F}_c (\mu, c)
$$ 
is achieved at some $(\mu_*, c_*)$. For notational simplicity, we recast $(\mu_*, c_*)$ as $(\mu, c)$, and denote the associated $(F^*, \phi^*)$ by $(F, \phi)$. Exploiting the expressions for the first-order variation of $\mathsf{F}_c$ (\cref{prop:first_var_less_than} and \cref{lem:diff_P_c_equal}), we get
\begin{align*}
	& \E \left[ \left( \partial_x f_{\mu} (t, X_t) \right)^2 \right] - \frac{1}{\alpha} \int_{0}^{t} \gamma(s)^2 \d s = \, 0, \ \forall t \in [q, 1], \\
	& \E \left[ g_c ( M_1 ) \right] + \frac{1}{2} \E \left[ \partial_x f_{\mu} (1, X_1)^2 \right] - \frac{1}{2 \alpha} \int_{0}^{1} \gamma(t)^2 \d t = \, 0,
\end{align*}
which further implies $\E [ g_c ( M_1 ) ] = 0$. Applying Proposition~\ref{prop:1st_2nd_moment_gen_equal}, we get,
using the fact that $\gamma(t)>0$ for all $t \in [0, 1]$, 
\begin{equation*}
	\E [\phi_t^2] = \frac{1}{\alpha}, \ \text{a.e.} \ t \in [q, 1].
\end{equation*}
Then, there exists a modification of $\{ \phi_t \}$ (still denoted as $\{ \phi_t \}$) such that $\E [\phi_t^2] = 1 / \alpha$ for all $t \in [q, 1]$. Therefore, $\{ \phi_t \}_{t \in [q, 1]}$ is feasible.
As a consequence, using Eq.~\eqref{eq:Fc_DualExpression}, we deduce that
\begin{align*}
	\mathsf{F} (\mu, c) = \mathsf{F}_c (\mu, c) = \, & \E \left[ h_c \left( X_q + \frac{1}{\alpha} F(X_q) + \int_{q}^{1} \left( 1 + \phi_t \right) \d B_t \right) - \frac{\gamma(q)}{2 \alpha} \left( \frac{1}{\alpha} F(X_q)^2 - q \right) \right] \\
	= \, & \E \left[ h_c \left( M_1 \right) - \frac{\gamma(q)}{2 \alpha} \left( \frac{1}{\alpha} F(X_q)^2 - q \right) \right].
\end{align*}
Next, we claim $\E [h_c (M_1)] = \E [h (M_1)]$. To see this, note that since $g_c$ is non-negative, $\E \left[ g_c ( M_1 ) \right] = 0$ implies $ g_c ( M_1 ) = 0$ almost surely. Hence, $h_c (M_1) = h (M_1)$ almost surely (use Proposition~\ref{prop:hc_and_gc}), and consequently $\E [h_c (M_1)] = \E [h (M_1)]$. We thus conclude that
\begin{align*}
	\mathsf{F} (\mu, c) 
	= \, & \E \left[ h \left( M_1 \right) - \frac{\gamma(q)}{2 \alpha} \left( \frac{1}{\alpha} F(X_q)^2 - q \right) \right] \\
	= \, & \E \left[ h \left( X_q + \frac{1}{\alpha} F(X_q) + \int_{q}^{1} \left( 1 + \phi_t \right) \d B_t \right) - \frac{\gamma(q)}{2 \alpha} \left( \frac{1}{\alpha} F(X_q)^2 - q \right) \right] \, .
\end{align*}
It now remains to show that $F$ is feasible. By definition, we have
\begin{equation*}
	\E \left[ F(v)^2 \right] = \frac{\alpha^2}{\gamma (q)^2} \E \left[ \left( \partial_x f_{\mu} (q, X_q) \right)^2 \right] = \frac{\alpha}{\gamma (q)^2} \int_{0}^{q} \gamma (s)^2 \d s = \alpha q,
\end{equation*}
since $\gamma$ is constant on $[0, q]$. This immediately implies that
\begin{equation*}
	\mathsf{F} (\mu, c) = \, \E \left[ h \left( M_1 \right)  \right] = \E \left[ h \left( X_q + \frac{1}{\alpha} F(X_q) + \int_{q}^{1} \left( 1 + \phi_t \right) \d B_t \right) \right].
\end{equation*}
Namely, $(\mu, c)$ achieves the optimal value.

However, here we cannot directly conclude $\E [F'(v)^2] \le \alpha$ from the feasibility of $\phi$, since $\phi$ is not defined in terms of $F'$. To circumvent this issue, we use will the same standard approximation argument as before, i.e., approximating $(\mu, c)$ by a sequence $\{(\mu, c_n)\}_{n \ge 1}$ with $c_n \to c^-$. Denoting the corresponding solution to the Parisi PDE by $f_{\mu}^n$, we define (note that $\phi^n$ is defined via $\partial_x^2 f_{\mu}^n$ since $c_n < c$)
\begin{equation*}
	F_n (x) = \frac{\alpha}{\gamma_n (q)} \partial_x f_{\mu}^n (q, x), \ \phi_t^n = \frac{1}{\gamma_n (t)} \partial_x^2 f_{\mu}^n (t, X_t^n), \ \forall t \in [0, 1],
\end{equation*}
where $X^n$ is the solution to the corresponding Parisi SDE. Then, we know that $\E [F_n' (v)^2] = \alpha^2 \E [(\phi_q^n)^2]$, and $F_n \to F$ uniformly on any compact set. Further, \cref{prop:1st_2nd_moment_gen_equal} and It\^{o}'s isometry together imply that $\E [(\phi_t^n)^2] \to \E [\phi_t^2]$ as $n \to \infty$. To show that $\E [F' (v)^2] \le \alpha$, it suffices to establish that for any test function $\psi \in C_{c}^{\infty} (\R)$:
\begin{equation}\label{eq:weak_feasible_claim}
	\left\vert \E [F'(v) \psi(v)] \right\vert \le \sqrt{\alpha} \E [\psi(v)^2]^{1/2}.
\end{equation}
Using integration by parts, we know that $\E [F_n'(v) \psi(v)]$ converges to $\E [F'(v) \psi(v)]$ as $n \to \infty$. Further, Cauchy-Schwarz inequality implies that
\begin{equation*}
	\left\vert \E [F_n'(v) \psi(v)] \right\vert \le \, \E [F_n'(v)^2]^{1/2} \E [\psi(v)^2]^{1/2} = \alpha \E [(\phi_q^n)^2]^{1/2} \E [\psi(v)^2]^{1/2}.
\end{equation*}
Taking the limit $n \to \infty$, we obtain that
\begin{equation*}
	\left\vert \E [F'(v) \psi(v)] \right\vert \le \, \alpha \E [(\phi_q)^2]^{1/2} \E [\psi(v)^2]^{1/2}.
\end{equation*}
The feasibility of $F'$ then follows from the feasibility of $\phi$ and \cref{eq:weak_feasible_claim}. This completes the proof of \cref{thm:two_stage_strong_duality} $(c)$.

	\section*{Acknowledgments}
The authors would like to thank Brice Huang, Mark Sellke and Nike Sun for discussing their work \cite{huang2024preparation} prior to publication. This research was supported by the NSF through award DMS-2031883, the Simons Foundation through Award 814639 for the Collaboration on the Theoretical Foundations of Deep Learning, the NSF grant CCF-2006489 and the ONR grant N00014-18-1-2729.

\appendix

\section{Proof of Theorem \ref{thm:dual_char_prob}}
\label{sec:ProofDuality}
The ``only if'' part is obvious. To prove the ``if'' part, we first recall some classical results from analysis.
Denote by $rba(\R^m)$ the space of regular, bounded and finitely additive measures on $\R^m$. Then, \cite[Thm. IV.6]{dunford1988linear} implies that, $rba(\R^m)$ equipped with the weak-* topology is the topological dual space of $C_b (\R^m)$. By \cite[Thm. IV.20]{reed1980methods}, we deduce that
\begin{equation*}
	(rba(\R^m), w_*)^* = \, C_b (\R^m).
\end{equation*}
Further, similar to the proof of \cite[Prop. 3.4]{brezis2010functional}, one can show that the weak-* topology is locally convex, thus implying the local convexity of $rba(\R^m)$.

Now, assume by contradiction that $\mu \notin E$. Since $E$ is closed and convex in $rba(\R^m)$, the singleton $\{ \mu \}$ is compact and convex, applying the Hahn-Banach theorem \cite[Thm. 1.7]{brezis2010functional} implies that exists an $h \in C_b (\R^m)$ that strictly separates $E$ and $\{ \mu \}$. Without loss of generality we can choose $h$ such that
\begin{equation*}
	\int_{\R^m} \!h \, \d \mu > \sup_{\nu \in E} \left\{ \int_{\R^m}\! h\, \d \nu \right\},
\end{equation*}
a contradiction. Therefore, $\mu \in E$ as desired. This completes the proof.

%
%
\section{The replica calculation}\label{append:replica}

In this section we carry out calculations using the non-rigorous replica
method from statistical physics, to support Conjecture 
\ref{conj:Parisi_formula_mdim}.
We will focus on the case $m = 1$, since the case of general $m > 1$ is almost identical, but  less transparent. We
refer to \cite{montanari2024friendly} for a friendly introduction to these techniques.  The derivation presented here is quite straightforward (from a physics perspective) and generalizes the replica calculation in \cite{gyorgyi2000beyond}.

Recall the definition of the Hamiltonian
\begin{equation*}
	H_{n, d} (\ww) = \frac{1}{n} \sum_{i=1}^{n} h \left( \langle \xx_i, \ww \rangle \right)\, ,\,\,\, \ww \in \S^{d-1}\, .
\end{equation*}
Define for $\beta > 0$,
\begin{equation*}
	Z_{\beta} = \int_{\S^{d-1}} e^{n\beta H_{n,d}(\ww)}\,
	\nu_0 (\d \ww),
\end{equation*}
where $\nu_0$ is the uniform measure on $\S^{d-1}$. Assume that
\begin{equation*}
	\lim_{n, d \to \infty, \ n/d \to \alpha} \max_{\ww \in \S^{d-1}} H_{n, d} (\ww)
\end{equation*}
exists almost surely, and concentrates around its expectation (this is true if $h$ is Lipschitz), then we would like to compute
\begin{align*}
	\lim_{n \to \infty} \E \left[ \max_{\ww \in \S^{d-1}} H_{n, d} (\ww) \right] = \lim_{n \to \infty} \E \left[ \lim_{\beta \to \infty} \frac{1}{n \beta} \log Z_{\beta} \right].
\end{align*}
Here, the limit $n \to \infty$ should be understood as $n, d \to \infty$ simultaneously with $n/d \to \alpha$. 
Within the replica method,
we interchange expectation and limit arbitrarily. It then suffices to compute the quantity
\begin{equation*}
	\lim_{\beta \to \infty} \lim_{n \to \infty} \frac{1}{n \beta} \E \left[ \log Z_{\beta} \right] = \lim_{\beta \to \infty} \lim_{n \to \infty} \lim_{k \to 0^+} \frac{1}{n \beta k} \log \E \left[ Z_{\beta}^k \right],
\end{equation*}
where we use the identity
\begin{equation*}
	\E \left[ \log Z \right] = \lim_{k \to 0^+} \frac{1}{k} \log \E \left[ Z^k \right].
\end{equation*}

While the above interchange of limits is not justified in the present derivation, it is not the most problematic step in the replica calculation.
Indeed, the critical step is to first consider $k$ as an integer, and then
extrapolate to non-integer values of $k$. For $k \in \mathbb{N}$, we have
\begin{align*}
	\E \left[ Z_{\beta}^k \right] = \, & \E \left[ \int_{(\S^{d-1})^k} \exp \left( \beta \cdot \sum_{j=1}^{k} \sum_{i=1}^{n} h \left( \langle \ww_j, \xx_i \rangle \right) \right) \cdot \prod_{j=1}^{k} \nu_0(\d \ww_j) \right] \\
	= \, & \int_{(\S^{d-1})^k} \E \left[ \exp \left( \beta \cdot \sum_{j=1}^{k} \sum_{i=1}^{n} h \left( \langle \ww_j, \xx_i \rangle \right) \right) \right] \cdot \prod_{j=1}^{k} \nu_0(\d \ww_j) \\
	= \, & \int_{(\S^{d-1})^k} \E \left[ \exp \left( \beta \cdot \sum_{j=1}^{k} h \left( \langle \ww_j, \xx \rangle \right) \right) \right]^n \cdot \prod_{j=1}^{k} \nu_0 (\d \ww_j).
\end{align*}
Denoting by $Q$ the overlap matrix of the $\ww_j$'s, namely $Q_{ij} = \langle \ww_i, \ww_j \rangle$ for $1 \le i, j \le k$, then we have for $\xx \sim \normal (\bzero, \id_d)$,
\begin{equation*}
	\E \left[ \exp \left( \beta \cdot \sum_{j=1}^{k} h \left( \langle \ww_j, \xx \rangle \right) \right) \right] = \E_{G \sim \normal (0, Q)} \left[ \exp \left( \beta \cdot \sum_{j=1}^{k} h(G_j) \right) \right].
\end{equation*}
For future convenience, we denote the above quantity as $f_{\beta, h} (Q)$, i.e.,
\begin{equation*}
	f_{\beta, h} (Q) = \E_{G \sim \normal (0, Q)} \left[ \exp \left( \beta \cdot \sum_{j=1}^{k} h(G_j) \right) \right],
\end{equation*}
it then follows that
\begin{align*}
	\E \left[ Z_{\beta}^k \right] = \, & \int_{(\S^{d-1})^k} f_{\beta, h} (Q)^n \cdot \prod_{j=1}^{k} \nu_0 (\d \ww_j) \\
	= \, & \int_{\S_+^k (1)} f_{\beta, h} (Q)^n \exp \left( d I_d (Q) \right) \nu (\d Q),
\end{align*}
where $\S_+^k (1)$ denotes the space of all $k \times k$ positive semidefinite matrices with all ones on the diagonal, and \modif{$\nu$ represents the uniform probability measure on this space, i.e.,
	\begin{equation*}
		\nu(\d Q) = \frac{1}{\int_{\S_+^k (1)} \prod_{1 \le i < j \le k} \d Q_{ij}} \cdot \bone \{Q \in \S_+^k (1) \} \cdot \prod_{1 \le i < j \le k} \d Q_{ij}.
\end{equation*}}
Moreover, we have for fixed $k$,
\begin{equation*}
	\lim_{d \to \infty} I_d (Q) = \frac{1}{2} \log \det Q,
\end{equation*}
thus leading to
\begin{align*}
	\lim_{n \to \infty} \frac{1}{n} \log \E \left[ Z_{\beta}^k \right] = \, & \max_{Q \in \S_+^k (1)} \left\{ \log f_{\beta, h} (Q) + \frac{1}{2 \alpha} \log \det Q \right\} \\
	= \, & \max_{Q \in \S_+^k (1)} \left\{ \log \E_{G \sim \normal (0, Q)} \left[ \exp \left( \beta \cdot \sum_{j=1}^{k} h(G_j) \right) \right] + \frac{1}{2 \alpha} \log \det Q \right\} \\
	:= \, & S_{\beta, h} (\alpha, k).
\end{align*}
Assume again that we can interchange the limits arbitrarily, then we get that
\begin{align*}
	\lim_{\beta \to \infty} \lim_{n \to \infty} \frac{1}{n \beta} \E \left[ \log Z_{\beta} \right] = \, & \lim_{\beta \to \infty} \lim_{n \to \infty} \lim_{k \to 0^+} \frac{1}{n \beta k} \log \E \left[ Z_{\beta}^k \right] \\
	= \, & \lim_{\beta \to \infty} \frac{1}{\beta} \lim_{k \to 0^+} \frac{1}{k} \lim_{n \to \infty} \frac{1}{n} \log \E \left[ Z_{\beta}^k \right] \\
	= \, & \lim_{\beta \to \infty} \frac{1}{\beta} \lim_{k \to 0^+} \frac{1}{k} S_{\beta, h} (\alpha, k).
\end{align*}
To compute this limit, we resort to the full RSB (full replica symmetry breaking) ansatz described in Section 3 of \cite{gyorgyi2000beyond}. Following their calculation, the limiting free energy can be expressed as the extreme value of a variational problem. To be specific, we have
\begin{align*}
	&\frac{1}{\beta} \lim_{k \to 0^+} \frac{1}{k} S_{\beta, h} (\alpha, k) =
	\inf_{y \in \mathscr{U} [0, 1]} \sA(y,\beta)\, ,\\
	&\sA(y,\beta) := f_y (0, 0) + \frac{1}{2 \alpha \beta} \int_{0}^{1} \left( \frac{1}{D_y(t)} - \frac{1}{1 - t} \right) \d t ,
\end{align*}
where $\mathscr{U} [0, 1]$ is the space of all non-descreasing function $y: [0, 1] \to [0, 1]$,
\begin{equation*}
	D_y(t) = \int_{t}^{1} y(s) \d s,
\end{equation*}
and $f_y (t, x)$ satisfies the PDE:
\begin{equation}\label{eq:Parisi_beta}
	\begin{split}
		\partial_t f_y (t, x) + \frac{1}{2} \beta y(t) \left( \partial_x f_y (t, x) \right)^2 + \frac{1}{2} \partial_x^2 f_y (t, x) = \, & 0, \\
		f_y (1, x) = \, & h(x).
	\end{split}
\end{equation}
The lemma below gives the zero-temperature limit ($\beta \to \infty$) of 
the variational functional $\sA(y,\beta)$, along specific 
sequences of $y_{\beta}$.
\begin{lem}\label{lem:Parisi_final}
	Let $c > 0$, and $\mu(t): [0, 1) \to \R_{\ge 0}$ be a non-decreasing function with $\int_{0}^1\mu(t)\, \d t<\infty$. Further, assume that $y(t)$ has the following form:
	\begin{equation*}
		y_{\beta}(t) = \frac{\mu(t)}{\beta} \bone_{t < 1 - \frac{c}{\beta}} + \bone_{t \ge 1 - \frac{c}{\beta}}.
	\end{equation*}
	Then, we have
	\begin{align*}
		& \lim_{\beta \to \infty} 
		\sA(y_{\beta},\beta) = \sF_1(\mu,c)\, ,\\
		& \sF_1(\mu,c) := f_\mu (0, 0) + \frac{1}{2 \alpha} \int_{0}^{1} \frac{\d t}{c + \int_{t}^{1} \mu(s) \d s},
	\end{align*}
	where $f_\mu$ solves the terminal-value problem:
	\begin{equation}\label{eq:Parisi_limit}
		\begin{split}
			&\partial_t f_\mu (t, x) + \frac{1}{2} \mu(t) \left( \partial_x f_\mu (t, x) \right)^2 + \frac{1}{2} \partial_x^2 f_\mu (t, x) = \,  0, \\
			&f_\mu (1, x) = \,  \sup_{u \in \R} \left\{ h(x+u) - \frac{u^2}{2 c} \right\}.
		\end{split}
	\end{equation}
\end{lem}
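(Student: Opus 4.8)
\emph{Proof sketch.} The plan is to split $\sA(y_\beta,\beta) = f_{y_\beta}(0,0) + \mathsf{S}_\beta$, where
\[
\mathsf{S}_\beta := \frac{1}{2\alpha\beta}\int_{0}^{1}\Big(\frac{1}{D_{y_\beta}(t)} - \frac{1}{1-t}\Big)\,\d t ,
\]
and to prove separately that $\mathsf{S}_\beta \to \frac{1}{2\alpha}\int_{0}^{1}\frac{\d t}{c+\int_{t}^{1}\mu(s)\,\d s}$ and $f_{y_\beta}(0,0)\to f_\mu(0,0)$; adding the two limits gives $\sF_1(\mu,c)$.

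\textbf{The entropy term.} A direct computation gives $D_{y_\beta}(t) = 1-t$ for $t\ge 1-c/\beta$, so the integrand vanishes there, while for $t<1-c/\beta$ one has $D_{y_\beta}(t) = \frac{1}{\beta}\big(c + \int_{t}^{1-c/\beta}\mu(s)\,\d s\big)$. Hence
\[
\mathsf{S}_\beta = \frac{1}{2\alpha}\int_{0}^{1-c/\beta}\frac{\d t}{c+\int_{t}^{1-c/\beta}\mu(s)\,\d s} \;-\; \frac{1}{2\alpha\beta}\log\frac{\beta}{c}.
\]
The second term tends to $0$; in the first, the integrand is bounded by $1/c$ and, for each fixed $t$, converges to $\big(c+\int_{t}^{1}\mu(s)\,\d s\big)^{-1}$, so dominated convergence yields the claimed limit (only $c>0$ and $\int_{0}^{1}\mu<\infty$ are used here).

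\textbf{The free-energy term.} On $[1-c/\beta,1]$ we have $y_\beta\equiv 1$, so $f_{y_\beta}$ solves $\partial_t f + \frac{\beta}{2}(\partial_x f)^2 + \frac12\partial_x^2 f = 0$ with $f(1,\cdot)=h$; the Cole--Hopf substitution $u=e^{\beta f}$ turns this into the backward heat equation, giving the explicit formula $f_{y_\beta}(t,x) = \frac1\beta\log\E_G\big[e^{\beta h(x+\sqrt{1-t}\,G)}\big]$, $G\sim\normal(0,1)$. At $t=1-c/\beta$, after the change of variable $u=\sqrt{c/\beta}\,G$,
\[
g_\beta(x) := f_{y_\beta}\big(1-\tfrac{c}{\beta},x\big) = \frac{1}{2\beta}\log\frac{\beta}{2\pi c} + \frac{1}{\beta}\log\int_{\R} e^{\beta(h(x+u)-u^2/(2c))}\,\d u .
\]
A Laplace-type estimate (using that $h$ is Lipschitz and bounded above) shows $g_\beta\to g_\mu := \sup_{u}\{h(\cdot+u)-u^2/(2c)\} = f_\mu(1,\cdot)$ uniformly on $\R$. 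On the other slab $[0,1-c/\beta]$ one has $\beta y_\beta(t)=\mu(t)$, so $f_{y_\beta}$ restricted there solves \emph{exactly} the Parisi PDE~\eqref{eq:parisi_mu_gen} with drift $\mu$, with terminal datum $g_\beta$ imposed at time $1-c/\beta$. To compare with $f_\mu$ I would pass through the intermediate object $f_{\hat\mu_\beta}$, the solution of the Parisi PDE with drift $\hat\mu_\beta := \mu\,\bone_{[0,1-c/\beta)}$ (which lies in $\FS$ with $c$, since $\mu$ is bounded on each $[0,1-\delta]$) and terminal datum $f_\mu(1,\cdot)=g_\mu$: on $[1-c/\beta,1]$ its drift vanishes, so $f_{\hat\mu_\beta}(1-c/\beta,\cdot) = \E_G[g_\mu(\cdot+\sqrt{c/\beta}\,G)]$, and on $[0,1-c/\beta]$ it again solves the $\mu$-Parisi PDE. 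The $L^\infty$-in-terminal-datum comparison principle for the Parisi PDE (the argument from the proof of \cref{thm:C2_approx_gen}$(a)$: the difference of two solutions satisfies a linear drift--diffusion equation and is therefore a martingale average of its terminal value; valid on $[0,T]$, $T<1$, after a trivial time rescaling) gives $|f_{y_\beta}(0,0)-f_{\hat\mu_\beta}(0,0)|\le \|g_\beta - \E_G[g_\mu(\cdot+\sqrt{c/\beta}\,G)]\|_{L^\infty}\to 0$, since $g_\beta\to g_\mu$ uniformly and $g_\mu$ is $\|h'\|_{L^\infty}$-Lipschitz. Finally the $L^1$-in-drift stability estimate $\|f_{\mu_1}-f_{\mu_2}\|_{L^\infty}\le\frac12\|g'\|_{L^\infty}^2\|\mu_1-\mu_2\|_{L^1}$ for a common terminal datum $g$ (the Jagannath--Sen estimate invoked in \cref{thm:existence_parisi_gtr}, \cite{jagannath2016dynamic}) gives $|f_{\hat\mu_\beta}(0,0)-f_\mu(0,0)|\le\frac12\|h'\|_{L^\infty}^2\int_{1-c/\beta}^{1}\mu(s)\,\d s\to 0$. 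Chaining the two bounds yields $f_{y_\beta}(0,0)\to f_\mu(0,0)$.

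\textbf{Main obstacle.} The delicate point is the \emph{uniform} convergence $g_\beta\to g_\mu$: the Gaussian smoothing at scale $\sqrt{c/\beta}$ combined with the inverse temperature $\beta$ produces a Laplace integral whose maximizer $u^\ast(x)$ drifts with $x$, so tail control cannot be local in $x$. The way to handle it is to use the uniform a priori bound $|u^\ast(x)|\le 2c\,\|h'\|_{L^\infty}$ (a consequence of Lipschitzness together with $h$ bounded above), which yields the two-sided bounds $g_\beta(x)\le g_\mu^{(a)}(x)+o_\beta(1)$ for $a>1$, with $g_\mu^{(a)}(x):=\sup_u\{h(x+u)-u^2/(2ac)\}$ lying within $O\big(a(a-1)\big)$ of $g_\mu$ uniformly in $x$ (obtained by comparing $e^{\beta h}$ against $e^{\beta g_\mu^{(a)}}e^{G^2/(2a)}$), and $g_\beta(x)\ge g_\mu(x)-o_\beta(1)$ by restricting the Gaussian integral to a unit window around $u^\ast(x)$ and using that $|u^\ast(x)|$ is bounded uniformly; letting $\beta\to\infty$ and then $a\downarrow1$ gives $\|g_\beta-g_\mu\|_{L^\infty}\to0$. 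Everything else — the Cole--Hopf formula on the last slab, the dominated-convergence argument for $\mathsf{S}_\beta$, and the $L^\infty$/$L^1$ stability of the Parisi PDE — is routine given the results of \cref{sec:solve_Parisi_PDE}.
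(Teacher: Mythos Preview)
Your proof is correct and follows the same overall route as the paper: split $\sA(y_\beta,\beta)$ into the free-energy piece $f_{y_\beta}(0,0)$ and the entropy piece, handle the latter by the explicit $D_{y_\beta}$ computation plus dominated convergence, and handle the former by Cole--Hopf on the slab $[1-c/\beta,1]$ together with a Laplace argument for the terminal datum.

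The one place where you go beyond the paper is in justifying $f_{y_\beta}(0,0)\to f_\mu(0,0)$. The paper simply writes ``since $t_\beta\to 1$, we deduce that $f_y$ converges to $f_\mu$'' and leaves it at that (this is, after all, a heuristic appendix), whereas you route the argument through the intermediate solution $f_{\hat\mu_\beta}$ and invoke the $L^\infty$-in-terminal-datum comparison together with the Jagannath--Sen $L^1$-in-drift stability. That is a legitimate and more careful way to close the step; the paper also only claims convergence of $g_\beta$ uniformly on compact sets, whereas you push for uniform convergence on all of $\R$ and supply an argument for it. Both versions lead to the same conclusion, but yours makes the PDE continuity explicit rather than implicit.
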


\begin{proof}
	Defining $t_{\beta} = 1 - c / \beta$, Eq.~\eqref{eq:Parisi_beta} reduces to
	\begin{align*}
		\partial_t f_y (t, x) + \frac{1}{2} \mu(t) \left( \partial_x f_y (t, x) \right)^2 + \frac{1}{2} \partial_x^2 f_y (t, x) = \, & 0, \ t \in [0, t_{\beta}), \\
		\partial_t f_y (t, x) + \frac{1}{2} \beta \left( \partial_x f_y (t, x) \right)^2 + \frac{1}{2} \partial_x^2 f_y (t, x) = \, & 0, \ t \in [t_{\beta}, 1), \\
		f_y (1, x) = \, & h(x).
	\end{align*}
	Using Cole-Hopf transform, we know that
	\begin{align*}
		f_y (t_{\beta}, x) = \, & \frac{1}{\beta} \log \E_{G \sim \normal(0, 1)} \left[ \exp \left( \beta \cdot h \left( x + \sqrt{\frac{c}{\beta}} G \right) \right) \right] \\
		=\, & \frac{1}{\beta} \log \left( \frac{1}{\sqrt{2 \pi}} \int_{\R} \exp \left( \beta \cdot h \left( x + \sqrt{\frac{c}{\beta}} z \right) - \frac{z^2}{2} \right) \d z \right) \\
		=\, & \frac{1}{\beta} \log \left( \frac{\sqrt{\beta}}{\sqrt{2 \pi c}} \int_{\R} \exp \left( \beta \cdot h \left( x + u \right) - \frac{\beta u^2}{2 c} \right) \d u \right),
	\end{align*}
	which converges to $\sup_{u \in \R} \left\{ h(x+u) - u^2 / (2 c) \right\}$ as $\beta \to \infty$, uniformly over
	compact sets. Moreover, since $t_{\beta} \to 1$, we deduce that $f_y$ converges to $f_\mu$ where $f_\mu$ solves Eq.~\eqref{eq:Parisi_limit}. As a consequence, $f_y (0, 0) \to f_{\mu} (0, 0)$. To compute the limit of the second term, we note that $D(t) = 1 - t$ if $t \ge t_{\beta}$. Therefore,
	\begin{align*}
		& \frac{1}{2 \alpha \beta} \int_{0}^{1} \left( \frac{1}{D(t)} - \frac{1}{1 - t} \right) \d t = \, \frac{1}{2 \alpha \beta} \int_{0}^{t_{\beta}} \left( \frac{1}{D(t)} - \frac{1}{1 - t} \right) \d t \\
		= \, & \frac{1}{2 \alpha} \int_{0}^{t_{\beta}} \frac{\d t}{c + \int_{t}^{t_{\beta}} \mu(s) \d s} + \frac{1}{2 \alpha \beta} \log \left( 1 - t_{\beta} \right) \\
		= \, & \frac{1}{2 \alpha} \int_{0}^{t_{\beta}} \frac{\d t}{c + \int_{t}^{t_{\beta}} \mu(s) \d s} + \frac{1}{2 \alpha \beta} \log \left( \frac{c}{\beta} \right) \\
		\to \, & \frac{1}{2 \alpha} \int_{0}^{1} \frac{\d t}{c + \int_{t}^{1} \mu(s) \d s} \ \text{as} \ \beta \to \infty.
	\end{align*}
	This completes the proof.
\end{proof}
Defining
\begin{equation*}
	\mathscr{U} = \left\{ \mu: [0, 1) \to \R_{\ge 0}: \ \mu \ \text{non-decreasing}, \ \int_{0}^{1} \mu(t) \d t < \infty \right\},
\end{equation*}
we note that the function $\mathsf{F}_1 : \mathscr{U}\times \R_{>0}\to \R$ defined in the last lemma 
coincides with the one of Conjecture \ref{conj:Parisi_formula_mdim} and Remark \ref{rmk:ReplicaM1}. This establishes the replica prediction.

\section{Technical lemmas}

The proposition below presents some analytical properties of $V_{\gamma} (t, z)$ (defined in Eq.~\eqref{val_fct_gen}) as a function of $\gamma$ with fixed $t, z$.
\begin{prop}[Properties of $V_{\gamma}$]\label{prop:VAL_continuity_gen}
	Fix $t$ and $z$, then $\gamma \mapsto V_{\gamma}(t,z)$ is convex and lower semicontinuous  with respect to the $L^{\infty}$-norm. Further, let $\gamma_0 \in L_{\infty}^{+} [0, 1]$ be such that $\inf_{t \in [0, 1]} \gamma_0 (t) > 0$, then $V_{\gamma}$ is continuous at $\gamma_0$ with respect to the $L^{\infty}$-norm.
\end{prop}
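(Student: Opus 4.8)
The proposition is a soft consequence of the variational structure of $V_{\gamma}$: by \eqref{val_fct_gen} it is a pointwise supremum over $\phi\in D[t,1]$ of functionals that are \emph{affine} in $\gamma$, together with a coercivity estimate in $\phi$ that is available near any strictly positive $\gamma_0$. Fix $t\in[0,1)$ and $z\in\R$ (the case $t=1$ is trivial since $V_\gamma(1,z)=h(z)$). For each $\phi\in D[t,1]$, Fubini's theorem and the admissibility constraint $\E\big[\int_t^1\phi_s^2\,\d s\big]<\infty$ allow us to write
\begin{equation*}
	g_\phi(\gamma):=\E\left[h\left(z+\int_t^1(1+\phi_s)\,\d B_s\right)\right]-\frac12\int_t^1\gamma(s)\left(\E[\phi_s^2]-\frac1\alpha\right)\d s,
\end{equation*}
so that $V_\gamma(t,z)=\sup_{\phi\in D[t,1]}g_\phi(\gamma)$. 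Each $g_\phi$ is affine in $\gamma$ and Lipschitz with respect to $\norm{\cdot}_{L^\infty[0,1]}$ with constant $\tfrac12\big(\E[\int_t^1\phi_s^2\,\d s]+(1-t)/\alpha\big)<\infty$; in particular it is continuous. Convexity of $\gamma\mapsto V_\gamma(t,z)$ is then immediate (a supremum of affine maps is convex), and lower semicontinuity in $L^\infty$ follows since a supremum of $L^\infty$-continuous functions is $L^\infty$-lower semicontinuous.

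It remains to establish local continuity at a point $\gamma_0$ with $c_0:=\tfrac12\inf_{s\in[0,1]}\gamma_0(s)>0$. The first step is a uniform lower bound: taking $\phi\equiv0$ and using $\gamma\ge0$ gives $V_\gamma(t,z)\ge v_0:=\E\big[h(z+\sqrt{1-t}\,G)\big]$, $G\sim\normal(0,1)$, which is finite since $h$ is Lipschitz. The second step is an a priori energy bound. For $\norm{\gamma-\gamma_0}_{L^\infty}\le\min\{1,c_0\}$ we have $\gamma(s)\ge c_0$ for a.e.\ $s$, so, using that $h$ is bounded from above and $\int_t^1\gamma(s)\,\d s\le(1-t)(\norm{\gamma_0}_{L^\infty}+1)$,
\begin{equation*}
	g_\phi(\gamma)\le\sup_{x\in\R}h(x)+\frac{(1-t)(\norm{\gamma_0}_{L^\infty}+1)}{2\alpha}-\frac{c_0}{2}\int_t^1\E[\phi_s^2]\,\d s .
\end{equation*}
Hence any $\phi$ with $g_\phi(\gamma)\ge v_0$ must satisfy $\E\big[\int_t^1\phi_s^2\,\d s\big]\le M$ for an explicit constant $M=M(c_0,\norm{\gamma_0}_{L^\infty},\sup h,v_0,t)$ that does not depend on $\gamma$. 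Consequently, for every such $\gamma$,
\begin{equation*}
	V_\gamma(t,z)=\sup\Big\{g_\phi(\gamma):\ \phi\in D[t,1],\ \E\big[\textstyle\int_t^1\phi_s^2\,\d s\big]\le M\Big\}.
\end{equation*}

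On this common, $\gamma$-independent index set the functionals $g_\phi$ are uniformly Lipschitz in $\gamma$ with constant $L:=\tfrac12\big(M+(1-t)/\alpha\big)$, so taking suprema yields $|V_\gamma(t,z)-V_{\gamma_0}(t,z)|\le L\,\norm{\gamma-\gamma_0}_{L^\infty}$ whenever $\norm{\gamma-\gamma_0}_{L^\infty}\le\min\{1,c_0\}$. Thus $V_{\cdot}(t,z)$ is locally Lipschitz, and in particular continuous, at $\gamma_0$ in the $L^\infty$-norm, which completes the proof. The only mildly delicate point is the uniform energy bound $M$; everything else is bookkeeping with Fubini and It\^o's isometry, and I do not anticipate any genuine obstacle here.
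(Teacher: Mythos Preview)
Your proof is correct and follows essentially the same approach as the paper: both observe that $V_\gamma$ is a supremum of affine (hence continuous) functionals in $\gamma$, giving convexity and lower semicontinuity for free, and then establish continuity at a strictly positive $\gamma_0$ by showing that near $\gamma_0$ the supremum can be restricted to controls with uniformly bounded energy $\int_t^1\E[\phi_s^2]\,\d s\le M$, over which the family $\{g_\phi\}$ is uniformly Lipschitz in $\gamma$. Your version is slightly more explicit in deriving the energy bound $M$ from the lower bound $V_\gamma\ge v_0$ (obtained by plugging in $\phi\equiv0$), whereas the paper simply asserts the existence of such a constant $C_0$; otherwise the arguments are identical, and both in fact yield local Lipschitz continuity at $\gamma_0$.
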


\begin{proof}
	The convexity and lower semicontinuity follow directly from the definition of $V_{\gamma}$, since it is the pointwise supremum of linear functionals that are continuous in $L^{\infty}$-norm.
	
	To prove the second part, let $\gamma_n \stackrel{L^{\infty}}{\to} \gamma_0$, then we know that $\inf_{t \in [0, 1]} \gamma_n (t) \ge \inf_{t \in [0, 1]} \gamma_0 (t)/2 > 0$ for sufficiently large $n$. As a consequence, there exists a constant $C_0 > 0$ such that
	\begin{equation*}
		V_{\gamma} (t, z) = \sup_{\int_{t}^{1} \E[\phi_s^2] \d s \le C_0} \E \left[ h \left( z + \int_{t}^{1} (1 + \phi_s) \d B_s \right) - \frac{1}{2} \int_{t}^{1} \gamma(s) \phi_s^2 \d s \right] + \frac{1}{2 \alpha} \int_{t}^{1} \gamma(s) \d s
	\end{equation*}
	for $\gamma = \gamma_n$ or $\gamma_0$. We thus obtain that
	\begin{equation*}
		\left\vert V_{\gamma_n} (t, z) - V_{\gamma_0} (t, z) \right\vert \le \frac{1}{2} \left( C_0 + \frac{1}{\alpha} \right) \norm{\gamma_n - \gamma_0}_{L^{\infty} [0, 1]}.
	\end{equation*}
	This completes the proof. In fact, we proved an even stronger statement: $V_{\gamma}$ is locally Lipschitz at the interior of $L_{\infty}^{+} [0, 1]$.
\end{proof}

The following lemma gives a variational representation for the concave envelope of a function.
\begin{lem}\label{lem:var_rep_conc}
	Let $h \in C (\R)$ be upper bounded, denote by $\operatorname{conc} h$ the concave envelope of $h$, then
	\begin{equation}\label{eq:variational_conc_env_gen}
		\operatorname{conc} h(z) = \sup_{U \in L^2(\Omega), \ \E[U] = 0} \E \left[ h(z + U) \right].
	\end{equation} 
	As a consequence, we have
	\begin{equation*}
		\operatorname{conc} \left( h (z) - \frac{t}{2}  z^2 \right) + \frac{t}{2} z^2 = \sup_{U \in L^2(\Omega), \ \E[U] = 0} \E \left[ h (z+U) - \frac{t}{2} U^2 \right].
	\end{equation*} 
\end{lem}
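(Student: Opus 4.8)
The plan is to prove the two inequalities in \eqref{eq:variational_conc_env_gen} separately and then obtain the displayed consequence by applying \eqref{eq:variational_conc_env_gen} to a shifted function. Throughout, I would use two elementary facts: since $h$ is bounded above, the constant $\sup_x h(x)$ is a concave majorant of $h$, so the concave envelope $\operatorname{conc} h$ --- whose hypograph is the intersection of the convex hypographs of all concave majorants of $h$ --- is concave, finite, and bounded above on $\R$, hence continuous and majorized by an affine function. For the inequality $\sup_U \E[h(z+U)] \le \operatorname{conc} h(z)$, I would fix $U \in L^2(\Omega)$ with $\E[U]=0$ and apply Jensen's inequality to the concave function $\operatorname{conc} h$ and the integrable random variable $z+U$ (which has mean $z$): using $h \le \operatorname{conc} h$ and the affine upper bound (so that $\E[\operatorname{conc} h(z+U)]$ is well defined in $[-\infty,\infty)$), one gets $\E[h(z+U)] \le \E[\operatorname{conc} h(z+U)] \le \operatorname{conc} h(z)$; taking the supremum over $U$ gives this direction.

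For the reverse inequality I would set $g(z) := \sup\{\E[h(z+U)] : U \in L^2(\Omega),\ \E[U]=0\}$ and show that $g$ is concave with $g \ge h$; by minimality of the concave envelope (the smallest concave function dominating $h$) this yields $g \ge \operatorname{conc} h$, which together with the first part gives $g = \operatorname{conc} h$, i.e.\ \eqref{eq:variational_conc_env_gen}. The bound $g \ge h$ is immediate by taking $U \equiv 0$. For concavity, fix $z = \mu z_1 + (1-\mu) z_2$ with $\mu \in [0,1]$ and $\varepsilon > 0$, choose mean-zero $U_1, U_2 \in L^2$ with $\E[h(z_i+U_i)] \ge g(z_i) - \varepsilon$, and (enlarging $\Omega$ if necessary, which does not change the supremum) let $B$ be an independent $\mathrm{Bernoulli}(\mu)$ variable; then $V := B(z_1 + U_1 - z) + (1-B)(z_2 + U_2 - z)$ lies in $L^2$ with $\E[V] = \mu(z_1-z) + (1-\mu)(z_2-z) = 0$, and $z+V$ equals $z_1+U_1$ with probability $\mu$ and $z_2+U_2$ with probability $1-\mu$, so $g(z) \ge \E[h(z+V)] = \mu\E[h(z_1+U_1)] + (1-\mu)\E[h(z_2+U_2)] \ge \mu g(z_1) + (1-\mu)g(z_2) - \varepsilon$. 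Letting $\varepsilon \downarrow 0$ proves that $g$ is concave.

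Finally, for the ``as a consequence'' statement I would apply \eqref{eq:variational_conc_env_gen} with $h$ replaced by $\tilde h(z) := h(z) - \tfrac{t}{2} z^2$, which is still continuous and bounded above for the relevant $t \ge 0$ (in the paper's use $t = 1/c > 0$): expanding $\tilde h(z+U) = h(z+U) - \tfrac t2 z^2 - t z U - \tfrac t2 U^2$ and using $\E[t z U] = t z \E[U] = 0$ for mean-zero $U$ gives $\operatorname{conc}\tilde h(z) = \sup_U \E[\tilde h(z+U)] = -\tfrac t2 z^2 + \sup_U \E[h(z+U) - \tfrac t2 U^2]$, and adding $\tfrac t2 z^2$ to both sides is exactly the claimed identity. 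I do not anticipate a genuine obstacle; the only points needing care are the justification of Jensen's inequality for the possibly unbounded-below function $\operatorname{conc} h$ (handled by its affine upper bound) and the realization of the auxiliary Bernoulli variable used in the concavity step (handled by enlarging the probability space, which leaves the supremum unchanged).
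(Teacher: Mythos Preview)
Your proposal is correct and follows essentially the same route as the paper: both arguments define $g(z)=\sup_U\E[h(z+U)]$, establish $g\ge h$ trivially, prove concavity of $g$ via an independent Bernoulli mixture of two near-optimizers, and use Jensen's inequality with a concave majorant of $h$ to obtain the reverse bound $g\le\operatorname{conc} h$ (the paper phrases this last step as showing $g\le f$ for every concave $f\ge h$, which is the same thing). Your treatment is slightly more careful about the technical points you flag---the affine upper bound needed to justify Jensen for a potentially unbounded-below concave function, and the enlargement of the probability space---both of which the paper leaves implicit; the derivation of the consequence by substituting $\tilde h(z)=h(z)-\tfrac t2 z^2$ and expanding is also identical.
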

\begin{proof}
	Since $h$ is upper bounded, we know that the right hand side of Eq.~\eqref{eq:variational_conc_env_gen} is well-defined and upper bounded. Let us denote
	\begin{equation*}
		g(z) = \sup_{U \in L^2(\Omega), \ \E[U] = 0} \E \left[ h(z + U) \right] = \sup_{U \in L^2(\Omega), \ \E[U] = z} \E \left[ h(U) \right],
	\end{equation*}
	then obviously we have $g(z) \ge h(z)$. Next we show that $g$ is concave. Fix $z_1, z_2 \in \R$ and $\alpha \in [0, 1]$, $\forall \veps > 0$ there exists $U_1, U_2 \in L^2 (\Omega)$ such that $\E[U_1] = z_1$, $\E [U_2] = z_2$, and
	\begin{equation*}
		g(z_1) \le \E [h(U_1)] + \veps, \ g(z_2) \le \E [h(U_2)] + \veps.
	\end{equation*}
	Now we define a new random variable $U \in L^2 (\Omega)$ by requiring
	\begin{equation*}
		\P (U = U_1) = \alpha, \ \P (U = U_2) = 1 - \alpha,
	\end{equation*}
	it follows that $\E [U] = \alpha z_1 + (1 - \alpha) z_2$, thus leading to
	\begin{align*}
		g \left( \alpha z_1 + (1 - \alpha) z_2 \right) \ge \, & \E [h(U)] = \alpha \E [h(U_1)] + (1 - \alpha) \E [h(U_2)] \\
		\ge \, & \alpha g(z_1) + (1 - \alpha) g(z_2) - \veps.
	\end{align*}
	Since $\veps > 0$ can be arbitrary, we finally deduce that $g \left( \alpha z_1 + (1 - \alpha) z_2 \right) \ge \alpha g(z_1) + (1 - \alpha) g(z_2)$. Therefore, $g$ is concave. It finally remains to show that $g$ is the smallest concave function that dominates $h$. To this end, assume $f \ge h$ is concave, then for any $U \in L^2(\Omega)$ with $\E[U] = z$, we deduce from Jensen's inequality:
	\begin{equation*}
		\E[h(U)] \le \E[f(U)] \le f \left( \E[U] \right) = f(z).
	\end{equation*}
	Taking supremum over all such random variable $U$ yields that $g(z) \le f(z)$. We have thus established that $g = \operatorname{conc} h$. The ``as a consequence'' part follows by direct calculation.
\end{proof}

We collect below a few useful properties of the functions $h_c$ and $f_{\mu}^c (1, \cdot)$ defined in \cref{sec:solve_Parisi_PDE}:
\begin{equation*}
	h_c (z) = \operatorname{conc} \left( h(z) - \frac{z^2}{2 c} \right) + \frac{z^2}{2 c}, \quad f_{\mu}^c (1, x) = \, \sup_{u \in \R} \left\{ h_c(x+u) - \frac{u^2}{2 c} \right\}.
\end{equation*}

\begin{lem}\label{lem:diff_boundary_c_equal}
	For any $x \in \R$ and $c > 0$, define $g_c (x) = (\partial/ \partial c) h_c (x)$ (existence is guaranteed by monotonicity and convexity with respect to $1/c$). Then, we have
	\begin{equation*}
		\frac{\d}{\d c} f_{\mu}^c (1, x) = g_c \left( x + c \partial_x f_{\mu}^c (1, x) \right) + \frac{\left( \partial_x f_{\mu}^c (1, x) \right)^2}{2}.
	\end{equation*}
\end{lem}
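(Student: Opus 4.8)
The plan is to treat $f_\mu^c(1,x)=\sup_{u\in\R}\Theta_c(u)$, with $\Theta_c(u):=h_c(x+u)-u^2/(2c)$, as a parametrized optimization problem and to apply an envelope (Danskin-type) argument in the parameter $c$, handling the $x$-dependence separately to locate the optimizer. Using \eqref{eq:defn_h_c} one computes
\begin{equation*}
\Theta_c(u)=\operatorname{conc}\!\Big(h(x+u)-\tfrac{(x+u)^2}{2c}\Big)+\frac{2xu+x^2}{2c},
\end{equation*}
which is concave in $u$ (a concave function composed with an affine map, plus an affine term), and which satisfies $\Theta_c(u)\le \sup_{\R}h-u^2/(2c)\to-\infty$ as $|u|\to\infty$ since $h$ is bounded from above. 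Hence the supremum defining $f_\mu^c(1,x)$ is attained at some maximizer $u^*=u^*(c,x)$.

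First I would identify $u^*$. Because $h\in C^2(\R)$ implies $h_c\in C^2(\R)$ (as established after \eqref{eq:defn_h_c}), the concave $C^1$ function $u\mapsto\Theta_c(u)$ has an interior maximizer characterised by the first-order condition $h_c'(x+u^*)=u^*/c$. On the other hand $f_\mu^c(1,\cdot)\in C^1(\R)$ (cf.\ \cref{lem:terminal_regularity_gen}), and the elementary envelope identity for the $x$-variable gives $\partial_x f_\mu^c(1,x)=h_c'(x+u^*)$, independently of the choice of maximizer. Combining the two, $u^*=c\,\partial_x f_\mu^c(1,x)$; in particular $u^*$ is unique and $x+u^*=x+c\,\partial_x f_\mu^c(1,x)$, and uniqueness together with coercivity and the joint continuity of $(c,u)\mapsto\Theta_c(u)$ makes $c\mapsto u^*(c,x)$ continuous.

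Next I would run the envelope argument in $c$. For $c'>c$, using sub-optimality of the "wrong" maximizer ($f_\mu^{c'}(1,x)\ge\Theta_{c'}(u^*(c,x))$ and $f_\mu^{c}(1,x)\ge\Theta_{c}(u^*(c',x))$) together with the exact values at the respective maximizers, one gets the sandwich
\begin{equation*}
\Theta_{c'}(u^*(c,x))-\Theta_{c}(u^*(c,x))\ \le\ f_\mu^{c'}(1,x)-f_\mu^{c}(1,x)\ \le\ \Theta_{c'}(u^*(c',x))-\Theta_{c}(u^*(c',x)).
\end{equation*}
Dividing by $c'-c$ and letting $c'\to c$ (and symmetrically from below), using $u^*(c',x)\to u^*(c,x)$, the existence of $\partial_c h_c$ assumed in the statement, and $\partial_c[{-}u^2/(2c)]=u^2/(2c^2)$, both ends converge to $\partial_c\Theta_c(u)\big|_{u=u^*}=g_c(x+u^*)+(u^*)^2/(2c^2)$. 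Substituting $u^*=c\,\partial_x f_\mu^c(1,x)$ gives the claimed identity. The main obstacle is making this $c$-limit fully rigorous: $h_c$ is only guaranteed to be convex (hence one-sidedly differentiable) in $1/c$, so one must either work throughout with the appropriate one-sided derivative of $h_c$ in $c$ — which is all that is needed downstream, since \cref{thm:two_stage_strong_duality}(c) only invokes this lemma along sequences $c_n\uparrow c$ — or establish enough joint continuity of $\partial_c h_c$ to pass the mean-value points to the limit. (A sanity check: since $\operatorname{conc}$ commutes with addition of affine functions and preserves suprema, one in fact has $f_\mu^c(1,x)=\sup_u\{h(x+u)-u^2/(2c)\}$ with $h$ in place of $h_c$, so the $g_c$-term vanishes at the maximizing point; keeping $h_c$ merely produces the stated two-term form directly.)
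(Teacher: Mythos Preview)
Your proposal is correct and follows essentially the same route as the paper: apply an envelope/Danskin argument to the parametrized supremum, identify the maximizer via the first-order condition together with the $x$-envelope identity $\partial_x f_\mu^c(1,x)=h_c'(x+u^*)$ to get $u^*=c\,\partial_x f_\mu^c(1,x)$, and then differentiate in $c$ at the maximizer. The paper simply invokes ``the envelope theorem'' in one line, whereas you spell out the sandwich inequality and flag the one-sided differentiability caveat; both arrive at the same formula by the same mechanism.
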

\begin{proof}
	By definition of $f_{\mu}^c (1, x)$ and the envelope theorem, we obtain
	\begin{align*}
		\frac{\d}{\d c} f_{\mu}^c (1, x) = \, \frac{\d}{\d c} \left( \sup_{u \in \R} \left\{ h_c(x+u) - \frac{u^2}{2 c} \right\} \right) = g_c (x + u(c, x)) + \frac{u(c, x)^2}{2 c^2},
	\end{align*}
	where
	\begin{equation*}
		u(c, x) \in \argmax_{u \in \R} \left\{ h_c (x+u) - \frac{u^2}{2 c} \right\}.
	\end{equation*}
	The above optimization problem is concave, and its first-order condition reads
	\begin{equation*}
		h_c' (x + u(c, x)) = \frac{u(c, x)}{c}.
	\end{equation*}
	Further, by duality, we know that one can take $u (c, x) = c \partial_x f_{\mu}^c (1, x)$. Now since $h_c'$ is uniformly bounded, it follows that as long as $c$ is bounded away from $0$, $\frac{\d}{\d c} f_{\mu}^c (1, x)$ is bounded.
	This completes the proof.
\end{proof}

\begin{prop}\label{prop:hc_and_gc}
	Fix $x \in \R$, then $h_c (x) = h(x)$ if and only if $g_c (x) = 0$.
\end{prop}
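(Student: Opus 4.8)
The plan is to reduce the statement to the variational representation of the concave envelope in \cref{lem:var_rep_conc}, together with two elementary convexity facts about the one-parameter family $c\mapsto h_c(x)$.

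First I would rewrite $h_c(x)$ in a variational form. Setting $t=1/c=\gamma(1)$ and using $\E[(x+U)^2]=x^2+\E[U^2]$ whenever $\E[U]=0$, \cref{lem:var_rep_conc} gives
\[
h_c(x)=\operatorname{conc}\!\Big(h(\cdot)-\tfrac{t}{2}(\cdot)^2\Big)(x)+\tfrac{t}{2}x^2=\sup_{U\in L^2(\Omega),\ \E[U]=0}\Big\{\E[h(x+U)]-\tfrac{t}{2}\E[U^2]\Big\}=:\Theta(t,x).
\]
Taking $U\equiv 0$ shows $h_c(x)\ge h(x)$; and since $h$ is $L$-Lipschitz, $\E[h(x+U)]\le h(x)+L(\E[U^2])^{1/2}$, so optimizing over $s=\E[U^2]\ge 0$ yields $\Theta(t,x)\le h(x)+L^2/(2t)$, hence $\inf_{t>0}\Theta(t,x)=h(x)$, i.e.\ $h_c(x)\downarrow h(x)$ as $c\downarrow 0$. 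Moreover $t\mapsto\Theta(t,x)$ is a supremum of affine non-increasing functions of $t$, hence convex and non-increasing; composing with the convex map $c\mapsto 1/c$ shows that $c\mapsto h_c(x)$ is convex on $(0,\infty)$, and the two preceding observations force it to be non-decreasing. In particular $g_c(x)=\partial_c h_c(x)\ge 0$ wherever it is defined, which is the non-negativity invoked in \cref{sec:less_than_case}.

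Given this, the equivalence is the elementary fact that a convex non-decreasing function agrees with its infimum precisely on the set where its derivative vanishes. Concretely: if $g_c(x)=0$ then, since the derivative of the convex function $c'\mapsto h_{c'}(x)$ is non-decreasing and non-negative, it vanishes on all of $(0,c]$, so $c'\mapsto h_{c'}(x)$ is constant there and equals its limit $h(x)$ at $0^+$; thus $h_c(x)=h(x)$. Conversely, if $h_c(x)=h(x)=\inf_{c'>0}h_{c'}(x)$, then $c$ is a global minimizer of the convex function $c'\mapsto h_{c'}(x)$, so its derivative there is $0$, i.e.\ $g_c(x)=0$. I would also record the explicit identity $g_c(x)=\tfrac{1}{2c^2}\E[U_*^2]$, where $U_*$ attains $\Theta(1/c,x)$ — the supremum is attained since, by Carathéodory, it suffices to optimize over two-point laws and $h(\cdot)-\tfrac{t}{2}(\cdot)^2\to-\infty$ at infinity — obtained from the standard sandwich $-\tfrac12\E[U_{t_2}^2]\le\frac{\Theta(t_2,x)-\Theta(t_1,x)}{t_2-t_1}\le-\tfrac12\E[U_{t_1}^2]$ for $t_1<t_2$, which gives $\partial_t\Theta(t,x)=-\tfrac12\E[U_t^2]$ at points of differentiability. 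This makes both implications transparent via $g_c(x)=0\iff U_*\equiv 0\iff$ the trivial shift is optimal $\iff h_c(x)=h(x)$.

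There is essentially no hard step here; the only point that deserves a word of care is the (at most one) value of $c$ at the right endpoint of the interval $\{c':h_{c'}(x)=h(x)\}$, where $c'\mapsto h_{c'}(x)$ may have a corner (left derivative $0$, right derivative possibly positive). There the symbol $g_c(x)$ is to be read through the left derivative, consistently with the convention implicit in \cref{lem:diff_boundary_c_equal}, and the stated equivalence still holds. Everything else is the elementary convexity bookkeeping above, so I do not anticipate any genuine obstacle.
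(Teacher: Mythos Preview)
Your overall strategy matches the paper's: pass to the variational formula $h_c(x)=\Theta(1/c,x)$ from \cref{lem:var_rep_conc} and exploit convexity and monotonicity. There is, however, a genuine slip in your main line: $c\mapsto h_c(x)$ is \emph{not} convex in general. The composition rule you invoke fails because the outer function $\Theta(\cdot,x)$ is non-increasing rather than non-decreasing; e.g.\ if $\Theta(t)=-t$ then $\Theta(1/c)=-1/c$ is concave. Consequently your step ``the derivative of the convex function $c'\mapsto h_{c'}(x)$ is non-decreasing, so it vanishes on $(0,c]$'' is unjustified as written. The paper sidesteps this by working directly in the variable $t=1/c$, where $\varphi(t):=\Theta(t,x)$ \emph{is} convex and non-increasing: from $\varphi'(1/c)=0$ convexity gives $\varphi'\ge0$ on $[1/c,\infty)$, while non-increasing gives $\varphi'\le0$, so $\varphi$ is constant there and $h_c(x)=\varphi(1/c)=\varphi(+\infty)=h(x)$. (Your ``only if'' direction survives regardless: an interior global minimizer of a differentiable function has derivative zero, no convexity needed.)

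That said, your second route---the envelope identity $g_c(x)=\tfrac{1}{2c^2}\E[U_*^2]$---is correct and actually tidier than the paper's argument. Your sandwich $-\tfrac12\E[U_{t_2}^2]\le(\Theta(t_2,x)-\Theta(t_1,x))/(t_2-t_1)\le-\tfrac12\E[U_{t_1}^2]$, together with convexity of $\Theta$ in $t$, forces $\partial_t^-\Theta=-\tfrac12\E[U_t^2]=\partial_t^+\Theta$ wherever an optimizer exists, and your Carath\'eodory/coercivity remark supplies the optimizer. Then $g_c(x)=0\iff\E[U_*^2]=0\iff U_*\equiv0\iff h_c(x)=h(x)$ is immediate, and as a bonus this shows $\Theta(\cdot,x)$ is everywhere differentiable. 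So your proposal does contain a complete proof---just drop the convexity-in-$c$ claim and either reparametrize by $t=1/c$ (as the paper does) or rely solely on the envelope identity.
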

\begin{proof}
	By definition, we know that for any $c_1 \le c_2$, $h_{c_1} (x) \le h_{c_2} (x)$. Further, $h(x) = h_0 (x) = \lim_{c \to 0^+} h_c (x)$. We first prove the ``only if'' part. Assume $h_c (x) = h(x)$, then by monotonicity, $h_{c'} (x) = h_c(x)$ for any $c' \le c$, which implies $g_c (x) = 0$ since $h_c (x)$ is differentiable in $c$. To show the ``if'' part, define for $t > 0$:
	\begin{equation*}
		\varphi (t) = h_{1/t} (x) = \operatorname{conc} \left( h(x) - \frac{t}{2} x^2 \right) + \frac{t}{2} x^2.
	\end{equation*}
	Then, we know that $\varphi (1/c) = \varphi (+ \infty)$. Further, Lemma~\ref{lem:var_rep_conc} implies that
	\begin{equation}\label{eq:varphi_t}
		\varphi (t) = \sup_{U \in L^2(\Omega), \ \E[U] = 0} \E \left[ h (x+U) - \frac{t}{2} U^2 \right]
	\end{equation}
	is convex and continuous in $t$. Now since $\varphi' (1/c) = g_c (x) = 0$, we know that $\varphi' (t) \ge 0$ for all $t \ge 1/c$, which implies that $\varphi$ is increasing on $[1/c, +\infty]$. However, we know that $\varphi$ is non-increasing, by 
	\eqref{eq:varphi_t}. Therefore, $\varphi$ must be constant on $[1/c, +\infty]$, which implies $\varphi(1/c) = \varphi (+\infty)$, i.e., $h_c (x) = h(x)$. This concludes the proof.
\end{proof}

The proposition below is crucial to a number of approximation arguments in our proof.
\begin{prop}\label{prop:approximation_process_gen}
	Let $\{ g_n \}_{n=1}^{\infty}$ and $g$ be measurable functions defined on $[0, 1] \times \R$, satisfying
	\begin{itemize}
		\item [$(a)$] $\forall t \in [0, 1]$, $g_n (t,\, \cdot\,)$ and $g(t,\,\cdot\,):\R\to\R$ are continuous functions.
		\item [$(b)$] There exists a function $m \in L^1 [0, 1]$, such that $\vert g_n (t, x) \vert \le m(t)$ and $\vert g(t, x) \vert \le m(t)$ for all $(t, x) \in [0, 1] \times \R$.
		\item [(c)] $g_n (t, x) \to g(t, x)$ for all $(t, x) \in [0, 1] \times \R$.
	\end{itemize}
	Assume $\{ x_n \}_{n=1}^{\infty}$ is a sequence of real numbers converging to $x \in \R$, consider the SDEs (strong existence and pathwise uniqueness guaranteed by Proposition 1.10 of \cite{cherny2005singular})
	\begin{align*}
		\d X_t^n = \, & g_n (t, X_t^n) \d t + \d B_t, \,\, X_0^n = x_n, \\
		\d X_t = \, & g (t, X_t) \d t + \d B_t, \,\,\, X_0 = x.
	\end{align*}
	Then, we have $\operatorname{Law} ((X_t^n )_{t \in [0, 1]})$ weakly converges to $\operatorname{Law} (( X_t )_{t \in [0, 1]})$ as $n \to \infty$, where $( X_t^n )_{t \in [0, 1]}$ and $( X_t ))_{t \in [0, 1]}$ are viewed as random elements in $C[0, 1]$.
\end{prop}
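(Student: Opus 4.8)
The plan is to prove the stronger statement that $X^n\to X$ \emph{almost surely} in $C[0,1]$ along the given Brownian motion $B$, and then read off weak convergence. The first step is to pass from the stochastic equations to a family of random ODEs: since the diffusion coefficient is the identity, the process $Y^n_t:=X^n_t-B_t$ satisfies, for a.e.\ $\omega$, the integral identity $Y^n_t=x_n+\int_0^t g_n(s,Y^n_s+B_s)\,\d s$, and likewise $Y_t:=X_t-B_t$ solves $Y_t=x+\int_0^t g(s,Y_s+B_s)\,\d s$. Because $|g_n(s,\cdot)|\le m(s)$ with $m\in L^1[0,1]$, the drift term has bounded variation with absolutely continuous primitive, so the argument below is a genuine pathwise ODE argument and no Girsanov/It\^o machinery is needed for the drift.

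Fix $\omega$ in the full-measure set on which $B(\omega)\in C[0,1]$ and all these identities hold. From $|Y^n_t(\omega)-Y^n_s(\omega)|\le\int_s^t m(u)\,\d u$ and $|Y^n_t(\omega)|\le\sup_n|x_n|+\norm{m}_{L^1[0,1]}$ one gets that $\{Y^n(\omega)\}_n$ is uniformly bounded and equicontinuous (the modulus $\tau\mapsto\int_0^\tau m$ is uniformly continuous on $[0,1]$ as $m\in L^1$), hence relatively compact in $C[0,1]$ by Arzel\`a--Ascoli; the same then holds for $\{X^n(\omega)\}_n=\{Y^n(\omega)+B(\omega)\}_n$. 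It therefore suffices to identify every subsequential uniform limit.

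So suppose $Y^{n_j}(\omega)\to z$ uniformly on $[0,1]$. We want to show that $\tilde X:=z+B(\omega)$ solves the limiting equation $\tilde X_t=x+\int_0^t g(s,\tilde X_s)\,\d s+B_t(\omega)$; by the pathwise uniqueness of \cite[Proposition 1.10]{cherny2005singular} this forces $\tilde X=X(\omega)$, i.e.\ $z=Y(\omega)$. Passing to the limit in $Y^{n_j}_t=x_{n_j}+\int_0^t g_{n_j}(s,Y^{n_j}_s+B_s)\,\d s$ is immediate for the initial term; for the drift integral, decompose $g_{n_j}(s,Y^{n_j}_s+B_s)=[\,g_{n_j}(s,Y^{n_j}_s+B_s)-g(s,Y^{n_j}_s+B_s)\,]+[\,g(s,Y^{n_j}_s+B_s)-g(s,z_s+B_s)\,]+g(s,z_s+B_s)$. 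The middle term tends to $0$ for a.e.\ $s$ by continuity of $g(s,\cdot)$ together with $Y^{n_j}_s\to z_s$, and is dominated by $2m(s)$, so its integral vanishes by dominated convergence; the last term integrates to $\int_0^t g(s,z_s+B_s)\,\d s$, which is exactly what we want. The genuinely delicate term is the first one, since pointwise convergence of $g_n$ alone does \emph{not} give $g_{n_j}(s,y_j)\to g(s,y)$ when $y_j\to y$. Here one must exploit that the curves $Y^{n_j}$ themselves solve the ODE, which bounds how long $s\mapsto Y^{n_j}_s+B_s$ can linger in any region where $g_{n_j}$ deviates from $g$: concretely, $\{\,s\mapsto g_{n_j}(s,Y^{n_j}_s+B_s)\,\}_j$ is uniformly integrable on $[0,1]$ (dominated by $m$), hence weakly sequentially precompact in $L^1[0,1]$ by Dunford--Pettis, and one identifies its weak limit as $s\mapsto g(s,z_s+B_s)$ via the occupation measure of the constrained curves combined with $g_n\to g$. \emph{This identification is the step I expect to be the main obstacle.} I note that in every application of this proposition in the paper the drifts have the form $g_n(t,x)=\mu(t)\,\partial_x f_{\mu_n}(t,x)$ (or similar) and $\partial_x f_{\mu_n}\to\partial_x f_\mu$ \emph{locally uniformly in $x$} by the regularity established in \cref{sec:solve_Parisi_PDE} (Dini's theorem after the monotone reformulation), in which case $g_{n_j}(s,Y^{n_j}_s+B_s)-g(s,Y^{n_j}_s+B_s)\to0$ pointwise in $s$ and the step reduces to routine dominated convergence.

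Finally, having shown that every subsequential limit of $X^n(\omega)$ equals $X(\omega)$, relative compactness yields $X^n(\omega)\to X(\omega)$ in $C[0,1]$ for a.e.\ $\omega$. Almost sure convergence of random elements of the Polish space $C[0,1]$ implies convergence in probability and hence $\operatorname{Law}((X^n_t)_{t\in[0,1]})\Rightarrow\operatorname{Law}((X_t)_{t\in[0,1]})$, as claimed; alternatively, one argues subsequentially, noting that every subsequence of $(X^n)$ admits a further subsequence converging a.s.\ to $X$, which already forces convergence in law.
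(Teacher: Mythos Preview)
Your pathwise approach differs from the paper's: the paper argues tightness of $\{\operatorname{Law}(X^n)\}$ in $C[0,1]$, passes to a subsequence, invokes Skorokhod's representation to get a.s.\ convergence on a \emph{new} probability space (with possibly different Brownian motions $B^n$), and identifies the limit by showing it solves the limiting SDE and appealing to uniqueness in law. Your argument aims for a.s.\ convergence on the \emph{original} space, which is a stronger conclusion.

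You correctly flag the one genuine obstacle: under the stated hypotheses (mere pointwise convergence $g_n(t,x)\to g(t,x)$), one cannot conclude $g_n(s,y_n)\to g(s,y)$ when $y_n\to y$. The paper's proof glosses over exactly this point---it asserts ``$g_n(t,X^n_t)\to g(t,Y_t)$ \dots since $g_n$ converges to $g$ and $g$ is continuous,'' which is the same unjustified step. Your Dunford--Pettis/occupation-measure suggestion does not obviously close it without further structure. As you observe, in every application in the paper the convergence $g_n(t,\cdot)\to g(t,\cdot)$ is locally uniform in $x$, and then the step is routine dominated convergence; the proposition as stated is slightly stronger than what either argument rigorously proves.

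There is a second, unflagged issue specific to your pathwise route: to identify the $\omega$-by-$\omega$ subsequential limit $z(\omega)$ with $Y(\omega)$, you invoke ``pathwise uniqueness.'' But the cited result gives \emph{SDE} pathwise uniqueness (two adapted solutions driven by the same $B$ agree a.s.), not uniqueness of the deterministic integral equation $z_t=x+\int_0^t g(s,z_s+B_s(\omega))\,\d s$ for a fixed $\omega$. These are genuinely different: SDE uniqueness with merely bounded continuous drift is a noise-induced phenomenon (Zvonkin/Veretennikov) and does not descend to the $\omega$-wise ODE; moreover your limit $z(\omega)$ need not be adapted, so SDE pathwise uniqueness cannot be applied to it directly. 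The paper's Skorokhod route sidesteps this by only needing uniqueness \emph{in law} of the limiting SDE. In the applications the drift is Lipschitz in $x$ (via the bound on $\partial_x^2 f_\mu$), so your ODE-level uniqueness does hold there as well.
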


\begin{proof}
	We first show that $\{ X^n \}_{n=1}^{\infty}$ is a tight sequence of $C[0, 1]$-valued random variables. First, it is obvious that $X_0^n = x_n$ is a tight sequence of random variables. Next, note that $\forall s \le t \in [0, 1]$,
	\begin{align*}
		\left\vert X_t^n - X_s^n \right\vert = \, & \left\vert \int_{s}^{t} g_{n} (u, X_u^n) \d u + B_t - B_s \right\vert \le \int_{s}^{t} m(u) \d u + \left\vert B_t - B_s \right\vert,
	\end{align*}
	which implies that for any $\epsilon>0$ and $\eta>0$, there is some $\delta>0$ such that for all large enough $n$ (depending on $\epsilon$ and $\eta$),
	\begin{equation*}
		\mathbb{P}\left(\omega_{X^n}(\delta)>\eta\right) \leq \epsilon,
	\end{equation*}
	where
	\begin{equation*}
		\omega_f(\delta):=\sup \{|f(s)-f(t)|: 0 \leq s, t \leq 1,|s-t| \leq \delta\}
	\end{equation*}
	defines the modulus of continuity for any $f \in C[0, 1]$. This proves that $\{ X^n \}_{n=1}^{\infty}$ is tight (cf. \cite{mitoma1983tightness}). As a consequence, any subsequence of $\{ X^{n_k} \}_{k=1}^{\infty}$ has a further subsequence that converges in distribution. It thus suffices to show that any such weak limit must be equal to $\operatorname{Law} (\{ X_t \}_{t \in [0, 1]})$. For simplicity, we still denote this subsequence by $\{ X^n \}_{n=1}^{\infty}$. According to Skorokhod's representation theorem, we may assume without loss of generality that each $X^n$ satisfies
	\begin{equation*}
		\d X_t^n = \, g_n (t, X_t^n) \d t + \d B_t^n, \ X_0^n = x_n,
	\end{equation*}
	where $B^n$ can possibly be different standard Brownian motions, and $X^n$ converges to some $Y \in C[0, 1]$ almost surely. Exploiting the SDE observed by $X^n$, we get that
	\begin{equation*}
		X_t^n = x_n + \int_{0}^{t} g_n (s, X_s^n) \d s + B_t^n, \ \forall t \in [0, 1].
	\end{equation*}
	By our assumption, on the event $X^n \to Y$, we have $g_n (t, X_t^n) \to g(t, Y_t)$ for all $t \in [0, 1]$ as $n \to \infty$, since $g_n$ converges to $g$ and $g$ is continuous. Using dominated convergence theorem ($g_n$ and $g$ are dominated by $m$), it follows that
	\begin{equation*}
		x_n + \int_{0}^{t} g_n (s, X_s^n) \d s \to x + \int_{0}^{t} g (s, Y_s) \d s, \ \forall t \in [0, 1].
	\end{equation*}
	As a consequence, $\{ B^n \}_{n=1}^{\infty}$ converges to some $B \in C[0, 1]$ almost surely. Of course, $B$ is a standard Brownian motion, which finally leads to the SDE
	\begin{equation*}
		Y_t = x + \int_{0}^{t} g (s, Y_s) \d s + B_t, \ \forall t \in [0, 1].
	\end{equation*}
	By uniqueness, we must have $\operatorname{Law} (Y) = \operatorname{Law} (X)$. This concludes the proof.
\end{proof}

	\newpage
	
	\bibliographystyle{alpha}
	\bibliography{small_nn}

\end{document}